\newtheorem{theorem}[equation]{Theorem}
\newtheorem{proposition}[equation]{Proposition}
\newtheorem{corollary}[equation]{Corollary}
\newtheorem{lemma}[equation]{Lemma}
\numberwithin{equation}{section}
\newenvironment{example}
	{\refstepcounter{equation}\medskip\noindent{\bf Example \theequation.}}
	{\medskip}
\newenvironment{remark}
	{\refstepcounter{equation}\medskip\noindent{\bf Remark \theequation.}}
	{\medskip}
\newenvironment{defn}
	{\refstepcounter{equation}\medskip\noindent{\bf Definition \theequation.}}
	{\medskip}
\newcommand{\beq}{\begin{equation}}
\newcommand{\eeq}{\end{equation}}
\DeclareMathOperator{\id}{Id}
\DeclareMathOperator{\Hom}{Hom}
\renewcommand{\hom}{{\rm hom}}
\newcommand{\End}{{\rm End}}
\newcommand{\Ext}{{\rm Ext}}
\newcommand{\ext}{{\rm ext}}
\DeclareMathOperator{\Aut}{Aut}
\DeclareMathOperator{\Inn}{Inn}
\newcommand{\zed}{{\mathbb Z}}
\newcommand{\shift}{\mathcal{S}}
\newcommand{\ang}[1]{\langle #1 \rangle}
\newcommand{\blank}{\mbox{$\underline{\makebox[10pt]{}}$}}
\DeclareMathOperator{\rgr}{gr-\!}
\DeclareMathOperator{\lgr}{\!-gr}
\DeclareMathOperator{\rmod}{mod-\!}
\DeclareMathOperator{\lmod}{\!-mod}
\DeclareMathOperator{\rmodu}{mod^{u}-\!}
\newcommand{\st}{\left\vert\right.}
\newcommand{\I}{{\mathbb I}}
\DeclareMathOperator{\coker}{Coker}
\DeclareMathOperator{\im}{Im}
\newcommand{\bbar}[1]{\overline{#1}}
\newcommand{\Hfunct}[3]{H_{\bbar{#1}}({#2},{#3})}
\newcommand{\Htwist}[4]{\Hom^{#1}_{#2}({#3}, {#4})}
\newcommand{\inv}{\iota}
\newcommand{\invbrak}[1]{\inv[#1]}
\newcommand{\zedfin}{\zed_{{\rm fin}}}
\newcommand{\weak}{\cong_w}
\newcommand{\sh}{\mathcal}
\newcommand{\F}{{\mathcal F}}
\DeclareMathOperator{\GWA}{W}
\newcommand{\ver}[1]{^{(#1)}}
\DeclareMathOperator{\Pic}{Pic}
\DeclareMathOperator{\Picz}{\Pic_0}
\DeclareMathOperator{\Supp}{Supp}
\DeclareMathOperator{\Spec}{Spec}
\DeclareMathOperator{\Proj}{Proj}
\DeclareMathOperator{\mywr}{wr}
\newcommand{\X}[1][0]{X \ang{#1}}
\newcommand{\Y}[1][0]{Y \ang{#1}}
\newcommand{\brackarr}[1]
{ {\renewcommand{\arraystretch}{1.5} \left\{   \begin{array}{ll} #1 \end{array}  \right.}}
\title{Rings graded equivalent to the Weyl algebra}
\author{Susan J. Sierra}
\thanks{Department of Mathematics, 
University of Washington,
Seattle, WA 98195-4350.  {\em Email: } {\tt sjsierra@math.washington.edu}}
\date{\today}
\keywords{Weyl algebra, graded module category, category equivalence, graded Morita theory}
\subjclass[2000]{Primary 16W50; Secondary 16D90}
\begin{document}
\begin{abstract}
We consider the first Weyl algebra, $A$, in the Euler gradation, and completely classify graded rings $B$ that are graded equivalent to $A$: that is, the categories $\rgr A$ and $\rgr B$ are equivalent.  This includes some surprising examples:  in particular, we show that $A$ is graded equivalent to an idealizer in a localization of $A$.

We obtain this classification as an application of a general Morita-type characterization of equivalences of graded module categories.   
  Given a $\zed$-graded ring $R$, an autoequivalence $\F$ of $\rgr R$, and a finitely generated graded projective right $R$-module $P$, we show how to construct a {\em twisted endomorphism ring} $\End^\F_R(P)$ and prove:

\medskip
\noindent{\bf Theorem.}
{\it The $\zed$-graded rings $R$ and $S$ are graded equivalent if and only if there are an autoequivalence $\F$ of $\rgr R$ and a finitely generated graded projective right $R$-module $P$ such that the modules $\{ \F^n P \}$ generate $\rgr R$ and  $S \cong \End^\F_R(P)$.}
\medskip
\end{abstract}

\maketitle
\tableofcontents
\section{Introduction}\label{sec-intro}
The first Weyl algebra $A = k \{ x,y \} /(xy-yx-1)$, where $k$ is an algebraically closed  field of characteristic 0, is in many ways the fundamental ring of noncommutative algebra.  Remarkably, although it has been studied since the 1930's, it continues to inspire new developments in the field.

One of the most fruitful areas of recent activity has come from considering rings Morita equivalent to $A$.  
The starting point of these results  was a theorem of Stafford \cite[Corollary~B]{St2}:  up to isomorphism, the domains Morita equivalent to $A$ are in natural bijection with the orbits of $\Aut_k(A)$ on the set of (module) isomorphism classes of right ideals of $A$.     Using earlier work of Cannings and Holland \cite{CH}, Berest and Wilson  \cite{BW1} then exhibited a rich geometry related to Stafford's result.  They
showed \cite[Theorem~1.1]{BW1} that isomorphism classes of right ideals of $A$ correspond to points in the {\em Calogero-Moser space} $\mathcal{C} = \bigcup_{N \geq 0} \mathcal{C}_N$, where each $\mathcal{C}_N$ should be thought of as a noncommutative deformation of the the Hilbert scheme of $N$ points in the affine 2-plane, and that $\Aut_k(A)$ also acts naturally on $\sh{C}$.   Further, they proved \cite[Theorem~1.2]{BW1} that the orbits of $\Aut_k(A)$ on $\mathcal{C}$ are precisely the $\mathcal{C}_N$.  (The fact that the orbits of $\Aut_k(A)$ acting on the set of isomorphism classes of right ideals are parameterized by the non-negative integers was independently proved by Kouakou \cite{K}.)    Thus we have:

\begin{theorem}[Berest-Wilson, Kouakou, Stafford]\label{thm-BKSW}
The isomorphism classes of domains Morita equivalent to $A$ are in 1-1 correspondence with the non-negative integers. \qed
\end{theorem}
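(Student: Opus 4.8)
The plan is to deduce the statement by chaining together the three results recalled above. First I would invoke Stafford's theorem \cite[Corollary~B]{St2}: every domain Morita equivalent to $A$ is isomorphic to an endomorphism ring $\End_A(I)$ with $I$ a nonzero right ideal of $A$, and $\End_A(I) \cong \End_A(J)$ precisely when $I$ and $J$ lie in a single orbit of the natural $\Aut_k(A)$-action on the set $\mathcal{R}$ of (module) isomorphism classes of right ideals of $A$. This reduces the problem to computing the orbit set $\mathcal{R}/\Aut_k(A)$.

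The second step is to identify $\mathcal{R}$ geometrically. By the Cannings--Holland description of right ideals \cite{CH}, as sharpened by Berest--Wilson \cite[Theorem~1.1]{BW1}, there is a bijection between $\mathcal{R}$ and the Calogero--Moser space $\mathcal{C} = \bigsqcup_{N \geq 0} \mathcal{C}_N$, and---this is the point I would check most carefully---it intertwines the $\Aut_k(A)$-action on $\mathcal{R}$ with the $\Aut_k(A)$-action on $\mathcal{C}$. Hence $\mathcal{R}/\Aut_k(A) \cong \mathcal{C}/\Aut_k(A)$. The third step is to quote \cite[Theorem~1.2]{BW1} (or, independently, Kouakou \cite{K}): the $\Aut_k(A)$-orbits on $\mathcal{C}$ are exactly the strata $\mathcal{C}_N$, one for each $N \in \zed_{\geq 0}$. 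Concatenating the bijections yields the asserted $1$--$1$ correspondence, with $N$ attached to the isomorphism class of $\End_A(I)$ for any $I$ mapping into $\mathcal{C}_N$; note that $\mathcal{C}_0$ is a single point, so $N = 0$ recovers $A = \End_A(A)$ itself.

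Since all the substantial work is already packaged in the cited theorems, the argument above amounts to bookkeeping. If one instead wanted a self-contained proof, the main obstacle would be the transitivity of $\Aut_k(A)$ on each $\mathcal{C}_N$: this is the heart of \cite{BW1}, established by realizing $\mathcal{C}_N$ through Hamiltonian reduction and exploiting the large group of symplectic automorphisms that acts on it, whereas Kouakou's independent argument works directly with right ideals of $A$. A secondary subtlety is verifying the $\Aut_k(A)$-equivariance of the right-ideals-to-Calogero--Moser correspondence; granting these two facts, the orbit count is immediate.
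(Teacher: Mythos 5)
Your proposal is correct and follows exactly the argument the paper itself gives: it cites Stafford's \cite[Corollary~B]{St2} to reduce to counting $\Aut_k(A)$-orbits on isomorphism classes of right ideals, then invokes Berest--Wilson \cite[Theorems~1.1 and 1.2]{BW1} (or Kouakou \cite{K}) to identify those orbits with the strata $\mathcal{C}_N$, $N \geq 0$. The paper states the theorem with a \qed precisely because, as you observe, the substantial work is contained in the cited results.
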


In this paper, we solve the analogous problem for {\em graded} module categories.  That is, we grade $A$ by giving $x$ degree 1 and $y$ degree -1.   We ask:  what are the $\zed$-graded rings $B$ such that the category $\rgr A$ of  graded right $A$-modules is equivalent to the category $\rgr B$ of graded right $B$-modules?
 We will call such an equivalence a {\em graded equivalence} and  will say that $A$ and $B$ are {\em graded equivalent}, although we caution that this terminology is not universally agreed upon in the literature.

Of course, any progenerator $P$ for the full module category of $A$ that happens to be graded induces a graded equivalence between $A$ and $\End_A(P)$; we refer to such an equivalence of categories as a {\em graded Morita equivalence}.  
Since we are interested in non-trivial graded equivalences, we ask:  
 what are the graded Morita equivalence classes of $\zed$-graded rings $B$ such that $A$ and $B$ are graded equivalent?

The answer to this question is quite surprising.   Given a positive integer $n$ and a set $J \subseteq \{0, \ldots, n-1\}$, define a ring $S(J,n)$ as follows:  first, let
\[ f = \prod^{n-1}_{\substack{i=0 \\ i \not\in J}} (z+i).\]
Let $W=W(J)$ be the ring generated over $k$ by $X$, $Y$, and $z$, with relations

\begin{align*}
Xz - zX & = n X	& Yz - zY & = -nY\\
XY & = f		& YX &  = f(z-n).
\end{align*}

Define 
\[S(J, n) = k[z] + \Bigl( \prod_{i \in J} (z+i) \Bigr) W \subseteq W.\]
Then we prove:
\begin{theorem}\label{ithm1}
{\em(Theorem~\ref{thm-class})}
Let $S$ be a $\zed$-graded ring.  Then $S$ is graded equivalent to $A$ if and only if $S$ is graded Morita equivalent to some $S(J, n)$.  
\end{theorem}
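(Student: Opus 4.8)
The plan is to read the classification off from the general Morita-type theorem stated above: the $\zed$-graded rings graded equivalent to $A$ are exactly the rings $\End^{\F}_A(P)$, where $\F$ ranges over autoequivalences of $\rgr A$ and $P$ over finitely generated graded projective right $A$-modules whose orbit $\{\F^n P\}_{n\in\zed}$ generates $\rgr A$. So everything reduces to identifying these twisted endomorphism rings up to graded Morita equivalence, and throughout I will use that $A$ is a simple (hence graded simple) Noetherian hereditary domain with $A_0=k[z]$, where $z=xy$ and conjugation by $x$ sends $z\mapsto z+1$.

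For the ``if'' direction it suffices, since a graded Morita equivalence is a graded equivalence and graded equivalences compose, to show that each $S(J,n)$ is graded equivalent to $A$. Here I would simply produce a pair $(\F,P)$: take $\F=\shift^{\,n}$ and $P=\bigoplus_{j=0}^{n-1} I_j[j]$ for graded right ideals $I_j$ of $A$ whose combined ``jump data'' along the one relevant $\sigma$-orbit $\zed\subseteq\Spec k[z]$ is prescribed by $J$. Since the shifts $0,\dots,n-1$ exhaust $\zed/n\zed$, the orbit $\{\shift^{\,nm}P\}_m$ automatically generates $\rgr A$; and computing $\End^{\shift^{\,n}}_A(P)=\bigoplus_m\Hom_{\rgr A}\!\bigl(P,P[nm]\bigr)$ inside the graded quotient ring of $A$, and then collapsing the $n\times n$ matrix structure coming from the summands of $P$, should yield exactly $S(J,n)$ with its presentation by $X,Y,z$ and $f=\prod_{i\notin J}(z+i)$. (As a check, $J=\emptyset$ gives $P=\bigoplus_j A[j]$ and recovers $S(\emptyset,n)=W(\emptyset,n)=A^{(n)}$, the $n$-th Veronese subring of $A$.)

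For the converse, let $S$ be graded equivalent to $A$, so $S\cong\End^{\F}_A(P)$ for some such pair. I would proceed in three steps. (i) Determine the autoequivalences of $\rgr A$: using graded simplicity and the fact that $A$ is a domain, I expect to show $\Pic(\rgr A)$ is generated by the shift $\shift$ and the graded algebra automorphisms $k^{\times}$ (and at any rate that this is all that matters for the classification), so that after absorbing a scalar into $P$ we may take $\F=\shift^{\,n}$ with $n\ge 1$. (ii) Normalize $P$: as $A$ is hereditary, $P$ is a direct sum of shifts of rank-one graded projectives, i.e.\ of graded right ideals; the generation hypothesis forces the shifts to hit every residue mod $n$, and a graded Morita reduction (inserting, deleting, and permuting summands that are shifts of $A$) brings $P$ to the form $\bigoplus_{j=0}^{n-1} I_j[j]$. (iii) Classify the $I_j$ and compute: a graded right ideal of $A$ is a compatible ``staircase'' of ideals of the PID $k[z]$, and only the $\sigma$-orbit $\zed$ carries nontrivial data, so up to isomorphism and shift such an ideal is encoded by a finite subset of $\zed$, and collectively the $I_j$ are encoded by some $J\subseteq\{0,\dots,n-1\}$; then $\End^{\shift^{\,n}}_A\!\bigl(\bigoplus_j I_j[j]\bigr)$, after the same matrix collapse as before, is graded Morita equivalent to $S(J,n)$. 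In particular the idealizer form $k[z]+\bigl(\prod_{i\in J}(z+i)\bigr)W(J)$ — and hence the example of an idealizer in a localization of $A$ — comes precisely from ideals $I_j$ that are not invertible.

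The crux, I expect, is steps (ii)--(iii) of the converse taken together: classifying graded right ideals of the Euler-graded Weyl algebra explicitly enough to extract the combinatorial datum $J$, and then recognizing each twisted endomorphism ring in the concrete presentation $S(J,n)$ — producing the relations among $X,Y,z$ and the correct idealizer by hand — rather than merely as an abstract subring of $\End_A(P)$. A secondary obstacle is step (i): ruling out autoequivalences of $\rgr A$ not built from shifts and scalar automorphisms, or at least showing any extra ones contribute nothing new; this is where graded simplicity and the domain hypothesis on $A$ should be doing the essential work.
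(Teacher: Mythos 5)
There is a genuine gap, and it is precisely the central point of the paper: you are missing the ``exotic'' autoequivalences of $\rgr A$. In step (i) of your converse you assert that $\Pic(\rgr A)$ is generated by the shift and scalar automorphisms, so that $\F$ may always be normalized to $\shift_A^n$. This is false. The paper shows (Proposition~\ref{prop-invsexist}, Corollaries~\ref{cor-Picz} and \ref{cor-Picr}) that $\Pic(\rgr A)\cong(\zed/2\zed)^{(\zed)}\rtimes D_\infty$: besides $\shift_A$ and the outer automorphism $\omega$ there is an infinite group of numerically trivial involutions $\inv_j$, which interchange the two simple modules supported at $-j$ and fix all other integrally supported simples, and which are \emph{not} induced by any automorphism of $A$ or of $\bbar{A}$ (Corollary~\ref{cor-new}). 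These involutions are exactly what produce the rings $S(J,n)$ with $J\neq\emptyset$: the relevant pair is $\F=\shift_A^n\inv_J$ and $P=A$, not $\F=\shift_A^n$ and a cleverly chosen $P$.

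Your ``if'' direction fails for the same reason, and provably so. By Proposition~\ref{prop-Morita-conjugate} the graded Morita class of $\End^{\F}_A(P)$ depends only on the conjugacy class of $\F$ in $\Pic(\rgr A)$, not on $P$; hence $\End^{\shift_A^n}_A(P)$ is graded Morita equivalent to $\End^{\shift_A^n}_A(A)\cong A\ver{n}$ for \emph{every} admissible $P$. Since $A\ver{n}$ is simple while $S(J,n)$ with $J\neq\emptyset$ has the nontrivial two-sided ideal $f_JW$, no choice of right ideals $I_j$ makes $\End^{\shift_A^n}_A\bigl(\bigoplus_j I_j\ang{j}\bigr)$ isomorphic to $S(J,n)$. (Concretely, for $n=1$: $\End^{\shift_A}_A(P)$ is the ordinary endomorphism ring $\End_A(P)$, always Morita equivalent to $A$, whereas $S(\{0\},1)$ is the idealizer $B=k+xA[y^{-1}]$, which has a one-dimensional representation.) Your normalization of $P$ into shifts of right ideals indexed by finite subsets of $\zed$ is sound as far as it goes --- it matches Lemmas~\ref{lem-split} and \ref{lem-simplefactor} --- but the combinatorial invariant $J$ must be read off from the conjugacy class of the pulled-back shift functor $\Phi^*\shift_S$ (Lemma~\ref{lem-comb2}), after which the identification $\End^{\shift_A^n\inv_J}_A(A)\cong S(J,n)$ is the explicit computation of Proposition~\ref{prop-SJ}.
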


Furthermore, we give  a precise enumeration of the graded Morita equivalence classes of rings  graded equivalent to $A$:  
\begin{theorem}\label{ithm-neck}
{\em (Corollary~\ref{cor-necklacedata})}
The graded Morita equivalence classes of rings graded equivalent to $A$ are in 1-1 correspondence with pairs $(\mathbb{J}, n)$ where $n$ is a positive integer and $\mathbb{J}$ is a necklace of $n$ black and white beads.
\end{theorem}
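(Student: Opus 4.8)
The plan is to combine Theorem~\ref{thm-class} with a direct study of graded Morita equivalence among the rings $S(J,n)$. By Theorem~\ref{thm-class}, every $\zed$-graded ring graded equivalent to $A$ is graded Morita equivalent to some $S(J,n)$; hence the set to be enumerated is exactly the set of graded Morita equivalence classes of the rings $S(J,n)$, with $n \geq 1$ and $J \subseteq \{0,\dots,n-1\}$. Identifying $\{0,\dots,n-1\}$ with $\zed/n\zed$, I claim that $S(J,n)$ and $S(J',n')$ are graded Morita equivalent precisely when $n = n'$ and $J'$ is a cyclic translate of $J$ in $\zed/n\zed$. Since a necklace of $n$ black and white beads is exactly a $2$-coloring of $\zed/n\zed$ taken up to cyclic translation, equivalently a subset of $\zed/n\zed$ up to translation, this yields the asserted bijection $\mathbb{J} \mapsto [S(J,n)]$ (for any subset $J$ representing $\mathbb{J}$), once we check it is well defined, surjective, and injective; surjectivity is immediate from the reduction above.

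For well-definedness I would show that translating $J$ by $1$ modulo $n$ produces a graded Morita equivalent ring. Concretely, I expect to exhibit an explicit rank-one graded projective right $S(J,n)$-module $P$ — built, as in the proof of Theorem~\ref{thm-class}, from the idealizer data — with $\End_{S(J,n)}(P) \cong S(J+1,n)$, and then verify that $\{\shift^m P\}$ generates $\rgr S(J,n)$, so that $P$ is a graded progenerator and the resulting equivalence is a graded Morita equivalence. Conceptually, $W(J,n)$ is a generalized Weyl algebra $k[z](\sigma^n,\, \prod_{i\notin J}(z+i))$ with $\sigma\colon z\mapsto z+1$; the subset $J$ records at which of the $n$ orbits of $\sigma^n$ on the ``integer'' points of $\Spec k[z]$ the idealizer construction acts, and the substitution $z\mapsto z+1$ cyclically permutes these $n$ orbits, a rotation that the shift functor implements. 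Iterating then gives that $S(J,n)$ is graded Morita equivalent to $S(J',n)$ for every cyclic translate $J'$ of $J$.

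For injectivity I must recover $n$ and the necklace class of $J$ from $\rgr S(J,n)$ equipped with its shift $\shift$, since that pair, and nothing more, is what a graded Morita equivalence preserves. Recovering $n$ is the easier step: it should appear as a suitable invariant of $(\rgr S(J,n),\shift)$ — for instance, from the period of the action induced by $\shift$ on the quotient of $\rgr S(J,n)$ by its subcategory of finite-length modules, which behaves like $\Spec k[z]$ with $\shift$ acting through a translation combined with $\sigma^n$, forcing this period to be $n$. The crux is the finer invariant: for fixed $n$, I would locate the distinguished $\shift$-orbit of $n$ ``special'' points arising from the $\sigma^n$-orbits marked by $J$, and at each point detect — via a homological invariant insensitive to the chosen equivalence, such as the local structure of the corresponding simple module or of its self-extensions — whether the behaviour there is of ``idealizer type'' (a black bead) or ``generic type'' (a white bead). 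Reading this around the orbit reconstructs the cyclic bead sequence; the only ambiguity is the choice of base point, that is, an overall rotation, so the necklace class is determined and nothing finer is.

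I expect this last step to be the real obstacle: producing a categorical invariant sharp enough to see the full cyclic arrangement of $J$, rather than merely $n$ and $\abs{J}$, while simultaneously confirming that it sees no more — so that, for example, a necklace and its reversal remain distinct, in accordance with the appearance of necklaces rather than bracelets in the statement.
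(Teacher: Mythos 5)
There is a genuine gap, and you have in fact pointed at it yourself. Your argument correctly reduces the count to showing that $S(J,n)$ and $S(J',n')$ are graded Morita equivalent exactly when $n=n'$ and $J'$ is a cyclic translate of $J$ modulo $n$ --- but note that this is precisely the second sentence of Theorem~\ref{thm-class}, which you quote only in part. Neither direction of that equivalence is actually established in your proposal. The ``well-defined'' direction is left as a plan to exhibit an explicit progenerator $P$ over $S(J,n)$ with $\End_{S(J,n)}(P)\cong S(J+1,n)$ together with a generation check; this could be carried out, but it is not done. More seriously, the injectivity direction --- that $S(J,n)\not\Morita S(J',n)$ when $J$ and $J'$ are not translates, and that a necklace is distinguished from its reversal --- rests on a hypothetical ``local homological invariant'' detecting black versus white beads, which you concede you have not constructed. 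That invariant is the entire content of the hard half of the classification: for instance, distinguishing $S(\{0,1\},4)$ from $S(\{0,2\},4)$ requires seeing the cyclic arrangement of $J$ and not merely $n$ and $\abs{J}$, and no pointwise property of simple modules or their self-extensions is offered that does this and is provably preserved by an arbitrary graded Morita equivalence.

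The paper closes this gap by a different mechanism, which is why Corollary~\ref{cor-necklacedata} is immediate from Theorem~\ref{thm-class}. The invariant of the pair $(\rgr S,\shift_S)$ is taken to be the conjugacy class of the pullback $\Phi^*\shift_S$ in $\Pic(\rgr A)$: Proposition~\ref{prop-Morita-conjugate} shows that $\End^{\F}_A(P)$ and $\End^{\F'}_A(P')$ are graded Morita equivalent if and only if $\F$ and $\F'$ are conjugate in $\Pic(\rgr A)$, and Corollary~\ref{cor-Picr} identifies $\Pic(\rgr A)$ with $\zedfin\rtimes D_\infty$. The combinatorial Lemma~\ref{lem-comb2} then computes that the conjugacy classes of generative autoequivalences $\shift_A^n\inv_J$ are exactly the necklace types: conjugation by $\inv_I$ alters $J$ by $\partial_n I$ (changing nothing modulo the parities per residue class), conjugation by $\shift_A^j$ translates $J$, and conjugation by anything odd reverses the sign of the rank and is therefore excluded --- which is exactly why necklaces rather than bracelets appear. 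Your sketch is morally aiming at the same invariant, but transporting everything into $\Pic(\rgr A)$ and reading the answer off the group structure is what makes both directions provable at once; without that (or a concrete substitute for your ``bead-colour'' invariant), the classification is not established.
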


 Theorem~\ref{ithm1} says in particular that $A \cong S(\emptyset, 1)$ is graded equivalent to $B = k + x A[y^{-1}] \cong S(\{0\},1)$.  The ring $B$ is the {\em idealizer} in $A[y^{-1}]$ of the right ideal $x A[y^{-1}]$; that is, $B = \{ \theta \in A[y^{-1}] \st \theta x \in x A[y^{-1}] \}$ is the maximal subring of $A[y^{-1}]$ in which $x A[y^{-1}]$ is a two-sided ideal.    While it is known that simplicity is not necessarily preserved under graded equivalence, it is still unexpected to find an equivalence between $A$ and an idealizer in a localization of $A$.  For example, $B$ has a 1-dimensional graded representation, while all $A$-modules are infinite-dimensional. By construction, $B$  has a nontrivial two-sided ideal, and is not a {\em maximal order}, while $A$, as is well-known, is simple.  In fact, $B$ is a standard example of a ring that fails the {\em second layer condition} governing relationships between prime ideals, whereas $A$ trivially satisfies the second layer condition.    (See the discussion of Example~\ref{eg-ring0} for definitions.)  Thus we obtain:

\begin{corollary}\label{icor2}
The properties of having a finite dimensional graded representation, being a maximal order, and satisfying the second layer condition are not invariant under graded equivalence. \qed
\end{corollary}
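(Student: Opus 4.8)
The plan is to derive everything from Theorem~\ref{ithm1} (i.e.\ Theorem~\ref{thm-class}) specialized to the pair $(J,n)=(\{0\},1)$. First I would check directly from the defining relations of $S(J,n)$ that $S(\emptyset,1)\cong A$ and $S(\{0\},1)\cong B$, where $B=k+xA[y^{-1}]$ is the idealizer $\{\theta\in A[y^{-1}]\st\theta x\in xA[y^{-1}]\}$ of the right ideal $xA[y^{-1}]$; Theorem~\ref{ithm1} then gives $\rgr A\simeq\rgr B$, so $A$ and $B$ are graded equivalent. It therefore suffices to exhibit, for each of the three named properties, one ring among $\{A,B\}$ that has it and one that does not.

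For the first property: $A$ is a simple $k$-algebra with $\dim_k A=\infty$, so if $M\neq 0$ is any (graded or ungraded) $A$-module with $\dim_k M<\infty$, then $A/\!\operatorname{Ann}_A(M)$ embeds in $\End_k(M)$, forcing $\operatorname{Ann}_A(M)\neq 0$ and hence $M=0$ by simplicity; thus $A$ has no finite-dimensional graded representation, while $xA[y^{-1}]$ is by construction a two-sided ideal of $B$ with $B/xA[y^{-1}]\cong k$ concentrated in degree $0$, a $1$-dimensional graded $B$-module. For the second property: $A$ is simple Noetherian, hence a maximal order in its quotient division ring $D=\operatorname{Fract}(A)$, whereas $B$ is a proper subring of $A[y^{-1}]\subseteq D$ which is an order equivalent to $A[y^{-1}]$ (it contains the essential right ideal $xA[y^{-1}]$), so $B$ is not maximal; here I would cite the standard facts on idealizer subrings. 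For the third property: $A$, being simple, trivially satisfies the second layer condition, while $B$ is the classical Noetherian example failing it, and I would invoke the known analysis of this idealizer recalled around Example~\ref{eg-ring0} rather than reprove it. Combining these three discrepancies with the graded equivalence $A\sim B$ yields the corollary.

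The ring isomorphisms $S(\emptyset,1)\cong A$ and $S(\{0\},1)\cong B$, together with the idealizer description of $B$, are routine once the relations are unwound, and the property-failures for $A$ are immediate from its simplicity. The only real work is on the side of $B$ — producing its $1$-dimensional module, checking it is a non-maximal order, and recalling why it violates the second layer condition — and since each of these already holds for the ungraded ring $B$, no subtlety arises from the fact that the equivalence of Theorem~\ref{ithm1} is one of \emph{graded} module categories: the content of the corollary is precisely that such an equivalence need not preserve these (non-categorical) properties. Thus the main obstacle is simply assembling the standard idealizer facts about $B$, all of which are available in the literature.
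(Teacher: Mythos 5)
Your proposal is correct and matches the paper's own argument: the paper likewise deduces the corollary from the graded equivalence $A\cong S(\emptyset,1)\sim S(\{0\},1)\cong B$ supplied by Theorem~\ref{ithm1}, separates the two rings by exactly these three properties (the $1$-dimensional module $B/xA[y^{-1}]\cong k$, the failure of $B$ to be a maximal order as a non-maximal member of the equivalence class of orders containing $A[y^{-1}]$, and the standard second-layer-condition counterexample), and defers the idealizer facts to the literature just as you do. No gaps.
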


Another ring occurring in Theorem~\ref{ithm1} is the {\em Veronese ring} $A\ver{2} = \bigoplus_{n \in \zed} A_{2n} \cong S(\emptyset, 2)$.   By Theorem~\ref{ithm1}, $A$ and $A\ver{2}$ are graded equivalent.  Of course one expects that $\Proj A$ (in the appropriate sense) and $\Proj A\ver{2}$ will be equivalent,  but, as far as we  are aware, this is the first nontrivial example of an equivalence  between the graded module categories of  a ring and its Veronese.

Theorem~\ref{ithm1} is an  application of general results on equivalences of graded module categories.  Given a graded ring $R$, an autoequivalence $\F$ of $\rgr R$, and a finitely generated graded right $R$-module $P$, we show that there is a natural way to construct a {\em twisted endomorphism ring $\End^\F_R(P)$}.   We prove:
\begin{theorem}\label{ithm-grMor}
{\em(Theorem~\ref{thm-grMor-pullback})}
Let $R$ and $S$ be $\zed$-graded rings.  Then $R$ and $S$ are graded equivalent if and only if there are a finitely generated graded projective right $R$-module $P$ and an autoequivalence $\F$ of $\rgr R$ such that $\{ \F^nP\}_{n \in \zed}$ generates $\rgr R$ and $S \cong \End^{\F}_R(P)$.
\end{theorem}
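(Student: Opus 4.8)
The plan is to prove the two directions separately, with the "only if" direction being the substantive one. For the "if" direction: assume we are given a finitely generated graded projective right $R$-module $P$ and an autoequivalence $\F$ of $\rgr R$ with $\{\F^n P\}_{n \in \zed}$ generating $\rgr R$, and $S \cong \End^\F_R(P)$. I would first need the (presumably already established) fact that $\End^\F_R(P)$ carries a natural $\zed$-grading whose degree-$m$ piece is $\Hom_{\rgr R}(P, \F^m P)$ (or a suitable variant), so that an $S$-module structure on a graded $R$-module is the same as compatible families of maps from the $\F^m P$. Then I would construct the functor $\rgr R \to \rgr S$ sending $M \mapsto \bigoplus_{m} \Hom_{\rgr R}(\F^m P, M)$ (graded by $m$), check it is well-defined into graded $S$-modules using the twisted multiplication on $\End^\F_R(P)$, and show it is an equivalence: fully faithfulness and essential surjectivity follow from the generating hypothesis on $\{\F^n P\}$ together with the projectivity and finite generation of $P$, by a standard Morita-style argument (the family $\{\F^n P\}$ plays the role of a progenerator "spread across degrees"). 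The key technical point is bookkeeping the grading shifts correctly so that the $\F$-twist in the multiplication of $S$ matches the $\F$-twist implicit in reindexing the Hom-groups.

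For the "only if" direction, suppose $G : \rgr R \to \rgr S$ is an equivalence. Let $\shift_S$ denote the shift autoequivalence on $\rgr S$, and set $\F = G^{-1} \shift_S G$, an autoequivalence of $\rgr R$. Let $P = G^{-1}(S)$, where $S$ is viewed as the free right $S$-module of rank one in $\rgr S$; since $S$ is a finitely generated graded projective generator of $\rgr S$ and its shifts $\shift_S^n S$ generate $\rgr S$, transporting along $G^{-1}$ shows $P$ is finitely generated graded projective and $\{\F^n P\} = \{G^{-1}(\shift_S^n S)\}$ generates $\rgr R$. It then remains to identify $S$, as a graded ring, with $\End^\F_R(P)$. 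The idea is that
\[
S_m \;=\; \Hom_{\rgr S}(S, \shift_S^m S) \;\cong\; \Hom_{\rgr R}\bigl(G^{-1}(S),\, G^{-1}(\shift_S^m S)\bigr) \;=\; \Hom_{\rgr R}(P, \F^m P),
\]
using that $G^{-1}$ is fully faithful and that $\Hom$ in $\rgr S$ between $S$ and $\shift_S^m S$ recovers $S_m$. Summing over $m$ gives a graded abelian-group isomorphism $S \cong \End^\F_R(P)$, and one must verify it is a ring isomorphism, i.e. that composition in $\rgr S$ (which is ordinary multiplication in $S$) corresponds under $G^{-1}$ to the twisted multiplication defining $\End^\F_R(P)$. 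This last verification is where the precise definition of $\End^\F_R(P)$ earns its keep: the twist by $\F$ in the multiplication rule is designed exactly to encode the fact that composing a map $S \to \shift_S^a S$ with a map $S \to \shift_S^b S$ requires first applying $\shift_S^a$ to the latter.

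The main obstacle I anticipate is not conceptual but organizational: keeping the twisting data coherent. Specifically, the definition of $\End^\F_R(P)$ presumably involves a choice of natural isomorphisms relating $\F$ to its iterates (since $\F^{a+b}$ and $\F^a \F^b$ agree only up to canonical isomorphism), and one must check the associativity of the twisted product and the compatibility of the isomorphism $S \cong \End^\F_R(P)$ with these choices; sloppiness here produces sign-like discrepancies that break the ring-isomorphism claim. A secondary subtlety is that $\F = G^{-1}\shift_S G$ need not literally be a "shift" on $\rgr R$ in any geometric sense, so all arguments must use only its abstract properties as an autoequivalence together with the generating condition on $\{\F^n P\}$; in particular one should be careful that "finitely generated" and "projective" are categorical notions here (compact, respectively a summand of a finite direct sum of shifts of $R$), so that they transport cleanly across $G$. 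Once the grading and twisting conventions are pinned down, both directions are essentially the graded incarnation of the classical Eilenberg–Watts / Morita argument, and I would present them in that light.
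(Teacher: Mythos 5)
Your proposal is correct and follows essentially the same route as the paper: the ``if'' direction is the graded Morita argument showing the twisted Hom functor $M \mapsto \bigoplus_m \hom_R(\F_m P, M)$ is an equivalence, and the ``only if'' direction takes $\F = \Phi^*\shift_S$, $P = \Psi(S)$, and identifies the graded pieces of $S$ with $\hom$-groups between the $\F^m P$ via full faithfulness. The coherence issue you flag (that $\F^{a+b}$ and $\F^a\F^b$ agree only up to chosen natural isomorphisms, and the ring structure must be independent of these choices) is exactly what the paper isolates as Proposition~\ref{prop-welldef}, so your anticipated obstacle is the right one and is handled the same way.
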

If $\F$ is the shift functor in $\rgr R$, then $\End_R^{\F}(P)$ is the classical endomorphism ring of $P$; thus Theorem~\ref{ithm-grMor} generalizes the Morita theorems.

If $R$ is a $\zed$-graded ring, we define (following \cite{BR}) the {\em Picard group} of $\rgr R$ to be the group of autoequivalences of $\rgr R$, modulo natural isomorphism.   
Theorem~\ref{ithm-grMor} illustrates the fundamental technique for all of our results: relate equivalences between graded module categories to the  Picard groups of the categories.    
Given $\zed$-graded rings $R$ and $S$, an equivalence of categories $\Phi: \rgr R \to \rgr S$ induces an isomorphism of Picard groups, which we write $\Phi^*: \Pic(\rgr S) \to \Pic(\rgr R)$; we refer to applying $\Phi^*$ as {\em pulling back along $\Phi$.}   Let $\F_\Phi$ be the autoequivalence of $\rgr R$ that comes from pulling back the shift functor in $\rgr S$ along $\Phi$.  It turns out that properties of $\Phi$ and $S$ can be deduced from the properties of $\F_\Phi$; in particular, $\F_\Phi$ is the autoequivalence $\F$ from Theorem~\ref{ithm-grMor}.

This paper is a companion to the work in \cite{S}.  
There, using results of \'Ahn-M\'arki \cite{AM}, we gave a Morita-type characterization of graded equivalences in terms of certain bigraded modules.
This characterization tells us that the rings graded equivalent to $A$ are endomorphism rings (in an appropriate sense) of these modules.  However, the modules are quite large and correspondingly difficult to work with.  One motivation for the current paper is  to develop techniques to make the analysis  in \cite{S} concrete in a specific case.

This paper has seven sections.  In Sections~\ref{sec-general} and \ref{sec-genPic} we prove Theorem~\ref{ithm-grMor} and other results that hold for any $\zed$-graded ring.  In particular, we characterize graded Morita equivalences and Zhang twists in terms of the Picard group.   In Sections~\ref{sec-category} and \ref{sec-Pic} we analyze the graded module category of the Weyl algebra and its Picard group.  The key result in these two sections is Corollary~\ref{cor-Picr}, describing the Picard group of $\rgr A$ explicitly. 

In Section~\ref{sec-main} we use the results of the previous two sections to prove Theorem~\ref{ithm1} and Theorem~\ref{ithm-neck}, and give several examples.  In  Section~\ref{sec-Ktheory} we describe the graded $K$-theory of $A$, and in particular show that, in contrast to the ungraded case, if $P \oplus Q \cong P \oplus Q'$ where $P$, $Q$, and $Q'$ are finitely generated graded projective modules, then $Q \cong Q'$.

In the remainder of the Introduction, we establish notation.    We fix a commutative ring $k$.  (Beginning in Section~\ref{sec-category}, $k$ will be an algebraically closed field of characteristic 0.)  For us, a {\em graded ring} means a $\zed$-graded $k$-algebra, and all categories and category equivalences are assumed to be $k$-linear.  If $R$ is a graded ring, the category $\rgr R$ consists of all $\zed$-graded right $R$-modules, and $\rmod R$ is the category of all right $R$-modules; similarly, we form the left module categories $R \lgr$ and $R \lmod$.  Morphisms in $\rgr R$ and $R \lgr$ are homomorphisms that fix degree; if $M$ and $N$ are graded $R$-modules, we write $\hom_R(M,N)$ to mean $\Hom_{\rgr R}(M, N)$, and write $\Hom_R(M, N)$ to denote  $\Hom_{\rmod R}(M,N)$.  We similarly write  $\Ext^i_R$ and $\ext^i_R$ for the derived functors of $\Hom_R$ and $\hom_R$, respectively.

If $R$ is  a graded ring, then the {\em shift functor} on $\rgr R$ (or $R \lgr$) sends a graded right or left module $M$ to the new module $M \ang{1} = \bigoplus_{j \in \zed} M\ang{1}_j$, defined by $M\ang{1}_j = M_{j-1}$.  We will write this functor  as:
\begin{align*}
\shift_R: 	M & \mapsto M\ang{1}.
	\end{align*}   
(We caution that this is the opposite of the standard convention.)
	
Since we will work  primarily with right module categories, for the remainder of the paper unless otherwise specified an $R$-module $M$ is a right module.  

 The research for this  paper was completed as  part of the author's Ph.D. studies at the University of Michigan under the direction of J.T. Stafford.  
The author was partially supported by NSF grants DMS-0502170 and  DMS-0802935.   In addition, the author would like to thank Paul Smith for many stimulating discussions, and the referee for careful reading, including pointing out an error in an earlier version, and several helpful suggestions.

\section{Graded equivalences and twisted endomorphism rings}\label{sec-general}

Let $R$ and $S$ be graded equivalent graded rings; we write $\rgr R \simeq \rgr S$.  Let $\Phi: \rgr R \to \rgr S$  be an equivalence of categories.  In this section, we show that, similarly to the classical Morita theorems, we may construct $S$ and $\Phi$ as  a ``twisted endomorphism ring'' and ``twisted Hom functor,'' respectively. 

 Let   $\Psi: \rgr S \to \rgr R$ be a quasi-inverse for $\Phi$ and let $\sh{G}$ be an autoequivalence of $\rgr S$.   We define  the {\em pullback of $\sh{G}$ along $\Phi$} to be the autoequivalence
 \[ \Phi^* \sh{G} = \Psi \sh{G} \Phi\] 
of $\rgr R$  ; likewise, we may {\em push forward} an autoequivalence $\F$ of $\rgr R$ to the autoequivalance $\Phi_* \F = \Phi \F \Psi$ of $\rgr S$.   

We will show that the ring $S$ and the equivalence of categories $\Phi$ may be reconstructed from $\Phi^* \shift_S$.  
To do this, let $P = \Psi(S)$.  Then we show that there is a natural way to define a {\em twisted Hom functor} 
$\Htwist{\F}{R}{P}{\blank}$ and a {\em twisted endomorphism ring} $\End^\F_R(P) = \Htwist{\F}{R}{P}{P}$ such that $S \cong \End^{\F}_R(P)$ and $\Phi$ is naturally isomorphic to $\Htwist{\F}{R}{P}{\blank}$.   We prove Theorem~\ref{ithm-grMor}, which, by characterizing graded equivalences in terms of twisted Hom functors, generalizes the Morita theorems.  As a corollary, we obtain a simple proof of a result of Gordon and Green characterizing graded Morita equivalences.  We also describe the twisted Hom functors that correspond to Zhang twists.

This section is somewhat technical because of the need to be careful with subtleties of notation; see Remark~\ref{rmk-notZ} for an indication of the potential pitfalls.  One helpful notational technique, borrowed from the author's previous paper \cite{S}, is to work with {\em $\zed$-algebras}, which are more general than graded rings.  
Recall that a $\zed$-algebra is a  ring $R$ without 1, with a $(\zed \times \zed)$-graded vector space decomposition $R  = \bigoplus_{i,j \in \zed} R_{ij}$, such that for all $i, j, l \in \zed$ we have $R_{ij} R_{jl} \subseteq R_{il}$, and if $j \neq j'$, then $R_{ij} R_{j' l} = 0$.  We require that the diagonal subrings $R_{ii}$ have units $1_i$ that act as a left identity on all  $R_{ij}$ and a right identity on all $R_{ji}$.  
If $R$ is a $\zed$-graded ring, the {\em $\zed$-algebra associated to $R$} is the ring
\[ \bbar{R} = \bigoplus_{i,j \in \zed} \bbar{R}_{ij},\]
where $\bbar{R}_{ij} = R_{j-i}$.  
Let $\rmodu \bbar{R}$ denote the category of unitary right $\bbar{R}$-modules; that is, modules $M$ such that $M \bbar{R} = M$.  Then it is easy to see that the categories $\rgr R$ and $\rmodu \bbar{R}$ are isomorphic, and we will identify them throughout.

We recall some terminology and results from \cite{S}.  Recall that a $\zed$-algebra $E$ is called {\em principal} if there is a $k$-algebra automorphism $\beta$ of $E$ that maps $E_{ij} \to E_{i+1,j+1}$ for all $i, j \in \zed$; $\beta$ is called a {\em principal automorphism} of $E$.   We recall:

\begin{proposition}\label{prop-principal}\cite[Proposition~3.3]{S}
Let $E$ be a $\zed$-algebra.  Then $E$ is principal if and only if there is a graded ring $B$ such that $E \cong \bbar{B}$ via a degree-preserving map. \qed
\end{proposition}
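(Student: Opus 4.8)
The plan is to treat the two implications separately; the reverse one is a matter of unwinding definitions, while the forward one requires an explicit construction of $B$.

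For the direction ``$E \cong \bbar{B}$ implies $E$ principal'', I would observe that $\bbar{B}$ carries a canonical principal automorphism $\sigma$. Since $\bbar{B}_{ij} = B_{j-i} = \bbar{B}_{i+1,j+1}$, declare $\sigma$ to be, in each bidegree, the identity map of $B_{j-i}$ regarded as sending the copy sitting in position $(i,j)$ to the copy in position $(i+1,j+1)$. Because multiplication in $\bbar{B}$ is inherited from $B$ and depends only on the differences of the indices, $\sigma$ is multiplicative, and it is clearly bijective with inverse the analogous downward shift; so it is a principal automorphism. Transporting $\sigma$ along a given degree-preserving isomorphism $\phi\colon E \to \bbar{B}$ then yields a principal automorphism $\phi^{-1}\sigma\phi$ of $E$.

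The substantive direction is the converse. Given a principal automorphism $\beta$ of $E$, I would build $B$ by setting $B_n = E_{0,n}$ and defining the product of $a \in B_m$ and $b \in B_n$ to be $a * b = a\,\beta^{m}(b)$, which lands in $E_{0,m}E_{m,m+n} \subseteq E_{0,m+n} = B_{m+n}$ (using that $\beta^m$ maps $E_{0,n}$ onto $E_{m,m+n}$), extended $k$-bilinearly. The verification that this makes $B$ a graded $k$-algebra has three parts: (i) associativity, which reduces to the identity $\beta^m\!\bigl(b\,\beta^n(c)\bigr) = \beta^m(b)\,\beta^{m+n}(c)$, i.e.\ to $\beta$ being an algebra endomorphism; (ii) that $1_0 \in E_{00} = B_0$ is a two-sided identity, the key point being that $\beta$ restricts to a ring isomorphism $E_{ii} \to E_{i+1,i+1}$ and hence $\beta(1_i) = 1_{i+1}$, so that $\beta^m(1_0) = 1_m$ and therefore $1_0 * a = 1_0 a = a$ and $a * 1_0 = a\,1_m = a$ for $a \in B_m$; and (iii) that the product respects the grading, which is immediate from the definition. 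Finally I would exhibit the isomorphism $\bbar{B} \cong E$ as the map $\varphi$ that on $\bbar{B}_{ij} = B_{j-i} = E_{0,j-i}$ is $\beta^{i}\colon E_{0,j-i} \to E_{ij}$: it is a degree-preserving, componentwise $k$-linear bijection, it carries local units to local units since $\beta^i(1_0) = 1_i$, and it is multiplicative because, for $a \in \bbar{B}_{ij}$ and $b \in \bbar{B}_{jl}$, one has $\varphi(a * b) = \beta^i\!\bigl(a\,\beta^{j-i}(b)\bigr) = \beta^i(a)\,\beta^j(b) = \varphi(a)\varphi(b)$, the last product being taken in $E_{ij}E_{jl} \subseteq E_{il}$.

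The main obstacle --- such as it is --- is purely bookkeeping: tracking which power of $\beta$ is needed to shift a given element into the correct bidegree, and confirming that $\beta$ moves the idempotents $1_i$ to one another so that $B$ genuinely acquires a multiplicative identity. Once the twisted multiplication $a * b = a\,\beta^m(b)$ is in place, every remaining claim collapses to a short computation.
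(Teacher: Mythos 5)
Your proof is correct and follows essentially the same route as the paper, which defers the details to \cite[Proposition~3.3]{S} but reviews exactly this construction: the compression $E^\gamma$ with product $a\star b = a\gamma^i(b)$ for one direction, and the canonical principal automorphism of $\bbar{B}$ coming from the identifications $\bbar{B}_{ij}=B_{j-i}=\bbar{B}_{i+1,j+1}$ for the other. The verifications you list (associativity via multiplicativity of $\beta$, the identity via $\beta(1_i)=1_{i+1}$, and the isomorphism $\bbar{B}\cong E$ given by $\beta^i$ in bidegree $(i,j)$) are precisely the bookkeeping the cited proof carries out.
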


We review the constructions in Proposition~\ref{prop-principal}.  Suppose that $E$ is principal, and let $\gamma$ be a principal automorphism of $E$.  Then we may construct  a ring $E^\gamma$, which we call the {\em compression of $E$ by $\gamma$.}  As a graded vector space, $E^\gamma = E_{0*} =  \bigoplus_{i \in \zed} E_{0i}$.  The multiplication $\star$ on $E^\gamma$ is defined as follows:  if $ a \in (E^\gamma)_i = E_{0i}$ and $b \in (E^\gamma)_j = E_{0j}$, then
\[ a \star b = a \gamma^i(b) \in E_{0i} E_{i, i+j} \subseteq E_{0, i+j} = (E^\gamma)_{i+j}.\]
The proof of \cite[Proposition~3.3]{S} shows that  $\bbar{E^\gamma} \cong E$.   
Conversely, if  $S$ is a graded ring, then $\bbar{S}$ has a {\em  canonical principal automorphism}
 $\alpha$, given by the identifications  $\bbar{S}_{ij} = S_{j-i} = \bbar{S}_{i+1,j+1}$.

Suppose that $M = \bigoplus_{i,j \in \zed} M_{ij}$ is a {\em bigraded} right $R$-module --- that is, each $M_{i*} = \bigoplus_{j \in \zed} M_{ij}$ is a graded $R$-submodule of $M$ and $M \cong \bigoplus_{i \in \zed} M_{i*}$.  Assume for simplicity that each $M_{i*}$ is finitely generated.  Then we define the {\em endomorphism $\zed$-algebra of $M$} to be
\[ \Hfunct{R}{M}{M} = \bigoplus_{i, j \in \zed} \Hfunct{R}{M}{M}_{ij},\]
where each $\Hfunct{R}{M}{M}_{ij} = \hom_R(M_{j*}, M_{i*})$.  
Further,  for any graded $R$-module $N = \bigoplus_{i \in \zed} N_i$ there is a graded object 
\[ \Hfunct{R}{M}{N} = \bigoplus_{j \in \zed} \hom_R(M_{j*}, N).\]
Let $H = \Hfunct{R}{M}{M}$.  Then $\Hfunct{R}{M}{N}$ is naturally a right $H$-module, since $M$ has a left $H$-action.   If there is a graded ring $S$ with an isomorphism $\bbar{S} \cong H$, then $\Hfunct{R}{M}{\blank}$ may be regarded as a functor from $\rgr R$ to $\rgr S$.   

We recall from \cite{S} the following characterization of graded equivalences:
\begin{proposition}\label{prop-grMor}\cite[Proposition~2.1]{S}
Let $R$ and $S$ be graded rings.  Then $\rgr R \simeq \rgr S$ if and only if there is a bigraded right $R$-module $M_R = \bigoplus_{i , j \in \zed} M_{ij}$ such that 

$(1)$ $M_R$ is a projective generator for $\rgr R$ with each $M_{i*} = \bigoplus_{j \in \zed} M_{ij}$ finitely generated; 

$(2)$ the $\zed$-algebras $\bbar{S}$ and $\Hfunct{R}{M}{M}$ are isomorphic via a degree-preserving map.  

Further, if $M$ is as above, then $\Hfunct{R}{M}{\blank}: \rgr R \to \rgr S$ and $\blank \otimes_{\bbar{S}} M: \rgr S \to \rgr R$ are quasi-inverse equivalences; and if $\Psi: \rgr S \to \rgr R$ is an equivalence of categories, then $M = \Psi(\bbar{S}_S)$ satisfies (1) and (2), and $\Psi \cong \blank \otimes_{\bbar{S}} M$. \qed
\end{proposition}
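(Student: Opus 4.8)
The plan is to read this as the graded Morita theorem transported into the language of $\zed$-algebras and to prove it by running the classical Morita argument ``row by row.'' Throughout I use the identification $\rgr R \cong \rmodu \bbar R$, so that an equivalence $\rgr R \simeq \rgr S$ is the same thing as an equivalence $\Psi\colon \rmodu \bbar S \to \rmodu \bbar R$. The key structural observation is that $\bbar S$, decomposed into its rows $\bbar S = \bigoplus_{i\in\zed}\bbar S_{i*}$ with $\bbar S_{i*}=\bigoplus_j S_{j-i}$, is a unitary right $\bbar S$-module whose rows form a family $\{\bbar S_{i*}\}_{i\in\zed}$ of finitely generated (equivalently, compact) projective generators of $\rmodu \bbar S$, and that the endomorphism $\zed$-algebra of this family, computed row against row, recovers $\bbar S$ itself. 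Thus $\bbar S_S$ is the universal example of a module satisfying (1) and (2).

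For the forward direction, given $\Psi$ I set $M_{i*}=\Psi(\bbar S_{i*})$ and $M=\bigoplus_i M_{i*}$. Since finitely generated projective objects and generating families are categorical notions---finitely generated projectives being exactly the compact projective objects of these module categories---$\Psi$ carries $\{\bbar S_{i*}\}$ to a family $\{M_{i*}\}$ of finitely generated projectives that generates $\rgr R$, so $M$ is a projective generator with finitely generated rows, which is (1). Full faithfulness of $\Psi$ yields $k$-module isomorphisms $\hom_R(M_{j*},M_{i*})\cong\hom_S(\bbar S_{j*},\bbar S_{i*})=\bbar S_{ij}$ compatible with composition and the bigrading, i.e.\ a degree-preserving isomorphism $\Hfunct{R}{M}{M}\cong\bbar S$ of $\zed$-algebras, which is (2). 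Finally, to see $\Psi\cong\blank\otimes_{\bbar S}M$, note both functors preserve arbitrary direct sums and are right exact, and that they agree on each $\bbar S_{i*}$ (indeed $\bbar S_{i*}\otimes_{\bbar S}M\cong M_{i*}=\Psi(\bbar S_{i*})$); since every object of $\rmodu \bbar S$ admits a presentation $\bigoplus \bbar S_{j*}\to\bigoplus \bbar S_{i*}\to N\to 0$, a five-lemma argument upgrades this to a natural isomorphism.

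For the converse, suppose $M$ satisfies (1) and (2). Writing $H=\Hfunct{R}{M}{M}\cong\bbar S$ makes $M$ a unitary $(\bbar S,\bbar R)$-bimodule, so $\blank\otimes_{\bbar S}M\colon\rgr S\to\rgr R$ and $\Hfunct{R}{M}{\blank}\colon\rgr R\to\rgr S$ form an adjoint pair, with unit $N\to\Hfunct{R}{M}{N\otimes_{\bbar S}M}$ and counit the evaluation $\Hfunct{R}{M}{L}\otimes_{\bbar S}M\to L$. Because each $M_{i*}$ is projective, $\Hfunct{R}{M}{\blank}$ is exact and preserves direct sums, while $\blank\otimes_{\bbar S}M$ is always right exact and preserves direct sums; one checks directly that unit and counit are isomorphisms on the generators $\bbar S_{i*}$ (respectively the $M_{i*}$), using (2), and then extends to all objects by presenting them by these generators and applying the five lemma---here the ``generates'' half of (1) is exactly what supplies the needed presentations in $\rgr R$. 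Hence the two functors are quasi-inverse equivalences, and in particular $\rgr R\simeq\rgr S$.

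I expect the genuine difficulty to lie not in the homological bookkeeping---that is the standard Morita pattern---but in keeping the two gradings disentangled: matching the internal grading of $S$-modules against the bigrading on $\bbar S$, verifying that the isomorphism in (2) really is bidegree-preserving, and navigating the shift conventions (precisely the hazard flagged in Remark~\ref{rmk-notZ}). A secondary point needing care is the assertion that equivalences preserve ``finitely generated projective'' and ``generating family''; I would pin this down by identifying finitely generated projectives with the compact projective objects of $\rgr R$, a notion manifestly invariant under equivalence.
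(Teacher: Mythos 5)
The paper does not actually prove this proposition: it is imported verbatim from \cite[Proposition~2.1]{S} with a \qed, and the proof there rests on \'Anh--M\'arki's Morita theory for rings with local units. Your argument is a correct reconstruction of exactly that route --- the rows $\bbar{S}_{i*}=1_i\bbar{S}$ as a family of compact projective generators of $\rmodu\bbar{S}$ whose endomorphism $\zed$-algebra is $\bbar{S}$, invariance of compact projectives and generating families under equivalence, and the unit/counit checks on generators --- so there is nothing within this paper to compare it against. The only step to make explicit is that ``the two functors agree on the $\bbar{S}_{i*}$'' must first be packaged as the canonical Eilenberg--Watts natural transformation $N\otimes_{\bbar{S}}M\to\Psi(N)$ (send $n\in N1_i$ to the image of $m$ under $\Psi$ applied to the map $1_i\bbar{S}\to N$ determined by $n$) before the five-lemma extension from generators to all objects can be run.
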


Let $R$, $S$, and $M$ satisfy (1) and (2) of Proposition~\ref{prop-grMor}; let $H = \Hfunct{R}{M}{M}$.  Then $H \cong \bbar{S}$ is principal, with a principal automorphism $\beta$ induced  from the  canonical principal automorphism of $\bbar{S}$.  Thus $H^\beta \cong S$, and this isomorphism gives the equivalence between $\rgr R$ and $\rgr S$.
 That is, if $N = \bigoplus_{i \in \zed} N_i$ is a graded $R$-module, the $S$-action on $\Hfunct{R}{M}{N}$ is given by
\beq\label{mult1}
fs = f \circ \beta^j(s)
\eeq
for $f \in \Hfunct{R}{M}{N}_j = \hom_R(M_{j*}, N)$ and $s \in S_i \cong (H^\beta)_i = H_{0i}$.

We now  define twisted endomorphism rings.
Let $R$ be a graded ring, let $\mathcal{F}$ be an autoequivalence of $\rgr R$, and let $P$ be  a finitely generated graded projective $R$-module.  We say that $\mathcal{F}$ is {\em $P$-generative} if the modules $\{ \mathcal{F}^i P \}_{i \in \zed}$ generate $\rgr R$; more generally $\mathcal{F}$ is {\em generative} if it is $P$-generative for some finitely generated graded projective $P$.  It is easy to see that if $\Phi: \rgr R \to \rgr S$ is a category equivalence, then the pullback $\Phi^* \shift_S$ is generative in $\rgr R$.  The next proposition shows that this process also works in reverse:  there is a natural way to obtain a  ring graded equivalent to $R$ from a generative autoequivalence of $\rgr R$.

We pause  to discuss a subtlety of notation.  If $\mathcal{F}$ is an autoequivalence of $\rgr R$ that is  not an automorphism, we must be careful about defining the powers $\mathcal{F}^i$ for $i<0$.   Let $\F_{-1}$ be a quasi-inverse to $\mathcal{F}$.  If $i<0$, we define $\F^i = (\F_{-1})^{-i}$; we define $\F^0 = \id_{\rgr R}$.

In fact, we will work in even more generality.  Let  $\F_{\bullet} = \{ \F_i \}_{i \in \zed}$ be a sequence of autoequivalences of $\rgr R$.  We will say that $\F_{\bullet}$ is a {\em $\zed$-sequence} if: 

(1) $\mathcal{F}_0 = \id_{\rgr R}$;

(2) There are natural {\em compatibility isomorphisms} $\eta^{ij}: \F_{i+j} \to \F_i \F_j$ for all $i, j \in \zed$ that satisfy:
\begin{equation} \label{cond1}
\mbox{For all $j$, the maps $\eta^{0j}, \eta^{j0} : \mathcal{F}_j \to \mathcal{F}_j$ are the identity.}
\end{equation}
and 
\begin{equation}\label{cond2}
\begin{split}
\mbox{For all $i, j, l \in \zed$, the following diagram commutes:} \\
\xymatrix{
\F_{i+j+l} \ar[rr]^{\eta^{i, j+l}} \ar[d]_{\eta^{i+j, l}} && \F_i \F_{j+l} \ar[d]^{\F_i(\eta^{jl})}\\
\F_{i+j} \F_l \ar[rr]_{ \eta^{ij} \F_l	} && \F_i \F_j \F_l.}
\end{split}
\end{equation}

For example, for any autoequivalence $\F$, the sequence $\{ \F^j \}_{j \in \zed}$ is a $\zed$-sequence.  For the  compatibility isomorphisms $\eta^{ij} : \F^{i+j} \to \F^i \F^j$, let 
\[ \eta^{-1,1}:\id_{\rgr R} \to \F^{-1} \circ \mathcal{F}\]
be any natural isomorphism.   Then for any $N \in \rgr R$, define
\[ \eta^{1,-1}_N: N \to \F \F^{-1}N\]
to be the element of $\hom_R(N, \F \F^{-1} N)$ corresponding under $\F^{-1}$ to
\[ \eta^{-1,1}_{\F^{-1} N} : \F^{-1} N \to \F^{-1} \F \F^{-1} N.\]
Then \eqref{cond2} is satisfied  for $(i, j, l) = (\pm 1, \mp 1, \pm 1)$, and there is a unique way to define $\eta^{ij}$ for arbitrary $i, j$ so that \eqref{cond2} holds for all $i, j, l$.

\begin{proposition}\label{prop-shift}
Let $R$ be a graded ring and let $\mathcal{F}$ be an autoequivalence of $\rgr R$.  Then $\mathcal{F}$ is generative if and only if there are a graded ring $S$ and an equivalence of categories  $\Phi: \rgr R \to \rgr S$  such that $\mathcal{F} \cong \Phi^* \shift_S$.
\end{proposition}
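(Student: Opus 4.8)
The plan is to prove both directions, with the substantive content lying in the forward implication. For the easy direction, suppose $\Phi\colon \rgr R \to \rgr S$ is an equivalence with $\mathcal{F} \cong \Phi^* \shift_S$. Since $\shift_S$ is an autoequivalence of $\rgr S$ and $\{\shift_S^{\,n}(S)\}_{n\in\zed} = \{S\langle n\rangle\}_{n\in\zed}$ generates $\rgr S$ (every graded $S$-module is a quotient of a direct sum of shifts of $S$), applying the quasi-inverse $\Psi$ we get that $\{\Psi(S\langle n\rangle)\}_{n\in\zed}$ generates $\rgr R$. Now $\mathcal{F}^n \cong (\Phi^*\shift_S)^n \cong \Phi^*(\shift_S^{\,n})$, so $\mathcal{F}^n(P) \cong \Psi\shift_S^{\,n}\Phi(P)$ where $P = \Psi(S)$; since $\Phi(P) \cong \Phi\Psi(S) \cong S$, we get $\mathcal{F}^n(P) \cong \Psi(S\langle n\rangle)$. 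Thus $\{\mathcal{F}^n P\}$ generates $\rgr R$, $P$ is finitely generated (as $S$ is, being cyclic) and projective (as $S_S$ is projective and $\Psi$ is an equivalence), so $\mathcal{F}$ is $P$-generative, hence generative. One should be slightly careful tracking the compatibility isomorphisms here, but this is routine given the $\zed$-sequence formalism already set up.

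For the forward direction, suppose $\mathcal{F}$ is $P$-generative for some finitely generated graded projective $P$. The idea is to manufacture the bigraded module required by Proposition~\ref{prop-grMor}. Set $M_{i*} = \mathcal{F}^i(P)$ for each $i \in \zed$, and let $M = \bigoplus_{i\in\zed} M_{i*}$. By hypothesis each $M_{i*}$ is finitely generated and projective (autoequivalences preserve both properties), and $\{M_{i*}\}$ generates $\rgr R$, so $M$ is a projective generator with each $M_{i*}$ finitely generated --- this is condition (1) of Proposition~\ref{prop-grMor}. Now form the endomorphism $\zed$-algebra $H = \Hfunct{R}{M}{M}$ with $H_{ij} = \hom_R(M_{j*}, M_{i*}) = \hom_R(\mathcal{F}^j P, \mathcal{F}^i P)$. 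The crucial point is to check that $H$ is \emph{principal}: I will produce a principal automorphism $\beta\colon H_{ij}\to H_{i+1,j+1}$. Using the compatibility isomorphisms $\eta^{1,i}\colon \mathcal{F}^{1+i}\to\mathcal{F}\mathcal{F}^i$ of the $\zed$-sequence $\{\mathcal{F}^n\}$, one sends $f\in\hom_R(\mathcal{F}^j P, \mathcal{F}^i P)$ to the composite $\mathcal{F}^{1+j}P \xrightarrow{\eta^{1,j}} \mathcal{F}\mathcal{F}^j P \xrightarrow{\mathcal{F}(f)} \mathcal{F}\mathcal{F}^i P \xrightarrow{(\eta^{1,i})^{-1}} \mathcal{F}^{1+i}P$; that this is a $k$-algebra automorphism of $H$ shifting bidegree by $(1,1)$ follows from the coherence conditions \eqref{cond1} and \eqref{cond2} --- verifying multiplicativity is exactly where the commuting square \eqref{cond2} gets used.

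Once $H$ is principal with principal automorphism $\beta$, form the compression $S = H^\beta$; by the construction reviewed after Proposition~\ref{prop-principal} we have $\bbar{S}\cong H$ via a degree-preserving map, so $R$, $S$, and $M$ satisfy conditions (1) and (2) of Proposition~\ref{prop-grMor}. Hence $\Phi := \Hfunct{R}{M}{\blank}\colon \rgr R \to \rgr S$ is an equivalence, with quasi-inverse $\Psi = \blank\otimes_{\bbar S} M$. It remains to identify $\mathcal{F}$ with $\Phi^*\shift_S = \Psi\shift_S\Phi$. Chasing definitions: for $N\in\rgr R$, $\Phi(N)$ has $j$-th component $\hom_R(M_{j*},N) = \hom_R(\mathcal{F}^j P, N)$; applying $\shift_S$ reindexes this; and applying $\Psi$ and using the $S$-action formula \eqref{mult1} (which involves $\beta$, hence the $\eta$'s) should return something naturally isomorphic to $\hom_R(\mathcal{F}^j P, N)$ reassembled so as to compute $\mathcal{F}(N)$ --- concretely, the natural isomorphism $\mathcal{F}^j P \cong \mathcal{F}\mathcal{F}^{j-1}P$ lets one rewrite $\Phi(\mathcal{F}(N))$ as the shift of $\Phi(N)$, giving $\Phi\mathcal{F}\cong\shift_S\Phi$, whence $\mathcal{F}\cong\Psi\shift_S\Phi = \Phi^*\shift_S$.

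\textbf{Main obstacle.} The genuinely delicate step is verifying that the candidate $\beta$ is a well-defined \emph{ring} automorphism of $H$ --- i.e., that it respects the $\zed$-algebra multiplication $H_{ij}H_{jl}\to H_{il}$. This is precisely a diagram-chase through the compatibility square \eqref{cond2} for $\{\mathcal{F}^n\}$, and it is the reason the paper develops the $\zed$-sequence formalism and emphasizes (in Remark~\ref{rmk-notZ}) that notational care is essential: the isomorphisms $\eta^{ij}$ need not come from honest functor powers when $\mathcal{F}$ is not an automorphism, so one cannot simply write $\mathcal{F}^{i+j}=\mathcal{F}^i\mathcal{F}^j$ on the nose. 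Everything else --- finiteness, projectivity, generation, the final naturality isomorphism --- is comparatively mechanical given Propositions~\ref{prop-principal} and~\ref{prop-grMor}.
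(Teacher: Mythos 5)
Your proposal is correct and follows essentially the same route as the paper: form $M=\bigoplus_i \F_i P$, use the compatibility isomorphisms $\eta^{1,i}$ to build the principal automorphism $\beta$ of $\Hfunct{R}{M}{M}$, compress to get $S$, invoke Proposition~\ref{prop-grMor}, and identify $\F$ with $\Phi^*\shift_S$ via the isomorphisms $\F_jP\cong\F\F_{j-1}P$. (One small point: multiplicativity of $\beta$ follows already from functoriality of $\F$ and naturality of $\eta^{1,i}$; the coherence square \eqref{cond2} is what is really needed for the iterate formula $\beta^j$ and for checking that the comparison map $\phi_N$ is an $S$-module map.)
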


\begin{proof}
Given an equivalence $\Phi: \rgr R \to \rgr S$ with quasi-inverse $\Psi$, by Proposition~\ref{prop-grMor}  the $R$-module $\Psi(\bbar{S}_S) \cong \bigoplus_{i \in \zed} (\Phi^* \shift_S)^i (\Psi S)$ generates $\rgr R$, so the autoequivalence $\Phi^*\shift_S$ is generative. 

Conversely, suppose that $\F$ is a $P$-generative autoequivalence of $\rgr R$ and that $\F_\bullet$ is a $\zed$-sequence with compatibility isomorphisms $\eta^{ij}$, such that $\F_1 = \F$.   Let $M = \bigoplus_{i \in \zed} \mathcal{F}_i P$, and let $H = \Hfunct{R}{M}{M}$.  The autoequivalence $\F$ and the maps $\eta^{ij}$ induce a principal automorphism $\beta$ of $H$ as follows:  if $s \in H_{ki} = \hom_R(\F_i P, \F_k P)$, define $\beta(s)$ to be the composition
\[ \xymatrix{
\mathcal{F}_{i+1}P \ar[r]^{\eta^{1i}}	& \mathcal{F} \mathcal{F}_i P \ar[r]^{\mathcal{F}(s)} 
	& \mathcal{F} \mathcal{F}_k P \ar[r]^{(\eta^{1k})^{-1}} & \mathcal{F}_{k+1} P	} \]
as an element of $H_{k+1, i+1} = \hom_R(\F_{i+1}P, \F_{k+1}P)$.
The compatibility condition \eqref{cond2} ensures that we have
\beq\label{alphan}
 \beta^j(s) = (\eta^{jk})^{-1} \circ \mathcal{F}_j (s) \circ \eta^{ji}
 \eeq
for all $j \in \zed$.  

Let $S$ be the compressed ring $H^{\beta}$.  Then by Propositions~\ref{prop-principal} and \ref{prop-grMor}, the functor $\Hfunct{R}{M}{\blank}$ is an equivalence between $\rgr R$ and $\rgr S$.    We need to show that $\mathcal{F}$ is isomorphic to the pullback of the the shift functor in $\rgr S$, or, equivalently, that 
$\mathcal{S}_S \circ \Hfunct{R}{M}{\blank} \cong \Hfunct{R}{M}{\blank} \circ \mathcal{F}$.  

Let $N \in \rgr R$.  
Define a map $\phi_N: \shift_S(\Hfunct{R}{M}{N}) \to \Hfunct{R}{M}{\mathcal{F}N}$ as follows:  if  $n \in (\shift_S \Hfunct{R}{M}{N})_j = \Hfunct{R}{M}{N}_{j-1} = \hom_R(\mathcal{F}_{j-1}P, N)$, let $\phi_N(n)$ be the composition 
\[ \xymatrix{
\phi_N(n):  \mathcal{F}_j P \ar[rr]^{\eta^{1,j-1}}	&& \mathcal{F} \mathcal{F}_{j-1} P
	\ar[rr]^{\mathcal{F}(n)}	&& \mathcal{F}N.	}\]
Clearly $\phi_N$ is an isomorphism of graded $k$-vector spaces; we verify that it  is an $S$-module map.

Let $s \in S_i = \hom_R(\F_i P, P)$ and let $n \in \shift_S(\Hfunct{R}{M}{N})_j = \Hfunct{R}{M}{N}_{j-1}$.  We need  to prove that $\phi_N(ns) = \phi_N(n) s$.  From \eqref{mult1} we have that the $S$-action on $\Hfunct{R}{M}{N}$ is defined  by $ns = n \circ \beta^{j-1}(s)$.  Therefore, $\phi_N(ns)$ is given by the diagram
\[ \xymatrix{
 \F_{i+j} P \ar[rd]_{\eta^{1, i+j-1}}&&&& \F \F_{j-1} P \ar[r]^(.6){\F(n)} 	& \F N. \\
 & \F \F_{i+j-1} P \ar @/^15pt/ [rrru]^{\F(\beta^{j-1}(s))} \ar[rr]_{\F(\eta^{j-1,i})}
	&& \F \F_{j-1} \F_iP  \ar[ru]_{\F \F_{j-1} (s)}}
	\]
On the other hand, the $S$-action on  $\Hfunct{R}{M}{\F N}$ is given by $\phi_N(n) s = \phi_N(n) \circ \beta^j(s)$, which corresponds to the diagram
\[ \xymatrix{
\F_{i+j}P \ar[r]_{\eta^{ji}} \ar @/^15pt/ [rr]^{\beta^j(s)}
	& \F_j \F_i P \ar[r]_{\F_j(s)}
	& \F_j P \ar[r]_{\eta^{1,j-1}} \ar @/^15pt/ [rr]^{\phi_N(n)}
	& \F \F_{j-1} P \ar[r]_(.6){\F(n)}
	& \F N. }\]
  So we need to prove the commutativity of 
\[ \xymatrix{
\mathcal{F}_{i+j}P \ar[r]^{\eta^{ji}}  \ar[d]^>>>>>{\eta^{1,i+j-1}}
	& \mathcal{F}_j \mathcal{F}_i P \ar[r]^{\mathcal{F}_j(s)} 
	& \mathcal{F}_j P \ar[r]^{ \eta^{1,j-1}} 
	& \mathcal{F} \mathcal{F}_{j-1} P \ar[r]^(.6){\mathcal{F}(n)}
	& \mathcal{F} N. \\
	 \mathcal{F} \mathcal{F}_{i+j-1} P \ar[rr]^{\mathcal{F}(\eta^{j-1,i})}
	&& \mathcal{F} \mathcal{F}_{j-1} \mathcal{F}_i P \ar[ru]_{\mathcal{F}\F_{j-1}(s)} } \]
  This follows from the compatibility condition \eqref{cond2} and the naturality of $\eta^{1,j-1}$.  

The naturality of $\phi$ is even easier.  Let $f: N \to N'$ be a homomorphism of graded $R$-modules and let $n \in \shift_S(\Hfunct{R}{M}{N})_j$ as above.  We leave it to the reader to verify that the naturality of the isomorphism $\phi$ reduces to establishing the commutativity of 
\[ \xymatrix{
\mathcal{F}_jP \ar[rr]^(0.4){\eta^{1,j-1}}
	&& \mathcal{F} \mathcal{F}_{j-1} P \ar[r]^(.6){\mathcal{F}(n)} \ar[rd]_{\mathcal{F}(f \circ n)}
	& \mathcal{F}N \ar[d]^{\mathcal{F}(f)} \\
	&&	&	\mathcal{F}N'.	} \]
This is immediate from functoriality of $\F$.
\end{proof}

\begin{defn}\label{defn-twist}
We call the ring $S = H^{\beta} = \bigoplus_{n \in \zed} \hom_R(\F_nP, P)$ from the above proof the 
 {\em $\F$-twisted endomorphism ring of $P$}, and we write it $\End^{\F}_R(P)$.
   We call the functor
\[ \Hfunct{R}{\bigoplus_{n \in \zed} \mathcal{F}_n P}{\blank}: \rgr R \to \rgr \End^{\F}_R(P)\]
the {\em $\F$-twisted Hom functor} of $P$, and we write it $\Htwist{\F}{R}{P}{\blank}$.  
\end{defn}

Thus we have proved:

\begin{proposition}\label{prop-pullback}
Let $R$ be a graded ring.  Let $P$ be a finitely generated projective graded $R$-module and let $\F$ be a $P$-generative autoequivalence of $\rgr R$.  Let  $S = \End^{\mathcal{F}}_R(P)$ and let $\Phi = \Htwist{\F}{R}{P}{\blank}: \rgr R \to \rgr S$.    Then $\Phi$ is an equivalence of categories, and $\F \cong \Phi^* \shift_S$.  
\qed
\end{proposition}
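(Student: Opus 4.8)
The plan is to recognize that Proposition~\ref{prop-pullback} is essentially a restatement of what was already established in the proof of Proposition~\ref{prop-shift}, now packaged in the terminology of Definition~\ref{defn-twist}; so the proof reduces to matching up notation and carrying out one routine manipulation with pullbacks. The key steps, in order, are: (i) instantiate the construction of Proposition~\ref{prop-shift} for the given data; (ii) identify its output with the ring $S$ and functor $\Phi$ as named in Definition~\ref{defn-twist}; (iii) convert the isomorphism $\shift_S \circ \Phi \cong \Phi \circ \F$ produced there into the pullback statement $\F \cong \Phi^*\shift_S$.

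For step (i), I would apply Proposition~\ref{prop-shift} to the $P$-generative autoequivalence $\F$, choosing for the $\zed$-sequence $\F_\bullet$ the sequence of powers $\{\F^n\}_{n\in\zed}$ with compatibility isomorphisms $\eta^{ij}$ built as in the paragraph preceding that proposition. Setting $M = \bigoplus_{n\in\zed}\F^nP$ and $H = \Hfunct{R}{M}{M}$, the proof of Proposition~\ref{prop-shift} equips $H$ with a principal automorphism $\beta$ satisfying \eqref{alphan}, shows via Propositions~\ref{prop-principal} and~\ref{prop-grMor} that $\Hfunct{R}{M}{\blank}$ is an equivalence from $\rgr R$ to $\rgr H^\beta$, and constructs for each $N$ a natural, $S$-linear isomorphism $\phi_N \colon \shift_S(\Hfunct{R}{M}{N}) \xrightarrow{\sim} \Hfunct{R}{M}{\F N}$, that is, a natural isomorphism $\phi\colon \shift_S\circ\Hfunct{R}{M}{\blank} \xrightarrow{\sim} \Hfunct{R}{M}{\blank}\circ\F$. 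For step (ii), Definition~\ref{defn-twist} is precisely the assertion that $H^\beta = \bigoplus_{n\in\zed}\hom_R(\F^nP,P)$ is $S = \End^\F_R(P)$ and that $\Hfunct{R}{M}{\blank}$ is $\Phi = \Htwist{\F}{R}{P}{\blank}$; hence $\Phi$ is an equivalence $\rgr R \to \rgr S$ and $\phi$ exhibits $\shift_S\circ\Phi \cong \Phi\circ\F$.

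For step (iii), fix a quasi-inverse $\Psi$ of $\Phi$ together with a natural isomorphism $\Psi\Phi \cong \id_{\rgr R}$. By definition $\Phi^*\shift_S = \Psi\,\shift_S\,\Phi$; composing $\phi$ on the left with $\Psi$ gives $\Psi\,\shift_S\,\Phi \cong \Psi\Phi\,\F \cong \F$, which is the desired isomorphism $\F \cong \Phi^*\shift_S$.

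I do not anticipate a genuine obstacle: the whole statement is a corollary of Proposition~\ref{prop-shift}, and the only points needing care are cosmetic. The mildly delicate one is the identification in step (ii) --- one must note that although the principal automorphism $\beta$, and hence $S$ and $\Phi$, depend a priori on the chosen $\zed$-sequence and on the auxiliary isomorphisms $\eta^{ij}$, they are pinned down up to isomorphism, which is all the proposition claims; the remainder is the bookkeeping of the pullback in step (iii). Nothing new has to be proved beyond what the proof of Proposition~\ref{prop-shift} already contains.
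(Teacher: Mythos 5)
Your proposal is correct and matches the paper's treatment: the paper states Proposition~\ref{prop-pullback} immediately after Definition~\ref{defn-twist} with only the remark ``Thus we have proved,'' since it is exactly the content of the proof of Proposition~\ref{prop-shift} repackaged in the new notation. Your step (iii), converting $\shift_S\circ\Phi\cong\Phi\circ\F$ into $\F\cong\Phi^*\shift_S$ by composing with a quasi-inverse, is the same routine bookkeeping the paper leaves implicit.
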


\begin{remark}\label{rmk-opp}
We comment that if $P$ is a finitely generated graded projective right $R$-module and $\sh{F}$ is a $P$-generative autoequivalence of $\rgr R$,  one may also use $\F$, or more properly the principal automorphism $\beta$ used in the proof of Proposition~\ref{prop-shift}, to define a natural ring structure on 
\[ \bigoplus_{n \in \zed} \hom_R(P, \sh{F}^{-n}P).\]
This ring is clearly isomorphic to $\End^{\F}_R(P)$ as we have defined it above.
\end{remark}

It is easy to see that the $\shift_R$-twisted endomorphism ring
$\End^{\shift_R}_R(P)$ is isomorphic to the ungraded endomorphism ring $\End_R(P)$, and that
$\Htwist{\shift_R}{R}{P}{\blank} \cong \Hom_R(P, \blank)$.  Thus twisted endomorphism rings generalize classical endomorphism rings.  
We also note that $\End^{\F}_R(P)$ and $\Htwist{\F}{R}{P}{\blank}$ may be defined for any graded $R$-module $P$, although by Proposition~\ref{prop-grMor} they will not give an equivalence of categories unless $P$ is finitely generated projective and $\F$ is $P$-generative.

A priori, the ring $\End^\F_R(P)$ and the functor $\Htwist{\F}{R}{P}{\blank}$ may depend on the full $\zed$-sequence $\F_\bullet$.  However, in fact they do not.  To see this, let $\F_\bullet$ be a $P$-generative $\zed$-sequence with compatibility isomorphisms $\eta^{ij}$.    Let  $\sh{B} \cong \F_1$, and  let $\sh{B}^{-1}= \F_{-1}$.  Then $\sh{B}^{-1}$ is a quasi-inverse for $\sh{B}$; let $\beta^{-1,1}: \id_{\rgr R} \to \sh{B}^{-1} \sh{B}$ be induced from $\eta^{-1,1}$.  Let $\{ \beta^{ij} \}$ be the (induced) compatibility isomorphisms on $\{\sh{B}^n\}$ satisfying \eqref{cond2}, as discussed before Proposition~\ref{prop-shift}.  Then for all $j \in \zed$ there are unique induced natural isomorphisms $\zeta_j: \sh{B}^j \to \F_j$, such that the diagram
\[ \xymatrix{
\mathcal{B}^{i+j} \ar[r]^{\zeta_{i+j}}	\ar[d]_{\beta^{ij}} & \mathcal{F}_{i+j} \ar[d]^{\eta^{ij}} \\
\mathcal{B}^i \mathcal{B}^j \ar[r]_{\zeta_i \zeta_j}
	& 	\mathcal{F}_i \mathcal{F}_j } \]
commutes for all $i, j \in \zed$.   This commutativity implies that the twisted endomorphism rings $\End^{\F_1}_R(P)$ and $\End^{\sh{B}}_R(P)$ are isomorphic, and that 
$\Htwist{\F_1}{R}{P}{\blank} \cong \Htwist{\sh{B}}{R}{P}{\blank}$.  We leave the tedious but routine verifications to the reader.  Thus we have:
\begin{proposition}\label{prop-welldef}
Let $\F$ be a $P$-generative autoequivalence of $\rgr R$.  Then the $\F$-twisted endomorphism ring and $\F$-twisted Hom functor of $P$ are well-defined, and depend only on the equivalence class of $\F$ in $\Pic(\rgr R)$. \qed
\end{proposition}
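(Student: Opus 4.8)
The plan is to split the statement into two independent assertions, both of which have essentially been set up in the discussion preceding the proposition: first, for a \emph{fixed} autoequivalence $\F$, the ring $\End^\F_R(P)$ and the functor $\Htwist{\F}{R}{P}{\blank}$ do not depend on the choice of $\zed$-sequence $\F_\bullet$ with $\F_1=\F$ used to build them; and second, replacing $\F$ by a naturally isomorphic autoequivalence $\F'$ yields an isomorphic ring and a naturally isomorphic functor. Since an element of $\Pic(\rgr R)$ is exactly a natural-isomorphism class of autoequivalences, the two assertions together give the proposition. Throughout one works with a $P$-generative $\zed$-sequence $\F_\bullet$, $M=\bigoplus_{i}\F_iP$, and $H=\Hfunct{R}{M}{M}$ with $H_{ki}=\hom_R(\F_iP,\F_kP)$, as in the proof of Proposition~\ref{prop-shift}.

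For the first assertion, given $\F_\bullet$ with compatibility isomorphisms $\eta^{ij}$, set $\sh{B}=\F_1$, $\sh{B}^{-1}=\F_{-1}$, and $\beta^{-1,1}=\eta^{-1,1}$, and let $\{\sh{B}^n\}$ be the canonical $\zed$-sequence extending these, with compatibility isomorphisms $\beta^{ij}$ (constructed as before Proposition~\ref{prop-shift}). Define natural isomorphisms $\zeta_j:\sh{B}^j\to\F_j$ by induction on $\abs{j}$: put $\zeta_j=\id$ for $j\in\{-1,0,1\}$, legitimate by \eqref{cond1}, and for $j\ge1$ set $\zeta_{j+1}=(\eta^{1,j})^{-1}\circ(\zeta_1\zeta_j)\circ\beta^{1,j}$, and symmetrically for negative indices. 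An induction on $\abs{i}+\abs{j}$ using the coherence condition \eqref{cond2} for both sequences shows that the square displayed just before the proposition (relating $\zeta_{i+j}$, $\zeta_i\zeta_j$, $\beta^{ij}$ and $\eta^{ij}$) commutes for \emph{all} $i,j\in\zed$. Writing $M'=\bigoplus_i\sh{B}^iP$, the maps $\hom_R(\F_jP,\F_iP)\to\hom_R(\sh{B}^jP,\sh{B}^iP)$ given by $g\mapsto(\zeta_i)_P^{-1}\circ g\circ(\zeta_j)_P$ assemble into a degree-preserving isomorphism of $\zed$-algebras $\Hfunct{R}{M}{M}\to\Hfunct{R}{M'}{M'}$, and the commuting square says exactly that this isomorphism intertwines the principal automorphism $\beta$ attached to $\F_\bullet$ via \eqref{alphan} with the one attached to $\{\sh{B}^n\}$. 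Compressing (Proposition~\ref{prop-principal}) then yields a ring isomorphism $\End^{\F_1}_R(P)\cong\End^{\sh{B}}_R(P)$ which on degree $j$ is $s\mapsto s\circ(\zeta_j)_P$, and the same data furnishes, for each $N$, an isomorphism $\Hfunct{R}{M}{N}\cong\Hfunct{R}{M'}{N}$ natural in $N$, i.e.\ an isomorphism of the two Hom functors. Since $\sh{B}$ depends only on $\F$, this proves independence of the $\zed$-sequence.

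For the second assertion, fix a natural isomorphism $\tau:\F\to\F'$ and use the canonical $\zed$-sequences $\{\F^n\}$ and $\{\F'^n\}$ on either side (allowable by the first assertion). Then $\tau$ induces natural isomorphisms $\tau_n:\F^n\to\F'^n$ for all $n\in\zed$ --- by horizontal composition for $n\ge0$, via the induced isomorphism of quasi-inverses for $n<0$, with $\tau_0=\id$ --- and a routine induction shows that the $\tau_n$ are compatible with the respective compatibility isomorphisms, i.e.\ the analogue of the square above holds with $\zeta$ replaced by $\tau$. The computation of the previous paragraph then applies verbatim: $s\mapsto s\circ(\tau_j)_P^{-1}$ on degree $j$ defines a ring isomorphism $\End^{\F}_R(P)\cong\End^{\F'}_R(P)$, and on each $N$ it defines a natural isomorphism $\Htwist{\F}{R}{P}{N}\cong\Htwist{\F'}{R}{P}{N}$.

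The main obstacle is the coherence bookkeeping in the first assertion: showing that the inductively defined $\zeta_j$ satisfy the full compatibility square for every $i,j$ (not just the instances $(1,j)$ used to define them), and that the induced map on endomorphism $\zed$-algebras is genuinely multiplicative and intertwines the two principal automorphisms. This is a Mac Lane--style coherence argument, but since we are not literally inside a monoidal category it must be done by hand, with special care for the negative powers $\F^i$ ($i<0$), which exist only relative to a chosen quasi-inverse --- precisely the notational pitfall flagged before Proposition~\ref{prop-shift}. Once that square is established, the remaining verifications are diagram chases of exactly the kind already carried out in the proof of Proposition~\ref{prop-shift}, so I would only sketch them.
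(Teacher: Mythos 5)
Your proposal is correct and follows essentially the same route as the paper: the paper's argument is exactly the construction of the natural isomorphisms $\zeta_j:\sh{B}^j\to\F_j$ compatible with the $\eta^{ij}$ and $\beta^{ij}$, with the remaining verifications left to the reader, except that the paper handles both of your assertions in a single step by taking $\sh{B}\cong\F_1$ (rather than $\sh{B}=\F_1$) so that independence of the $\zed$-sequence and invariance under natural isomorphism are established simultaneously. Your extra detail (the inductive definition of $\zeta_j$, the conjugation formula $g\mapsto(\zeta_i)_P^{-1}\circ g\circ(\zeta_j)_P$, and the intertwining of the principal automorphisms) is precisely the content the paper calls ``tedious but routine.''
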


We are ready to reframe Proposition~\ref{prop-grMor} in terms of autoequivalences of $\rgr R$.  
\begin{theorem}\label{thm-grMor-pullback}
Let $R$ and $S$ be graded rings.  Then $\rgr R \simeq \rgr S$ if and only if there are

$(1)$ a finitely generated graded projective module $P$ and a $P$-generative autoequivalence $\F$ of $\rgr R$ such that 

$(2)$ $S \cong \End^{\F}_R(P)$ via a degree-preserving map.

Further, if $\Phi: \rgr R \to \rgr S$ is an equivalence  of categories with quasi-inverse $\Psi$, then $\F = \Phi^* \shift_S$ and $P = \Psi S$ satisfy (1) and (2), and $\Phi \cong \Htwist{\F}{R}{P}{\blank}$.
\end{theorem}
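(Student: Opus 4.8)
The plan is to assemble Theorem~\ref{thm-grMor-pullback} from the machinery already in place, namely Proposition~\ref{prop-grMor} and Proposition~\ref{prop-pullback}, the only new work being to check that the construction of Proposition~\ref{prop-pullback} produces a \emph{projective} generating module $P$ and that, conversely, every equivalence arises this way with $P$ finitely generated projective and $\F$ being the pulled-back shift.

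For the ``if'' direction, suppose $(1)$ and $(2)$ hold: $P$ is finitely generated graded projective and $\F$ is $P$-generative. Then Proposition~\ref{prop-pullback} gives directly that $\Phi = \Htwist{\F}{R}{P}{\blank}: \rgr R \to \rgr \End^\F_R(P)$ is an equivalence of categories; composing with the degree-preserving isomorphism $S \cong \End^\F_R(P)$ of $(2)$ yields an equivalence $\rgr R \simeq \rgr S$. (Here I would note that a degree-preserving ring isomorphism $S \cong \End^\F_R(P)$ induces an isomorphism of categories $\rgr S \cong \rgr \End^\F_R(P)$, which is immediate.)

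For the ``only if'' direction, let $\Phi: \rgr R \to \rgr S$ be an equivalence with quasi-inverse $\Psi$, and set $P = \Psi(S) = \Psi(\bbar{S}_S)$ and $\F = \Phi^*\shift_S = \Psi\shift_S\Phi$. By Proposition~\ref{prop-grMor}, the module $M = \Psi(\bbar{S}_S)$, viewed as a bigraded module $M = \bigoplus_{i,j} M_{ij}$ with $M_{i*}$ the image under $\Psi$ of the $i$-th ``row'' $\bbar{S}_{i*}$, satisfies condition $(1)$ of that proposition: it is a projective generator for $\rgr R$ with each $M_{i*}$ finitely generated. But the $i$-th row $\bbar{S}_{i*}$ of $\bbar{S}$ is, up to the canonical principal automorphism $\alpha$ of $\bbar S$, just the shift $\shift_S^i(S)$; hence $M_{i*} \cong \Psi\shift_S^i(S) \cong (\Phi^*\shift_S)^i\Psi(S) = \F^i P$. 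In particular $P = M_{0*}$ is finitely generated graded projective, and the modules $\{\F^n P\}$ generate $\rgr R$, so $\F$ is $P$-generative, giving $(1)$. For $(2)$, condition $(2)$ of Proposition~\ref{prop-grMor} says $\bbar{S} \cong \Hfunct{R}{M}{M}$ via a degree-preserving map compatible with the canonical principal automorphisms; compressing by those automorphisms (as in the paragraph following Proposition~\ref{prop-principal}) gives $S \cong \Hfunct{R}{M}{M}^\beta = \End^\F_R(P)$, and one checks this isomorphism is degree-preserving. Finally the formula~\eqref{mult1} for the $S$-action on $\Hfunct{R}{M}{N}$ identifies $\Hfunct{R}{M}{\blank}$ with $\Htwist{\F}{R}{P}{\blank}$, and Proposition~\ref{prop-grMor} gives $\Psi \cong \blank\otimes_{\bbar S}M$, equivalently $\Phi \cong \Hfunct{R}{M}{\blank} = \Htwist{\F}{R}{P}{\blank}$.

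The main obstacle—really the only non-bookkeeping point—is the identification $M_{i*} \cong \F^i P$ together with the matching of the $\zed$-sequence structure: one must verify that the bigraded structure on $M = \Psi(\bbar S)$ transported from the grading on $\bbar S$ agrees with the $\zed$-sequence $\{\F^i\}$ and its compatibility isomorphisms $\eta^{ij}$ used to build $\End^\F_R(P)$, so that the principal automorphism $\beta$ of Proposition~\ref{prop-shift} coincides with the one induced from the canonical principal automorphism $\alpha$ of $\bbar S$ under the isomorphism $\bbar S \cong \Hfunct{R}{M}{M}$. This is exactly the kind of notational subtlety the author flags before Proposition~\ref{prop-shift}; I would handle it by choosing the natural isomorphism $\eta^{-1,1}$ to come from a fixed quasi-inverse $\Psi$ and the unit/counit of the adjunction $(\Phi,\Psi)$, so that the induced $\eta^{ij}$ are precisely the structure maps transported from $\bbar S$, after which the two principal automorphisms agree on the nose and the rest is the routine compression argument. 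Everything else is an assembly of previously-proved statements.
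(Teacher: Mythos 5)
Your proposal is correct and follows essentially the same route as the paper: one direction is Proposition~\ref{prop-pullback}, and the converse identifies $M=\Psi(\bbar{S})$ with $\bigoplus_n (\Phi^*\shift_S^n)(P)$ and compresses $\Hfunct{R}{M}{M}$ by the induced principal automorphism. The ``$\zed$-sequence matching'' issue you isolate at the end is exactly what the paper dispatches by observing that $\{\Phi^*\shift_S^j\}$ is a $\zed$-sequence with $\F_1=\F$ and invoking Proposition~\ref{prop-welldef}, which is a slightly cleaner packaging of your proposed choice of compatibility isomorphisms.
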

\begin{proof}
One direction is Proposition~\ref{prop-pullback}.  For the other, suppose that  $\Phi: \rgr R \to \rgr S$ is an equivalence with quasi-inverse $\Psi$.  
Let $\sh{F}  = \Phi^* \shift_S$, and let $P = \Psi S_S$.  
By Proposition~\ref{prop-grMor}, we have that $\Phi \cong \Hfunct{R}{M}{\blank}$, where 
\[M = \Psi(\bbar{S}_S) \cong \Psi(\bigoplus_{n \in \zed} \shift^n_S(\Phi P)),\]
 and $S \cong \Hfunct{R}{M}{M}^\beta$, where $\beta$ is the principal automorphism of $\Hfunct{R}{M}{M}$ induced from the identifications
\[ \hom_R(\Psi \shift_S^j \Phi P, \Psi \shift_S^i \Phi P) \cong S_{j-i} \cong \hom_R(\Psi \shift_S^{j+1} \Phi P, \Psi \shift_S^{i+1} \Phi P).\]
That is, $\beta$ is given by the autoequivalence $\sh{F}$ of $\rgr R$ and by the compatibility isomorphisms on the $\zed$-sequence $\sh{F}_\bullet = \{  \Psi \shift_S^j \Phi \} = \{ \Phi^* \shift_S^j \}$.   Since $\sh{F}_1 = \sh{F}$, by Proposition~\ref{prop-welldef} we have that $S \cong \End^{\sh{F}}_R(P)$ and that $\Phi \cong \Htwist{\sh{F}}{R}{P}{\blank}$.  
\end{proof}

We comment that although Proposition~\ref{prop-grMor} and Theorem~\ref{thm-grMor-pullback} are  closely related,  in many respects Theorem~\ref{thm-grMor-pullback} is superior.  Theorem~\ref{thm-grMor-pullback} is a much more functorial way of constructing equivalences, uses only one finitely generated graded $R$-module,  and more compactly and clearly generalizes the ungraded Morita theorems.    Theorem~\ref{ithm1}, which is the main application in this paper of Theorem~\ref{thm-grMor-pullback}, gives one indication of the power of our approach. 

 In the remainder of this section and in Section~\ref{sec-genPic}, we give some general applications of Theorem~\ref{thm-grMor-pullback}.  We first note that  Theorem~\ref{thm-grMor-pullback}, when combined with the seemingly purely formal Proposition~\ref{prop-welldef},  has surprising power.  For example, we immediately obtain the following result of Gordon and Green \cite[Corollary~5.2, Proposition~5.3]{GG}: 
\begin{corollary} \label{cor-Morita}
Let $R$ and $S$ be graded rings, and let $\Phi: \rgr R \to \rgr S$ be an equivalence of categories.  Then $\Phi$ is a graded Morita equivalence if and only if $\Phi$ commutes with shifting in the sense that $\Phi^* \shift_S \cong \shift_R$.
\end{corollary}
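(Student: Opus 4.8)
The plan is to use Theorem~\ref{thm-grMor-pullback} to reduce the statement to the observation, already made after Definition~\ref{defn-twist}, that $\End^{\shift_R}_R(P) \cong \End_R(P)$ and $\Htwist{\shift_R}{R}{P}{\blank} \cong \Hom_R(P,\blank)$. First I would fix an equivalence $\Phi \colon \rgr R \to \rgr S$ with quasi-inverse $\Psi$, set $P = \Psi S$, and recall from Theorem~\ref{thm-grMor-pullback} that $\F := \Phi^* \shift_S$ is $P$-generative, that $S \cong \End^{\F}_R(P)$, and that $\Phi \cong \Htwist{\F}{R}{P}{\blank}$.

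For the ``if'' direction, suppose $\F = \Phi^*\shift_S \cong \shift_R$. By Proposition~\ref{prop-welldef} the twisted endomorphism ring and twisted Hom functor depend only on the class of $\F$ in $\Pic(\rgr R)$, so $\End^{\F}_R(P) \cong \End^{\shift_R}_R(P) \cong \End_R(P)$ (an isomorphism of graded rings, since the identifications are degree-preserving) and $\Htwist{\F}{R}{P}{\blank} \cong \Htwist{\shift_R}{R}{P}{\blank} \cong \Hom_R(P,\blank)$. Thus $\Phi$ is naturally isomorphic to $\Hom_R(P,\blank)$ with $S \cong \End_R(P)$; since $\Phi$ is an equivalence, $P$ is a graded module that is a progenerator for $\rmod R$, so this is by definition a graded Morita equivalence.

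For the ``only if'' direction, suppose $\Phi$ is a graded Morita equivalence, say induced by a graded progenerator $Q$ for $\rmod R$, so that $S \cong \End_R(Q)$ and $\Phi \cong \Hom_R(Q,\blank)$ as functors $\rgr R \to \rgr S$. Here the classical (ungraded) Morita equivalence $\Hom_R(Q,\blank)$ commutes with the shift functors on the nose: $\Hom_R(Q, N\ang{1}) = \Hom_R(Q,N)\ang{1}$ naturally in $N$, which says precisely $\shift_S \circ \Phi \cong \Phi \circ \shift_R$, i.e. $\Phi^*\shift_S \cong \shift_R$.

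The only genuinely delicate point — and the step I expect to require the most care — is the bookkeeping in the ``if'' direction: one must check that the chain of isomorphisms supplied by Proposition~\ref{prop-welldef} is compatible with the grading and with the identification $S \cong \End^{\F}_R(P)$, so that ``$\Phi$ is naturally isomorphic to a classical graded Hom functor'' really does match the definition of a graded Morita equivalence (in particular that the source ring of the equivalence is $R$ itself and not merely a ring graded Morita equivalent to it). This is exactly the kind of notational subtlety flagged in Remark~\ref{rmk-notZ}, but once the identifications of Theorem~\ref{thm-grMor-pullback} and Proposition~\ref{prop-welldef} are invoked verbatim, it is routine.
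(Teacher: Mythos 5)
Your proposal is correct and follows essentially the same route as the paper: both directions reduce to the identifications $\End^{\shift_R}_R(P)\cong\End_R(P)$ and $\Htwist{\shift_R}{R}{P}{\blank}\cong\Hom_R(P,\blank)$ together with Theorem~\ref{thm-grMor-pullback} and Proposition~\ref{prop-welldef} (the paper phrases the ``only if'' direction by citing Proposition~\ref{prop-pullback} rather than checking directly that $\Hom_R(Q,N\ang{1})\cong\Hom_R(Q,N)\ang{1}$, but that is the same observation). The one point worth making explicit in your ``if'' direction is that $P=\Psi S$ is a progenerator for $\rmod R$ because $\shift_R$ being $P$-generative means the shifts $P\ang{n}$ generate $\rgr R$, and these are all isomorphic to $P$ as ungraded modules.
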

\begin{proof}
If $\Phi$ is a  graded Morita equivalence, then we have $\Phi \cong  \Hom_R(P, \blank) = \Htwist{\shift_R}{R}{P}{\blank}$ for some graded module $P$ that is a progenerator in $\rmod R$, and $S \cong \End_R(P) = \End^{\shift_R}_R(P)$.  By Proposition~\ref{prop-pullback}, $\shift_R \cong \Phi^*\shift_{S}$.

Conversely, suppose that $\Phi^* \shift_S \cong \shift_R$.    Let $\Psi$ be the quasi-inverse to $\Phi$,  let $P = \Psi(S)$, and let $\F = \Phi^* \shift_S$.  Since $\F \cong \shift_R$, by Proposition~\ref{prop-welldef} $\End_R(P) = \End^{\shift_R}_R(P)$ is isomorphic to $\End^{\F}_R(P)$, which by Theorem~\ref{thm-grMor-pullback} is isomorphic to $S$.  Furthermore, applying the same results we see that 
\[\Phi \cong \Htwist{\F}{R}{P}{\blank} \cong \Htwist{\shift_R}{R}{P}{\blank} = \Hom_R(P, \blank)\]
 is a graded Morita equivalence.
\end{proof}

\begin{remark}\label{rmk-notZ}
Versions of many the results of this section also hold for rings graded by arbitrary groups.  Suppose that $G'$ and $G$ are groups, and that $R$ is a $G'$-graded ring.  The idea is to work with {\em $G$-sequences} $\F_\bullet = \{ \F_g \}_{g\in G}$ of autoequivalences of $\rgr R$, defined as sequences with appropriate compatibility isomorphisms
$\eta^{f,g}: \F_{fg} \to \F_f \F_g$.  Given a $G$-sequence $\F_\bullet$ and a graded $R$-module $P$, we may construct a $G$-graded ring $\End^{\F_{\bullet}}_R(P)$ and a functor $\Htwist{\F_\bullet}{R}{P}{\blank}$, and  suitably modified versions of Proposition~\ref{prop-shift} and Theorem~\ref{thm-grMor-pullback} still hold.  However, Proposition~\ref{prop-welldef} does not hold for arbitrary grading groups:  it is possible to have two $G$-sequences $\sh{B}_\bullet$ and $\sh{C}_\bullet$ such that for all $g \in G$ we have $\sh{B}_g \cong \sh{C}_g$, but the functors $\Htwist{\sh{B}_\bullet}{R}{P}{\blank}$ and $\Htwist{\sh{C}_\bullet}{R}{P}{\blank}$ are not isomorphic.  

This means that Corollary~\ref{cor-Morita} does not hold for rings graded by arbitrary groups.  
  In  \cite[Example~4.8]{BR2} the authors construct a ring $R$ graded by the symmetric group $\Sigma_3$ with the property  that the set of autoequivalences $\Phi$ of $\rgr R$ such that $\Phi^* \shift_R^g \cong \shift_R^g$ for all $g \in \Sigma_3$ is strictly larger than the set of graded Morita equivalences from $\rgr R$ to $\rgr R$.   We note that part of the proof of this statement is contained in \cite{dRErr}.
  \end{remark}

For $\zed$-graded rings, 
Corollary~\ref{cor-Morita} is extremely powerful, since it provides an easy way to check if two rings in the graded equivalence class of $R$ are Morita equivalent.  The following application will be particularly helpful for us:

\begin{proposition}\label{prop-Morita-conjugate}
Let $R$ be a graded ring, let $P$ and $P'$ be finitely generated graded projective $R$-modules, and suppose that $\mathcal{F}$ and $\F'$ are  autoequivalences of $\rgr R$ such that $\F$ is $P$-generative and $\mathcal{F}'$ is $P'$-generative.
  Then $\End^{\mathcal{F}}_R(P)$ and $\End^{\mathcal{F}'}_R(P')$ are graded Morita equivalent if and only if $\F$ and $\F'$ are conjugate in $\Pic(\rgr R)$.
\end{proposition}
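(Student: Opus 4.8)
The plan is to reduce both implications to Corollary~\ref{cor-Morita}, which characterizes graded Morita equivalences among all category equivalences as exactly those that intertwine the shift functors. Write $S = \End^{\F}_R(P)$ and $S' = \End^{\F'}_R(P')$. Since $\F$ is $P$-generative and $\F'$ is $P'$-generative, Proposition~\ref{prop-pullback} supplies equivalences $\Phi = \Htwist{\F}{R}{P}{\blank} : \rgr R \to \rgr S$ and $\Phi' = \Htwist{\F'}{R}{P'}{\blank} : \rgr R \to \rgr S'$, with quasi-inverses $\Psi$ and $\Psi'$, such that $\F \cong \Phi^* \shift_S = \Psi \shift_S \Phi$ and $\F' \cong (\Phi')^* \shift_{S'} = \Psi' \shift_{S'} \Phi'$; composing on either side, equivalently $\shift_S \cong \Phi \F \Psi$ and $\shift_{S'} \cong \Phi' \F' \Psi'$. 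Beyond these identities and Corollary~\ref{cor-Morita}, the only formal facts I will use are that pullback along an equivalence is well defined up to natural isomorphism (independently of the chosen quasi-inverse), and that a quasi-inverse of a composite of equivalences is the composite of the quasi-inverses in reverse order.

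For the \emph{if} direction, suppose $\F$ and $\F'$ are conjugate in $\Pic(\rgr R)$, and fix an autoequivalence $\mathcal{G}$ of $\rgr R$, with quasi-inverse $\mathcal{G}^{-1}$, for which $\mathcal{G}\F'\mathcal{G}^{-1} \cong \F$ (equivalently $\mathcal{G}^{-1}\F\mathcal{G} \cong \F'$). I would then set $\Theta = \Phi \circ \mathcal{G} \circ \Psi' : \rgr S' \to \rgr S$, an equivalence with quasi-inverse $\Phi' \circ \mathcal{G}^{-1} \circ \Psi$, and carry out the computation
\[ \Theta^* \shift_S \cong \Phi' \mathcal{G}^{-1} \bigl( \Psi \shift_S \Phi \bigr) \mathcal{G} \Psi' \cong \Phi' \bigl( \mathcal{G}^{-1} \F \mathcal{G} \bigr) \Psi' \cong \Phi' \F' \Psi' \cong \shift_{S'}. \]
By Corollary~\ref{cor-Morita}, $\Theta$ is a graded Morita equivalence, so $S$ and $S'$ are graded Morita equivalent.

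The \emph{only if} direction is the mirror image. Given a graded Morita equivalence $\Theta : \rgr S' \to \rgr S$, Corollary~\ref{cor-Morita} yields $\Theta^* \shift_S \cong \shift_{S'}$, which rearranges (composing with $\Theta$ and a quasi-inverse $\Theta^{-1}$ on the two sides) to $\Theta \shift_{S'} \Theta^{-1} \cong \shift_S$. I would then set $\mathcal{G} = \Psi \circ \Theta \circ \Phi' : \rgr R \to \rgr R$, with quasi-inverse $\mathcal{G}^{-1} = \Psi' \circ \Theta^{-1} \circ \Phi$, and compute
\[ \mathcal{G} \F' \mathcal{G}^{-1} \cong \Psi \Theta \bigl( \Phi' \F' \Psi' \bigr) \Theta^{-1} \Phi \cong \Psi \bigl( \Theta \shift_{S'} \Theta^{-1} \bigr) \Phi \cong \Psi \shift_S \Phi \cong \F, \]
so that $\F$ and $\F'$ are conjugate in $\Pic(\rgr R)$.

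I do not expect a genuine obstacle: the entire content is carried by Proposition~\ref{prop-pullback} and Corollary~\ref{cor-Morita}, and the rest is formal. The only thing requiring real care is bookkeeping — keeping the source and target of each functor and the convention for conjugation consistent between the two implications, and noting that the displayed chains of natural isomorphisms are legitimate precisely because pullback is well defined up to natural isomorphism. A fully written proof would simply spell out those two verifications.
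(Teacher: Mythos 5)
Your proposal is correct and is essentially the same argument as the paper's: both directions reduce to Corollary~\ref{cor-Morita} via Proposition~\ref{prop-pullback}, with the Morita equivalence $\Theta = \Phi\,\mathcal{G}\,\Psi'$ in one direction and the conjugating autoequivalence $\mathcal{G} = \Psi\,\Theta\,\Phi'$ in the other, exactly as in the paper (where your $\mathcal{G}$ is called $\iota$). The only cosmetic difference is that the paper phrases the converse computation as $\iota\F' \cong \F\iota$ rather than $\mathcal{G}\F'\mathcal{G}^{-1} \cong \F$, which is the same statement in $\Pic(\rgr R)$.
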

\begin{proof}
Let $T = \End^{\mathcal{F}}_R(P)$ and let $T' = \End^{\mathcal{F}'}_R(P')$.  
Let
$\Phi = \Htwist{\mathcal{F}}{R}{P}{\blank}: \rgr R \to \rgr T$; likewise let
$\Phi' = \Htwist{\mathcal{F}'}{R}{P'}{\blank}: \rgr R \to \rgr T'$.  By Theorem~\ref{thm-grMor-pullback}, $\Phi$ and $\Phi'$ are equivalences of categories; let $\Psi$ and $\Psi'$ be their respective quasi-inverses.

First assume that $\F$ and $\F'$ are conjugate in $\Pic (\rgr R)$, so there is an autoequivalence $\iota$ of $\rgr R$ such that $\iota \F' \cong \F \iota$.  Let $\iota^{-1}$ be a quasi-inverse to $\iota$.  
Define $\Theta = \Phi \iota \Psi':  \rgr T' \to \rgr T$; note that $\Phi' \iota^{-1} \Psi$ is a quasi-inverse to $\Theta$.  

We apply Corollary~\ref{cor-Morita} and compute $\Theta^* \shift_T$.  We have:
\[ \Theta^* \shift_T = \Phi' \iota^{-1} \Psi \shift_T \Phi \iota \Psi' = (\Phi')_* (\iota^{-1} \Phi^* (\shift_T) \iota).\]
Since by Proposition~\ref{prop-pullback} we have $\Phi^* \shift_T \cong \mathcal{F}$ and 
$(\Phi')_* \mathcal{F}' \cong (\Phi')_*( \Phi')^* \shift_{T'} \cong \shift_{T'}$, we see that
\[\Theta^* \shift_T = \Phi'_* (\iota^{-1} \Phi^* (\shift_T) \iota) \cong \Phi'_*(\iota^{-1} \mathcal{F} \iota) \cong \Phi'_*(\mathcal{F}') \cong \shift_{T'}.\]
Thus by Corollary~\ref{cor-Morita}, $\Theta$ is a graded Morita equivalence.

Conversely, suppose that $\Theta: \rgr T' \to \rgr T$ is a graded Morita equivalence.  By Corollary~\ref{cor-Morita}, we have $\Theta \shift_{T'} \cong \shift_T \Theta$.  Let $\iota = \Psi \Theta \Phi' \in \Pic(\rgr R)$.  Then  we have that 
\[
\iota \F' \cong \iota \Psi' \shift_{T'} \Phi' \cong \Psi \Theta \Phi' \Psi' \shift_{T'} \Phi' 
 \cong \Psi \Theta \shift_{T'} \Phi' \cong \Psi \shift_T \Theta \Phi' \cong \Psi \shift_T \Phi \Psi \Theta \Phi' = \F \iota.
\]
Thus $\F$ and $\F'$ are conjugate in $\Pic(\rgr R)$.  
\end{proof}

\section{The Picard group of a graded module category}\label{sec-genPic}
If $S$ is an ungraded ring, there is a classical map from $\Aut S \to \Pic S$.  Beattie and del R\'io \cite{BR} have observed  that this extends, using $\zed$-algebras, to graded categories.  That is,  given a graded ring $R$, there is a group homomorphism 
\begin{align}\label{Aut-Pic}\begin{split}
\pi: \Aut \bbar{R} 	& \to \Pic(\rgr R) \\
 \gamma			& \mapsto (\blank)_{\gamma}. \end{split} \end{align}
We review the construction of the map \eqref{Aut-Pic}.  Let $\gamma$ be an automorphism of the $\zed$-algebra $\bbar{R}$, and  let $M$ be an $\bbar{R}$-module.  Then there is a twisted module $M_{\gamma}$ that equals $M$ as a  vector space, with multiplication $\circ^\gamma$ defined by  
\[m \circ^\gamma r = m \gamma^{-1} (r).\]
  The functor $(\blank)_\gamma$ is isomorphic to $\blank \otimes_{\bbar{R}} \bbar{R}_{\gamma}$, and clearly if $M_{\bbar{R}}$ is unitary, so is $M_\gamma$.   Since unitary $\bbar{R}$-modules and graded $R$-modules are the same, we may regard $\pi(\gamma) =  (\blank)_\gamma$ as an element of  $\Pic(\rgr R)$; it is a quick computation that $\pi(\alpha\beta) = \pi(\alpha) \circ \pi(\beta)$.    

As an example, let $\alpha$ be the canonical principal automorphism of $\bbar{R}$.  Then $(\blank)_\alpha \cong \shift_R$:  if $M$ is a graded $R$-module, then $(M_\alpha)_i = M \circ^\alpha 1_i = M \cdot \alpha^{-1}(1_i) = M \cdot 1_{i-1} = M_{i-1}$.  This isomorphism is easily seen to be natural.

In this section, we explore applications of the map \eqref{Aut-Pic} to the study of equivalences of graded module categories.  We derive implications for the equivalences known as Zhang twists, and also study the kernel of \eqref{Aut-Pic}.

To begin, we move beyond graded Morita equivalences to investigate the other well-known examples of graded equivalences: the equivalences induced by  {\em twisting systems}, as defined by Zhang \cite{Zh}.  We recall that a twisting system  is a set $\tau = \{ \tau_n\} $ of graded $k$-vector space automorphisms  of $R$ satisfying relations
\[ \tau_{n}(a) \tau_{n+m} (b) = \tau_n(a \tau_m (b))\]
for all $n, m, l \in \zed$, $a \in R_m$, and $b \in R_l$.    Given a ring $R$ and a twisting system $\tau$ on $R$, we may form a {\em twisted algebra}  $R^\tau$, with multiplication $\star$ defined by $a \star b = a \tau_m(b)$ for $a \in R_m$ and $b \in R_l$.  These twisted algebras are often referred to as {\em Zhang twists}.    In \cite[Corollary~4.4]{S}, the author characterized Zhang twists as follows:
\begin{theorem}\label{thm-zhangchar}
Let $R$ and $S$ be graded rings.  Then the following are equivalent:

$(1)$ $S$ is isomorphic to a Zhang twist of $R$ via a degree-preserving map.

$(2)$ The $\zed$-algebras $\bbar{R}$ and $\bbar{S}$ are isomorphic via a degree-preserving map.

$(3)$ There is a principal automorphism $\beta$ of $\bbar{R}$ so that $S \cong \bbar{R}^\beta$ via  a degree-preserving map.

$(4)$  There is an equivalence $\Phi: \rgr R \to \rgr S$ such that  for all $n \in \zed$, $\Phi(R \ang{n}) \cong S \ang{n}$. \qed
\end{theorem}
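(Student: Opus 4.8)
The plan is to establish the cycle $(1)\Rightarrow(2)\Rightarrow(3)\Rightarrow(1)$ together with $(3)\Rightarrow(4)\Rightarrow(2)$, using throughout the identification $\rgr R=\rmodu\bbar R$ and the elementary observation that $R\ang n$ is exactly the $n$th ``row'' $1_n\bbar R$ of $\bbar R$. Most of the argument is formal $\zed$-algebra bookkeeping; the one substantive ingredient is the dictionary between twisting systems on $R$ and principal automorphisms of $\bbar R$, which is what $(1)\Leftrightarrow(3)$ really says.

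For $(1)\Rightarrow(2)$, suppose $S\cong R^\tau$ for a twisting system $\tau=\{\tau_n\}$. I would define $\theta\colon\bbar S\to\bbar R$ to be, on the component $\bbar S_{ij}=(R^\tau)_{j-i}=R_{j-i}$, the map $\tau_i$; the twisting-system relation, rewritten as $\tau_i(a)\tau_j(b)=\tau_i(a\,\tau_{j-i}(b))$, is precisely the identity needed to see that $\theta$ is a homomorphism of $\zed$-algebras, and it is a degree-preserving bijection because each $\tau_n$ is a graded linear automorphism of $R$. For $(3)\Rightarrow(1)$, given a principal automorphism $\beta$ of $\bbar R$, I would let $\tau_n$ be, on $R_j=\bbar R_{0j}$, the restriction of $\beta^n$, which lands in $\bbar R_{n,n+j}=R_j$; since $\beta$ is a $\zed$-algebra automorphism the identity $\beta^n(a\,\beta^m(b))=\beta^n(a)\,\beta^{n+m}(b)$ holds in $\bbar R$, which is exactly the twisting-system axiom, and by construction the multiplication of $R^\tau$ coincides with that of the compression $\bbar R^\beta$, so $R^\tau\cong\bbar R^\beta\cong S$ by Proposition~\ref{prop-principal}. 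Finally, for $(2)\Leftrightarrow(3)$: the implication $(3)\Rightarrow(2)$ is immediate from $\bbar{\bbar R^\beta}\cong\bbar R$ (recorded after Proposition~\ref{prop-principal}), and for $(2)\Rightarrow(3)$ I would conjugate the canonical principal automorphism $\alpha_S$ of $\bbar S$ across a degree-preserving isomorphism $\phi\colon\bbar R\xrightarrow{\sim}\bbar S$ to get a principal automorphism $\beta=\phi^{-1}\alpha_S\phi$ of $\bbar R$ with $\bbar R^\beta\cong\bbar S^{\alpha_S}\cong S$.

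For $(3)\Rightarrow(4)$: the degree-preserving isomorphism $\phi\colon\bbar R\xrightarrow{\sim}\bbar S$ supplied by $(2)\Leftrightarrow(3)$ fixes each diagonal unit, $\phi(1_n)=1_n$, so restriction of scalars along $\phi^{-1}$ is an equivalence $\Phi\colon\rgr R\to\rgr S$ with $\Phi(1_n\bbar R)\cong 1_n\bbar S$, i.e. $\Phi(R\ang n)\cong S\ang n$ for all $n$. For $(4)\Rightarrow(2)$: let $\Psi$ be a quasi-inverse of $\Phi$ and put $M=\Psi(\bbar S_S)$; since $\bbar S_S=\bigoplus_n S\ang n$ as a bigraded module, hypothesis $(4)$ gives $M=\bigoplus_n\Psi(S\ang n)\cong\bigoplus_n R\ang n$ as bigraded right $R$-modules, and Proposition~\ref{prop-grMor} then yields a degree-preserving isomorphism $\bbar S\cong\Hfunct R M M\cong\bbar R$, using that the $(i,j)$ component of $\Hfunct R{\bigoplus_n R\ang n}{\bigoplus_n R\ang n}$ is $\hom_R(R\ang j,R\ang i)=R_{j-i}=\bbar R_{ij}$ with composition matching multiplication in $\bbar R$.

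I expect the main obstacle to be not any single deep step but the discipline of tracking ``degree-preserving'' and all of the identifications ($R\ang n$ with $1_n\bbar R$, $\bbar{E^\gamma}$ with $E$, hom-spaces with homogeneous components) consistently along the chain. The only place a genuine coherence issue could surface is in $(4)\Rightarrow(2)$, where one wants the isomorphisms $\Phi(R\ang n)\cong S\ang n$ to fit together rather than being chosen independently; packaging them as the single bigraded module $M=\bigoplus_n R\ang n$ and invoking Proposition~\ref{prop-grMor} is what makes that step clean.
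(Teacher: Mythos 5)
The paper does not actually prove Theorem~\ref{thm-zhangchar}: it is imported verbatim from \cite[Corollary~4.4]{S} with a \qed, so there is no in-paper argument to compare against. Judged on its own, your proof is correct and is essentially the standard dictionary underlying that citation: the map $\theta|_{\bbar S_{ij}}=\tau_i$ does translate the twisting-system identity $\tau_i(a)\tau_j(b)=\tau_i(a\tau_{j-i}(b))$ into multiplicativity of a degree-preserving $\zed$-algebra isomorphism, the assignment $\tau_n=\beta^n|_{\bbar R_{0*}}$ inverts it and identifies $R^\tau$ with the compression $\bbar R^{\beta}$, conjugating the canonical principal automorphism handles $(2)\Leftrightarrow(3)$, and restriction of scalars along a degree-preserving isomorphism together with $\hom_R(R\ang j,R\ang i)=R_{j-i}=\bbar R_{ij}$ closes the loop through $(4)$. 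Your closing remark about the coherence issue in $(4)\Rightarrow(2)$ is well taken, and packaging the row isomorphisms into the single bigraded module $M=\Psi(\bbar S_S)$ and invoking Proposition~\ref{prop-grMor} does resolve it, since the endomorphism $\zed$-algebra depends only on the isomorphism classes of the rows once compatible identifications $M_{n*}\cong R\ang n$ are fixed.
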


We call a functor $\Phi$ satisfying Theorem~\ref{thm-zhangchar}(4) a {\em twist functor.}   We will prove a result analogous to Corollary~\ref{cor-Morita}, characterizing twist functors $\Phi: \rgr R\to \rgr S$  in terms of the pullbacks $\Phi^* \shift_S$.  

We first make a technical comment:  all  twist functors may all be written in a standard form.  Suppose that we are given a graded ring $R$ and  a principal automorphism $\beta$ of $\bbar{R}$.    Let $\alpha$ be the canonical principal automorphism of $\bbar{R}$.    Any graded $R$-module is also an $\bbar{R}$-module and so an $\bbar{R}^\beta$-module.  Let $\circ_R$ denote the $R$-action on $M$, and let $\star$ denote the $\bbar{R}^\beta$-action.  Then the $\bbar{R}^\beta$-action on $M$ is given as follows:  for any $m \in M_i$ and $s \in \bbar{R}^\beta_j$ we have
\beq\label{twist1}m \star s = m \circ_R \alpha^{-i} \beta^i(s) \eeq
 (note that $\alpha^{-i} \beta^i(s) \in \bbar{R}_{0j} = R_j$).  This gives an equivalence $\rgr R \to \rgr \bbar{R}^\beta$, which we call a {\em standard twist functor}.

We leave to the reader the verification of the following:

\begin{lemma}\label{lem-zhangfuncts}
Let $R$ and $S$ be graded rings, and let $\Phi: \rgr R \to \rgr S$ be a twist functor.  Let $\beta$ be the principal automorphism of $\bbar{R}$ induced by $\shift_S$ and $\Phi$, and let $R' = \bbar{R}^\beta$.   Let $Z: \rgr R \to \rgr R'$ be the standard twist functor.  Let $\phi: S \to R'$ be the induced ring isomorphism.  Then the diagram
\[ \xymatrix{
\rgr R \ar[r]^{Z} \ar[rd]_{\Phi}	& \rgr R' \\
	& \rgr S \ar[u]_{\phi}	}\]
commutes.   That is, without loss of generality any twist functor is standard. \qed
\end{lemma}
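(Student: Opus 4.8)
The plan is to show that the standard twist functor $Z\colon\rgr R\to\rgr R'$ and the composite $\phi\circ\Phi\colon\rgr R\to\rgr R'$ are each, up to natural isomorphism, the twisted Hom functor $\Htwist{\F}{R}{R}{\blank}$ attached by Theorem~\ref{thm-grMor-pullback} to \emph{one and the same} autoequivalence $\F$ of $\rgr R$ and \emph{one and the same} module $R_R$; once that is in place the two functors are forced to be naturally isomorphic, which is the asserted commutativity. The relevant autoequivalence is $\F=\Phi^*\shift_S$, equivalently the image $(\blank)_\beta$ of $\beta$ under the map $\pi$ of \eqref{Aut-Pic}.

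First I would record the consequences of $\Phi$ being a twist functor. Let $\Psi$ be a quasi-inverse of $\Phi$. Since $\Phi(R)\cong S$ and $\Phi(R\ang n)\cong S\ang n$ for all $n$, applying $\Psi$ gives $\Psi(S)\cong R$ and $\F^n R\cong R\ang n$ for all $n$; in particular $\F$ is $R$-generative. Theorem~\ref{thm-grMor-pullback} then gives $S\cong\End^{\F}_R(R)$ and $\Phi\cong\Htwist{\F}{R}{R}{\blank}\colon\rgr R\to\rgr\End^{\F}_R(R)$. Next one checks that the isomorphism $\phi\colon S\to R'=\bbar R^{\beta}$ of the statement is precisely the composite of this isomorphism $S\cong\End^{\F}_R(R)=\Hfunct R M M^{\beta_H}$ --- here $M=\bigoplus_n\F^n R$ and $\beta_H$ is the principal automorphism of $\Hfunct R M M$ coming, via Proposition~\ref{prop-grMor}, from the canonical principal automorphism of $\bbar S$ --- with the identification $\Hfunct R M M^{\beta_H}\cong\bbar R^{\beta}$ induced by chosen graded isomorphisms $\F^n R\xrightarrow{\sim}R\ang n$; these conjugate $\beta_H$ to $\beta$, which is what ``$\beta$ induced by $\shift_S$ and $\Phi$'' means. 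Hence $\phi\circ\Phi\cong\Htwist{\F}{R}{R}{\blank}$ as functors into $\rgr R'$.

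Then I would treat $Z$ the same way. The standard twist functor $Z\colon\rgr R\to\rgr R'$ is an equivalence (Theorem~\ref{thm-zhangchar} together with the discussion preceding the lemma) and $Z(R)\cong R'$, so Theorem~\ref{thm-grMor-pullback} applied to $Z$ gives $Z\cong\Htwist{\G}{R}{R}{\blank}$ with $\G=Z^*\shift_{R'}$. It therefore suffices to identify $Z^*\shift_{R'}$ with $\F$. Both are autoequivalences of $\rgr R$ assembled from $\beta$: the relation $\F=\Phi^*\shift_S\cong(\blank)_\beta$ is built into the construction of $\beta$, while $Z^*\shift_{R'}\cong(\blank)_\beta$ comes out of the defining formula \eqref{twist1} for the standard twist functor, whose underlying shift is supplied by the canonical automorphism $\alpha$ of $\bbar R$ exactly as in the identification $(\blank)_\alpha\cong\shift_R$. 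With $Z^*\shift_{R'}\cong\F$ in hand, Theorem~\ref{thm-grMor-pullback} gives $Z\cong\Htwist{\F}{R}{R}{\blank}\cong\phi\circ\Phi$.

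The one point that is not purely formal, and which I expect to be the main obstacle, is exactly the identification $\F\cong(\blank)_\beta\cong Z^*\shift_{R'}$ used above --- equivalently, the explicit verification the lemma defers to the reader. Unwound, it amounts to this: fixing graded isomorphisms $\F^n R\xrightarrow{\sim}R\ang n$, one transports the $S$-action \eqref{mult1} on $\hom_R(\F^n R,N)$ along $\phi$ and along the evaluation-at-$1$ identifications $\hom_R(R\ang n,N)\cong N_n$, and checks it agrees with the $R'$-action \eqref{twist1} on $N$: for $s\in R'_j$, both send $f$ to the element of $N_{n+j}$ obtained by multiplying the image of $f$ in $N_n$ by $\alpha^{-n}\beta^n(s)\in R_j$, where $\alpha$ is the canonical principal automorphism of $\bbar R$. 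This is bookkeeping among the principal automorphism $\beta_H$ of $\Hfunct R M M$, the automorphism $\beta$ of $\bbar R$, and $\alpha$ --- precisely the sort of notational pitfall flagged in Remark~\ref{rmk-notZ} --- but nothing deeper; once it is carried out the diagram commutes, so that every twist functor is indeed standard.
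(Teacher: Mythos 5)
Your proof is correct, and since the paper leaves this verification entirely to the reader there is no written proof to compare against; the intended argument is surely the direct unwinding that you isolate in your final paragraph. That check --- that the $S$-action \eqref{mult1} on $\Hfunct{R}{M}{N}$, transported along $\phi$ and the evaluation-at-$1$ identifications $\hom_R(R\ang{n},N)\cong N_n$, coincides with the action \eqref{twist1} defining $Z$ --- already proves the lemma by itself, because Proposition~\ref{prop-grMor} gives $\Phi\cong\Hfunct{R}{M}{\blank}$ with $M=\Psi(\bbar{S}_S)\cong\bigoplus_n R\ang{n}$, while $Z$ acts via \eqref{twist1} on the same underlying graded vector space; and your bookkeeping of the degrees and of $\alpha^{-n}\beta^{n}(s)$ is right. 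The surrounding scaffolding via Theorem~\ref{thm-grMor-pullback} and Proposition~\ref{prop-welldef} is therefore dispensable, and one of its inputs should be handled with care: the assertion that $\Phi^{*}\shift_S\cong(\blank)_\beta$ for an arbitrary (not yet known to be standard) twist functor $\Phi$ is precisely what Proposition~\ref{prop-twists} establishes, and the proof given there invokes the present lemma to reduce to the standard case, so quoting that identification as ``built into the construction of $\beta$'' would be circular if the direct computation were not also supplied. Since you do supply it, the argument stands; I would simply reorder it so that the comparison of \eqref{mult1} with \eqref{twist1} comes first and the identifications $\Phi^{*}\shift_S\cong Z^{*}\shift_{R'}\cong(\blank)_\beta$ are derived as consequences rather than used as inputs.
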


We now give the desired version of Corollary~\ref{cor-Morita} for twist functors.  
\begin{proposition}\label{prop-twists}
Let $R$ be a graded ring, and let $\F$ be an autoequivalence of $\rgr R$.  Then $\Htwist{\F}{R}{R}{\blank}$ is a twist functor if and only if $\F \cong ( \blank)_\beta$ for some principal automorphism $\beta$ of $\bbar{R}$.
\end{proposition}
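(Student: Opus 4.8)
The plan is to reduce both directions to one computation, which I will call the \emph{claim}: if $\beta$ is a principal automorphism of $\bbar R$ and $Z\colon \rgr R\to\rgr \bbar R^\beta$ is the standard twist functor associated to $\beta$, then $Z^*\shift_{\bbar R^\beta}\cong(\blank)_\beta$ in $\Pic(\rgr R)$. Granting the claim, the forward implication runs as follows. If $\Phi=\Htwist{\F}{R}{R}{\blank}$ is a twist functor then it is in particular an equivalence $\rgr R\to\rgr S$ with $S=\End^{\F}_R(R)$, so $\F$ is $R$-generative and $\F\cong\Phi^*\shift_S$ by Proposition~\ref{prop-pullback}. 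By Lemma~\ref{lem-zhangfuncts} we may, after composing with the induced degree-preserving ring isomorphism $S\cong\bbar R^\beta$, take $\Phi$ to be the standard twist functor $Z$; then $\F\cong\Phi^*\shift_S\cong Z^*\shift_{\bbar R^\beta}\cong(\blank)_\beta$ by the claim.

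For the converse, suppose $\F\cong(\blank)_\beta$ with $\beta$ principal. By Theorem~\ref{thm-zhangchar} the standard twist functor $Z$ is a twist functor, so $Z^{-1}(\bbar R^\beta)\cong R$; combined with the claim this exhibits $(\blank)_\beta\cong Z^*\shift_{\bbar R^\beta}$ as an $R$-generative autoequivalence, so $\F$ is $R$-generative too. Applying the uniqueness clause of Theorem~\ref{thm-grMor-pullback} to the equivalence $Z$ (with quasi-inverse $Z^{-1}$) gives $Z\cong\Htwist{Z^*\shift_{\bbar R^\beta}}{R}{Z^{-1}(\bbar R^\beta)}{\blank}$, and feeding in the claim, the isomorphism $Z^{-1}(\bbar R^\beta)\cong R$, and Proposition~\ref{prop-welldef} (together with the evident fact that an isomorphism of the module argument changes a twisted Hom functor only up to isomorphism) yields $\Htwist{(\blank)_\beta}{R}{R}{\blank}\cong Z$, a twist functor. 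One further application of Proposition~\ref{prop-welldef}, using $\F\cong(\blank)_\beta$, shows $\Htwist{\F}{R}{R}{\blank}\cong\Htwist{(\blank)_\beta}{R}{R}{\blank}$, which is therefore a twist functor.

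It remains to prove the claim. Write $R'=\bbar R^\beta$ and let $\alpha$, $\alpha'$ be the canonical principal automorphisms of $\bbar R$, $\bbar{R'}$; by the example following \eqref{Aut-Pic}, $\shift_{R'}\cong(\blank)_{\alpha'}$. The compression construction recalled after Proposition~\ref{prop-principal} furnishes a degree-preserving isomorphism $\iota\colon\bbar{R'}\to\bbar R$, which on homogeneous components $\bbar{R'}_{kl}=(R')_{l-k}=\bbar R_{0,l-k}$ is $t\mapsto\beta^k(t)$; a one-line check gives $\iota\alpha'\iota^{-1}(u)=\beta^{k+1}\bigl(\beta^{-k}(u)\bigr)=\beta(u)$ for $u\in\bbar R_{kl}$, that is, $\iota\alpha'\iota^{-1}=\beta$. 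Moreover, formula \eqref{twist1} shows that the composite $\rgr R\xrightarrow{Z}\rgr R'=\rmodu\bbar{R'}\to\rmodu\bbar R=\rgr R$, the second arrow being restriction along $\iota^{-1}$, is the identity functor: for $M\in\rgr R$ and $u\in\bbar R_{kl}$ the resulting action of $u$ on $m\in M_k$ is $m\circ_R\alpha^{-k}\beta^k\bigl(\beta^{-k}(u)\bigr)=m\circ_R\alpha^{-k}(u)=m\circ_R u$, because $\alpha^{-k}$ is the identity under $\bbar R_{kl}=R_{l-k}=\bbar R_{0,l-k}$. Hence, under these identifications, $Z$ is the equivalence induced by the ring isomorphism $\iota$, and by functoriality of the map \eqref{Aut-Pic} together with $\iota\alpha'\iota^{-1}=\beta$ we get $Z^*\shift_{R'}\cong(\blank)_\beta$. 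The only genuine work here is this last paragraph, and even it is purely formal bookkeeping with $\zed$-algebras, with the compression $E\mapsto E^\gamma$, and with the homomorphism \eqref{Aut-Pic}; once the claim is in hand both implications are immediate from Lemma~\ref{lem-zhangfuncts}, Theorem~\ref{thm-zhangchar}, Proposition~\ref{prop-welldef} and Theorem~\ref{thm-grMor-pullback}.
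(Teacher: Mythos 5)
Your proof is correct and follows essentially the same route as the paper: both directions reduce, via Lemma~\ref{lem-zhangfuncts}, Theorem~\ref{thm-zhangchar}, Theorem~\ref{thm-grMor-pullback} and Proposition~\ref{prop-welldef}, to the single computation that the standard twist functor pulls $\shift_{\bbar{R}^\beta}$ back to $(\blank)_\beta$. The only (cosmetic) difference is that you carry out that computation by identifying $Z$ with restriction of scalars along the $\zed$-algebra isomorphism $\iota\colon\bbar{\bbar{R}^\beta}\to\bbar{R}$ and checking $\iota\alpha'\iota^{-1}=\beta$, whereas the paper compares the module actions elementwise via \eqref{twistA} and \eqref{twistB}; these are the same manipulation in different packaging.
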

\begin{proof} 
Let $\alpha$ denote the canonical principal automorphism of $\bbar{R}$.

$(\Rightarrow)$  Suppose that $\Phi = \Htwist{\F}{R}{R}{\blank}: \rgr R \to \rgr S$ is a twist functor.  By Proposition~\ref{prop-pullback}, $\F \cong \Phi^* \shift_S$.  Let $\Psi$ be a quasi-inverse for $\Phi$.  By Lemma~\ref{lem-zhangfuncts}, without loss of generality we may assume that $S = \bbar{R}^\beta$ for some principal automorphism $\beta$ of $\bbar{R}$, and that $\Phi$ and $\Psi$ are standard twist functors.   We will denote multiplication in $\rgr R$ by $\circ_R$; similarly for $\circ_S, \circ_{\bbar{R}}$.  
If $M$ is a graded $R$-module, then as a graded vector space $\Phi(M) = M$, with the $S$-module structure on $\Phi(M)$ given as follows:  if $m \in M_i$ and $s \in S_j = \bbar{R}_{0j} = R_j$, then by \eqref{twist1}
\beq \label{twistA} m \circ_S s = m \circ_R \alpha^{-i} \beta^i (s)\eeq
and likewise 
\beq \label{twistB} m \circ_R s = m \circ_S \beta^{-i} \alpha^i (s). \eeq

  We claim that $\Phi^* \shift_S \cong (\blank)_\beta$.  
Let $M$ be  a graded $R$-module.  Note that  $(\F M)_i = (\Psi \shift_S \Phi M)_i = M_{i-1} = M \circ_{\bbar{R}} \beta^{-1}(1_i) = (M_\beta)_i$.  We verify that $\F M$ and $M_\beta$ are isomorphic as $R$-modules.  
 Let $m \in (M_\beta)_{i} = (\F M)_{i} = M_{i-1}$, and let $r \in R_j$.   Let $\circ_1$ denote the $R$-action on $ M_\beta$, and let $\circ_2$ denote the $R$-action on $\F M$.   
Then we have that
\[ m \circ_1 r  = m \circ_{\bbar{R}} \beta^{-1} \alpha^i (r) = m \circ_R \alpha^{-(i-1)} \beta^{-1} \alpha^i (r).\]
On the other hand, since $m \in (\F M)_{i} = (\Psi \shift_S \Phi M)_{i}$, 
we have by \eqref{twistA} and \eqref{twistB} that
\begin{multline*}
m \circ_2 r = m \circ_S \beta^{-i} \alpha^{i} (r) = m \circ_R \alpha^{-(i-1)} \beta^{i-1} \beta^{-i} \alpha^{i} (r) \\
 = m \circ_R \alpha^{-(i-1)}\beta^{-1} \alpha^i (r) = m \circ_1 r.
 \end{multline*}
 Thus $\F M \cong M_\beta$; we leave to the reader the verification that this isomorphism is natural in $M$.

$(\Leftarrow).$  Suppose that $\F \cong (\blank)_\beta$, where $\beta$ is a principal automorphism of $\bbar{R}$.  Let  $S = \bbar{R}^{\beta}$.  As mentioned in the discussion after Proposition~\ref{prop-principal}, the proof of \cite[Proposition~3.3]{S} shows that $\bbar{R} \cong \bbar{S}$.  By Theorem~\ref{thm-zhangchar}, there is an equivalence of categories $\Phi: \rgr R \to \rgr S$, where $\Phi$ is a twist functor.   Let $\mathcal{G} = \Phi^* \shift_S$.  Then by Theorem~\ref{thm-grMor-pullback}, $\Phi \cong \Htwist{\mathcal{G}}{R}{R}{\blank}$.  But by the discussion above, we see that $\mathcal{G} \cong (\blank)_{\beta} \cong \F$; and by Proposition~\ref{prop-welldef}, $\Htwist{\mathcal{F}}{R}{R}{\blank}$ is isomorphic to $\Htwist{\sh{G}}{R}{R}{\blank}$ and so is a twist functor.
\end{proof}

\begin{corollary}\label{cor-ZhangMor}
Let $R$ and $S$ be graded rings.  An equivalence $\Phi: \rgr R \to \rgr S$ is a Zhang twist followed by a graded Morita equivalence if and only if $\Phi^* \shift_S \cong (\blank)_\gamma$ for some principal automorphism $\gamma$ of $\bbar{R}$. 
\end{corollary}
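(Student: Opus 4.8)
The plan is to derive Corollary~\ref{cor-ZhangMor} formally from the two characterizations already established: Corollary~\ref{cor-Morita} (an equivalence $\Psi$ is a graded Morita equivalence precisely when $\Psi^*\shift\cong\shift$) and Proposition~\ref{prop-twists} ($\Htwist{\F}{R}{R}{\blank}$ is a twist functor precisely when $\F\cong(\blank)_\beta$ for a principal automorphism $\beta$ of $\bbar R$). Throughout I read ``$\Phi$ is a Zhang twist followed by a graded Morita equivalence'' as: there are a twist functor $Z\colon\rgr R\to\rgr R'$ and a graded Morita equivalence $\Theta\colon\rgr R'\to\rgr S$ with $\Phi\cong\Theta Z$. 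The whole argument is bookkeeping with pullbacks of autoequivalences, so I would fix, once and for all, compatible quasi-inverses for every equivalence that appears (see the warning in Remark~\ref{rmk-notZ}).

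\textbf{Necessity.} Given $\Phi\cong\Theta Z$ as above, take $\Psi=Z^{-1}\Theta^{-1}$ as a quasi-inverse of $\Phi$ and compute
\[\Phi^*\shift_S=Z^{-1}\Theta^{-1}\shift_S\Theta Z=Z^{-1}(\Theta^*\shift_S)Z\cong Z^{-1}\shift_{R'}Z=Z^*\shift_{R'},\]
the middle isomorphism being Corollary~\ref{cor-Morita} applied to $\Theta$. It then remains to recognize $Z^*\shift_{R'}$ as some $(\blank)_\gamma$. Here I use that $Z$, being a twist functor, satisfies $Z(R)\cong R'$ (the $n=0$ case of the defining property), so the finitely generated projective module attached to $Z$ by the ``Further'' clause of Theorem~\ref{thm-grMor-pullback} is $Z^{-1}(R')\cong R$; thus $Z\cong\Htwist{Z^*\shift_{R'}}{R}{R}{\blank}$, which falls under Proposition~\ref{prop-twists}. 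That proposition then yields $Z^*\shift_{R'}\cong(\blank)_\gamma$ for a principal automorphism $\gamma$ of $\bbar R$, whence $\Phi^*\shift_S\cong(\blank)_\gamma$.

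\textbf{Sufficiency.} Conversely, put $\F=\Phi^*\shift_S$ and suppose $\F\cong(\blank)_\gamma$ with $\gamma$ principal. By Proposition~\ref{prop-twists}, $Z=\Htwist{\F}{R}{R}{\blank}$ is a well-defined twist functor $\rgr R\to\rgr R'$ onto some Zhang twist $R'$ of $R$, and by Proposition~\ref{prop-pullback}, $Z^*\shift_{R'}\cong\F=\Phi^*\shift_S$ (with $Z^{-1}$ taken as a fixed quasi-inverse of $Z$). Set $\Theta=\Phi Z^{-1}\colon\rgr R'\to\rgr S$, with quasi-inverse $Z\Psi$; then
\[\Theta^*\shift_S=(Z\Psi)\shift_S(\Phi Z^{-1})=Z(\Phi^*\shift_S)Z^{-1}\cong Z(Z^*\shift_{R'})Z^{-1}\cong\shift_{R'}.\]
By Corollary~\ref{cor-Morita}, $\Theta$ is a graded Morita equivalence, and $\Phi\cong\Theta Z$ is the required factorization.

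\textbf{The main point.} There is no deep obstacle; the content lies entirely in manipulating the operations $(\blank)^*$ and $(\blank)_*$ correctly and in not conflating twist functors with twisting systems. The single step worth isolating is in the necessity half: to invoke Proposition~\ref{prop-twists} one must know that the finitely generated projective module governing a twist functor is $R$ itself, which is precisely the observation $Z(R)\cong R'$. Everything else is the formal algebra of autoequivalences together with Corollary~\ref{cor-Morita}, Proposition~\ref{prop-pullback}, and Proposition~\ref{prop-twists}.
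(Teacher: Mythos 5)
Your proof is correct. Note that the paper itself omits the argument for Corollary~\ref{cor-ZhangMor} (it only remarks that the proof is ``similar to the proof of Corollary~\ref{cor-Morita}''), so there is no written proof to compare against; your write-up supplies exactly the missing details, combining Corollary~\ref{cor-Morita}, Proposition~\ref{prop-pullback}, and Proposition~\ref{prop-twists} with the conjugation bookkeeping already used in Proposition~\ref{prop-Morita-conjugate}. Two small points you handle correctly but could make explicit: in the necessity half, the identification of the projective attached to $Z$ with $R$ uses the $n=0$ case of Theorem~\ref{thm-zhangchar}(4), as you say; and in the sufficiency half, the fact that $(\blank)_\gamma$ is $R$-generative (so that $\Htwist{\F}{R}{R}{\blank}$ really is an equivalence) is buried in the proof of Proposition~\ref{prop-twists}$(\Leftarrow)$, where $\F$ is identified with the pullback of a shift functor along a twist functor, which is $R$-generative because a twist functor sends $R$ to $R'$.
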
 
\begin{proof}
The proof is similar to the proof of Corollary~\ref{cor-Morita}; since we will not use the result in the sequel, we omit the details.
\end{proof}

Let $R$ be a graded ring, and let $\alpha$ be the canonical principal automorphism of $\bbar{R}$.  Since we have seen that $(\blank)_\alpha \cong \shift_R$, we immediately have the following corollary:
\begin{corollary}\label{cor-automs}
Let $R$ and $S$ be graded equivalent graded rings, and let $\pi$ be the map defined in \eqref{Aut-Pic}.  If $\Phi: \rgr R \to \rgr S$ is 
either a graded Morita equivalence or a twist functor, then $\Phi^* \shift_S \in \im \pi$.
\qed
\end{corollary}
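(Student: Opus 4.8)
The plan is to reduce both cases to the characterization already obtained in Corollary~\ref{cor-ZhangMor}. A graded Morita equivalence is trivially a Zhang twist (by the identity twisting system) followed by a graded Morita equivalence, and a twist functor is, by Theorem~\ref{thm-zhangchar} together with Lemma~\ref{lem-zhangfuncts}, the equivalence induced by a Zhang twist (composed, if necessary, with an isomorphism of module categories, which is a trivial graded Morita equivalence). In either case Corollary~\ref{cor-ZhangMor} applies and produces a principal automorphism $\gamma$ of $\bbar R$ with $\Phi^* \shift_S \cong (\blank)_\gamma = \pi(\gamma)$, whence $\Phi^* \shift_S \in \im \pi$.

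Alternatively one can dispatch the two cases separately, which is perhaps more transparent. If $\Phi$ is a graded Morita equivalence, then Corollary~\ref{cor-Morita} gives $\Phi^* \shift_S \cong \shift_R$; since, as observed in the paragraph preceding the statement, $\shift_R \cong (\blank)_\alpha = \pi(\alpha)$ for $\alpha$ the canonical principal automorphism of $\bbar R$, this settles the Morita case. If $\Phi$ is a twist functor, then Theorem~\ref{thm-zhangchar}(4) gives $\Phi(R) \cong S$, so the module $P = \Psi(S)$ supplied by Theorem~\ref{thm-grMor-pullback} satisfies $P \cong R$; hence $\Phi \cong \Htwist{\F}{R}{R}{\blank}$ with $\F = \Phi^* \shift_S$, and Proposition~\ref{prop-twists} yields a principal automorphism $\beta$ of $\bbar R$ with $\Phi^* \shift_S \cong (\blank)_\beta = \pi(\beta)$.

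Since the result is flagged as an immediate corollary, there is no genuine obstacle here. The only points needing a line of care are the identification $\shift_R \cong (\blank)_\alpha$ in the Morita case (already in hand) and, in the twist-functor case, the reduction to a twisted Hom functor on $R$ itself via $\Psi(S) \cong R$ — both routine, so the proof can reasonably be left to the reader as the \qed indicates.
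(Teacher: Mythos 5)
Your proposal is correct and matches the paper's (implicit) argument: the corollary is stated as immediate from Corollary~\ref{cor-Morita}, Proposition~\ref{prop-twists}, and the observation that $\shift_R \cong (\blank)_\alpha = \pi(\alpha)$, which is exactly your second route (and your first route via Corollary~\ref{cor-ZhangMor} is an equally valid packaging of the same facts).
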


We now change focus from the image of \eqref{Aut-Pic} to its kernel.  
Beattie and del R\'io's original interest in the map \eqref{Aut-Pic} was to extend the 
 classical short exact sequence for an ungraded ring $S$
\[ \xymatrix{
0 \ar[r]	& \Inn S \ar[r] & 	\Aut S \ar[r] & \Pic S,	} \]
  to rings with local units, in particular to $\zed$-algebras, and to characterize the kernel of  \eqref{Aut-Pic}.  Let $R$ be a graded ring.  We will say that an automorphism $\gamma$ of $\bbar{R}$ is {\em inner} if $\gamma$ lies in the kernel of \eqref{Aut-Pic}.  The following is a special case of one of the results of \cite{BR}:
\begin{theorem}\label{thm-BR}\cite[Theorem~1.4]{BR}
Let $R$ be a graded ring, and let $\gamma$ be a graded automorphism of $\bbar{R}$ of degree 0.  Then $\gamma$ is inner if and only if for all $m,  n \in \zed$ there are $f_m \in \bbar{R}_{mm} = R_0$ and $g_n \in \bbar{R}_{nn} = R_0$ so that for all $w \in \bbar{R}_{mn}$, $\gamma(w) = f_m w g_n$. \qed
\end{theorem}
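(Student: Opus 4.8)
The plan is to translate the condition ``$\gamma$ is inner'' into the statement that the twist autoequivalence $(\blank)_\gamma$ is naturally isomorphic to $\id_{\rgr R}$, and then to extract information about $\gamma$ by evaluating such a natural isomorphism on the standard projective generators. Identify $\rgr R$ with $\rmodu \bbar R$, and for $n \in \zed$ let $P_n = 1_n\bbar R = \bbar R_{n*}$, so that $\hom_{\bbar R}(P_n, M) \cong M_n$ for any unitary $M$, a map corresponding to $m \in M_n$ being $r \mapsto mr$; in particular $\hom_{\bbar R}(P_n, P_m) \cong \bbar R_{mn}$, the element $w$ giving the left-multiplication map $\rho_w\colon r \mapsto wr$. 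Since $\gamma$ has degree $0$ it fixes every $1_n$ and restricts to a ring automorphism of each $\bbar R_{nn} = R_0$. The heart of the argument is the reformulation: \emph{$\gamma$ is inner if and only if there is a family $\{u_m\}_{m \in \zed}$ of units $u_m \in R_0$ with $\gamma(w) = u_m w u_n^{-1}$ for all $m,n \in \zed$ and all $w \in \bbar R_{mn}$.}

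To prove the reformulation, for the forward direction I would start from a natural isomorphism $\theta\colon \id_{\rgr R} \to (\blank)_\gamma$ and set $u_n = \theta_{P_n}(1_n)$; degree considerations place $u_n$ in $\bbar R_{nn} = R_0$, and since $\theta_{P_n}(r) = \theta_{P_n}(1_n\cdot r) = u_n \circ^\gamma r = u_n\gamma^{-1}(r)$ and $\theta_{P_n}$ is a bijection, left multiplication by $u_n$ is a bijection of $R_0$, so $u_n$ is a unit. Applying naturality of $\theta$ to $\rho_w\colon P_n \to P_m$ for $w \in \bbar R_{mn}$ and evaluating at $1_n$ yields $u_m\gamma^{-1}(w) = wu_n$, i.e.\ $\gamma(w) = u_m w u_n^{-1}$. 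For the converse I would, given such units, define $\theta_M\colon M \to M_\gamma$ by $\theta_M(m) = m u_n$ for $m \in M_n$; a direct check (using $\gamma^{-1}(r) = u_n^{-1} r u_l$ on $\bbar R_{nl}$) shows this is a $\bbar R$-linear isomorphism into the twisted module, with inverse $x \mapsto xu_n^{-1}$ in degree $n$, and that it is natural in $M$, whence $(\blank)_\gamma \cong \id_{\rgr R}$.

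Granting the reformulation, Theorem~\ref{thm-BR} is immediate in one direction: if $\gamma$ is inner take $f_m = u_m$ and $g_n = u_n^{-1}$. For the other direction, suppose $f_m \in \bbar R_{mm}$ and $g_n \in \bbar R_{nn}$ satisfy $\gamma(w) = f_m w g_n$ on each $\bbar R_{mn}$. Putting $m = n$ and $w = 1_n$ gives $f_n g_n = \gamma(1_n) = 1_n$, so $e = g_n f_n$ is an idempotent of $R_0$. The map $x \mapsto f_n x g_n$ on $R_0$ is exactly $\gamma|_{R_0}$, hence injective; since $f_n e = f_n$ and $e g_n = g_n$ one has $f_n x g_n = f_n(exe)g_n$ for every $x$, so injectivity forces $x = exe$ for all $x \in R_0$, and $x = 1$ gives $e = 1$. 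Thus each $f_n$ is a unit with $f_n^{-1} = g_n$, so $\gamma(w) = f_m w f_n^{-1}$ and the reformulation shows $\gamma$ is inner.

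The step I expect to be the only genuine obstacle --- everything else being routine manipulation of natural transformations once one knows to test against the generators $P_n$ and the maps $\rho_w$ --- is the last one: promoting the given $f_n, g_n$ from the one-sided relation $f_n g_n = 1$ (all that $\gamma(1_n) = 1_n$ directly provides) to a pair of mutually inverse units, which is where the idempotent computation and the injectivity of $\gamma$ on $R_0$ enter.
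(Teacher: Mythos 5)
Your proof is correct. Note that the paper itself offers no argument for this statement --- it is quoted from Beattie--del R\'io \cite[Theorem~1.4]{BR} with a bare \verb+\qed+ --- so there is nothing internal to compare against; what you have written is a complete, self-contained proof. The route you take is the natural one: unwind ``inner'' (i.e.\ $(\blank)_\gamma \cong \id_{\rgr R}$) by testing a natural isomorphism against the projective generators $P_n = 1_n\bbar{R}$ and the morphisms $\rho_w$, which converts naturality squares into the identities $u_m\gamma^{-1}(w) = wu_n$. Two small points that you handle correctly and that are worth flagging as the only places where care is needed: first, the observation that a degree-$0$ automorphism necessarily fixes each $1_n$ (since $\gamma(1_n)$ acts as a left identity on $\bbar{R}_{nn}$ and $1_n$ as a right identity, so $\gamma(1_n) = \gamma(1_n)\cdot 1_n = 1_n$), which is what makes $(M_\gamma)_n = M_n$ and places $u_n$ in $\bbar{R}_{nn}$; and second, the promotion of the one-sided relation $f_ng_n = 1_n$ to two-sided invertibility via the idempotent $e = g_nf_n$ and the injectivity of $\gamma$ on $R_0$. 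Both are argued soundly, and the converse construction $\theta_M(m) = mu_n$ on $M_n$ is checked to be $\bbar{R}$-linear into the twist and natural in $M$. No gaps.
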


The image of \eqref{Aut-Pic} is almost never trivial,  since $\shift_R$ is not (usually) the identity in $\rgr R$. However, if all  degree 0 automorphisms of $\rgr R$ are inner, then we expect  the image of \eqref{Aut-Pic} to be small.  In this situation, there are strong consequences for $\rgr R$.
We first see that that there are no nontrivial Zhang twists of $R$.    

\begin{corollary}\label{cor-notwist}
Let $R$ be a graded ring such that all automorphisms of $\bbar{R}$ of degree 0 are inner.  If  $\Phi: \rgr R \to \rgr S$ is a twist functor, then $S \cong R$ and $\Phi$ is naturally isomorphic to $\id_{\rgr R}$.
\end{corollary}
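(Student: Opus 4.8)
The plan is to identify $\F := \Phi^*\shift_S$ with a twisting autoequivalence $(\blank)_\beta$ via Proposition~\ref{prop-twists}, to observe that the principal automorphism $\beta$ of $\bbar{R}$ so produced differs from the canonical one only by a degree-$0$ automorphism, and then to invoke the hypothesis to see that this degree-$0$ automorphism is trivial in $\Pic(\rgr R)$. Once $\F \cong \shift_R$ has been established, the conclusion falls out of Proposition~\ref{prop-welldef}.

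First I would use that $\Phi$ is a twist functor: the case $n = 0$ of its defining property gives $\Phi(R) \cong S$, so if $\Psi$ is a quasi-inverse then $P := \Psi(S) \cong R$, and without loss of generality we may take $P = R$. By Theorem~\ref{thm-grMor-pullback} we then have $\Phi \cong \Htwist{\F}{R}{R}{\blank}$ and $S \cong \End^{\F}_R(R)$; in particular $\Htwist{\F}{R}{R}{\blank}$ is a twist functor, so Proposition~\ref{prop-twists} supplies a principal automorphism $\beta$ of $\bbar{R}$ with $\F \cong (\blank)_\beta = \pi(\beta)$, where $\pi$ denotes the map~\eqref{Aut-Pic}.

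Next I would bring in the canonical principal automorphism $\alpha$ of $\bbar{R}$. Both $\alpha$ and $\beta$ carry $\bbar{R}_{ij}$ into $\bbar{R}_{i+1,j+1}$, so $\beta\alpha^{-1}$ is a $k$-algebra automorphism of $\bbar{R}$ of degree $0$, hence inner by hypothesis; that is, $\pi(\beta\alpha^{-1}) \cong \id_{\rgr R}$. Since $\pi$ is a group homomorphism and $\pi(\alpha) = (\blank)_\alpha \cong \shift_R$, we get $\F \cong \pi(\beta) \cong \pi(\beta\alpha^{-1}) \circ \pi(\alpha) \cong \shift_R$. Feeding this back into Proposition~\ref{prop-welldef}, together with the identification of $\shift_R$-twisted endomorphism rings and Hom functors with ordinary ones, then yields $S \cong \End^{\F}_R(R) \cong \End^{\shift_R}_R(R) = \End_R(R) \cong R$ and $\Phi \cong \Htwist{\F}{R}{R}{\blank} \cong \Htwist{\shift_R}{R}{R}{\blank} = \Hom_R(R, \blank) \cong \id_{\rgr R}$, all isomorphisms being degree-preserving.

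I expect essentially all of the content to be carried by the previously established Proposition~\ref{prop-twists}, Proposition~\ref{prop-welldef}, and Theorem~\ref{thm-grMor-pullback}; the only real care needed is the bookkeeping justifying the replacement of $\Psi(S)$ by $R$ (so that Proposition~\ref{prop-twists} applies on the nose) and the check that the chain of identifications above preserves degrees. A minor variant would avoid the appeal to Theorem~\ref{thm-grMor-pullback} at this point by using Lemma~\ref{lem-zhangfuncts} to assume $\Phi$ is the standard twist functor $\rgr R \to \rgr \bbar{R}^{\beta}$ and reading off $\Phi^*\shift_S \cong (\blank)_\beta$ directly from the proof of Proposition~\ref{prop-twists}; the remainder of the argument is the same.
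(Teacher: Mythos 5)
Your proposal is correct and follows essentially the same route as the paper: identify $\Phi^*\shift_S$ with $(\blank)_\beta$ via Proposition~\ref{prop-twists}, note that $\beta$ differs from the canonical principal automorphism by a degree-$0$ (hence inner) automorphism so that $\Phi^*\shift_S \cong \shift_R$, and conclude via the well-definedness of twisted endomorphism rings and Hom functors. Your extra bookkeeping identifying $\Psi(S)$ with $R$ so that Proposition~\ref{prop-twists} applies on the nose is a point the paper leaves implicit, but the argument is the same.
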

\begin{proof}
Let $\Phi: \rgr R \to \rgr S$ be a twist functor; let $\F = \Phi^* \shift_S$.  By Proposition~\ref{prop-twists}, we know that $\F \cong  (\blank)_\beta$, for some  principal automorphism  $\beta$ of $\bbar{R}$.  Let $\alpha$ be the canonical principal automorphism of $\bbar{R}$; then $\alpha^{-1} \beta$ has degree 0 and so by assumption is inner.  Thus $\F \cong (\blank)_\beta\cong (\blank)_\alpha \cong \shift_R$.  By Theorem~\ref{thm-grMor-pullback}  $S \cong \End_{R}^{\shift_{R}}(R) \cong R$, and $\Phi \cong \Htwist{\shift_{R}}{R}{R}{\blank} \cong \id_{\rgr R}$.
\end{proof}

As a second consequence, we show that all autoequivalences of $\rgr R$ are determined by their action on (isomorphism classes of) $R$-modules.  Formally, let $\F$ and $\F'$ be autoequivalences of the category $\rgr R$.  We will say that  $\mathcal{F}$ and $\mathcal{F}'$ are {\em weakly isomorphic}, written $\mathcal{F} \weak \mathcal{F}'$, if for all finitely generated graded $R$-modules $M$, we have $\mathcal{F}(M) \cong \mathcal{F}'(M)$.   There is no assumption on the naturality of this isomorphism.  

\begin{corollary}\label{cor-noweak}
Let $R$ be a ring such that all automorphisms of $\bbar{R}$ of degree 0 are inner, and let $\F$ and $\F'$ be autoequivalences of ${\rgr R}$.  Then $\F \weak \F'$ if and only if $\F$ and $\F'$ are naturally isomorphic.
\end{corollary}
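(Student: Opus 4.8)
The plan is to reduce the statement to Corollary~\ref{cor-notwist} by isolating the autoequivalence that records the discrepancy between $\F$ and $\F'$. One direction is trivial: if $\F$ and $\F'$ are naturally isomorphic then certainly $\F(M) \cong \F'(M)$ for every finitely generated graded $R$-module, so $\F \weak \F'$. For the converse, suppose $\F \weak \F'$. Set $\sh{G} = \F^{-1}\F'$, an autoequivalence of $\rgr R$; then $\sh{G}(M) \cong \F^{-1}\F'(M) \cong \F^{-1}\F(M) \cong M$ for every finitely generated graded $R$-module $M$, where the middle isomorphism uses $\F \weak \F'$. So it suffices to show: if $\sh{G}$ is an autoequivalence of $\rgr R$ with $\sh{G}(M) \cong M$ for all finitely generated graded $M$, then $\sh{G} \cong \id_{\rgr R}$; granting this, $\F^{-1}\F' \cong \id$, hence $\F' \cong \F$.

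To prove the reduced claim, I would first observe that $\sh{G}$ is a twist functor in the sense of Theorem~\ref{thm-zhangchar}(4). Indeed, for each $n \in \zed$ the module $R\ang{n} = \shift_R^n(R)$ is finitely generated and graded, so by hypothesis $\sh{G}(R\ang{n}) \cong R\ang{n}$; thus $\sh{G}$ satisfies condition (4) of Theorem~\ref{thm-zhangchar} with $S = R$, i.e.\ $\sh{G}$ is a twist functor $\rgr R \to \rgr R$. Now invoke Corollary~\ref{cor-notwist}: since all degree-$0$ automorphisms of $\bbar{R}$ are inner, any twist functor out of $\rgr R$ is naturally isomorphic to $\id_{\rgr R}$ (and its target is isomorphic to $R$). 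Hence $\sh{G} \cong \id_{\rgr R}$, which is exactly what was needed.

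The only subtlety I foresee is the bookkeeping around quasi-inverses and the definition of $\F^{-1}$, since the autoequivalences here need not be invertible on the nose. One must check that $\F^{-1}\F' \cong \id$ really does yield a natural isomorphism $\F' \cong \F$ — this follows by composing with $\F$ on the left and using $\F\F^{-1} \cong \id$, but it is worth stating. Similarly, one should note that weak isomorphism is well-behaved under composition with an autoequivalence, so that $\sh{G} = \F^{-1}\F'$ inherits the property $\sh{G}(M) \cong M$; this is immediate because $\F^{-1}$ sends isomorphic modules to isomorphic modules. These are all routine, so the proof is genuinely short: the real content is entirely carried by Theorem~\ref{thm-zhangchar} and Corollary~\ref{cor-notwist}, and the step of recognizing $\sh{G}$ as a twist functor via its action on the shifts $R\ang{n}$ is the key idea.
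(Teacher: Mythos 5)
Your proof is correct and follows essentially the same route as the paper: form the discrepancy autoequivalence (the paper uses $\F(\F')^{-1}$ rather than $\F^{-1}\F'$, which is immaterial), note that it fixes all the shifts $R\ang{n}$ and is therefore a twist functor by Theorem~\ref{thm-zhangchar}(4), and conclude via Corollary~\ref{cor-notwist}. Nothing further is needed.
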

\begin{proof}
Suppose that $\F \weak \F'$.  Let $\mathcal{G} =  \F (\F')^{-1}$.  Then $\mathcal{G}$ is weakly isomorphic to $\id_{\rgr R}$, and so for all $n$, we have $\mathcal{G} (R\ang{n}) \cong R\ang{n}$.  That is, $\mathcal{G}: \rgr R \to \rgr R$ is a twist functor.  But now by Corollary~\ref{cor-notwist}, $\mathcal{G} \cong \id_{\rgr R}$ and so $\F \cong \F'$.
\end{proof}

\section{Graded modules over the Weyl algebra}\label{sec-category}
From this point on, we will work with the Weyl algebra and its graded module category.   
 Let $k$ be an algebraically closed  field of characteristic 0, and let  $A = k\{x,y\} /(xy-yx-1)$ be the (first) Weyl algebra over $k$.  We will grade $A$ by the Euler gradation, where  we put $\deg x = 1$ and $\deg y = -1$.    

The goal of the next two sections is to understand the graded Picard group of $A$.    In Section~\ref{sec-main} we will use these results and the methods of Sections~\ref{sec-general} and \ref{sec-genPic} to prove Theorem~\ref{ithm1}.

We fix notation, which will remain in force for the remainder of the paper.  We will let $z = xy$, so that $A_0 = k[z]$.  
Let $\sigma$ be the automorphism of $k[z]$ given by  $\sigma(z) = z+1$.  We will constantly use the fact that graded right $A$ modules are actually graded $(k[z], A)$-bimodules: since the grading on $A$ is given by commuting with $z$, we have that $\rho \in k[z]$ acts on a right $A$-module $M$ by
\[ \rho \cdot m = m \sigma^{-i}(\rho)\]
for any $m \in M_i$.  

Our first step in analyzing the category $\rgr A$ is to understand simple objects and the extensions between them.  There are very few of these:  if $M$ and $M'$ are graded simple modules, then $\Ext_A(M, M')$ is either 0 or $k$.    On the other hand, McConnell and Robson \cite{MR} have shown  that, while $\Ext_A(M,N)$ is always finite-dimensional for ungraded simple $A$-modules $M$ and $N$, there are simple $M$ and $N$ with infinitely many nonisomorphic extensions of $M$ by $N$.  This is one indication, that, as we will repeatedly see, the graded structure of $A$ is much more rigid than the ungraded structure.

Lemmas~\ref{lem-simple} and \ref{lem-Ext} are  elementary and presumably known, but   we have not been able to find these specific results in the literature.  

As $A$ is hereditary, we will use the abbreviation $\Ext_A$ for $\Ext^1_A$, and $\ext_A$ for $\ext^1_A$.  

\begin{lemma}\label{lem-simple}
Up to isomorphism, the graded simple $A$-modules are precisely:
\begin{itemize}
\item $X = A/xA$ and its shifts $X \ang{n}$ for $n \in \zed$;
\item $Y = A/yA \ang{1}$ and its shifts $Y \ang{n}$;
\item $M_\lambda = A / (z+\lambda) A$ for $\lambda \in k \smallsetminus \zed$.
\end{itemize}
\end{lemma}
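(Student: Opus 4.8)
The plan is to classify the graded simple right $A$-modules directly, using the fact that $A$ is a hereditary Noetherian domain and that a graded simple module $M$ must be cyclic, hence of the form $A/I$ for a graded maximal right ideal $I$. First I would observe that since the grading on $A$ is the Euler grading with $A_0 = k[z]$, a graded simple module $M = \bigoplus_i M_i$ has each $M_i$ a module over $k[z]$, and multiplication by $x$ and $y$ shifts degree by $\pm 1$. I would split into two cases according to whether multiplication by $x$ (equivalently $y$) acts injectively on $M$ or not.

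In the case that $x$ does not act injectively, there is a nonzero $m$ in some $M_i$ with $mx = 0$; then $mA$ is a nonzero submodule, so $M = mA$, and the annihilator of $m$ is a graded right ideal containing $x$. Since $xA$ is a maximal graded right ideal (one checks $A/xA$ is simple: it is $\bigoplus_{n \le 0} k[z]\bar{y}^{-n}$ with the obvious action, and any nonzero submodule contains a lowest-degree element which generates everything, using that $k[z]$ acts on each graded piece as a field would — more precisely using $z$ acting as scalars after the identification), we get $M \cong X\langle i \rangle$ up to the appropriate shift. Symmetrically, if $y$ does not act injectively we get $M \cong Y\langle n\rangle$ for some $n$; here one uses the relation $yx = z$ and $xy = z+1$ (in the conventions of the paper, with $z = xy$) to see that $X$ and $Y$ are genuinely distinct and to compute which shift appears. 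The bookkeeping on the shift — making sure $Y = A/yA\langle 1\rangle$ is the right normalization so that $Y$ is concentrated in degrees $\ge 0$ with $Y_0 \ne 0$ — is the fiddly part but is routine.

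In the remaining case, both $x$ and $y$ act injectively on $M$. Then each $M_i$ has the same dimension over $k$ (via the injections $\cdot x: M_i \to M_{i+1}$ and $\cdot y : M_{i+1}\to M_i$, whose composites are multiplication by the element $z$ or $\sigma$-twist thereof acting on $M_i$), and since $M$ is simple and $z = xy \in A_0$ acts on each $M_i$ as a $k[z]$-module endomorphism commuting with everything, $z$ must act as a scalar on $M_0$, say $z \cdot m = -\lambda m$ — but then if $\lambda \in \zed$ one of the composites $M_i \to M_i$ is zero (because $z, z\pm 1, \dots$ eventually hits $0$ after the degree-shifting twist by $\sigma$), contradicting injectivity; so $\lambda \notin \zed$. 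Conversely for $\lambda \notin \zed$ the module $A/(z+\lambda)A$ is one-dimensional in each degree and simple, and these are pairwise non-isomorphic since $\lambda$ is recovered as (minus) the scalar by which $z$ acts on degree $0$. The main obstacle, such as it is, is keeping the degree-shift conventions and the $\sigma$-twisting (the left $k[z]$-action on a right module being $\rho\cdot m = m\sigma^{-i}(\rho)$ for $m \in M_i$) straight so that the case $\lambda \in \zed$ correctly collapses to the $X$ and $Y$ families rather than producing spurious extra simples; once that is pinned down, each case is a short direct argument.

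I should note that this lemma is stated to be elementary and that the three families are visibly simple and pairwise non-isomorphic (different shifts of $X$, of $Y$, and the $M_\lambda$ are distinguished by support and by the $z$-eigenvalue), so the real content is \emph{exhaustiveness}, which the case division above handles completely.
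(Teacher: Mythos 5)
Your argument is correct, but it takes a genuinely different route from the paper. The paper's proof is essentially a citation: it invokes Bavula's classification of simple $k[z]$-weighted $A$-modules \cite[Theorem~3.2]{Bav} (the weight spaces being exactly the graded pieces) and then only does the bookkeeping needed to pass from ungraded to graded isomorphism classes, namely that $M_\lambda\ang{1}\cong M_{\lambda+1}$ while $M_\lambda\not\cong M_\mu$ in $\rgr A$ for $\lambda\neq\mu$. You instead reprove the classification from scratch by splitting on whether $x$ and $y$ act injectively, which is self-contained and makes visible exactly where the integrality of $\lambda$ enters (the products $x^jy^j=z(z+1)\cdots(z+j-1)$ and $y^jx^j=(z-1)\cdots(z-j)$ acquiring a zero factor). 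That is a perfectly good trade: you lose the brevity of the citation but gain an elementary argument that needs no external input. Two small points to tighten. First, your justification that $z$ acts as a scalar on $M_0$ ("commuting with everything") is garbled as stated, since $z$ does not commute with $x$ or $y$; the correct one-line reason is that $M_0$ is a \emph{simple} $k[z]$-module, because for any $k[z]$-submodule $N_0\subseteq M_0$ one has $(N_0A)_0=N_0$ (every degree-$0$ word in $x,y$ lies in $k[z]$), so a proper nonzero $N_0$ would generate a proper nonzero graded submodule. Second, your normalization remark about $Y$ is off in detail: $A/yA$ already sits in degrees $\geq 0$, so $Y=(A/yA)\ang{1}$ sits in degrees $\geq 1$; the point of the shift is rather to make $X$ and $Y$ have the same support as left $k[z]$-modules (both killed by $z$), as in Lemma~\ref{lem-comp}. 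Neither slip affects the classification, since the family $\{Y\ang{n}\}_{n\in\zed}$ is shift-stable.
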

\begin{proof}
The graded $A$-modules are precisely those that decompose as a sum of weighted subspaces over $k[z]$.   By \cite[Theorem~3.2]{Bav}, up to ungraded isomorphism the simple $k[z]$-weighted $A$-modules are
\begin{itemize}
\item $A/xA$;
\item $A/yA$;
\item One module $M_\lambda$ for each coset of $(k \smallsetminus \zed) / \zed$.
\end{itemize}
We observe that if $\lambda \in k \smallsetminus \zed$, then $M_\lambda\ang{1} \cong M_{\lambda+1}$, and $M_\lambda \not\cong M_\mu$ in $\rgr A$  if $\lambda \neq \mu$.  Thus the graded isomorphism classes of graded simples correspond to the shifts $X\ang{n}$ and $Y \ang{n}$, plus one module $M_\lambda$ for each element of $k \smallsetminus \zed$.
\end{proof}

Another indication of the comparative rigidity of $\rgr A$ is that there is only one outer automorphism of $A$ that preserves the graded structure.  This is the map $\omega: A \to A$  defined by $\omega(x) = y$ and $\omega(y) = -x$.  We will also denote the induced autoequivalences of $\rgr A$, $\rmod A$, and $A \lmod$ by $\omega$.  
We remark that the action of $\omega$ on the simples $X \ang{n}, Y \ang{n}$ is given by $\omega(X\ang{n}) \cong Y\ang{-n-1}$ and $\omega(Y \ang{n} ) \cong X \ang{-n-1}$.

For notational convenience,  we will also define $\omega$ on graded $k$-vector spaces as the functor that reverses grading: if $V$ is a graded vector space, we write $\omega V$ for the graded vector space given by $(\omega V)_n = V_{-n}$.   Then $\omega$  gives isomorphisms of graded vector spaces
\beq \label{omega-identity} 
\begin{split}
\Hom_A (\omega M, \omega M') \cong \omega \Hom_A(M, M') \\
\Ext_A(\omega M, \omega M') \cong \omega \Ext_A(M, M').	\end{split} \eeq
for any $M$ and $M'$ in $\rgr A$.  
If either $M$ or $M'$ is a graded $A$-bimodule, then it is an easy computation that the maps in~\ref{omega-identity} are $A$-module maps.

We now compute extensions between simple objects in $\rgr A$.  
We remind the reader that $\hom_A$ and $\ext_A = \ext^1_A$ refer to homomorphisms in the category $\rgr A$, and $\Hom_A$ and $\Ext_A$ refer to $\rmod A$.

\begin{lemma}\label{lem-Ext}
$(1)$ $\Ext_A(X, A) \cong (A/Ax)\ang{-1}$, and $\Ext_A(Y, A) \cong A/Ay$.

$(2)$ As a graded vector space, $\Ext_A(X,Y) = \Ext_A(Y, X) =k$, concentrated in degree 0.

$(3)$ 
$\Ext_A(X,X) = \Ext_A(Y,Y) = 0$.

$(4)$ If $\lambda \neq \mu$,  then $\ext_A(M_\lambda, M_\mu) = 0$, but $\ext_A(M_\lambda, M_\lambda) = k$.

$(5)$ Let $\lambda \in k \smallsetminus \zed$ and let $S \in \{X, Y\}$.  Then $\Ext_A(M_{\lambda}, S) = \Ext_A(S, M_{\lambda}) = 0$.
\end{lemma}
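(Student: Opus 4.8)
\textbf{Proof proposal for Lemma~\ref{lem-Ext}.}

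The plan is to compute all the Ext-groups directly from short projective resolutions, exploiting the fact that $A$ is hereditary so that every simple has a two-term resolution $0 \to A\ang{a} \to A\ang{b} \to S \to 0$. For $X = A/xA$ the resolution is $0 \to A\ang{1} \xrightarrow{\cdot x} A \to X \to 0$ (multiplication by $x$ raises degree by $1$, and $x$ is a nonzerodivisor since $A$ is a domain); for $Y = A/yA\ang{1}$ it is $0 \to A\ang{-1} \xrightarrow{\cdot y} A \to Y\ang{-1} \to 0$, shifted appropriately; and for $M_\lambda = A/(z+\lambda)A$ it is $0 \to A \xrightarrow{\cdot(z+\lambda)} A \to M_\lambda \to 0$ (here $z+\lambda$ is central of degree $0$, and is a nonzerodivisor). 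Applying $\Hom_A(\blank, N)$ to each resolution, $\Ext_A(S,N)$ is the cokernel of the induced map on Hom-groups, and $\hom_A$ versus $\Hom_A$ is just a matter of tracking which graded pieces survive.

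First I would do part (1): apply $\Hom_A(\blank, A)$ to $0 \to A\ang{1} \xrightarrow{\cdot x} A \to X \to 0$. Since $\Hom_A(A\ang{n}, A) \cong A\ang{-n}$ as a left $A$-module and right multiplication by $x$ becomes left multiplication by $x$ on the dual, we get $\Ext_A(X,A) \cong \coker(A \xrightarrow{x \cdot} A\ang{-1}) = (A/Ax)\ang{-1}$; the $\Hom$ term vanishes because $A$ has no graded-simple submodules, equivalently $xA \neq A$. The computation for $Y$ is identical up to the shift. For part (2), apply $\Hom_A(\blank, Y)$ to the resolution of $X$: one gets $\Ext_A(X,Y) \cong \coker(\Hom_A(A,Y) \xrightarrow{x\cdot} \Hom_A(A\ang{1},Y)) \cong \coker(Y \xrightarrow{x \cdot} Y\ang{-1})$. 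Since $Y \cong k[y]/(\ldots)$ with $y$ acting invertibly but $x$ acting as a ``shift-down-and-kill'' operator, multiplication by $x$ on $Y$ has a one-dimensional cokernel concentrated in a single degree; bookkeeping the Euler grading pins that degree down to $0$. The symmetric statement $\Ext_A(Y,X) = k$ follows either by the same method or, more cheaply, by applying the duality $\omega$ of \eqref{omega-identity}, since $\omega X \cong Y\ang{-1}$ and $\omega Y \cong X\ang{-1}$, which also re-proves half of part (1). Part (3) is the observation that $x\cdot: X \to X\ang{-1}$ and $y \cdot : Y \to Y \ang{1}$ are surjective (indeed isomorphisms onto the relevant graded pieces), so the cokernels vanish.

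For parts (4) and (5), I would use the resolution $0 \to A \xrightarrow{\cdot(z+\lambda)} A \to M_\lambda \to 0$ and apply $\Hom_A(\blank, N)$, giving $\Ext_A(M_\lambda, N) \cong \coker\big(N \xrightarrow{(z+\mu)\cdot} N\big)$ when $N = M_\mu$, where the map is ``multiplication by $z+\mu$'' using that $M_\mu$ is a cyclic $k[z]$-module; for $\mu \neq \lambda$ this is invertible on $M_\lambda \cong k[z]/(z+\lambda)$ (as $\lambda - \mu \neq 0$), hence $\ext_A(M_\lambda,M_\mu)=0$, while for $\mu = \lambda$ it is the zero map and $\ext_A(M_\lambda,M_\lambda) \cong M_\lambda = k$. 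Here one has to be slightly careful: as a \emph{graded} module $M_\lambda\ang{1}\cong M_{\lambda+1}$, so when I say $\ext_A(M_\lambda,M_\mu)=0$ for $\lambda\neq\mu$ I really mean as objects of $\rgr A$ with $\lambda,\mu$ fixed representatives; the full $\Ext_A$ in $\rmod A$ involves summing over the coset and still vanishes off the diagonal. Finally, part (5): $\Ext_A(M_\lambda, S)$ for $S\in\{X,Y\}$ is $\coker(S \xrightarrow{(z+\lambda)\cdot} S)$, and on $X = A/xA$ the operator $z = xy$ acts as a diagonalizable operator with eigenvalues in $\zed$ (indeed $z$ acts on the degree-$n$ part of $X$ as a scalar congruent to something integral), so $z+\lambda$ with $\lambda\notin\zed$ is invertible, forcing the cokernel to be $0$; and $\Ext_A(S, M_\lambda)$ comes from the resolution of $S$ and vanishes because $x$ (resp. $y$) acts invertibly on $M_\lambda$.

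\textbf{Main obstacle.} The only genuinely delicate point is the degree bookkeeping in part (2) — correctly identifying which single graded component of $Y\ang{-1}$ the cokernel of $x\cdot$ lives in, and hence that $\Ext_A(X,Y)$ is concentrated in degree $0$ rather than some nonzero shift. This requires writing $Y$ explicitly (say as $k[x,x^{-1}]$-like data with the $A$-action spelled out in the Euler grading) and checking the action of $x$ degree by degree; everything else is a mechanical cokernel computation, and the $\omega$-duality halves even that work by deriving the $Y$-statements from the $X$-statements.
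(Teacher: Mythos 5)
Your proposal is correct, and for parts (1), (4) and (5) it is essentially the paper's argument: dualize the two-term resolution $0 \to A\ang{1} \to A \to X \to 0$ (resp.\ $0 \to A \xrightarrow{z+\lambda} A \to M_\lambda \to 0$) and identify $\Ext_A$ with a cokernel, using $\omega$ to transfer statements between $X$ and $Y$. Where you genuinely diverge is in parts (2) and (3). The paper keeps the sequence $0 \to xA \to A \to X \to 0$ in the \emph{second} variable, applying $\Hom_A(Y,\blank)$ and $\Hom_A(X,\blank)$, so that $\Ext_A(Y,X)$ appears as the cokernel of $\Ext_A(Y,A)\ang{1} \to \Ext_A(Y,A)$; since part (1) already identifies $\Ext_A(Y,A) \cong A/Ay$, whose Hilbert function is $0$ in negative degrees and $1$ in nonnegative degrees, the answer $k$ in degree $0$ falls out with no further computation, and no explicit description of $X$ or $Y$ as graded modules is ever needed. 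You instead put the resolution in the \emph{first} variable and compute the cokernel of the induced map $Y \to Y\ang{-1}$ (which, note, is \emph{right} multiplication by $x$ under the identification $\Hom_A(A,Y)\cong Y$ for right modules, not left multiplication as written); this forces exactly the degree bookkeeping you flag as the main obstacle, namely that $Y$ lives in degrees $\geq 1$ with right multiplication by $x$ an isomorphism $Y_n \to Y_{n+1}$ for $n\geq 1$ and the cokernel therefore a single copy of $k$ sitting in degree $0$ of $Y\ang{-1}$. Both routes are sound; the paper's buys you the degree count for free from part (1), while yours is more self-contained per item but requires writing out the graded structure of $X$ and $Y$ explicitly (which the paper in any case records separately in Lemma~\ref{lem-comp}).
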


\begin{proof}
(1) We apply $\Hom_A(\blank, A)$ to the short exact sequence
\beq\label{s-star} \xymatrix{
0 \ar[r] & xA \ar[r] & A \ar[r] & X \ar[r] & 0 } \eeq
to obtain:
\[ \xymatrix{ 0 \ar[r] & A \ar[r] & A x^{-1} \ar[r] & \Ext_A(X, A) \ar[r] & 0. } \]
Thus we have computed that 
$\Ext_A(X, A) \cong A x^{-1} / A \cong (A / Ax) \ang{-1}$.
Applying $\omega$, we obtain from \eqref{omega-identity} that 
$\Ext_A(Y \ang{-1}, A) \cong  (A/Ay)\ang{1}$, and so 
$\Ext_A(Y, A)  \cong \Ext_A(Y \ang{-1}, A)\ang{-1} \cong A/ Ay$.

(2) Applying $\Hom_A(Y, \blank)$ to \eqref{s-star}, we obtain
\[0 \to \Ext_A(Y, A) \ang{1} \to \Ext_A(Y, A) \to \Ext_A(Y, X) \to  0. \]
Since by (1)  we know that $\Ext_A(Y, A)_j$ is 0 if $j \leq -1$ and $k$ if $j \geq 0$, we see that $\Ext_A(Y,X) \cong k$.  Applying $\omega$, we obtain that 
\[ k \cong \omega(k) \cong \Ext_A(\omega Y, \omega X) = \Ext_A(X \ang{-1}, Y \ang{-1}) \cong \Ext_A(X, Y).\]

(3)  By applying $\Hom_A(X, \blank)$ to \eqref{s-star}, we obtain a long exact sequence:
\[ 
0 \to \Hom_A(X, X) \to \Ext_A(X, A)\ang{1} \to \Ext_A(X, A) \to \Ext_A(X, X) \to 0 \]
and a similar computation shows that $\Ext_A(X, X) = 0$.  Applying $\omega$ again we see that $\Ext_A(Y, Y) = 0$.

(4) We apply $\hom_A(\blank, M_\mu)$ to the exact sequence
\[ \xymatrix{
0 \ar[r] & A \ar[r]^{z+\lambda} & A \ar[r] & M_\lambda \ar[r] & 0 }\]
to obtain
\[\xymatrix@=.4cm{
0 \ar[r] & \hom_A(M_\lambda, M_\mu) \ar[r] & k[z]/(z+\mu) \ar[rr]^{z + \lambda} &&
	k[z]/(z+\mu) \ar[r] & \ext_A(M_\lambda, M_\mu) \ar[r] & 0. }\]
It is immediate that $\ext_A(M_\lambda, M_\mu) \cong \hom_A(M_\lambda, M_\mu)$ and is 0 if $\lambda \neq \mu$ and $k$ if $\lambda = \mu$.

(5) This proof is similar to the others, and we omit it.
\end{proof}

We note one consequence of Lemma~\ref{lem-Ext}: since the  sequences
\beq\label{YX} \xymatrix{
0 \ar[r] & Y \ar[r] & A/zA \ar[r] & X \ar[r] & 0 } \eeq
and
\beq \label{XY} \xymatrix{
0 \ar[r] & X \ar[r] & (A/(z-1)A) \ang{1} \ar[r] & Y \ar[r] & 0 } \eeq
do not split, they and their shifts are the only nonsplit extensions in either $\rgr A$ or $\rmod A$ involving the shifts of $X$ and $Y$.  In fact, in general we have:
\begin{lemma}\label{lem-indec}
Let $Z$ be an indecomposable graded $A$-module of finite length.  Then $Z$ is determined up to isomorphism by its Jordan-H\"older quotients.
\end{lemma}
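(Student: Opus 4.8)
The plan is to use the extension calculations of Lemma~\ref{lem-Ext} to break the category of finite-length graded $A$-modules into $\Ext$-orthogonal blocks, and then to describe the indecomposables one block at a time. \emph{First}, I would record, using Lemma~\ref{lem-Ext} together with the vanishing of $\hom_A(S,S')$ for non-isomorphic graded simples, that the graded simples of Lemma~\ref{lem-simple} fall into families that are pairwise orthogonal for both $\hom_A$ and $\ext_A$: for each $n\in\zed$ the pair $\{X\ang{n},Y\ang{n}\}$, and for each $\lambda\in k\smallsetminus\zed$ the singleton $\{M_\lambda\}$. The one point needing care is that $\ext_A(X\ang{n},Y\ang{m})=\ext_A(X,Y\ang{m-n})$ vanishes for $n\ne m$ (and similarly with $X$ and $Y$ interchanged), which follows from Lemma~\ref{lem-Ext}(2) because $\Ext_A(X,Y)$ is concentrated in degree~$0$. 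A standard induction on length then gives, for any finite-length $Z$, a decomposition $Z=\bigoplus_\beta Z_\beta$ with all Jordan-H\"older factors of $Z_\beta$ in the family $\beta$; hence an indecomposable $Z$ has all of its Jordan-H\"older factors in a single family, and it suffices to treat the two kinds of family.

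\emph{Next}, the family $\{M_\lambda\}$: suppose every composition factor of $Z$ is isomorphic to $M_\lambda$. Since $xy=z$ and $yx=z-1$ act on the graded pieces of $M_\lambda$ by the nonzero scalars $-\lambda$ and $-(\lambda+1)$, the maps $x$ and $y$ induce isomorphisms between adjacent graded pieces of $M_\lambda$, and hence, by the five lemma along a composition series, of $Z$ as well. Therefore $Z$ is recovered from its degree-zero piece $Z_0$, a finite-length $k[z]$-module killed by a power of $(z+\lambda)$, and $Z\mapsto Z_0$ is an equivalence onto that category. An indecomposable such module is $k[z]/(z+\lambda)^n$, so $Z\cong A/(z+\lambda)^n A$; since $n$ is the length of $Z$ — the multiplicity of $M_\lambda$ among the Jordan-H\"older factors — $Z$ is determined by its factors.

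\emph{Finally}, the family $\{X\ang{n},Y\ang{n}\}$, which is the crux; after a shift we may assume $n=0$. The aim is to show that the indecomposables with all composition factors in $\{X,Y\}$ are precisely the uniserial modules obtained by iterating the two nonsplit extensions \eqref{YX} and \eqref{XY}, so that their composition factors alternate between $X$ and $Y$, and that each such module is determined by its socle together with its length. The induction on length would take an indecomposable $Z$ of length $>1$ with simple socle $S\in\{X,Y\}$, write $Z/S=\bigoplus Z_i$ as a sum of indecomposables — each, inductively, one of these alternating uniserials — and present $Z$ as an extension $0\to S\to Z\to\bigoplus Z_i\to 0$. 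The key input is a short list of $\Ext$-computations obtained by feeding \eqref{YX} and \eqref{XY} into long exact sequences: for an alternating uniserial $U$, the group $\ext_A(U,S)$ is one-dimensional when the socle of $U$ differs from $S$ and zero when it equals $S$, and in the first case the unique nonsplit extension $0\to S\to Z'\to U\to 0$ is again an alternating uniserial, one step longer, with socle $S$. Granting this, indecomposability forces $Z/S$ to be a single $Z_i$ and $Z$ to be the next alternating uniserial, which completes the classification; the lemma then follows since the composition series of an alternating uniserial is determined by its socle and length.

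The step I expect to be the main obstacle is ruling out \emph{non}-uniserial indecomposables in this last block: one must show that an extension of a direct sum $Z_1\oplus Z_2$ of two alternating uniserials by a simple $S$ cannot be indecomposable unless one of the $Z_i$ is zero, and that an extension of a single uniserial by $S$ cannot acquire a second simple summand in its socle. This uses not merely the vanishing statements of Lemma~\ref{lem-Ext} but the fact that each relevant $\ext_A$-group has dimension at most one, so that the restriction of an extension class to any summand is either zero or the generator; the hereditary property of $A$, used freely to annihilate the $\Ext^2$ terms, is what keeps the long exact sequences short enough to push the argument through.
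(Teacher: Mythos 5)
Your block decomposition and your classification of the indecomposables in each block are essentially right, and go far beyond the paper, whose entire proof is the sentence ``This follows by induction from Lemmas~\ref{lem-simple} and \ref{lem-Ext}.'' The treatment of the $M_\lambda$ blocks and the reduction to alternating uniserials in the integral blocks give the correct picture, and the tools you name — one-dimensionality of the relevant $\ext_A$ groups, hereditariness of $A$, and acting by automorphisms of $\bigoplus Z_i$ on an extension class $(\xi_1,\dots,\xi_r)\in\bigoplus\ext_A(Z_i,S)$ to split off summands — do suffice to rule out non-uniserial indecomposables.

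The genuine problem is your last step, and it exposes an imprecision in the statement itself. Your classification shows that an indecomposable in the block of $\{X\langle n\rangle, Y\langle n\rangle\}$ is an alternating uniserial determined by its socle \emph{and} its length; but socle-plus-length is strictly more information than the multiset of Jordan--H\"older quotients whenever the length is even. Concretely, $A/zA$ and $(A/(z-1)A)\langle 1\rangle$ from \eqref{YX} and \eqref{XY} are non-isomorphic indecomposables (their socles are $Y$ and $X$ respectively) with the same Jordan--H\"older quotients $\{X,Y\}$, so the lemma is false if ``Jordan--H\"older quotients'' is read as the multiset of composition factors. Your closing sentence — that the composition series of an alternating uniserial is determined by its socle and length — is true but runs in the wrong direction: it does not recover the module from its factors. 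What your argument actually proves, and what is actually needed in Corollary~\ref{cor-intsimple}, is that an indecomposable of finite length is uniserial and is determined by its unique composition series read as an \emph{ordered} sequence of quotients (equivalently, by its socle together with its length). You should state and prove that version; as written, the final deduction is a non sequitur, and the statement in its literal form cannot be proved because it is contradicted by the length-two examples above.
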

\begin{proof}
This follows by induction from Lemmas~\ref{lem-simple} and \ref{lem-Ext}.
\end{proof}

The left $k[z]$-action on the modules $(A/zA) \ang{n}$ and $(A/(z-1)A)\ang{n+1}$ is especially nice.  We record this as:
\begin{lemma}\label{lem-comp}
Let $n \in \zed$.    Then $(z+n) (X\ang{n}) = 0 = (z+n) (Y \ang{n})$, and as graded left $k[z]$-modules, we have
\[ (A / zA) \ang{n} \cong (A/(z-1) A) \ang{n+1}\cong \bigoplus_{j \in \zed} \frac{k[z]}{(z+n)}.\]
\end{lemma}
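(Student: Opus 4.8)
The statement to prove is Lemma~\ref{lem-comp}: that $(z+n)$ annihilates $X\ang{n}$ and $Y\ang{n}$, and that the graded left $k[z]$-modules $(A/zA)\ang{n}$ and $(A/(z-1)A)\ang{n+1}$ are both isomorphic to $\bigoplus_{j\in\zed} k[z]/(z+n)$. The approach is a direct computation with the right $A$-module structure and the $(k[z],A)$-bimodule rule $\rho\cdot m = m\,\sigma^{-i}(\rho)$ for $m$ in degree $i$, where $\sigma(z)=z+1$.

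First I would treat the annihilation claims. The module $X = A/xA$ is spanned in degree $i\geq 0$ by the image of $y^i$ and is zero in negative degrees; more precisely $X\ang{n}$ is supported in degrees $\geq n$, with the degree-$n$ piece spanned by the image of $1$. Since $z = xy$ acts on the right on the class of $1\in A/xA$ by $\bar 1\cdot z = \overline{xy} = 0$, the right action of $z$ on $X$ is zero on $X_0$; shifting, the class generating $(X\ang n)_n$ is killed by right multiplication by $z$. Translating to the left $k[z]$-action via $\rho\cdot m = m\sigma^{-n}(\rho)$ for $m\in(X\ang n)_n$, we get $(z+n)\cdot m = m\,\sigma^{-n}(z+n) = m\,z = 0$, so $(z+n)$ annihilates the degree-$n$ component, hence (being central-ish: $k[z]$-action is degreewise) all of $X\ang n$ once we know every graded piece is a cyclic image of that generator — which follows since $X$ is a cyclic right $A$-module generated in its bottom degree. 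The computation for $Y = A/yA\ang 1$ is symmetric, or one applies $\omega$ using $\omega(X\ang n)\cong Y\ang{-n-1}$ together with the fact that $\omega$ intertwines the $k[z]$-structures appropriately (reversing degrees and sending $z\mapsto -z$); I would do it directly to avoid chasing the sign conventions through $\omega$.

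Next, the structure of $A/zA$. The key point is that $A/zA$ is a $\zed$-graded vector space with each graded piece one-dimensional over $k$: indeed $A = \bigoplus_i A_i$ with $A_i = x^i k[z]$ for $i\geq 0$ and $A_i = y^{-i}k[z]$ for $i<0$ (as a left $k[z]$-module $A_i\cong k[z]$ in each degree), and quotienting by the right ideal $zA$ collapses each $A_i$ to $A_i/A_i\cap zA$. One checks $A_i \cap zA = x^i(z\text{-stuff})$; concretely, in degree $i\geq 0$, $x^i k[z]\cap zA = x^i\cdot(\text{the ideal of }k[z]\text{ that maps into }zA)$, and using $x^i z = (z-i)x^i$ one finds the degree-$i$ part of $A/zA$ is $x^i k[z]/(x^i(z-i)k[z]) \cong k[z]/(z-i)$ — wait, I must be careful here; the cleaner route is to compute the left $k[z]$-action on the one-dimensional space $(A/zA)_i$ directly from the bimodule rule. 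Since $A/zA$ is a cyclic right $A$-module generated by $\bar 1$ in degree $0$, and $\bar 1\cdot z = 0$, the right $A$-action makes $(A/zA)_i$ spanned by $\bar 1\cdot a$ for $a\in A_i$; for $i\geq 0$ take $a = x^i$, for $i<0$ take $a = y^{-i}$. The left action of $(z+n)$ on the degree-$i$ generator $m_i$ is $m_i\,\sigma^{-i}(z+n) = m_i(z+n-i)$; I then compute $m_i(z+n-i) = \bar 1\cdot x^i(z+n-i) = \bar 1\cdot(z+n)x^i$ (using $x^i z = (z-i)x^i$, i.e. $(z+n-i)$ conjugates past $x^i$ to $(z+n)$) $= (\bar 1 z)x^i + n\bar 1 x^i$. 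Hmm, $\bar 1 z = 0$ gives $= n m_i$ — that's wrong unless I recheck; the correct identity is $x^i z = (z+i)x^i$ under $z=xy$ — let me just say: the displayed identity to establish is that $(z+n)$ acts as zero on each $(A/zA\ang n)_i$, which reduces to the single relation $\bar 1 \cdot z = 0$ moved across the generator $x^{i-n}$ (resp. $y^{n-i}$) using the commutation $x z = (z+1)x$ and $yz = (z-1)y$. Once that is checked in the two cases $i\geq n$ and $i<n$, every graded piece of $(A/zA)\ang n$ is a one-dimensional $k[z]$-module on which $z$ acts as $-n$, i.e. is $\cong k[z]/(z+n)$, giving the direct sum decomposition. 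For $(A/(z-1)A)\ang{n+1}$ the identical computation with $z-1$ in place of $z$, together with the shift by $n+1$ instead of $n$, produces the same answer; alternatively one notes $(A/(z-1)A)\ang{1}$ is precisely the middle term in \eqref{XY} and compares Jordan–Hölder data via Lemma~\ref{lem-indec}, but the direct computation is shorter.

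**Main obstacle.** The only real subtlety is bookkeeping: getting the commutation relations $xz = (z+1)x$, $yz = (z-1)y$ right (these follow from $xy-yx=1$, $z=xy$), and correctly combining them with the convention $\rho\cdot m = m\,\sigma^{-i}(\rho)$ and the shift conventions $M\ang 1_j = M_{j-1}$ (which the paper warns is nonstandard). Once signs and shifts are pinned down, each assertion is a one-line verification; there is no conceptual difficulty, so the proof will be short and the excerpt's one-line proof ("direct computation") is accurate. I would present it as: establish $xz=(z+1)x$ and $yz=(z-1)y$; deduce $\bar 1\cdot z = 0$ in $A/zA$ forces $z$ to act as the scalar $-n$ on every graded piece of $(A/zA)\ang n$ via these relations; conclude both annihilation and the decomposition; remark that $(A/(z-1)A)\ang{n+1}$ is handled identically.
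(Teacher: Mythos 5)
Your argument is correct and is essentially the paper's: the proof there is the one-line observation that the claims follow from the exact sequences \eqref{XY} and \eqref{YX} together with the identity $(z+n)x^n = x^n z \in x^{n+1}A$, which is precisely the commutation you use to move the relation $\bar{1}\cdot z = 0$ (resp.\ $\bar{1}\cdot(z-1)=0$) across the cyclic generators. The only slip is cosmetic and does not enter the argument: $X\ang{n}$ is supported in degrees $\le n$, not $\ge n$, since $X=A/xA$ is spanned by the images of the $y^{j}$ in degrees $-j\le 0$.
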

\begin{proof}
This follows from the exact sequences \eqref{XY} and \eqref{YX} and the computation
$(z+n) x^n = x^n z \in x^{n+1} A$ in the quotient ring of $A$.
\end{proof}

Since  $M_\lambda \cong \bigoplus_{j \in \zed} k[z]/(z+\lambda)$ as a left $k[z]$-module, we see that any graded $A$-module of finite length, when considered as a left $k[z]$-module, is supported at finitely many $k$-points of $\Spec k[z]$.   

\begin{defn}
If $M$ is a graded right $A$-module of finite length, we define  the {\em support  
 of $M$}, $\Supp M$, to
 be the   support of $M$ as a left $k[z]$-module.  We are particularly interested in the cases when $\Supp M \subset \zed$ (we say that $M$ is { \em integrally supported}) or when $\Supp M = \{ n\}$ for some $n \in \zed$ (we say that $M$ is {\em simply supported at $n$}).  Lemma~\ref{lem-simple} and Lemma~\ref{lem-comp} show that $X\ang{n}$ and $Y \ang{n}$ are the unique simples simply supported at $-n$.
 \end{defn}

We now  turn to understanding graded projective $A$-modules.  We will say that an injection $f: P \hookrightarrow Q$ of rank 1 graded projective modules is a {\em maximal embedding} if there is no $f': P \hookrightarrow Q$ with $\im (f') \supsetneq \im(f)$.   

\begin{lemma}\label{lem-maxemb}
Let $P$ and $Q$ be rank 1 graded projective $A$-modules, and let $f: P \to Q$ be a  maximal embedding.  Then the module 
 $Q/f(P)$ is semisimple and integrally supported.
\end{lemma}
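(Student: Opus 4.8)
The plan is to analyze the cokernel $Q/f(P)$ of a maximal embedding by exploiting the hereditary property of $A$ and the classification of graded simples (Lemma~\ref{lem-simple}). First I would observe that since $P$ and $Q$ both have rank $1$ and $f$ is injective, $Q/f(P)$ is a graded $A$-module of finite length; indeed, localizing at the generic point (or passing to the quotient division ring of $A$) shows $\im(f)$ has finite colength. So $Q/f(P)$ has a finite filtration by graded simples, which by Lemma~\ref{lem-simple} are of the form $X\ang{n}$, $Y\ang{n}$, or $M_\lambda$ with $\lambda \notin \zed$.

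The key step is to rule out, using maximality of $f$, both (a) any composition factor $M_\lambda$ with $\lambda \notin \zed$ (which would contradict integral support), and (b) any nonsplit self-extension or nonsplit extension among the simples appearing (which would contradict semisimplicity). For (a): suppose $M_\lambda$ ($\lambda\notin\zed$) occurs as a submodule of $Q/f(P)$, say as $N/f(P)$ for some $f(P) \subsetneq N \subseteq Q$ with $N/f(P) \cong M_\lambda$. Then I want to produce an embedding $P \hookrightarrow N$ with image strictly larger than $f(P)$. Since $A$ is hereditary, $N$ is projective of rank $1$; the point is that the inclusion $f(P) \hookrightarrow N$ should, after possibly composing with an automorphism, be realizable as a map $P \to Q$ with strictly bigger image — more precisely, I would show $N \cong P$ (or at least $N$ embeds into $Q$ appropriately) using that rank $1$ graded projectives are, up to isomorphism and shift, closely controlled, and that $\Ext_A(M_\lambda, \text{(rank 1 projective)})$ vanishes or is controlled by Lemma~\ref{lem-Ext}(5) — which says $\Ext_A(M_\lambda, S) = 0$ for $S \in \{X,Y\}$. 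The cleanest route: take the preimage $N$ of $\mathrm{soc}(Q/f(P))$'s $M_\lambda$-isotypic part; since $\Ext_A(M_\lambda, P) = 0$ by dévissage from Lemma~\ref{lem-Ext}(5) (as $P$ is built from shifts of $X,Y$ at the level of composition factors of any finite-length quotient — or directly, $\Ext^1$ into a projective vanishes appropriately), the extension $0 \to f(P) \to N \to M_\lambda \to 0$ forces $N$ to contain a copy of $P$ mapping isomorphically to... this needs care. Alternatively, and more robustly, I would argue that if $Q/f(P)$ had a composition factor $M_\lambda$ or a nonsplit extension, one could enlarge $\im(f)$ by pulling back along the corresponding submodule of $Q/f(P)$, because $P$ itself, being rank $1$ projective, maps onto any rank $1$ projective submodule $N$ of $Q$ with $N \supseteq f(P)$ via an isomorphism extending the inclusion $f(P)\hookrightarrow N$ up to automorphism — contradicting maximality.

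For (b), semisimplicity: I would suppose $Q/f(P)$ is not semisimple, so its socle is a proper submodule; let $\pi: Q \to Q/f(P)$ and let $N = \pi^{-1}(\mathrm{soc}(Q/f(P)))$, so $f(P) \subsetneq N \subsetneq Q$. Then $N$ is again rank $1$ graded projective (hereditary), and $f$ factors as $P \xrightarrow{g} N \hookrightarrow Q$ with $g$ still an embedding of rank $1$ projectives having the same image $f(P)$ inside $N$. The claim is that $g$ is itself a maximal embedding into $N$ — but $N/g(P) = \mathrm{soc}(Q/f(P))$ is semisimple, so by an inductive/minimality argument on length this is fine; the real content is that a maximal embedding into $Q$ restricts to a maximal embedding into each such intermediate $N$, and conversely that a non-semisimple cokernel produces a strictly larger embedded image by the same pullback trick as in (a). The uniformizing fact I would lean on is Lemma~\ref{lem-indec}: an indecomposable finite-length graded module is determined by its Jordan–Hölder factors, which, combined with Lemma~\ref{lem-Ext}, severely limits what $Q/f(P)$ can look like.

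The main obstacle I expect is making precise the ``pull back along a submodule of the cokernel to enlarge $\im(f)$'' argument — specifically, showing that whenever $f(P) \subsetneq N \subseteq Q$ with $N$ rank $1$ projective, there exists an embedding $P \hookrightarrow N$ whose image is exactly $N$ (equivalently $P \cong N$) or at least strictly contains $f(P)$. This is where the structure theory of rank $1$ graded projective $A$-modules is essential; presumably a later lemma classifies these (up to isomorphism they should be parametrized by something like fractional-ideal data, with any two abstractly isomorphic once you forget the embedding), and the real input is that $P$ and $N$, both rank $1$ graded projective, become isomorphic after the right shift, so the inclusion $f(P) \hookrightarrow N$ upgrades to an isomorphism $P \xrightarrow{\sim} N$ precomposed with an endomorphism of $P$ — and since $\End$ of a rank $1$ projective is a domain (subring of a division ring), non-isomorphisms are ``strictly smaller image'' maps, giving the contradiction with maximality. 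I would isolate this as the crux and handle (a) and (b) uniformly once it is in place.
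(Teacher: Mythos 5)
There is a genuine gap, and it sits exactly where you flagged ``the main obstacle.'' The crux you propose --- that whenever $f(P) \subsetneq N \subseteq Q$ with $N$ rank $1$ graded projective, one can produce an embedding $P \hookrightarrow N$ with image strictly larger than $f(P)$ (essentially because $P \cong N$ up to shift) --- is false, and if it were true it would prove far too much: it would force $Q/f(P) = 0$ for every maximal embedding, whereas the lemma only asserts the cokernel is semisimple and integrally supported (and it is typically nonzero). Rank $1$ graded projectives over $A$ are \emph{not} all isomorphic up to shift; they are distinguished by their simple factors (this is Lemma~\ref{lem-simplefactor}, and the modules $\inv_J A$ of Section~\ref{sec-Pic} give uncountably many pairwise non-isomorphic examples in a fixed ``shift class''). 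In particular, if $N/f(P) \cong X\ang{n}$ is a single simple, then $N$ and $f(P)$ are $n$-opposite and no larger embedding of $P$ into $N$ exists --- which is precisely why such factors are allowed to survive in the cokernel.

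The missing mechanism is the left $k[z]$-action, i.e.\ the $(k[z],A)$-bimodule structure on graded right $A$-modules. The modules that must be excluded --- $M_\lambda$ with $\lambda \notin \zed$, and the nonsplit length-$2$ extensions $(A/zA)\ang{n}$ and $(A/(z-1)A)\ang{n+1}$ --- are each annihilated on the left by a single linear polynomial $z+\lambda$ (Lemma~\ref{lem-comp} for the integral case). So if $f(P) \subseteq Q_1 \subseteq Q_2 \subseteq Q$ with $Q_2/Q_1$ isomorphic to one of these, then $Q_1 = (z+\lambda)Q_2$, and hence
\[ (z+\lambda)^{-1} f(P) \subseteq (z+\lambda)^{-1} Q_1 = Q_2 \subseteq Q, \]
computed inside the graded quotient ring. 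Since left multiplication by $(z+\lambda)^{-1}$ is a degree-preserving isomorphism of graded right $A$-modules, $(z+\lambda)^{-1}f(P)$ is a copy of $P$ in $Q$ strictly containing $f(P)$, contradicting maximality. By contrast a single $X\ang{n}$ or $Y\ang{n}$ is annihilated by $z+n$ but is only ``half'' of $Q_2/(z+n)Q_2$, so no enlargement results. Your dévissage remarks about $\Ext_A(M_\lambda,\text{--})$ and the appeal to Lemma~\ref{lem-indec} do not substitute for this step, and part (b) of your argument explicitly defers to ``the same pullback trick as in (a),'' which was never established.
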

\begin{proof} 
Let $N = Q/f(P)$.  As $Q_A$ is 1-critical, $N$   has finite length. By Lemma~\ref{lem-simple} $N$ has a finite composition series whose factors are isomorphic to $X\ang{n}, Y \ang{n}$, or $M_\lambda$, with $\lambda \not\in \zed$.

Suppose that some $M_\lambda$ is a subfactor of $N$.  Then we have $f(P) \subseteq Q_1 \subseteq Q_2 \subseteq Q$, with $Q_2/Q_1 \cong M_\lambda$, and so $Q_1 = (z+\lambda) Q_2$.  But now
$ (z+\lambda)^{-1} f(P) \subseteq (z+\lambda)^{-1} Q_1 = Q_2 \subseteq Q$, contradicting the maximality of $f$.  

Thus $N$ is integrally supported.
  If $N$ is not semisimple,  then by Lemma~\ref{lem-Ext}, $N$ has a subfactor isomorphic to either $(A / zA) \ang{n}$ or $(A/ (z-1) A) \ang{n+1}$.  In either case, arguing as above we have $(z+n)^{-1} f(P) \subseteq Q$, contradicting the maximality of $f$. 
 \end{proof}

  We have the following easy consequence of Lemma~\ref{lem-maxemb}:
\begin{lemma}\label{lem-asymptotic}
Let $P$ be  a rank 1 graded projective $A$-module.  Then for all $n \gg 0$, we have $\hom_A(P, X\ang{n}) = \hom_A(P, Y\ang{-n}) = k$.
\end{lemma}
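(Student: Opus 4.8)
The plan is to sandwich $P$ between a shifted free module and a finite-length module, and then to read off both Hom spaces from the (elementary) computation for the free module, using Lemma~\ref{lem-Ext} to discard the finite-length error terms once $n$ is large. For the first step: being a rank $1$ graded projective module, $P$ is torsion-free of rank $1$, so inverting the nonzero homogeneous elements of $A$ embeds $P$ in a $\zed$-graded quotient ring $Q$ in which every nonzero homogeneous element is a unit (concretely $Q\cong k(z)[x,x^{-1};\sigma]$); over such a ring every graded module is graded-free, so $P\otimes_A Q\cong Q\ang{\ell_0}$ for some $\ell_0$. Since $P$ is finitely generated and $Q=\bigcup_{N\ge0}x^{-N}A$, the inclusion $P\hookrightarrow P\otimes_A Q$ has image inside $x^{-N}A\ang{\ell_0}\cong A\ang{\ell}$ for $N\gg0$, giving a graded embedding $f\colon P\hookrightarrow A\ang{\ell}$ whose cokernel $C:=A\ang{\ell}/f(P)$ is torsion, hence of finite length (as in Lemma~\ref{lem-maxemb}). (Alternatively, one may take $f$ of minimal colength, which is automatically a maximal embedding, and invoke Lemma~\ref{lem-maxemb} to see that $C$ is even semisimple and integrally supported; either version works below.)

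Next I would apply $\hom_A(\blank,T)$ to $0\to P\xrightarrow{f}A\ang{\ell}\to C\to0$. Since $A\ang{\ell}$ is projective and $A$ is hereditary, this gives an exact sequence
\[0\to\hom_A(C,T)\to\hom_A(A\ang{\ell},T)\to\hom_A(P,T)\to\ext_A(C,T)\to0,\]
and the point is that both $\hom_A(C,T)$ and $\ext_A(C,T)$ vanish for $T=X\ang{n}$ once $n\gg0$, and for $T=Y\ang{-n}$ once $n\gg0$. Indeed, the composition factors of the finite-length module $C$ lie among the simples of Lemma~\ref{lem-simple}. A nonzero map from $C$ onto the simple $X\ang{n}$ would exhibit $X\ang{n}$ as a composition factor of $C$, so $\hom_A(C,X\ang{n})=0$ as soon as $n$ exceeds the finitely many shifts of $X$ occurring in $C$; and by Lemma~\ref{lem-Ext}(2),(3),(5), for every composition factor $S$ of $C$ the space $\ext_A(S,X\ang{n})$ vanishes when $S$ is an $X\ang{a}$ or an $M_\lambda$ and is nonzero for at most one value of $n$ when $S$ is a $Y\ang{b}$, so a d\'evissage along a composition series of $C$ gives $\ext_A(C,X\ang{n})=0$ for $n\gg0$. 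Interchanging the roles of $X$ and $Y$ and replacing $n$ by $-n$, the same reasoning gives $\hom_A(C,Y\ang{-n})=\ext_A(C,Y\ang{-n})=0$ for $n\gg0$.

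Thus for $n\gg0$ the sequence collapses to $\hom_A(P,T)\cong\hom_A(A\ang{\ell},T)=T_\ell$. By Lemma~\ref{lem-comp} (or directly from $X=A/xA$ and $Y=(A/yA)\ang{1}$), the graded piece $X_j$ is $k$ for $j\le0$ and $0$ for $j>0$, while $Y_j$ is $k$ for $j\ge1$ and $0$ for $j\le0$; hence $(X\ang{n})_\ell=X_{\ell-n}=k$ for $n\ge\ell$ and $(Y\ang{-n})_\ell=Y_{\ell+n}=k$ for $n\ge1-\ell$. Combining this with the previous paragraph yields $\hom_A(P,X\ang{n})=\hom_A(P,Y\ang{-n})=k$ for all $n\gg0$, as desired. (One could also derive the $Y$-statement from the $X$-statement by applying the autoequivalence $\omega$, using $\omega(X\ang{n})\cong Y\ang{-n-1}$ and the fact that $\omega$ permutes rank $1$ graded projectives.)

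The only step carrying real content is the first one — producing the embedding $f\colon P\hookrightarrow A\ang{\ell}$ with finite-length cokernel; everything after it is formal manipulation of the long exact sequence together with Lemma~\ref{lem-Ext}, and the hypothesis ``$n\gg0$'' comfortably absorbs the finitely many exceptional shifts contributed by $C$. I therefore anticipate no genuine obstacle, only some care in tracking the bounds on $n$.
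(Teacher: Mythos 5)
Your proof is correct and follows essentially the same route as the paper: embed $P$ maximally into a (shift of the) free module, note via Lemma~\ref{lem-maxemb} that the cokernel has finite length and integral support, and kill the $\hom$ and $\ext$ correction terms for $n\gg 0$ using Lemma~\ref{lem-Ext}. The only differences are cosmetic — you target $A\ang{\ell}$ rather than $A$ and run a d\'evissage along a composition series where the paper simply cites finite-dimensionality of $\Hom_A(M,X)$ and $\Ext_A(M,X)$.
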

\begin{proof}
Let 
\[ 0 \to P \to A \to M \to 0\]
be a maximal embedding.    Then there are exact sequences
\[ \Hom_A(M, X) \to \Hom_A(A, X) \to \Hom_A(P, X) \to \Ext_A(M, X)\]
and
\[ \Hom_A(M, Y) \to \Hom_A(A, Y) \to \Hom_A(P,Y) \to \Ext_A(M, Y).\]
By Lemma~\ref{lem-maxemb}, $M$ is semisimple and and integrally supported.  By Lemma~\ref{lem-Ext},  the vector spaces $\Hom_A(M, X)$, $\Hom_A(M, Y)$, $\Ext_A(M, X)$, and $\Ext_A(M,Y)$ are finite-dimensional; so for $n \gg  0$ we have $\hom_A(P, X \ang{n}) = \hom_A(A, X \ang{n}) = X_{-n} = k$ and $\hom_A(P, Y \ang{-n}) = \hom_A(A, Y \ang{-n}) = Y_n = k$.
\end{proof}

In fact, more is true:  the simple factors of a module $P$ partition the set of integrally supported simples.  
\begin{proposition}\label{prop-duality}
Let $P$ be a rank 1 graded projective $A$-module, and let $n \in \zed$.  Then exactly one of the following is true:

 $(1)$ We have
\[ \ext_A(Y\ang{n}, P) = \hom_A(P, X\ang{n}) = k\]
and
\[ \hom_A(P, Y\ang{n}) = \ext_A(X\ang{n}, P) = 0;  or \]

$ (2)$ we have
 \[\hom_A(P, X\ang{n}) = \ext_A(Y\ang{n}, P) = 0\]
 and
 \[\ext_A(X\ang{n}, P) = \hom_A(P, Y\ang{n}) = k.\]
\end{proposition}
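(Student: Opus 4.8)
The plan is to establish two separate ``exactly one'' dichotomies --- one among $\hom_A(P,X\ang n)$ and $\hom_A(P,Y\ang n)$, and one among $\ext_A(X\ang n,P)$ and $\ext_A(Y\ang n,P)$ --- and then to correlate them, so that the four quantities fall into exactly one of the two advertised patterns. As a preliminary, I would fix an embedding $0 \to P \to A \to M \to 0$ with $M$ of finite length (Lemma~\ref{lem-maxemb}), so that $P_i = A_i$ for all but finitely many $i$. Since $x$ and $y$ act injectively on the domain $A$, no graded piece $P_i$ can be zero, so each $P_i$ is a nonzero $k[z]$-submodule of $A_i \cong k[z]$ and hence is free of rank one over $k[z]$; the same holds for each graded piece of the rank-one graded projective left module $P^{*} = \Hom_A(P,A)$.

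First I would make one test computation. Applying $\Hom_A(P,\blank)$ and $\Hom_A(\blank,P)$ to the exact sequence $0 \to A \xrightarrow{z\cdot} A \to A/zA \to 0$, and using that $P$ is projective so that $\Ext_A(P,A) = 0$, one identifies $\Hom_A(P,A/zA)$ with the cokernel of multiplication by $z$ on $P^{*}$ and $\Ext_A(A/zA,P)$ with the cokernel of multiplication by $z$ on $P \cong \Hom_A(A,P)$. By the preliminary remark each of these cokernels is one-dimensional in every degree, so for every $n$,
\[ \hom_A(P,(A/zA)\ang n) \cong k \cong \ext_A((A/zA)\ang n,P). \]

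Next I would feed this into the nonsplit sequence \eqref{YX} shifted by $n$, namely $0 \to Y\ang n \to (A/zA)\ang n \to X\ang n \to 0$. Applying $\hom_A(P,\blank)$ and using $\ext_A(P,\blank) = 0$ gives a short exact sequence $0 \to \hom_A(P,Y\ang n) \to k \to \hom_A(P,X\ang n) \to 0$, while applying $\hom_A(\blank,P)$ and using that $P$ is torsion-free, hence admits no nonzero maps from a finite-length module, gives $0 \to \ext_A(X\ang n,P) \to k \to \ext_A(Y\ang n,P) \to 0$. Hence exactly one of $\hom_A(P,X\ang n)$, $\hom_A(P,Y\ang n)$ equals $k$ and the other vanishes, and likewise exactly one of $\ext_A(X\ang n,P)$, $\ext_A(Y\ang n,P)$ equals $k$ and the other vanishes.

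Finally I would correlate the two dichotomies. If $\hom_A(P,X\ang n) \neq 0$, a nonzero map $P \to X\ang n$ is surjective, giving an exact sequence $0 \to P' \to P \to X\ang n \to 0$; applying $\hom_A(Y\ang n,\blank)$ and using $\ext^2_A = 0$ ($A$ is hereditary) together with $\ext_A(Y\ang n,X\ang n) = k$ (Lemma~\ref{lem-Ext}(2)), one obtains a surjection $\ext_A(Y\ang n,P) \twoheadrightarrow k$, whence $\ext_A(Y\ang n,P) = k$ and, by the previous paragraph, $\ext_A(X\ang n,P) = 0$: this is case $(1)$. Symmetrically, if $\hom_A(P,Y\ang n) \neq 0$, a surjection $P \to Y\ang n$ together with $\hom_A(X\ang n,\blank)$ and $\ext_A(X\ang n,Y\ang n)=k$ forces $\ext_A(X\ang n,P) = k$ and $\ext_A(Y\ang n,P) = 0$: this is case $(2)$. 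Since by the first dichotomy exactly one of the two hypotheses holds, exactly one of $(1)$, $(2)$ holds. The one genuinely delicate point is the test computation above: one must correctly pin down the left and right $k[z]$-module structures on $P$ and on $P^{*}$ --- free of rank one in each degree --- and identify the relevant connecting maps with the appropriately twisted multiplication by $z$, so that both cokernels come out one-dimensional in each degree; everything after that is formal diagram-chasing together with the $\ext$ computations of Lemma~\ref{lem-Ext}.
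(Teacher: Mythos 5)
Your argument is correct, but it is organized quite differently from the paper's. The paper reduces to $n=0$ by shifting, verifies the statement for $P=A$ directly from Lemma~\ref{lem-Ext}(1), and then handles a general $P$ via a maximal embedding $0 \to P \to A \to M \to 0$ (Lemma~\ref{lem-maxemb}), where $M$ is semisimple, integrally supported, and has no $Y$-summand; the long exact sequences then identify $\ext_A(X,P) \cong \hom_A(P,Y)$, the functor $\omega$ supplies the companion isomorphism $\ext_A(Y,P) \cong \hom_A(P,X)$, and the dichotomy comes from $\ext_A(Y,P) \oplus \hom_A(P,Y) \cong \ext_A(Y,A) = k$. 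You instead use the indecomposable length-two module $(A/zA)\ang{n}$ as a test object: the identifications $\hom_A(P,(A/zA)\ang{n}) \cong (P^*/zP^*)_{\pm n}$ and $\ext_A((A/zA)\ang{n},P) \cong (P/Pz)_{\pm n}$ are correct (note that for $m \in P_i$ one has $mz = (z+i)\cdot m$ in the left $k[z]$-structure, so both cokernels are indeed one-dimensional in every degree once you know each graded component of $P$ and of $P^*$ is $k[z]$-free of rank one), the sequence \eqref{YX} then yields the two ``exactly one is $k$'' dichotomies, and the correlation via $\ext_A(Y\ang{n},X\ang{n})=k$ and hereditariness is sound. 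Your route buys a proof that treats all $n$ simultaneously, avoids both the $\omega$-duality and the semisimplicity of the cokernel of a maximal embedding, and is arguably more elementary; the cost is the supplementary degree-by-degree $k[z]$-module bookkeeping, which the paper never needs to make explicit, and you lose the natural isomorphisms $\ext_A(X\ang{n},P) \cong \hom_A(P,Y\ang{n})$ that the paper's argument produces as a byproduct. Both proofs are complete and rely only on material available before the proposition.
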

\begin{proof}
We first prove the result for $n = 0$.  
By Lemma~\ref{lem-Ext}, we have $\ext_A(X, A) = 0 =  Y_0 = \hom_A(A,Y)$ and $\ext_A(Y, A) = k = X_0 = \hom_A(A, X)$.  Thus the claim holds for $A = P$.  For general $P$, let  
\beq\label{eq1} 0 \to P \to A \to M \to 0 \eeq
be a maximal embedding of $P$ in $A$. By Lemma~\ref{lem-maxemb},  $M$ is semisimple and  integrally supported.  Since $Y$ is not a factor of $A$, we know that $Y$ is not a factor of $M$, and so  $\hom_A(M, Y) = \hom_A(Y, M) = \ext_A(M, X) = \ext_A(X,M) = 0$.  Further, by Lemma~\ref{lem-Ext}, there are $k$-vector space isomorphisms
\beq\label{eq2}
\hom_A(M, X) \cong \ext_A(M,Y) \cong \ext_A(Y, M) \cong \hom_A(X,M).
\eeq

Via the long exact $\Ext$ sequence, \eqref{eq1} induces a diagram with exact rows

\[ \xymatrix{
& 0 \ar[r] & \hom_A(X, M) \ar[d]^{\cong} \ar[r] & \ext_A(X, P) \ar[r] & \ext_A(X, A) = 0 \\
0  \ar[r] & \hom_A(P, Y) \ar[r] & \ext_A(M,Y) \ar[r] & 0}\]
where the vertical isomorphism comes from \eqref{eq2}.  
Thus $\ext_A(X,P) \cong \hom_A(P,Y)$.  
Applying $\omega$, we see that $\ext_A(Y, P') \cong \hom_A(P', X)$, where $P' = (\omega P) \ang{1}$.  As $P$ ranges over all rank 1 graded  projectives, so does $P'$.  This shows that for all $P$ we have $\ext_A(Y, P) \cong \hom_A(P, X)$.  

To see that exactly one of (1) and (2) holds, observe that  \eqref{eq1} also induces a diagram with  exact rows
\[ \xymatrix{
 0 \ar[r] & \ext_A(Y, P) \ar[r] & \ext_A(Y,A) \ar[r] & \ext_A(Y, M) \ar[r] \ar[d]^{\cong} & 0 \\
&  0 \ar[r] & \hom_A(P, Y) \ar[r]& \ext_A(M,Y) \ar[r] & 0,}\]
  where again the vertical map comes from \eqref{eq2}.
  Thus  we see that
\[ k = \ext_A(Y, A) \cong \ext_A(Y,P) \oplus \hom_A(P, Y).\]
The claim follows,  and by shifting degrees we obtain the result for all $n$.
\end{proof}

 \begin{defn}\label{def-Fj}
  For any integer $j$ and rank 1 graded projective $P$, let $F_j(P)$ be the unique graded simple factor of $P$ supported at $-j$.  We will  say that two rank 1 projectives $P$ and $Q$ are  {\em $j$-congruent} if $F_j(P) \cong F_j(Q)$, and are {\em $j$-opposite} if they are not $j$-congruent.  \end{defn}
 
  Note  that $F_j(P)$ is well-defined by  Proposition~\ref{prop-duality}.

\begin{lemma}\label{lem-simplefactor}
If $P$ and $Q$ are rank 1 graded projective $A$-modules that  are $j$-congruent for all $j \in \zed$, then $P \cong Q$.
\end{lemma}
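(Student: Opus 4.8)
The plan is to show that if $P$ and $Q$ are $j$-congruent for all $j$, then there is an isomorphism $P \to Q$, by constructing a map with zero cokernel via a maximal-embedding argument. First I would fix a maximal embedding $P \hookrightarrow Q$ (this exists since both have rank 1 and hence embed in each other, or one can embed $P$ in $A$ and compose; the existence of maximal embeddings between rank 1 projectives follows from the fact that a rank 1 projective $A$-module is $1$-critical, so any properly ascending chain of copies of $P$ inside $Q$ stabilizes). Write $N = Q/f(P)$. By Lemma~\ref{lem-maxemb}, $N$ is semisimple and integrally supported, so $N \cong \bigoplus_{j} (S_j)^{\oplus m_j}$ where each $S_j \in \{X\ang{j}, Y\ang{j}\}$ (using Lemma~\ref{lem-simple}, the integrally supported simples are exactly the shifts of $X$ and $Y$). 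The goal is to show $N = 0$, i.e.\ every $m_j = 0$.

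The key step is a bookkeeping argument on the simple factors. From the short exact sequence $0 \to P \to Q \to N \to 0$ and the long exact sequences for $\hom_A(\blank, X\ang{j})$ and $\hom_A(\blank, Y\ang{j})$, combined with the precise dimension count in Proposition~\ref{prop-duality} (which says that for each $j$ exactly one of $\hom_A(P, X\ang{j})$, $\hom_A(P, Y\ang{j})$ is $k$ and the other is $0$, and similarly for $Q$, with the two ``halves'' linked by the $\ext$-terms), I would extract a numerical relation. Specifically: since $F_j(P) \cong F_j(Q)$, the ``type'' at each integer $-j$ agrees for $P$ and $Q$; feeding this into the $\Ext$ long exact sequence forces $\hom_A(N, X\ang{j})$ and $\hom_A(N, Y\ang{j})$ both to vanish for every $j$ (any factor $X\ang{j}$ or $Y\ang{j}$ in $N$ would change the type of the simple factor of $Q$ at $-j$ relative to $P$, contradicting $j$-congruence, as in the maximality argument inside the proof of Lemma~\ref{lem-maxemb}). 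Hence $N$ has no simple factors, so $N = 0$ and $f$ is an isomorphism.

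The main obstacle I expect is getting the bookkeeping exactly right: Proposition~\ref{prop-duality} controls $\hom$ and $\ext$ from $P$ (and $Q$) to the \emph{individual} simples $X\ang{j}, Y\ang{j}$, but to conclude from $j$-congruence for all $j$ that the quotient $N$ is zero I need to track how a single factor $X\ang{j}$ (say) in $N$ propagates through the connecting maps $\ext_A(N, \blank) \to \ext_A(Q,\blank)$ and how it interacts with the rank 1 constraint. The cleanest route is probably to avoid the long exact sequence altogether and instead argue directly: if $m_j > 0$ for some $j$, then $f(P) \subseteq Q' \subsetneq Q$ with $Q/Q' \cong S_j$ for some $S_j \in \{X\ang{j}, Y\ang{j}\}$; the module $Q'$ is again a rank 1 graded projective, and one checks using Lemma~\ref{lem-comp} and Proposition~\ref{prop-duality} that $F_j(Q') \not\cong F_j(Q)$, while $F_i(Q') \cong F_i(Q)$ for $i \neq j$. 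Iterating, one sees that $F_\bullet(P) $ and $F_\bullet(Q)$ must differ in exactly those coordinates $j$ where $m_j$ is odd (or simply: wherever $m_j \neq 0$, using that there is only \emph{one} simple factor of a rank 1 projective at each integer by Proposition~\ref{prop-duality}), so $j$-congruence for all $j$ forces all $m_j = 0$. That last reduction — that a rank 1 projective has at most one, hence (after a maximal embedding in $A$) a well-defined single simple factor at each integer, and that passing through one such factor toggles exactly that coordinate — is the crux, and it is exactly what Proposition~\ref{prop-duality} and Definition~\ref{def-Fj} were set up to provide.
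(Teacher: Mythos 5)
Your proposal is correct and matches the paper's argument: the paper takes a maximal embedding $P\hookrightarrow Q$, notes that the cokernel $M$ is semisimple and integrally supported (Lemma~\ref{lem-maxemb}), and uses the long exact sequence together with Proposition~\ref{prop-duality} and Lemma~\ref{lem-Ext} to conclude $\hom_A(P,N)=0$ for every simple summand $N$ of $M$, so that $j$-congruence for all $j$ forces $M=0$. Your alternative ``toggling'' argument via a chain of corank-one submodules is also valid (the multiplicity-freeness of $M$ you invoke does follow from $\dim_k\hom_A(Q,S)\leq 1$), but it is not needed.
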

\begin{proof}
Let
\[ 0 \to P \to Q \to M \to 0 \]
be a maximal embedding  of $P$ in $Q$.  By Lemma~\ref{lem-maxemb},  $M$ is semisimple and integrally supported.   Let $N$ be a simple summand of $M$.  Consider the sequence
\[ 0 \to \hom_A(M, N) \to \hom_A(Q, N) \to \hom_A(P, N) \to \ext_A(M, N).\]
Now, by Proposition~\ref{prop-duality}, there is no $j$ such that $M$ contains both $X \ang{j}$ and $Y \ang{j}$ as composition factors, and $\dim_k \hom_A(Q, N) = 1$.  Then, by Lemma~\ref{lem-Ext}, $\ext_A(M, N) =0$, and so $ \hom_A(P, N) = 0$.  This implies that  for any $j \in \Supp M$, the modules $P$ and $Q$ are $j$-opposite.  Since $P$ and $Q$ are always $j$-congruent, we must have $M=0$ and $P \cong Q$.
\end{proof}

This lemma again shows the  comparative rigidity of the graded category:  it is certainly not true that the (ungraded) simple factors of a projective module determine it up to isomorphism, since every projective module is a generator for $\rmod A$.

We record for future reference the following routine consequence of the fact that $A$ is hereditary:

\begin{lemma} \label{lem-split}
Let $P$ be a finitely generated graded projective $A$-module.  Then $P$ splits completely as a direct sum of rank 1 graded projective modules. \qed
\end{lemma}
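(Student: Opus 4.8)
The plan is to induct on the rank of $P$, splitting off one rank~1 graded projective summand at a time; the mechanism is that $A$, being hereditary, has graded global dimension~$1$, so every graded submodule of a graded projective module is again graded projective.

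First I would record the standing facts. Since $A$ is a Noetherian domain it is an Ore domain; let $Q$ be its quotient division ring. A finitely generated graded projective $A$-module $P$ is torsion-free, and because $-\otimes_A Q$ is exact it has a well-defined rank $\rank P = \dim_Q(P \otimes_A Q) \in \zed_{\geq 0}$ that is additive on short exact sequences of finitely generated modules; moreover $\rank P = 0$ forces $P = 0$. Because $A$ is hereditary, its graded global dimension is also $1$ (for a $\zed$-graded ring the graded global dimension does not exceed the ungraded one), and hence every graded submodule of a graded projective module is graded projective. In particular, every nonzero graded submodule of a shift $A\ang{n}$ --- equivalently, up to shift, every nonzero graded right ideal of $A$ --- is a rank~1 graded projective module.

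Now I would carry out the induction on $\rank P$. If $\rank P \leq 1$ the statement is immediate (either $P = 0$, the empty direct sum, or $P$ is itself a rank~1 graded projective). Suppose $\rank P = r \geq 2$. Being finitely generated and projective, $P$ is a graded direct summand of some finitely generated graded free module $\bigoplus_j A\ang{n_j}$; the associated split inclusion $P \hookrightarrow \bigoplus_j A\ang{n_j}$ is injective, so, as $P \neq 0$, at least one coordinate map $f \in \hom_A(P, A\ang{n_j})$ is nonzero. Then $\im f$ is a nonzero graded submodule of $A\ang{n_j}$, hence a rank~1 graded projective module, and the exact sequence
\[ 0 \longto \Ker f \longto P \longto \im f \longto 0 \]
splits because $\im f$ is projective. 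Thus $P \cong \Ker f \oplus \im f$, in which $\Ker f$ is a graded submodule of $P$ --- therefore graded projective --- of rank $r - 1$. Applying the induction hypothesis to $\Ker f$ finishes the proof.

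I do not expect a genuine obstacle here: this is exactly the graded analogue of the classical decomposition of finitely generated projective modules over a hereditary Noetherian domain into uniform (rank~1) pieces, and the only steps requiring a line of justification --- additivity of rank under $-\otimes_A Q$ and the bound on graded global dimension by ungraded global dimension --- are standard for $\zed$-graded rings. If one prefers to bypass the global-dimension comparison, one may instead verify directly that a graded right ideal of a (graded) hereditary ring is graded projective, which then feeds the same induction.
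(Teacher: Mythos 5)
Your proof is correct, and it is precisely the argument the paper has in mind: the paper states this lemma with no proof at all, calling it ``a routine consequence of the fact that $A$ is hereditary,'' and your induction on rank --- splitting off the image of a nonzero coordinate map $P \to A\ang{n_j}$, which is graded projective of rank $1$ because graded submodules of graded projectives are graded projective over a (graded) hereditary ring --- is the standard way to fill in those routine details.
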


\section{The Picard group of $\rgr A$}\label{sec-Pic}
In this section we will calculate the Picard group of $\rgr A$. Let $D_\infty$ denote the infinite dihedral group, and let $(\zed/2\zed)^{(\zed)}$ be the direct sum of countably many copies of $\zed/2\zed$.  We will show that there is an  exact sequence of groups
\[ \xymatrix{
1 \ar[r] & (\zed/2\zed)^{(\zed)} \ar[r] & \Pic(\rgr A) \ar[r] & D_\infty \ar[r] & 1 }\]
and that in fact $\Pic(\rgr A)$ is isomorphic to the restricted wreath product \[(\zed /2\zed) \mywr_{\zed} D_\infty \cong (\zed/2\zed)^{(\zed)} \rtimes D_\infty.\]

We first investigate the automorphism group of the $\zed$-algebra $\bbar{A}$ and the map
\eqref{Aut-Pic} for $A$.  
We establish notation: define $m_{ij} \in \bbar{A}_{ij} = A_{j-i}$ to be the canonical $k[z]$-module generator of $A_{j-i}$; that is, $m_{ij}$ is $x^{j-i}$ if $j \geq i$ and $y^{i-j}$ if $i> j$.   

Recall that an automorphism of $\bbar{A}$ is {\em inner} if it is in the kernel of \eqref{Aut-Pic}.

\begin{lemma}\label{lem-autA} 
Let $\gamma$ be an automorphism of $\bbar{A}$ of degree 0.  Then $\gamma$ is  inner.
\end{lemma}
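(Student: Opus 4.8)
The plan is to show that a degree-$0$ automorphism $\gamma$ of $\bbar{A}$ satisfies the criterion of Theorem~\ref{thm-BR}: for each pair $m,n$ there must exist $f_m, g_n \in R_0 = k[z]$ with $\gamma(w) = f_m w g_n$ for all $w \in \bbar{A}_{mn}$. First I would record what $\gamma$ does on the diagonal. Each diagonal ring $\bbar{A}_{ii}$ is a copy of $k[z]$, and $\gamma$ restricts to a $k$-algebra automorphism of it; since the only $k$-algebra automorphisms of $k[z]$ are the affine ones $z \mapsto az+b$, we get $\gamma|_{\bbar{A}_{ii}}(z) = a_i z + b_i$ for scalars $a_i \in k^\times$, $b_i \in k$. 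The key constraint comes from the multiplication $\bbar{A}_{i,i+1}\bbar{A}_{i+1,i} = \bbar{A}_{ii}$ and $\bbar{A}_{i,i-1}\bbar{A}_{i-1,i} = \bbar{A}_{ii}$, i.e. from the relations $x y = z$ and $y x = z - 1$ read inside the $\zed$-algebra: $m_{i,i+1} m_{i+1,i} = 1_i \cdot z$ (identifying $\bbar{A}_{ii}$ with $k[z]$) and $m_{i,i-1}m_{i-1,i} = z - 1$. Since $\bbar{A}_{i,i+1}$ is free of rank $1$ over $k[z]$ on either side, $\gamma(m_{i,i+1}) = u_i m_{i,i+1}$ for some unit $u_i$ — but the only units of $k[z]$ are nonzero scalars, so $u_i \in k^\times$; similarly $\gamma(m_{i+1,i}) = v_i m_{i+1,i}$ with $v_i \in k^\times$.

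Next I would extract the scalar relations. Applying $\gamma$ to $m_{i,i+1}m_{i+1,i}=z$ (as an element of $\bbar{A}_{ii}$) gives $u_i v_i \cdot (\gamma|_{ii}(z)) = \gamma|_{ii}(z)$ computed the other way — more precisely, on the left side $\gamma(m_{i,i+1})\gamma(m_{i+1,i}) = u_i v_i\, m_{i,i+1}m_{i+1,i} = u_i v_i z$, while this must equal $\gamma(z) = a_i z + b_i$ in $\bbar{A}_{ii}$. So $a_i z + b_i = u_i v_i z$, forcing $b_i = 0$ and $u_i v_i = a_i$. Doing the same with $m_{i,i-1}m_{i-1,i} = z-1$: the left computes to $v_{i-1}u_{i-1}(z-1) = a_{i-1}(z-1)$, while the right is $\gamma(z) - 1 = a_i z - 1$ (using $b_i = 0$). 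Comparing, $a_{i-1} = a_i$ and $a_{i-1} = 1$. Hence every $a_i = 1$ and every $b_i = 0$: $\gamma$ fixes each diagonal $k[z]$ pointwise, and $u_i v_i = 1$ for all $i$.

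With that in hand, $\gamma$ is completely determined by the scalars $u_i$ (with $v_i = u_i^{-1}$), via $\gamma(m_{i,i+1}) = u_i m_{i,i+1}$, $\gamma(m_{i+1,i}) = u_i^{-1} m_{i+1,i}$, and multiplicatively on all off-diagonal pieces: $\gamma(m_{ij}) = c_{ij} m_{ij}$ where $c_{ij}$ is the telescoping product $u_i u_{i+1}\cdots u_{j-1}$ for $j > i$ (and the corresponding product of inverses for $j < i$). Now I would verify the Beattie–del Río criterion directly: pick any reference index, say set $f_m := c_{0m}^{-1}$ viewed in $\bbar{A}_{mm} = k[z]$ for all $m$ — actually, the cleanest choice is $f_m = c_{0m}$ if $m\le 0$ handled symmetrically; concretely, define a diagonal family by $h_m = c_{0m}$ (a scalar, hence a unit in each $\bbar{A}_{mm}$), and check that $\gamma(w) = h_m^{-1} w\, h_n$ for $w \in \bbar{A}_{mn}$, which reduces to the identity $c_{mn} = h_m^{-1} h_n$, i.e. $c_{0m}\, c_{mn} = c_{0n}$ — exactly the telescoping/cocycle identity satisfied by the partial products of the $u_i$. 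By Theorem~\ref{thm-BR}, $\gamma$ is inner.

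The only real subtlety — the step I'd expect to need the most care — is the bookkeeping with the telescoping products and the sign/index conventions for $j<i$, together with making sure the chosen $f_m$, $g_n$ genuinely lie in $\bbar{A}_{mm} = \bbar{A}_{nn} = k[z]$ (they do, since all the $u_i$ are scalars, so all $c_{ij}$ are scalars). Everything else is forced: the rigidity is entirely a consequence of $k[z]$ having only affine automorphisms and only scalar units, combined with the two defining relations $xy=z$, $yx=z-1$ pinning down the constants $a_i, b_i$.
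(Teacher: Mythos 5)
Your proposal is correct and follows essentially the same route as the paper: show $\gamma$ scales each generator $m_{ij}$ by a scalar, use the Weyl relation to force $\gamma$ to fix the diagonal copies of $k[z]$, observe the resulting cocycle identity $\zeta_{ij}\zeta_{j\ell}=\zeta_{i\ell}$ for the telescoping scalars, and then split $\zeta_{mn}=\zeta_{m0}\zeta_{0n}$ to exhibit the form required by Theorem~\ref{thm-BR}. The only cosmetic difference is that you apply $\gamma$ to $xy=z$ and $yx=z-1$ separately (tracking possible affine automorphisms $z\mapsto a_iz+b_i$ of the diagonal), whereas the paper applies it to $xy-yx=1_n$; both yield the same conclusion.
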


\begin{proof}
For all  $i, j \in \zed$ there is a unit $\zeta_{ij} \in k[z]$ such that $\gamma(m_{ij}) = \zeta_{ij} m_{ij}$.  Thus  $\zeta_{ij} \in k^*$, and in particular $\zeta_{ij}$ is central in $A$.   
Further, for all $n \in \zed$ we have $\zeta_{nn} = 1$.  Applying $\gamma$ to the identity
\[m_{n, n+1} m_{n+1, n} - m_{n, n-1} m_{n-1, n} = 1_{n},\]
we obtain that $\zeta_{n, n-1} \zeta_{n-1, n} = 1$ for all $n$, and so $\gamma(z \cdot 1_{n}) = z\cdot 1_{n}$.   This implies that for any $f \in k[z]$ and $i, j \in \zed$, we have
\[ \gamma(f \cdot m_{ij}) = \zeta_{ij} f \cdot m_{ij},\]
and in particular that $\zeta_{ij} \zeta_{j \ell} = \zeta_{i \ell}$ for all $i, j, \ell \in \zed$.   
 Thus if $v \in \bbar{A}_{ij}$, we have $\gamma(v) = \zeta_{ij} v = \zeta_{i0} \zeta_{0j} v =\zeta_{i0} v \zeta_{0j} $, and so $\gamma$ is inner by Theorem~\ref{thm-BR}.  
\end{proof}

\begin{corollary}\label{notwist-A}
If $\Phi: \rgr A \to \rgr S$ is a twist functor, then $S \cong A$ and $\Phi \cong \id_{\rgr A}$.
\end{corollary}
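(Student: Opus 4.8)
The plan is to observe that Corollary~\ref{notwist-A} is the instance of Corollary~\ref{cor-notwist} in which the ring $R$ is the Weyl algebra $A$. Corollary~\ref{cor-notwist} applies to any graded ring all of whose degree $0$ automorphisms of the associated $\zed$-algebra are inner (i.e. lie in the kernel of the map \eqref{Aut-Pic}), and its conclusion is exactly that any twist functor out of $\rgr R$ lands at a ring isomorphic to $R$ and is naturally isomorphic to the identity.

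First I would invoke Lemma~\ref{lem-autA}, which establishes precisely the required hypothesis for $R = A$: every automorphism of $\bbar{A}$ of degree $0$ is inner. Granting this, $A$ satisfies the hypothesis of Corollary~\ref{cor-notwist}, so applying that corollary with $R = A$ immediately yields that for any twist functor $\Phi : \rgr A \to \rgr S$ we have $S \cong A$ and $\Phi \cong \id_{\rgr A}$.

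There is essentially no obstacle here — the statement is a one-line deduction from results already proved. If one wanted to be self-contained, one could instead unwind the argument of Corollary~\ref{cor-notwist} directly: given a twist functor $\Phi$, set $\F = \Phi^* \shift_S$; by Proposition~\ref{prop-twists} we have $\F \cong (\blank)_\beta$ for some principal automorphism $\beta$ of $\bbar{A}$; then $\alpha^{-1}\beta$ has degree $0$, hence is inner by Lemma~\ref{lem-autA}, so $\F \cong (\blank)_\alpha \cong \shift_A$; and Theorem~\ref{thm-grMor-pullback} (together with Proposition~\ref{prop-welldef}) gives $S \cong \End_A^{\shift_A}(A) \cong A$ and $\Phi \cong \Htwist{\shift_A}{A}{A}{\blank} \cong \id_{\rgr A}$. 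Either route is routine; the real content was already isolated in Lemma~\ref{lem-autA}.
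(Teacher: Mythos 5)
Your proposal is correct and matches the paper's proof exactly: the paper deduces the corollary directly from Lemma~\ref{lem-autA} together with Corollary~\ref{cor-notwist}. Your optional unwinding of Corollary~\ref{cor-notwist} via Proposition~\ref{prop-twists} and Proposition~\ref{prop-welldef} is also accurate but unnecessary.
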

\begin{proof}
This follows from Lemma~\ref{lem-autA}  and Corollary~\ref{cor-notwist}.  \end{proof}

The next result is the most important application of the rigidity of the category $\rgr A$: the infinite dihedral group $D_\infty$ is a factor as well as a subgroup of $\Pic(\rgr A)$.  This gives us two integer invariants associated to each element of $\Pic(\rgr A)$. 

\begin{theorem}\label{thm-RST}
Let $\mathcal{F}$ be an autoequivalence of $\rgr A$.  Then there are unique integers $a = \pm 1$ and $b$ such that for all $n \in \zed$, we have
\[ \{ \mathcal{F}(X\ang{n}), \mathcal{F}(Y \ang{n})\} \cong \{ X \ang{an+b}, Y \ang{an+b} \} \]
and for all $\lambda \in k \smallsetminus \zed$,
\[ \mathcal{F}(M_\lambda) \cong M_{a \lambda + b}.\]
\end{theorem}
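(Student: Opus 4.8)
The plan is to exploit the fact, established in Sections~\ref{sec-category} and \ref{sec-Pic}, that the graded simple modules of finite length fall into two families with very different behavior: the ``integrally supported'' simples $X\ang{n}, Y\ang{n}$ (which carry nonsplit self-extensions only through the modules $A/zA$ and $A/(z-1)A$, and have $\Ext$ between $X\ang{n}$ and $Y\ang{n}$) and the simples $M_\lambda$ for $\lambda\in k\smallsetminus\zed$ (which are ``rigid'' in the sense that $\ext_A(M_\lambda,M_\lambda)=k$ but $\ext_A(M_\lambda,M_\mu)=0$ for $\mu\neq\lambda$, and there are no extensions between the $M_\lambda$ and the $X\ang{n},Y\ang{n}$). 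Since $\F$ is an autoequivalence it permutes isomorphism classes of graded simples and preserves all $\Ext$-groups, so it must preserve each of these two families as a set. Thus $\F$ permutes the set $\{M_\lambda : \lambda\in k\smallsetminus\zed\}$, and it permutes the set of isomorphism classes $\{X\ang{n},Y\ang{n} : n\in\zed\}$.

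First I would analyze the action on the $X\ang{n},Y\ang{n}$. The key structural input is that for each $n$ there is exactly one nonsplit extension (up to scalar) among $\{X\ang{n},Y\ang{n}\}$ on each side coming from the modules $(A/zA)\ang{n}$ and $(A/(z-1)A)\ang{n+1}$ of Lemma~\ref{lem-comp}, linking the ``level $-n$'' simples to the ``level $-n-1$'' simples; more precisely, by Lemma~\ref{lem-Ext}(2)(3) the only nonsplit self-extensions of finite-length integrally-supported semisimples are built from $X\ang{n}$--$Y\ang{n}$ pairs, and the ``adjacency'' recorded by the sequences \eqref{YX} and \eqref{XY} makes $\{\,\{X\ang{n},Y\ang{n}\}\,\}_{n\in\zed}$ into a graph isomorphic to the Cayley graph of $D_\infty$ (two vertices per ``slot'', edges from the two non-split extensions at each slot). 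An autoequivalence must preserve this graph structure, so it induces an automorphism of this $D_\infty$-labeled structure; the automorphism group of that structure is exactly $D_\infty$ acting on the slots, which gives the pair $(a,b)$ with $a=\pm1$: there is a unique affine map $n\mapsto an+b$ of $\zed$ realizing the induced permutation of slots, and $\F(\{X\ang n,Y\ang n\})\cong\{X\ang{an+b},Y\ang{an+b}\}$. (Uniqueness of $(a,b)$ is immediate since the slots are all distinct.)

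Next I would transfer this affine map to the action on the $M_\lambda$. Here the bridge is the left $k[z]$-module structure: every finite-length graded $A$-module is a $(k[z],A)$-bimodule supported at finitely many points of $\Spec k[z]$, and the ``support'' invariant is what distinguishes the simples within each family. But $\F$ need not preserve the $k[z]$-action, so I cannot argue with support directly; instead I would use a categorical surrogate for the integer $n$. The cleanest approach is to note that the shift functor $\shift_A = (\blank)_\alpha$ already satisfies the theorem with $(a,b)=(1,1)$: $\shift_A(M_\lambda)\cong M_{\lambda+1}$ and $\shift_A(X\ang n)=X\ang{n+1}$. After composing $\F$ with a suitable power of $\shift_A$ and, if $a=-1$, with $\omega$ (which by the remark after Lemma~\ref{lem-simple} sends $X\ang n\mapsto Y\ang{-n-1}$, $M_\lambda\mapsto M_{-\lambda}$, realizing $(a,b)=(-1,-1)$), I may assume $\F$ fixes each $\{X\ang n,Y\ang n\}$ setwise. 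It then suffices to show such an $\F$ fixes every $M_\lambda$. For this I would use a ``counting'' argument with a rank-1 graded projective $P$: by Proposition~\ref{prop-duality} and Lemma~\ref{lem-asymptotic} the invariants $\hom_A(P,X\ang n)$, $\ext_A(M_\lambda,P)$, etc., pin down both the position of $M_\lambda$ relative to the integrally-supported simples and the parameter $\lambda$ itself (two distinct $M_\lambda$ are distinguished by whether $\ext_A(M_\lambda,M_\mu)$ vanishes). Concretely, pick a rank-1 projective $P$ and consider the module $P':=\F(P)$; since $\F$ fixes the integrally-supported simples, Lemma~\ref{lem-simplefactor} forces $\F(P)\cong P$ after a further harmless adjustment, and then the functor $\hom_A(P,\blank)$ applied before and after $\F$ shows $\F(M_\lambda)$ has the same $\ext$-relations to all other $M_\mu$ as $M_\lambda$ does, whence $\F(M_\lambda)\cong M_\lambda$.

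The main obstacle I anticipate is the second half: establishing that the affine map governing the $X\ang n,Y\ang n$ is \emph{the same} affine map governing the $M_\lambda$. The extension groups alone do not link the two families (Lemma~\ref{lem-Ext}(5)), so the coupling must come from a global object — a projective module or the ring $A$ itself — that ``sees'' both. I expect the cleanest route is via rank-1 graded projectives and the partition-of-simples statement (Proposition~\ref{prop-duality}, Lemma~\ref{lem-simplefactor}): reduce to the case where $\F$ fixes all integrally-supported simples, show this forces $\F$ to fix a rank-1 projective $P$ up to isomorphism, and then read off $\F(M_\lambda)\cong M_\lambda$ from a short exact sequence $0\to P'\to P\to M_\lambda\to 0$ together with the $\ext$-rigidity of the $M_\lambda$. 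Getting the bookkeeping of shifts and the $\omega$-twist exactly right so that the \emph{same} $(a,b)$ appears in both formulas is the delicate point, but it is essentially forced once one checks the theorem for the generators $\shift_A$ and $\omega$ of the ``obvious'' $D_\infty\subset\Pic(\rgr A)$.
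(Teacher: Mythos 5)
Your opening steps (that $\F$ permutes the family $\{M_\lambda\}_{\lambda\notin\zed}$ among themselves and permutes the pairs $\{X\ang{n},Y\ang{n}\}$, because the $M_\lambda$ are the only simples with nonsplit self-extensions and the pairs are the only nonsplit extensions of nonisomorphic simples) agree with the paper and are correct. The two substantive points of the theorem, however, are exactly where your proposal has gaps.

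First, the ``Cayley graph of $D_\infty$'' you describe on the integrally supported simples does not exist. By Lemma~\ref{lem-Ext}(2)--(3), $\ext_A(X\ang{n},Y\ang{m})$ and $\ext_A(Y\ang{n},X\ang{m})$ vanish unless $m=n$ (the $\Ext$-groups are concentrated in degree $0$), and $\ext_A(X\ang{n},X\ang{m})=\ext_A(Y\ang{n},Y\ang{m})=0$; both sequences \eqref{YX} and \eqref{XY} link $X\ang{0}$ to $Y\ang{0}$, not slot $n$ to slot $n\pm 1$. So the $\ext$-quiver of the integrally supported simples is a disjoint union of two-vertex components, one per slot, with no edges between distinct slots. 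From this data one only learns that $\F$ induces \emph{some} bijection $g$ of $\zed$; nothing forces $g$ to be affine, so the pair $(a,b)$ cannot be extracted this way, and your subsequent reduction (compose with $\shift_A^{b}$ and possibly $\omega$ to make $\F$ fix every slot) is unavailable. Second, even granting that reduction, your mechanism for proving $\F(M_\lambda)\cong M_\lambda$ fails: by Lemma~\ref{lem-Ext}(4)--(5), together with the fact that $\hom_A(P,M_\lambda)=k$ for every rank one graded projective $P$ and every $\lambda\notin\zed$, all the $M_\lambda$ have identical Hom- and Ext-invariants against every test object you propose, so any permutation of $\{M_\lambda\}$ is consistent with that data; and applying $\F$ to $0\to(z+\lambda)P\to P\to M_\lambda\to 0$ gives nothing, since $(z+\lambda)P\cong P$ as an abstract graded module. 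The paper closes both gaps simultaneously with one device your proposal is missing: the induced functor $\F_0=\F(\blank\otimes_{k[z]}A)_0$ on $\rmod k[z]$, which sends $k[z]/(z+\lambda)$ to $k[z]/(z+g(\lambda))$ for \emph{all} $\lambda\in k$ (by Lemma~\ref{lem-comp} for integral $\lambda$, by the definition of $M_\lambda$ otherwise), is invertible, and hence comes from an automorphism of $k[z]$. This forces $g(\lambda)=a\lambda+b$ on all of $k$ at once, yielding both the affineness of the slot permutation and the agreement of $(a,b)$ between the two families.
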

\begin{proof}
If $a$ and $b$ exist, they are clearly unique.

We  observe from Lemma~\ref{lem-simple} and Lemma~\ref{lem-Ext} that $\mathcal{F}$ must map the simple module $M_\lambda$ to some other simple $M_{\mu}$, since these are the only simples with nonsplit self-extensions.  Therefore, $\mathcal{F}$ must map integrally supported simples to integrally supported simples.  Further, by Lemma~\ref{lem-Ext}, for all $n \in \zed$ the pair  $\{X\ang{n}, Y\ang{n}\}$ must map to some other pair $\{X \ang{n'}, Y\ang{n'}\}$, as these pairs form the only nonsplit extensions of two nonisomorphic  simples.    In other words, there is a bijection $g: k \to k$ such that:
\begin{enumerate}
\item If $\lambda \in \zed$, then $g(\lambda) \in \zed$ and $\F(\{X\ang{\lambda}, Y\ang{\lambda}\}) \cong \{X\ang{g(\lambda)}, Y\ang{g(\lambda)} \}$.
\item If $\lambda \in k \smallsetminus \zed$, then $g(\lambda) \not\in \zed$ and $\F(M_\lambda) \cong M_{g(\lambda)}$.
\end{enumerate}

Now consider the functor $\F_0 = \F(\blank \otimes_{k[z]} A)_0: \rmod k[z] \to \rmod k[z]$.  We claim that for all $\lambda \in k$, we have   $\F_0(k[z]/(z+\lambda)) \cong k[z]/(z+g(\lambda))$.  This follows from Lemma~\ref{lem-comp} if $\lambda \in \zed$, and from the definition of $M_\lambda$ and $M_{g(\lambda)}$ if $\lambda \not\in\zed$.  
In particular, as $g$ is a bijection, $\F_0$ is invertible and so an autoequivalence of $\rmod k[z]$.  But the only such functors act via automorphisms of  $k[z]$, so there are constants $a,b \in k $ such that $g(n) = an+b$ for all $n$.  As $g$ maps $\zed$ bijectively to $\zed$, $a$ must be $\pm 1$ and $b$ must be an integer.
\end{proof}

\begin{defn}
If $\F$ is an autoequivalence of $\rgr A$, the integer $b$ above is the {\em rank} of $\F$, and the integer $a$ is the {\em sign} of $\F$.  We say that $\F$ is {\em odd} if it has sign -1, and is {\em even} if it has sign 1.  
We say that  $\F$ is {\em numerically trivial} if it is even and has rank 0. 
\end{defn}

 For example, $\shift_A^n$ is even of rank $n$, and $\omega$ is odd of rank -1.  
  
Since the set of maps $g: \zed \to \zed$ of the form $n \mapsto \pm n + b$ is isomorphic to $D_\infty$, and rank and sign clearly behave well with respect to composition of functors, from Theorem~\ref{thm-RST}  we immediately obtain: 

\begin{corollary}\label{cor-Piczquotient}
Let $\Picz(\rgr A)$ be the subgroup of $\Pic(\rgr A)$ of numerically trivial autoequivalences.    
Then $\Pic(\rgr A) \cong \Picz(\rgr A) \rtimes D_\infty$.
\end{corollary}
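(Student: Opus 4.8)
The plan is to repackage Theorem~\ref{thm-RST} as a split short exact sequence of groups. First I would identify $D_\infty$ with the group of affine isometries of $\zed$, i.e.\ the maps $g_{a,b}\colon n\mapsto an+b$ with $a\in\{\pm 1\}$ and $b\in\zed$, which is generated by the translation $t\colon n\mapsto n+1$ and the reflection $s\colon n\mapsto -n-1$ (subject to $s^2=1$ and $sts^{-1}=t^{-1}$). Theorem~\ref{thm-RST} attaches to every autoequivalence $\F$ of $\rgr A$ a well-defined pair (sign, rank), equivalently the element $g_\F=g_{a,b}\in D_\infty$; since naturally isomorphic functors agree on isomorphism classes of simple modules, this descends to a map $\rho\colon\Pic(\rgr A)\to D_\infty$, $[\F]\mapsto g_\F$. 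Applying $\F\G$ to the modules $M_\lambda$ (and to the pairs $\{X\ang n,Y\ang n\}$) shows $g_{\F\G}=g_\F\circ g_\G$, so $\rho$ is a group homomorphism; and $\ker\rho=\Picz(\rgr A)$ by the very definition of numerical triviality. Thus there is an exact sequence $1\to\Picz(\rgr A)\to\Pic(\rgr A)\xrightarrow{\rho}D_\infty\to 1$ once surjectivity is known, and the corollary reduces to producing a splitting of $\rho$.

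Next I would build the splitting $\sigma\colon D_\infty\to\Pic(\rgr A)$ from the presentation $D_\infty=\langle s,t\mid s^2=1,\ sts^{-1}=t^{-1}\rangle$ by setting $\sigma(t)=[\shift_A]$ and $\sigma(s)=[\omega]$. For $\sigma$ to be a well-defined homomorphism I must check that the two defining relations of $D_\infty$ hold in $\Pic(\rgr A)$: (i) $[\omega]^2=[\id_{\rgr A}]$, and (ii) $[\omega][\shift_A][\omega]^{-1}=[\shift_A]^{-1}$. Relation (i) is where the rigidity of the category enters: $\omega^2$ is the ring automorphism $x\mapsto -x,\ y\mapsto -y$ of $A$, which preserves degree, hence induces a degree-$0$ automorphism of $\bbar A$, which by Lemma~\ref{lem-autA} is inner, i.e.\ trivial in $\Pic(\rgr A)$; so $\omega^2\cong\id_{\rgr A}$. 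Relation (ii) is a bookkeeping check: since $\omega$ relabels the degree-$n$ component of a module as its degree-$(-n)$ component (compatibly with the $A$-action, $\omega$ being a ring automorphism), one computes $(\omega(M\ang 1))_n=M_{-n-1}=((\omega M)\ang{-1})_n$, so $\omega\shift_A\cong\shift_A^{-1}\omega$. Granting (i) and (ii), $\sigma$ is a homomorphism, and $\rho\sigma=\id_{D_\infty}$ because $\rho([\shift_A])=t$ (as $\shift_A$ is even of rank $1$) and $\rho([\omega])=s$ (from $\omega(X\ang n)\cong Y\ang{-n-1}$, i.e.\ $\omega$ has sign $-1$ and rank $-1$). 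In particular $\rho$ is surjective.

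A split short exact sequence of groups gives a semidirect product decomposition, so $\Pic(\rgr A)\cong\Picz(\rgr A)\rtimes D_\infty$, with $D_\infty$ acting by conjugation through $\sigma$. The only content beyond Theorem~\ref{thm-RST} is the verification of relations (i) and (ii) for the section: (i) genuinely uses Lemma~\ref{lem-autA}, while (ii) is purely formal; I do not expect a serious obstacle, which is why the corollary may be called immediate. The one subtlety worth flagging is that having the same sign and rank does \emph{not} by itself force two autoequivalences to be naturally isomorphic (that gap is precisely $\Picz(\rgr A)$), so relation (ii) must be checked directly on functors rather than read off from $\rho$.
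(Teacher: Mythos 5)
Your proof is correct and follows essentially the same route as the paper: Theorem~\ref{thm-RST} supplies the surjection onto $D_\infty$ with kernel $\Picz(\rgr A)$, and the splitting is the subgroup $\ang{\omega,\shift_A}$, whose dihedral relations you verify via Lemma~\ref{lem-autA} (for $\omega^2\cong\id$) and the degree bookkeeping $\omega\shift_A\cong\shift_A^{-1}\omega$. The paper states this in one line; your write-up just makes the same argument explicit.
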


\begin{proof}
This follows from Theorem~\ref{thm-RST} and the fact that the subgroup
\[ \ang{\omega, \shift_A} \subseteq \Pic(\rgr A) \]
is isomorphic to $D_\infty$.
\end{proof}

As a second corollary, we see that to check if two autoequivalences of $\rgr A$ are isomorphic, it suffices to see if they agree on integrally supported simples.

\begin{corollary}\label{cor-intsimple}
Let $\F$ and $\F'$ be autoequivalances of $\rgr A$.  Then $\F \cong \F'$ if and only if $\F(S) \cong \F'(S)$ for all integrally supported simples $S$.
\end{corollary}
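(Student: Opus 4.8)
Since the forward implication is immediate, assume that $\F(S) \cong \F'(S)$ for every integrally supported simple $S$, and set $\mathcal{G} = (\F')^{-1}\circ\F$, an autoequivalence of $\rgr A$. As equivalences preserve isomorphism classes of objects, applying $(\F')^{-1}$ to $\F(S)\cong\F'(S)$ gives $\mathcal{G}(S)\cong S$ for every integrally supported simple $S$; in particular $\mathcal{G}(X\ang n)\cong X\ang n$ and $\mathcal{G}(Y\ang n)\cong Y\ang n$ for all $n$. By Theorem~\ref{thm-RST}, $\mathcal{G}$ has a sign $a=\pm1$ and a rank $b$, with $\mathcal{G}(\{X\ang n,Y\ang n\})\cong\{X\ang{an+b},Y\ang{an+b}\}$; comparing this with $\{X\ang n,Y\ang n\}$ forces $an+b=n$ for all $n$ (as $X\ang n\not\cong Y\ang m$ and $X\ang n\cong X\ang m$ only when $n=m$), so $a=1$ and $b=0$. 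Hence also $\mathcal{G}(M_\lambda)\cong M_{a\lambda+b}=M_\lambda$ for $\lambda\in k\smallsetminus\zed$, so $\mathcal{G}$ fixes \emph{every} graded simple $A$-module up to isomorphism.

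The plan is now to show $\mathcal{G}\weak\id_{\rgr A}$; by Lemma~\ref{lem-autA} and Corollary~\ref{cor-noweak} this forces $\mathcal{G}\cong\id_{\rgr A}$, that is, $\F\cong\F'$. So fix a finitely generated graded $A$-module $M$; we must show $\mathcal{G}(M)\cong M$. Since $A$ is a hereditary Noetherian domain, $M$ splits as the direct sum of a graded projective module and a finite-length module: the torsion submodule $T(M)$ has finite length, $M/T(M)$ is finitely generated and torsion-free and hence projective (it embeds in a free module, and $A$ is hereditary), so the sequence $0\to T(M)\to M\to M/T(M)\to 0$ splits. As $\mathcal{G}$ is additive it is enough to treat the two cases.

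Suppose first that $M$ has finite length, and write it as a direct sum of indecomposables. An autoequivalence is exact and preserves indecomposability, so each indecomposable summand $Z$ is carried to an indecomposable $\mathcal{G}(Z)$ of the same length with the same Jordan-H\"older factors (using that $\mathcal{G}$ fixes every simple). By Lemma~\ref{lem-indec}, $\mathcal{G}(Z)\cong Z$, hence $\mathcal{G}(M)\cong M$. Suppose instead that $M$ is projective. By Lemma~\ref{lem-split} it is a direct sum of rank $1$ graded projectives, so we may assume $M=P$ is rank $1$ projective, hence indecomposable. An autoequivalence preserves finite generation, projectivity and indecomposability, so $\mathcal{G}(P)$ is a finitely generated indecomposable graded projective, and therefore is rank $1$ again by Lemma~\ref{lem-split}. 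Now fix $j\in\zed$: the simple $F_j(P)$, one of $X\ang j$ or $Y\ang j$, is a quotient of $P$, so applying the exact functor $\mathcal{G}$ and using $\mathcal{G}(F_j(P))\cong F_j(P)$ exhibits $F_j(P)$ as a simple quotient of $\mathcal{G}(P)$ supported at $-j$; by Proposition~\ref{prop-duality} this quotient is unique, so $F_j(\mathcal{G}(P))\cong F_j(P)$. As this holds for every $j$, the rank $1$ projectives $P$ and $\mathcal{G}(P)$ are $j$-congruent for all $j$, and Lemma~\ref{lem-simplefactor} gives $\mathcal{G}(P)\cong P$.

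The only inputs beyond formal category theory (invariance of simplicity, length, indecomposability, finite generation and projectivity under equivalences, together with exactness) are the structural fact that every finitely generated graded $A$-module is a graded projective plus a finite-length module, and the analysis of rank $1$ projectives via their simple quotients. I expect the latter — in particular checking that $\mathcal{G}(P)$ again lies among the rank $1$ graded projectives, so that $F_j(\mathcal{G}(P))$ is defined — to be the step requiring the most care, although it is short given Lemma~\ref{lem-split}, Proposition~\ref{prop-duality} and Lemma~\ref{lem-simplefactor}.
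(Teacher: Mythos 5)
Your proof is correct and follows essentially the same route as the paper: reduce via Theorem~\ref{thm-RST} to agreement on all simples, decompose a finitely generated module into a torsion part (handled by Lemma~\ref{lem-indec}) and a projective part (handled by Lemmas~\ref{lem-split} and \ref{lem-simplefactor}), and conclude from weak isomorphism via Lemma~\ref{lem-autA} and Corollary~\ref{cor-noweak}. The only cosmetic difference is that you pass to $\mathcal{G}=(\F')^{-1}\F$ and compare with the identity, whereas the paper compares $\F$ and $\F'$ directly.
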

\begin{proof}
Suppose that $\F(S) \cong \F'(S)$ for all integrally supported simples $S$.  Then by Theorem~\ref{thm-RST}, $\F$ and $\F'$ have the same rank and sign, and so $\F(S) \cong \F'(S)$ for any simple module $S$. Let $N$ be a finitely generated graded $A$-module; we write  $N = P \oplus Z$ where $P$ is torsion-free (and hence projective)  and $Z$ is torsion.   
Let $Z = Z_1 \oplus Z_2 \oplus \cdots \oplus Z_n$, where the $Z_i$ are indecomposable.  By Lemma~\ref{lem-indec}, $\F(Z_i) \cong \F'(Z_i)$ for all $i$, so $\F(Z) \cong \F'(Z)$.  

   By Lemma~\ref{lem-split}, $P$ splits completely as a direct sum of rank 1 graded projectives; we  write $P \cong P_1 \oplus \cdots \oplus P_n$, where the $P_i$ have rank 1.  Since $\F$ and $\F'$ agree on simples,  for each $i$ the modules  $\F P_i$ and $\F' P_i$ have isomorphic simple factors and so by Lemma~\ref{lem-simplefactor}, $\F P_i \cong \F' P_i$ for all $i$.  Thus $\F P \cong \F' P$, and so $\F N \cong \F P \oplus \F Z \cong \F' P \oplus \F' Z \cong \F' N$.  Thus $\F$ and $\F'$ are weakly isomorphic; by Lemma~\ref{lem-autA} and Corollary~\ref{cor-noweak}, $\F \cong \F'$.

The other direction is trivial.
\end{proof}

A priori, it is not clear that $\Picz(\rgr A)$ is nontrivial.  
It turns out, however, that this group is  quite large.  We exhibit generators for it in the next proposition.

We recall the notation from Definition~\ref{def-Fj} that if $P$ is  a rank 1 graded projective module, then $F_j(P)$ is the unique simple factor of $P$ supported at $-j$.

\begin{proposition}\label{prop-invsexist}
For any $j \in \zed$ there is a numerically trivial  automorphism $\inv_j$ of $\rgr A$  such that $\inv_j(X \ang{j}) \cong Y \ang{j}$, $\inv_j(Y \ang{j}) \cong X \ang{j}$, and if $j' \neq j$, then 
$\inv_j(X \ang{j'}) \cong X \ang{j'}$ and $\inv_j (Y \ang{j'}) \cong Y \ang{j'}$.  Further, for any $i, j \in \zed$, 
\beq\label{eq-obvious}
\shift^i_A \inv_j \cong \inv_{i+j} \shift^i_A,
\eeq
and $\inv_j^2 \cong \id_{\rgr A}$.  
\end{proposition}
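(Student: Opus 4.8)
The plan is to construct $\inv_0$ first and then set $\inv_j := \shift_A^j\,\inv_0\,\shift_A^{-j}$ for all $j\in\zed$. With this definition the relation \eqref{eq-obvious} is immediate, and $\inv_j^2 = \shift_A^j\,\inv_0^2\,\shift_A^{-j}$, so it suffices to verify the claimed action on simples, numerical triviality and $\inv_0^2\cong\id_{\rgr A}$ in the case $j=0$. Indeed, once $\inv_0$ is understood, the action of $\inv_j$ on simples follows from $\inv_j(X\ang m)=\shift_A^j\,\inv_0(X\ang{m-j})$ (and similarly for $Y\ang m$) together with the obvious action of $\shift_A$; and since conjugation by a power of $\shift_A$ changes neither the sign nor, for an even functor, the rank of Theorem~\ref{thm-RST}, $\inv_j$ is numerically trivial as soon as $\inv_0$ is.

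The real content is therefore the construction of $\inv_0$, which I would obtain from Theorem~\ref{thm-grMor-pullback}. Take $P=xA\cong A\ang 1$, a rank $1$ graded projective module whose shifts $\shift_A^n P\cong A\ang{n+1}$ generate $\rgr A$. What is needed is a $P$-generative autoequivalence $\F$ of $\rgr A$ for which $\End^{\F}_A(P)\cong A$ via a degree-preserving isomorphism; then $\Phi:=\Htwist{\F}{A}{P}{\blank}$ is an autoequivalence of $\rgr A$, and I would take $\inv_0:=\Phi$ (composing, if necessary, with a power of $\shift_A$ or with $\omega$ to normalize the action on simples). To produce $\F$, I would work inside the abelian subcategory $\T_0\subseteq\rgr A$ of finite length modules supported at $\{0\}$: by Lemmas~\ref{lem-simple}, \ref{lem-Ext} and \ref{lem-indec} this category has exactly the two simples $X$ and $Y$, neither has self-extensions, and $\ext_A(X,Y)=\ext_A(Y,X)=k$, so $\T_0$ carries an evident order-two self-equivalence interchanging $X$ and $Y$ (matching up the two non-split sequences \eqref{YX} and \eqref{XY}). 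The task is to extend this to an autoequivalence $\F$ of all of $\rgr A$ that is ``the identity away from strand $0$''. Concretely this can be done either by writing down the corresponding invertible $\bbar A$-bimodule by hand --- essentially $\bbar A$ with its multiplication renormalized through the node $m_{i0}$, $m_{0j}$ at strand $0$ --- or by specifying $\F$ on a generating set of objects and morphisms of $\rgr A$ and checking exactness and invertibility directly; in either case one then checks $P$-generativity and $\End^{\F}_A(P)\cong A$.

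Granting $\F$, hence $\inv_0$, the remaining verifications are bookkeeping. By construction $\inv_0$ interchanges $X$ and $Y$; by Theorem~\ref{thm-RST} its effect on integrally supported simples is governed by an element $(a,b)\in D_\infty$, and since $\inv_0$ fixes the pair $\{X,Y\}$ and is forced to be trivial on $X\ang n,Y\ang n$ for $|n|$ large anyway (Lemma~\ref{lem-asymptotic}), we must have $(a,b)=(1,0)$: so $\inv_0$ is even of rank $0$, i.e. numerically trivial, and $\inv_0(X\ang n)\cong X\ang n$, $\inv_0(Y\ang n)\cong Y\ang n$ for $n\neq 0$. Finally $\inv_0^2$ is numerically trivial and fixes every integrally supported simple, so $\inv_0^2\cong\id_{\rgr A}$ by Corollary~\ref{cor-intsimple}.

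The main obstacle is precisely the middle step: exhibiting the nontrivial numerically trivial autoequivalence $\F$ (equivalently, producing the invertible $\bbar A$-bimodule, or checking that $\End^{\F}_A(xA)$ is isomorphic to $A$ as a graded ring). Everything before and after is a formal application of Theorems~\ref{thm-RST} and \ref{thm-grMor-pullback} and Corollary~\ref{cor-intsimple}; but this step is exactly where the rigidity results of Section~\ref{sec-category} (classification of simples, of extensions, and of rank $1$ projectives by their simple factors) have to be used concretely rather than formally, since $\Picz(\rgr A)$ being nontrivial is not something the general machinery of Sections~\ref{sec-general}--\ref{sec-genPic} provides on its own.
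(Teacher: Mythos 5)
Your reduction to $j=0$, and your concluding bookkeeping (pinning down rank and sign via Theorem~\ref{thm-RST}, and getting $\inv_0^2\cong\id_{\rgr A}$ from Corollary~\ref{cor-intsimple} once $\inv_0^2$ is known to fix all integrally supported simples) match the paper and are fine. But the proposal has a genuine gap at exactly the point you yourself flag as ``the real content'': the autoequivalence $\inv_0$ is never actually constructed. The route through Theorem~\ref{thm-grMor-pullback} is circular here --- to produce an autoequivalence $\Phi$ of $\rgr A$ that way you must first hand it a $P$-generative autoequivalence $\F$ with $\End^{\F}_A(P)\cong A$, and the $\F$ you would need is (up to shift) built out of the very involutions you are trying to construct. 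Your two fallback suggestions (write down an invertible $\bbar A$-bimodule ``renormalized at strand $0$,'' or define $\F$ on generators and check everything) are not carried out, and the first is actively dangerous: by Lemma~\ref{lem-autA} any degree-$0$ renormalization of the multiplication of $\bbar A$ is inner, so a bimodule of the form $\bbar A_\gamma$ can never yield a nontrivial numerically trivial functor. What fills the gap in the paper is a direct construction: for a rank~$1$ graded projective $P$ one has $\hom_A(P,X\oplus Y)=k$ by Proposition~\ref{prop-duality}, so $P$ has a unique submodule $N$ with $P/N$ the simple factor of $P$ supported at $0$; set $\inv_0(P)=N$, check (using $0$-congruence) that every morphism between rank~$1$ projectives restricts to these submodules, extend to all of $\rgr A$ by standard arguments, and observe $\inv_0^2P=zP$ to produce an inverse. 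None of the general machinery of Sections~\ref{sec-general}--\ref{sec-genPic} substitutes for this.

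A smaller but real error: your claim that Lemma~\ref{lem-asymptotic} forces $\inv_0$ to act trivially on $X\ang{n},Y\ang{n}$ for $|n|\gg0$, hence that the sign is $+1$, does not work. Knowing $b=0$ leaves the possibility that $\inv_0$ is odd, i.e.\ sends the pair $\{X\ang{n},Y\ang{n}\}$ to $\{X\ang{-n},Y\ang{-n}\}$; the simple factors of the rank~$1$ projective $\inv_0(A)$ would still be distributed one per integer, so Lemma~\ref{lem-asymptotic} sees nothing wrong. The paper rules this out by an explicit computation showing $\inv_0(X\ang{1})\cong X\ang{1}$ (using $P=(z+1)A+x^2A$), and only then uses the asymptotic behaviour of the simple factors of $\inv_0(A)\cong A\ang 1$ to identify the action on all integrally supported simples.
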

\begin{proof}
Suppose that $\inv_0$ exists as described.  Then the automorphism $\inv_j = \shift_A^{j} \inv_0 \shift_A^{-j}$ has the required properties, and \eqref{eq-obvious} is satisfied by construction.  Thus it suffices to construct $\inv_0$.

We first define the action of $\inv_0$ on rank 1 graded projective modules.  Let $P$ be such a module.  By  Proposition~\ref{prop-duality}, we know that $\hom_A(P, X \oplus Y) = k$; thus there is a unique submodule $N$ of $P$ such that the sequence
\[ \xymatrix{ 0 \ar[r] & N \ar[r] & P \ar[r] & X \oplus Y }\]
is exact.   Formally, $N$ is the reject of $X \oplus Y $ in $P$; we remark that $N$ is 0-opposite to $P$.   Let $\inv_0(P) = N$.

Suppose that $Q$ is a rank 1 projective that is 0-opposite to $P$.  Let $f: Q \to P$ be a nonzero map.  We claim that $f(Q) \subseteq N$.  
To see this, let $S = F_0(P)$.  Since $Q$ is 0-opposite to $P$, we have that $\hom_A(Q, S) = 0$.  Thus  if we let $M = \coker f$, 
from the exact sequence
\[ 0 \to \hom_A(M, S) \to \hom_A(P, S)  \to \hom_A(Q, S) = 0\]
we see that $\hom_A(M, S) \cong \hom_A(P, S) =  k$.  Thus there is some $N'$ with $f(Q) \subseteq N' \subseteq P$ and  $P / N' \cong S$; but now since $S \cong P/N$ and $\hom_A(P, S) \cong k$, we have $N'=N$ and so $f(Q) \subseteq N$, as claimed.

We next define the action of $\inv_0$ on morphisms between rank 1 projectives.  Let  $f: P \to P'$, where $P$ and $P'$ are rank 1 graded projective modules.  Define  $\inv_0(f): \inv_0(P) \to \inv_0(P')$ to be $f |_{\inv_0(P)}$.  We verify that this is well-defined --- that is, that $f(\inv_0(P)) \subseteq \inv_0(P')$.  If $P'$ and $P$ are 0-congruent, then $\inv_0(P) $ and $P'$ are 0-opposite, so by the claim above we have $f(\inv_0(P)) \subseteq \inv_0(P')$.  On the other hand, if $P'$ and $P$ are 0-opposite, then $f(P) \subseteq \inv_0(P')$ so certainly  $f(\inv_0(P)) \subseteq \inv_0(P')$.

It is clear that $\inv_0$ behaves functorially on maps between rank 1 projectives; that is, it sends identities to identities and preserves commuting triangles.  By standard arguments, $\inv_0$ extends uniquely to a functor defined on all modules and morphisms in $\rgr A$. 

We show that $\inv_0$ has the properties claimed.  Let $P$ be a rank 1 graded projective module.  By Lemma~\ref{lem-comp} we have $\inv_0^2 P = z P$.  Without loss of generality, we may regard $P$ as a submodule of the graded quotient ring of $A$; thus if we define $\inv_0^{-1}(P) = z^{-1} \inv_0(P)$ and extend to all of $\rgr A$ it is clear that $\inv_0 \inv_0^{-1} = \inv_0^{-1} \inv_0 = \id_{\rgr A}$.  Thus $\inv_0$ is an automorphism of $\rgr A$.  We also note that if $\inv_0$ behaves as claimed on simple modules, then for any integrally supported simple $S$, we have $\inv_0^2(S) \cong S$, and so by Corollary~\ref{cor-intsimple}, $\inv_0^2 \cong \id_{\rgr A}$.  

Thus it remains to establish that $\inv_0$ acts as claimed on simples.  First, since $\inv_0 A  = xA$, by applying $\inv_0$ to the exact sequence
\[ 0 \to xA \to A \to X \to 0\]
we obtain
\[ 0 \to zA \to xA \to \inv_0(X) \to 0\]
and so $\inv_0(X) \cong xA / zA \cong Y$.  

By Theorem~\ref{thm-RST}, we see that $\inv_0$ has rank 0 and sends $Y$ to $X$.  
We show that $\inv_0$ is even by computing $\inv_0(X \ang{1})$.  Let $P = (z+1) A + x^2 A$.   It is straightforward to see that $A/P \cong X \ang{1}$, and that $P/x^2 A \cong X$.  Thus $x^2A = \inv_0(P)$, and applying $\inv_0$ to the exact sequence
\[ 0 \to P \to A \to X \ang{1} \to 0 \]
gives
\[ 0 \to x^2 A \to x A \to \inv_0(X \ang{1}) \to 0.\]
Thus $\inv_0(X\ang{1}) \cong xA /x^2 A \cong  X \ang{1}$.  Therefore the image of $\inv_0$ in the factor group $D_\infty$ of $\Pic(\rgr A)$ acts on $\zed$ by sending 0 to 0 and 1 to 1, and so must be the identity.

The simple factors of $\inv_0(A) \cong A\ang{1}$ are $\{X\ang{j}\}_{j \geq 1}$ and $\{Y \ang{j}\}_{j \leq 0}$.  Thus for all  $j \neq 0$ we have $F_j(A) \cong F_j(\inv_0(A)),$ which is isomorphic to $\inv_0(F_j(A))$ since $\inv_0$ is numerically trivial.  Theorem~\ref{thm-RST} shows that $\inv_0$ has the desired behavior on integrally supported simples.  
\end{proof}

It is clear that $\inv_i \inv_j = \inv_j \inv_i$ and that the subgroup of $\Pic(\rgr A)$ generated by the $\{\inv_j\}$ is isomorphic to the  countable direct sum $(\zed/2\zed)^{(\zed)}$.  It is convenient to identify this with the set of finite subsets of $\zed$, which we denote $\zedfin$.  The group operation on $\zedfin$ is exclusive or, which we write as $\oplus$; thus   for $K, J \in \zedfin$, we have 
\[ K \oplus J = (K \cup J) \smallsetminus (K \cap J).\]
The identity element of $\zedfin$ is $\emptyset$.   

There are multiplicative and additive actions of $\zed$ on $\zedfin$, given by 
\[ nJ = \{ nj \st j \in J\} \]
and
\[ J + n = \{ j+n \st j \in J \}\]
for any $n \in \zed$ and $J \in \zedfin$.  In particular, $D_\infty$ is naturally a subgroup of $\Aut(\zedfin)$.

To each $J \in \zedfin$, we may associate an automorphism of $\rgr A$, which we write either $\inv_J$ or $\invbrak{J}$.  It is defined by
\[ \invbrak{J} = \inv_J = \prod_{j \in J} \inv_j,\]
with inverse
\[ \inv_J^{-1} = \Bigl( \prod_{j \in J} (z+j)^{-1} \Bigr) \inv_J.\] 
For completeness, we define $\inv_{\emptyset} = \id_{\rgr A}$.
Because $\inv_J^2 \cong \id_{\rgr A}$, we refer to the functors $\inv_J$ as {\em involutions.}  We note that if $P$ is a rank 1 graded projective $A$-module, then $\inv_J P$ is the unique submodule of $P$ that is maximal with respect to the property that it is $j$-opposite to $P$ for all $j \in J$.  (In fact, it is possible to give a categorical definition of the functors $\inv_J$ as the autoequivalences of $\rgr A$ that are subfunctors of the identity on projectives and whose square is naturally isomorphic to $\id_{\rgr A}$.  We will not prove this assertion.) 

We now show that the subgroup of involutions of $\rgr A$ is in fact equal to $\Picz(\rgr A)$.  

\begin{corollary}\label{cor-Picz}
Define
\begin{align*} \Lambda: \zedfin & \to \Picz(\rgr A)\\
	J & \mapsto \inv_J
	\end{align*}
Then $\Lambda$ is a group isomorphism.
\end{corollary}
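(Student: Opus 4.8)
The plan is to show that $\Lambda$ is a well-defined group homomorphism, and then establish injectivity and surjectivity separately; the bulk of the work lies in surjectivity.

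First I would check that $\Lambda$ lands in $\Picz(\rgr A)$ and is a homomorphism. Each $\inv_j$ is numerically trivial by Proposition~\ref{prop-invsexist}, and since numerically trivial autoequivalences form a subgroup, any product $\inv_J = \prod_{j \in J} \inv_j$ is numerically trivial, so $\Lambda(J) \in \Picz(\rgr A)$. The relations $\inv_i \inv_j \cong \inv_j \inv_i$ and $\inv_j^2 \cong \id_{\rgr A}$ (both from Proposition~\ref{prop-invsexist}, the commutativity being a consequence of \eqref{eq-obvious} together with the explicit description, or simply from the fact that $\inv_i$ and $\inv_j$ alter the simple factors at $-i$ and $-j$ independently) show that for $K, J \in \zedfin$ we have $\inv_K \circ \inv_J \cong \inv_{K \oplus J}$: common indices cancel in pairs. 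Hence $\Lambda(K \oplus J) \cong \Lambda(K) \circ \Lambda(J)$, so $\Lambda$ is a homomorphism.

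For injectivity, suppose $\inv_J \cong \id_{\rgr A}$ for some $J \in \zedfin$. By Corollary~\ref{cor-intsimple}, it suffices to examine the action on integrally supported simples. If $j \in J$, then $\inv_J(X\ang{j}) \cong Y\ang{j}$ by the defining property of $\inv_j$ (the other $\inv_{j'}$ with $j' \in J$, $j' \neq j$, fix $X\ang{j}$), which is not isomorphic to $X\ang{j}$; this contradicts $\inv_J \cong \id_{\rgr A}$. Hence $J = \emptyset$, so $\Lambda$ is injective.

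The main obstacle is surjectivity: given a numerically trivial autoequivalence $\F$, I must produce $J \in \zedfin$ with $\F \cong \inv_J$. The idea is to read off $J$ from the action of $\F$ on the integrally supported simples. Since $\F$ is numerically trivial, Theorem~\ref{thm-RST} tells us that for each $n \in \zed$, $\F$ maps the pair $\{X\ang{n}, Y\ang{n}\}$ to itself, so either $\F$ fixes both $X\ang{n}$ and $Y\ang{n}$ or it swaps them. Let $J = \{ n \in \zed \st \F \text{ swaps } X\ang{n} \text{ and } Y\ang{n} \}$. The key point to verify is that $J$ is finite: this should follow from Lemma~\ref{lem-asymptotic}, since $\F(A)$ is a rank $1$ graded projective module, and by Lemma~\ref{lem-maxemb}/Proposition~\ref{prop-duality} its simple factors agree with those of $A$ for all but finitely many $n$ — meaning $\F(F_n(A)) \cong F_n(\F A)$ can differ from $F_n(A)$ only for finitely many $n$, and for $n$ where $\F$ fixes nothing we would need $F_n(A)$ to flip type. (More carefully: $\inv_J$ with the above finite $J$ and $\F$ agree on integrally supported simples by construction, once finiteness of $J$ is established.) Granting $J \in \zedfin$, the composite $\inv_J^{-1} \circ \F$ is numerically trivial and fixes every integrally supported simple, so by Corollary~\ref{cor-intsimple} it is isomorphic to $\id_{\rgr A}$; hence $\F \cong \inv_J = \Lambda(J)$, proving surjectivity. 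The finiteness argument is the one genuinely new step; everything else is bookkeeping built on Theorem~\ref{thm-RST}, Corollary~\ref{cor-intsimple}, and Proposition~\ref{prop-invsexist}.
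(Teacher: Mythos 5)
Your proposal is correct and follows essentially the same route as the paper: the homomorphism property via $\inv_j^2 \cong \id$ and commutativity, injectivity by evaluating on $X\ang{j}$, and surjectivity by defining $J$ from the action on the pairs $\{X\ang{n}, Y\ang{n}\}$, deducing finiteness of $J$ from $\F(F_n(A)) \cong F_n(\F A)$ together with Lemma~\ref{lem-asymptotic}, and concluding with Corollary~\ref{cor-intsimple}. The paper's proof is just a terser version of yours.
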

\begin{proof}
Since $(\inv_j)^2 \cong \id_{\rgr A}$ for all $j$, we have that $\inv_K \inv_J \cong \invbrak{K\oplus J}$.  Therefore $\Lambda$ is a group homomorphism, and it is clearly injective.  
We prove surjectivity.  

Suppose that $[\mathcal{F}] \in \Picz(\rgr A)$, and let $P = \mathcal{F}(A)$.
Since $\F$ is numerically trivial, $ \F(F_j(A)) \cong  F_j(P) $ for all $j$; thus by Lemma~\ref{lem-asymptotic}, the set 
\[ J = \{ n \st \F(X\ang{n}) \cong Y \ang{n} \}\] 
is finite.  Since for all integrally supported simples $S$ we have $\inv_J(S) \cong \F(S)$,
 by Corollary~\ref{cor-intsimple} it follows that $\F \cong \inv_J$. 
\end{proof}

\begin{corollary}\label{cor-new}
The automorphism group of  $\bbar{A}$ is generated by inner automorphisms, by the canonical principal automorphism $\alpha$, and by the map $\bbar{\omega}: \bbar{A} \to \bbar{A}$ given by 
$\bbar{\omega}(m_{ij}) = m_{-i,-j}$ if $j \geq i$ and $\bbar{\omega}(m_{ij}) = -m_{-i, -j}$ if $j < i$.    In particular, if $\pi$ is the map defined in \eqref{Aut-Pic}, then 
\[ \im(\pi) = \ang{\omega, \shift_A } \subseteq \Pic(\rgr A).\]
\end{corollary}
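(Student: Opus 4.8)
The claim has two parts: (1) $\Aut\bbar{A}$ is generated by inner automorphisms together with $\alpha$ and $\bbar{\omega}$, and (2) $\im\pi = \ang{\omega,\shift_A}$. I will prove (2) first using the structural results already established, and then deduce (1) from (2) together with the identification of the kernel of $\pi$ coming from Lemma~\ref{lem-autA} and Theorem~\ref{thm-BR}.

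First I would record the easy inclusions. Since $\pi(\alpha)\cong\shift_A$ (shown in the discussion after \eqref{Aut-Pic}) and $\pi(\bbar{\omega})\cong\omega$ (a direct check that the twist of a module by $\bbar{\omega}$ reverses the grading and applies the automorphism $\omega$ of $A$), we have $\ang{\omega,\shift_A}\subseteq\im\pi$. For the reverse inclusion, take any $\gamma\in\Aut\bbar{A}$ and consider $\pi(\gamma)\in\Pic(\rgr A)$. By Theorem~\ref{thm-RST} the autoequivalence $\pi(\gamma)$ has a well-defined sign $a=\pm1$ and rank $b\in\zed$. The element $\shift_A^{b}$ if $a=1$, or $\omega\,\shift_A^{\,b+1}$ if $a=-1$ (recall $\omega$ has sign $-1$ and rank $-1$, so $\omega\shift_A^{b+1}$ has sign $-1$ and rank $b$), lies in $\ang{\omega,\shift_A}$ and has the same sign and rank as $\pi(\gamma)$. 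Multiplying $\pi(\gamma)$ by the inverse of this element, we obtain an autoequivalence $\mathcal{G}$ that is numerically trivial, i.e. $[\mathcal{G}]\in\Picz(\rgr A)$.

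Now the main point: I must show this numerically trivial $\mathcal{G}$ is itself in the image of $\pi$, in fact trivial in $\im\pi$. Here is where the hypothesis that $\mathcal{G}=\pi(\delta)$ for some $\delta\in\Aut\bbar{A}$ (a product of $\gamma$ with a $\bbar{\omega},\alpha$-word) is crucial: $\delta$ has degree $0$, so by Lemma~\ref{lem-autA} $\delta$ is inner, i.e. $\delta\in\ker\pi$, hence $\mathcal{G}=\pi(\delta)$ is naturally isomorphic to the identity. Therefore $\pi(\gamma)\cong$ (the $\ang{\omega,\shift_A}$-word), proving $\im\pi\subseteq\ang{\omega,\shift_A}$ and establishing (2). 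Part (1) follows immediately: given any $\gamma\in\Aut\bbar{A}$, the degree of $\gamma$ is some integer $n$, and $\alpha^{-n}\gamma$ has degree $0$; by the argument just given, $\pi(\alpha^{-n}\gamma)\in\ang{\omega,\shift_A}$, so $\pi(\alpha^{-n}\gamma)\cong\pi(w)$ for a word $w$ in $\bbar{\omega}$ and $\alpha$; then $\pi(\alpha^{-n}\gamma\, w^{-1}) \cong \id$, so $\alpha^{-n}\gamma w^{-1}\in\ker\pi$ is inner, whence $\gamma$ lies in the subgroup generated by inner automorphisms, $\alpha$, and $\bbar{\omega}$.

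**The main obstacle.** The substantive work is entirely in the rank/sign argument and in noticing that Lemma~\ref{lem-autA} applies once we have reduced to a degree-$0$ automorphism — but this is genuinely easy given everything already proved. The one place requiring care is the bookkeeping in the $a=-1$ case: one must verify that $\omega\shift_A^{b+1}$ (or whichever explicit element one chooses) really does have rank exactly $b$ and sign $-1$, using that rank and sign are group homomorphisms to $D_\infty$ as noted before Corollary~\ref{cor-Piczquotient}, and that $\omega$ is odd of rank $-1$. A second minor check is the claim $\pi(\bbar\omega)\cong\omega$: one unwinds the definition of the twisted module $M_{\bbar\omega}$ and sees that it coincides with the grading-reversing functor composed with the $\omega$-twist of the $A$-action. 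Neither of these presents a real difficulty; the proof is essentially an assembly of Lemma~\ref{lem-autA}, Theorem~\ref{thm-RST}, and the explicit identifications $\pi(\alpha)\cong\shift_A$, $\pi(\bbar\omega)\cong\omega$.
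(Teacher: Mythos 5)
There is a genuine gap at exactly the step you label as ``crucial.'' After reducing to the numerically trivial case you have an automorphism $\delta$ of $\bbar{A}$ with $\pi(\delta)\in\Picz(\rgr A)$, and you assert that $\delta$ ``has degree $0$'' so that Lemma~\ref{lem-autA} applies. Nothing justifies this. An arbitrary automorphism of the $\zed$-algebra $\bbar{A}$ need not carry each component $\bbar{A}_{ij}$ to some $\bbar{A}_{i+d,j+d}$ --- indeed $\bbar{\omega}$ itself sends $\bbar{A}_{ij}$ to $\bbar{A}_{-i,-j}$ and has no degree in this sense, so the premise of your final paragraph (``the degree of $\gamma$ is some integer $n$'') is already false as stated. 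More to the point, numerical triviality of $\pi(\delta)$ is a statement about the action on simple modules and does not by itself control how $\delta$ moves the components of $\bbar{A}$; and if you could show $\delta$ has degree $0$, Lemma~\ref{lem-autA} would immediately give $\pi(\delta)\cong\id$, so your assertion is essentially equivalent to the thing that still needs proving.

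The paper closes this gap with a different idea. Given $\pi(\beta)$ numerically trivial, Corollary~\ref{cor-Picz} yields $\pi(\beta)\cong\inv_J$ for some $J\in\zedfin$. The functor $(\blank)_\beta$ sends cyclic modules to cyclic modules, and the cyclic rank~$1$ graded projectives are precisely the shifts $A\ang{n}$, i.e.\ those $P$ for which $\{ i \st F_i(P)\cong X\ang{i}\}$ is a half-line $[n,\infty)$. If $J\neq\emptyset$ and $j=\min J-1$, then $(A\ang{j})_\beta\cong\inv_J(A\ang{j})$ has $\{ i \st F_i \cong X\ang{i}\}=\{j\}\sqcup\bigl([j+2,\infty)\smallsetminus J\bigr)$, which is not a half-line; this contradiction forces $J=\emptyset$, hence $\pi(\beta)\cong\id_{\rgr A}$, which is the definition of $\beta$ being inner. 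You need an argument of this kind (or an independent proof that every automorphism of $\bbar{A}$ permutes the components via some $n\mapsto \pm n+b$ matching the sign and rank of its image under $\pi$). The rest of your proposal --- the reduction via Theorem~\ref{thm-RST} and the identifications $\pi(\alpha)\cong\shift_A$ and $\pi(\bbar{\omega})\cong\omega$, and the deduction of the generation statement from the computation of $\im\pi$ --- agrees with the paper and is fine.
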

\begin{proof}
As $\bbar{\omega}$ is clearly induced from the ring automorphism $\omega: A \to A$, it is well-defined and is an automorphism of $\bbar{A}$.  We note that $\pi(\bbar{\omega}) \cong \omega$ and $\pi(\alpha) \cong \shift_A$.

We consider the map \eqref{Aut-Pic}.  Let $\beta \in \Aut(\bbar{A})$.  By Theorem~\ref{thm-RST}, there is some $\gamma \in \ang{\alpha, \bbar{\omega}}$ so that $\pi(\beta \gamma^{-1})$ is a numerically trivial autoequivalence of $\rgr A$.  Thus without loss of generality we may suppose that $\pi(\beta) \in \Pic_0(\rgr A)$.    We will show that this implies that $\beta$ is inner.

Note that if $M$ is a cyclic graded right $A$-module, then $M_{\beta}$ is also cyclic as an $\bbar{A}$-module and therefore as an $A$-module.  Now, the rank 1 cyclic graded projective right $A$-modules are precisely the shifts $A \ang{n}$; these are the rank 1 graded projectives $P$ so that
\[ \{ i \st F_i(P) \cong X\ang{i} \} = [n, \infty) \]
for some $n \in \zed$.  

By Corollary~\ref{cor-Picz}, there is some $J \in \zedfin$ so that $\pi(\beta) \cong \inv_J$.  Suppose that $J \neq \emptyset$, and let 
\[ j = \min J - 1.\]
Let $P = \inv_J(A \ang{j}) \cong (A\ang{j})_{\beta}$.   Then
\[ \{i \st F_i(P) \cong X \ang{i} \} = \{j\} \sqcup \bigl( [j+2, \infty) \smallsetminus J \bigr).\]
Therefore, $P$ is not cyclic, a contradiction.

Thus $J = \emptyset$ and so $\pi(\beta) \cong \id_{\rgr A}$; thus by definition $\beta$ is inner.
\end{proof}
%
%Now let $\beta$ be an automorphism of $\bbar{A}$.   Then there is a bijection $f: \zed \to \zed$ such that $\beta(k\ang{n}) = k\ang{f(n)}$ for all $n \in \zed$.  Suppose that $n$ is a local minimum of $f$; that is, suppose that we have $f(n-1) > f(n) < f(n+1)$.  Let $i = f(n+1) - f(n)$ and let $j = f(n-1) - f(n)$.  Applying $\beta$ to the identity 
%\beq\label{id-Abar}
%m_{n, n+1} m_{n+1, n} - m_{n, n-1} m_{n-1, n} = 1_{nn}
%\eeq
% we have
%\[ \xi x^i y^i - \xi' x^j y^j \in k\ang{f(n)}\]
%for some nonzero scalars $\xi$ and $\xi'$; but this is not possible, since $i \neq j$.  Likewise, $n$ cannot be a local maximum of $f$, and so $f$ is monotonic.
%
%Thus we have that there are scalars $a = \pm 1$ and $b \in \zed$ such that $f(n) = an +b$ for all $n$.  Composing $f$ with appropriate powers of $\alpha$ and $\omega$, we obtain an automorphism of degree 0.  Thus $\Aut \bbar{A}$ has the generators claimed.

We note the contrast between the the graded and ungraded behavior of $A$.  Let us write the ungraded Picard group of $A$ as $\Pic(\rmod A)$.  Then a  theorem of Stafford \cite[Corollary~4.5(i)]{St2} says that the natural map from $\Aut(A) \to \Pic(\rmod A)$ is surjective.  On the other hand, Corollary~\ref{cor-new} shows that 
\[ \Pic(\rgr A) = \Picz(\rgr A) \rtimes \im \pi,\]
and we have seen that $\Picz(\rgr A)$ is large --- in particular, it is not finitely generated.

\begin{corollary}\label{cor-Picr}
The group $\Pic({\rgr A})$ is generated by $\shift_A$, $\omega$, and $\inv_0$, and is isomorphic to the restricted wreath product $(\zed/2\zed) \mywr_{\zed} D_\infty \cong \zedfin \rtimes D_\infty$.
\end{corollary}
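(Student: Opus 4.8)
The plan is to assemble this corollary from the structural results already established. The key point is that we have two pieces: the extension $\Pic(\rgr A) \cong \Picz(\rgr A) \rtimes D_\infty$ from Corollary~\ref{cor-Piczquotient}, and the identification $\Picz(\rgr A) \cong \zedfin$ from Corollary~\ref{cor-Picz}, under which $\Lambda(J) = \inv_J$. Combining these immediately yields an isomorphism $\Pic(\rgr A) \cong \zedfin \rtimes D_\infty$, where $D_\infty = \ang{\omega, \shift_A}$ acts on $\zedfin$ via the action of $\Pic(\rgr A)$ on $\Picz(\rgr A)$ by conjugation.

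The main thing to verify is that this conjugation action is the \emph{natural} one — that is, the action of $D_\infty \subseteq \Aut(\zedfin)$ by the multiplicative and additive actions $J \mapsto nJ$ and $J \mapsto J+n$ described before Corollary~\ref{cor-Picz} — so that the semidirect product really is the restricted wreath product $(\zed/2\zed)\mywr_\zed D_\infty$. For this I would compute the conjugates of $\inv_j$ by the two generators of $D_\infty$. For $\shift_A$, the relation $\shift_A^i \inv_j \cong \inv_{i+j}\shift_A^i$ is exactly \eqref{eq-obvious} of Proposition~\ref{prop-invsexist}, so conjugation by $\shift_A$ sends $\inv_j \mapsto \inv_{j+1}$, i.e.\ acts on $\zedfin$ by $J \mapsto J+1$. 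For $\omega$: using the fact that $\omega$ is odd of rank $-1$ together with $\omega(X\ang{n}) \cong Y\ang{-n-1}$ and $\omega(Y\ang{n}) \cong X\ang{-n-1}$ and Corollary~\ref{cor-intsimple}, one checks that $\omega \inv_j \omega^{-1} \cong \inv_{-1-j}$, so conjugation by $\omega$ acts on $\zedfin$ by $J \mapsto (-1-j \mid j \in J) = -1 - J$. These two maps $n \mapsto n+1$ and $n \mapsto -1-n$ generate $D_\infty$ acting on $\zed$ (hence on $\zedfin$) by the standard affine action, which is exactly the action in the wreath product $(\zed/2\zed)\mywr_\zed D_\infty$. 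Finally, the generation statement is immediate: $\Pic(\rgr A)$ is generated by $\Picz(\rgr A)$ and $\ang{\omega,\shift_A}$; the former is generated by the $\inv_j = \shift_A^j \inv_0 \shift_A^{-j}$, so $\{\shift_A, \omega, \inv_0\}$ suffices.

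The step I expect to require the most care is the computation of $\omega \inv_0 \omega^{-1}$: one must track how $\omega$ interacts with the defining property of $\inv_0$ (reject of $X \oplus Y$ in a rank 1 projective), and the rank/sign bookkeeping from Theorem~\ref{thm-RST} is what makes the answer come out as $\inv_{-1}$ rather than, say, $\inv_0$. Everything else is formal group theory once the conjugation formulas are in hand. I would phrase the proof as: cite Corollaries~\ref{cor-Piczquotient} and \ref{cor-Picz} for the semidirect product decomposition with $\Picz(\rgr A) \cong \zedfin$; verify the conjugation action of $\ang{\omega,\shift_A} \cong D_\infty$ on $\zedfin$ agrees with the wreath-product action using \eqref{eq-obvious} and the action of $\omega$ on simples via Corollary~\ref{cor-intsimple}; and conclude generation from $\inv_j = \shift_A^j\inv_0\shift_A^{-j}$.
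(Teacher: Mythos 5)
Your proposal is correct and follows essentially the same route as the paper, which proves the corollary by combining Corollary~\ref{cor-Piczquotient}, Corollary~\ref{cor-Picz}, the relation \eqref{eq-obvious}, and the computation $\omega \inv_j \omega \cong \inv_{-1-j}$. Your additional care in checking that the conjugation action of $\ang{\omega,\shift_A}$ on $\zedfin$ matches the standard wreath-product action is exactly the content the paper leaves implicit.
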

\begin{proof}
This follows from Corollary~\ref{cor-Piczquotient}, Corollary~\ref{cor-Picz}, \eqref{eq-obvious}, and the computation $\omega \inv_j \omega = \inv_{-1-j}$.
\end{proof}

\begin{corollary}\label{cor-eRST}
Let $\F$ be an autoequivalence of $\rgr A$.  Then there are a unique integer $b$ and a unique $J \in \zedfin$ such that if $\F$ is even, then $\F \cong \shift^b_A \circ \inv_J$, and if  $\F$ is odd, then $\F \cong \shift^{b}_A \circ  \inv_J \circ \omega$.
 \qed
\end{corollary}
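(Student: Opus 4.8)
The plan is to read this off the structure of $\Pic(\rgr A)$ just obtained, reducing everything to Theorem~\ref{thm-RST} (which makes the sign and rank of $\F$ well-defined) and Corollary~\ref{cor-Picz} (which says that $\Lambda\colon \zedfin \to \Picz(\rgr A)$ is an isomorphism). Both existence and uniqueness of $b$ and $J$ have to be checked, and I would split into the even and odd cases.

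First I would treat the even case. If $\F$ is even of rank $b$, then $\shift_A^{-b}\circ\F$ is again an autoequivalence, and since the sign is multiplicative and the rank is additive on even functors, it is even of rank $0$, i.e.\ numerically trivial; so $[\shift_A^{-b}\circ\F]\in\Picz(\rgr A)$. By Corollary~\ref{cor-Picz} there is then a unique $J\in\zedfin$ with $\shift_A^{-b}\circ\F\cong\inv_J$, that is, $\F\cong\shift_A^{b}\circ\inv_J$. Uniqueness is equally quick: if $\shift_A^{b}\circ\inv_J\cong\shift_A^{b'}\circ\inv_{J'}$, then comparing ranks (each $\inv_K$ is numerically trivial, so $\shift_A^b\circ\inv_J$ has rank $b$) gives $b=b'$, whence $\inv_J\cong\inv_{J'}$ and $J=J'$ by injectivity of $\Lambda$.

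Next I would reduce the odd case to the even one using $\omega$. Since $\omega$ is odd, if $\F$ is odd then $\F\circ\omega$ is even, so by the previous paragraph there are a unique integer $b$ and a unique $J\in\zedfin$ with $\F\circ\omega\cong\shift_A^{b}\circ\inv_J$. Composing on the right with $\omega$ and using $\omega^2\cong\id_{\rgr A}$ gives $\F\cong\shift_A^{b}\circ\inv_J\circ\omega$; uniqueness follows by composing any two such presentations with $\omega$ on the right and invoking the even case. The relation $\omega^2\cong\id_{\rgr A}$ that this uses is itself immediate from Corollary~\ref{cor-intsimple}, since $\omega^2$ fixes every integrally supported simple by $\omega(X\ang n)\cong Y\ang{-n-1}$ and $\omega(Y\ang n)\cong X\ang{-n-1}$ (it also drops out of Corollary~\ref{cor-Picr}).

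I do not expect a real obstacle here; the only thing to keep straight is the bookkeeping of how the rank transforms under composition with an odd functor, since multiplying by $\omega$ shifts the rank by $1$, so the integer $b$ produced in the odd case is the rank of $\F$ plus one rather than the rank of $\F$ itself — but the statement only asserts existence and uniqueness of \emph{some} such $b$, so this is harmless. As an alternative to the case analysis, one could instead observe directly that the asserted expressions are exactly the normal forms for $\zedfin\rtimes D_\infty$ with respect to the generators $\shift_A$, $\inv_0$, $\omega$ of Corollary~\ref{cor-Picr}, after rewriting $\inv_J\circ\shift_A^{c}$ as $\shift_A^{c}\circ\inv_{J-c}$ via \eqref{eq-obvious}.
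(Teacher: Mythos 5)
Your proof is correct and follows exactly the route the paper intends for this corollary (which it leaves as an immediate consequence of Corollary~\ref{cor-Piczquotient} and Corollary~\ref{cor-Picz}): strip off the rank with a shift, identify the numerically trivial remainder with a unique $\inv_J$ via the isomorphism $\Lambda$, and reduce the odd case to the even one by composing with $\omega$ and using $\omega^2\cong\id_{\rgr A}$. Your side remarks — that $b$ in the odd case is the rank plus one, and that these are just the normal forms in $\zedfin\rtimes D_\infty$ — are accurate and consistent with Corollary~\ref{cor-Picr}.
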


We end the section with several technical lemmas.  First, it is useful to have some explicit formulae for computing involutions.  

\begin{lemma}\label{lem-form}
$(1)$ For any $J \in \zedfin$, 
\[\inv_J A = \bigcap_{i \in J} \inv_i A.\]

$(2)$ For any $ i \in \zed$, consider $y^i A \subseteq A[y^{-1}]$.  Then for any $j \in \zed$, we have 
\[ \inv_j y^i A = y^i \inv_{j+i} A.\]

$(3)$ For any $i \in \zed$, 
\[\inv_i A = \brackarr{ 
	(z+i)A + x^{i+1}A	& \mbox{ if $i \geq 1$} \\
	xA				& \mbox{ if $i = 0$} \\
	yA				& \mbox{ if $i = -1$} \\
	(z+i)A + y^{-i}A	& \mbox{ if $i \leq -2$}
} \]
\end{lemma}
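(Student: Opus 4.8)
The strategy is to verify each of the three formulae by computing an appropriate maximal embedding and using Proposition~\ref{prop-duality} and the definition of $\inv_J$ as (on projectives) the reject of the relevant simple modules, just as in the proof of Proposition~\ref{prop-invsexist}. For part (3), the case $i=0$ is $\inv_0 A = xA$, which is already established in the proof of Proposition~\ref{prop-invsexist}, and $i=-1$ follows by applying $\omega$ together with the identity $\omega \inv_0 \omega = \inv_{-1}$ from Corollary~\ref{cor-Picr} (note $\omega(xA) = yA$). For $i \geq 1$: set $P = (z+i)A + x^{i+1}A$. First check $P$ is a rank~1 graded projective submodule of $A$ (it is a right ideal of the hereditary ring $A$, hence projective, and is nonzero so has rank~1), then identify $A/P$. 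Arguing as in Proposition~\ref{prop-invsexist} with $P_1 = (z+1)A + x^2 A$ generalized: one computes that $A/P$ is the unique indecomposable (by Lemma~\ref{lem-indec}) with the correct Jordan--H\"older factors, and crucially that $A/P$ is supported away from $0$ but includes $X\ang{i}$ as the unique simple factor of $A$ at $-i$ replaced — more precisely, one shows $P$ is the reject in $A$ of $X\ang{i} \cong F_i(A)$, i.e. $P/$(something) $\cong X\ang{i}$ in the right way, so that $P = \inv_i A$ by the characterization of $\inv_i$ on projectives. The case $i \leq -2$ follows from the $i\geq 1$ case by applying $\omega$ and using $\omega\inv_j\omega = \inv_{-1-j}$ together with $\omega(x^{i+1}A) = $ (a shift of a power of $y$), adjusting indices. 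For all of these the key computational input is the relation $(z+n)x^n = x^n z \in x^{n+1}A$ in the graded quotient ring of $A$ (as in Lemma~\ref{lem-comp}), which controls which simple the quotient is supported at.

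For part (2), $\inv_j y^i A = y^i \inv_{j+i} A$: the point is that conjugation by the shift relates $\inv_j$ to $\inv_{j+i}$, and $y^i A$ is, up to the identification of $A$-submodules of $A[y^{-1}]$, a shift of $A$. Concretely, left multiplication by $y^i$ (inside $A[y^{-1}]$) sends a graded $A$-submodule $N$ to a graded $A$-submodule $y^i N$, and as graded modules $y^i N \cong N\ang{-i}$ (since $y$ has degree $-1$). Under this identification the reject-of-simples characterization of $\inv_j$ transforms by $\inv_j(N\ang{-i}) \cong (\inv_{j+i} N)\ang{-i}$, which is exactly \eqref{eq-obvious} from Proposition~\ref{prop-invsexist} rewritten multiplicatively: $\shift_A^{-i}\inv_{j+i} \cong \inv_j \shift_A^{-i}$. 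Taking $N = A$ and noting $\inv_j$ commutes with the internal ``$y^i\cdot$'' in the appropriate sense gives the formula; the only care needed is to track that the isomorphism $y^i A \cong A\ang{-i}$ really is realized by left multiplication and is compatible with the submodule structure inside $A[y^{-1}]$.

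For part (1), $\inv_J A = \bigcap_{i\in J}\inv_i A$: since $\inv_J = \prod_{j\in J}\inv_j$ and each $\inv_j$ on a rank~1 projective $P$ produces the maximal submodule that is $j$-opposite to $P$ (with $\inv_j$-images for distinct $j$ being submodules cut out by ``conditions at disjoint supports''), one shows by induction on $|J|$ that $\inv_J P$ is the maximal submodule of $P$ that is $j$-opposite to $P$ for every $j\in J$ — this last statement is asserted in the text after the definition of $\inv_J$. Granting that, $\inv_J A$ is contained in each $\inv_i A$ (as $\inv_J A$ is $i$-opposite to $A$ for each $i\in J$), hence in the intersection; conversely the intersection $\bigcap_{i\in J}\inv_i A$ is a submodule of $A$ that is $i$-opposite to $A$ for every $i \in J$ (opposition at $i$ depends only on the simple factor $F_i$, which is detected locally at $-i$, and intersecting can only remove, not create, a factor isomorphic to $F_i(A)$), so by maximality it is contained in $\inv_J A$. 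One must check the intersection is still rank~1 projective, which is automatic: it is a nonzero right ideal of the hereditary ring $A$.

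**Main obstacle.** The genuinely delicate point is the claim underlying part~(1) that $\inv_J P$ equals the maximal submodule of $P$ opposite to $P$ at all $j \in J$ — this requires knowing that applying $\inv_j$ then $\inv_{j'}$ (for $j\neq j'$) does not disturb the $j$-opposition already achieved, i.e. that the reject operations at different integral supports are independent. This follows from Lemma~\ref{lem-maxemb} and Lemma~\ref{lem-simplefactor} (the quotient $P/\inv_j P$ is semisimple and simply supported at $-j$, so passing to a submodule opposite at $j'$ changes only the factor at $-j'$), but assembling it cleanly is the step I expect to take the most care. The explicit identifications in part~(3) are then essentially bookkeeping with the commutation relation in $A[y^{-1}]$, modulo double-checking the boundary indices $i=1,-1$ against the separately-handled $i=0$ case.
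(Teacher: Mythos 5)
Your overall strategy matches the paper's. Part (2) is proved in the paper exactly as you outline: the graded isomorphisms $\inv_j y^i A \cong \shift_A^{-i}\inv_{j+i}A \cong y^i \inv_{j+i}A$ coming from \eqref{eq-obvious}, followed by the observation that both sides are maximal submodules of $y^iA$ (with the same simple quotient, hence equal as subsets). Part (3) is left to the reader in the paper, so your computational plan there --- identify $A/P$ for each candidate $P$ and use uniqueness of the maximal submodule with quotient $F_i(A)$, handling $i\le -2$ via $\omega$ --- is as good as any.

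The one step that does not work as written is the reverse inclusion in part (1). You assert that $Q=\bigcap_{i\in J}\inv_i A$ is $i$-opposite to $A$ for each $i\in J$ because ``intersecting can only remove, not create, a factor isomorphic to $F_i(A)$.'' Being $i$-opposite to $A$ is \emph{not} inherited by submodules: $\inv_i^2 A=(z+i)A$ sits inside $\inv_i A$, yet left multiplication by $z+i$ is a degree-$0$ isomorphism $A\to (z+i)A$, so $(z+i)A$ is $i$-congruent to $A$ even though $\inv_i A$ is $i$-opposite to it. So passing to a smaller submodule can perfectly well re-create a quotient isomorphic to $F_i(A)$, and the hypothesis feeding your maximality argument is unjustified. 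The claim itself is true but needs an argument --- for instance, apply $\hom_A(\blank,F_i(A))$ to $0\to Q\to A\to \bigoplus_{j\in J}F_j(A)\to 0$ and use Lemma~\ref{lem-Ext} to see that $\ext_A\bigl(\bigoplus_{j\in J}F_j(A),F_i(A)\bigr)=0$, whence $\hom_A(Q,F_i(A))=0$. The paper sidesteps the issue entirely: having shown $\inv_J A\subseteq Q$ (which only uses that $\inv_J A$ is $i$-opposite to $A$ for $i\in J$, a consequence of numerical triviality), it compares lengths: $A/\inv_J A$ has length $\#J$ since each $\inv_j$ yields a maximal submodule, while $A/Q$ embeds in $\bigoplus_{i\in J}A/\inv_i A\cong\bigoplus_{i\in J}F_i(A)$ and the diagonal map is onto because these are pairwise non-isomorphic simples, so $A/Q$ also has length $\#J$ and the inclusion is an equality. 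I recommend that route; it also spares you the ``maximal submodule opposite at all $j\in J$'' characterization that you correctly flag as the delicate point.
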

\begin{proof}
(1) We note that $\inv_J A$ is  $i$-opposite to $A$ for all $i \in J$; therefore, $\inv_J A \subseteq \inv_i A$ for any $i \in J$.  For any $i \in \zed$ and rank 1 graded projective $P$, we know that  $\inv_i P$ is a maximal submodule of $P$.  Therefore, $\inv_J A$ has height $\#J$ in $A$; as the length of $A / (\bigcap_{i \in J} \inv_i A)$ is $\#J$, we conclude that $\inv_J A = \bigcap_{i \in J} \inv_i A$.

(2) As graded modules, $\inv_j y^i A \cong \inv_j \shift_A^{-i} A \cong \shift_A^{-i} \inv_{j+i}A \cong y^i \inv_{j+i} A$.  Furthermore, both are maximal submodules of $y^i A$.  Therefore they must be equal.

We leave the computation of (3) to the reader.
\end{proof}

We also give two  lemmas that we will use to understand when an  autoequivalence of $\rgr A$ is generative.  

\begin{lemma}\label{lem-generative}
If $\F$ is an autoequivalence of $\rgr A$, then $\F$ is $P$-generative if and only if the set $\{ \F^n P \}$ generates all integrally supported graded simple $A$-modules.
\end{lemma}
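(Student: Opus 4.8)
One direction is trivial: if $\F$ is $P$-generative then $\{\F^n P\}$ generates $\rgr A$, hence it generates every object, in particular every integrally supported graded simple. For the converse, suppose $\{\F^n P\}$ generates all integrally supported graded simples; I must show it generates all of $\rgr A$. The natural strategy is to check that the full subcategory of objects generated by $\{\F^n P\}$ is all of $\rgr A$, and for this it suffices to check that $\{\F^n P\}$ generates every finitely generated graded $A$-module (since $\rgr A$ is locally finite and every module is a direct limit of its finitely generated submodules, and the class of generated objects is closed under quotients, direct sums, and such limits). By Lemma~\ref{lem-split} and the torsion/torsion-free decomposition used in the proof of Corollary~\ref{cor-intsimple}, every finitely generated graded $A$-module is a direct sum of rank 1 graded projectives and finite-length modules, so it is enough to handle (a) finite-length modules and (b) rank 1 graded projectives.

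For (a), every finite-length graded module has a composition series whose factors are among $X\ang n$, $Y\ang n$, and $M_\lambda$ with $\lambda\notin\zed$. By Theorem~\ref{thm-RST}, applying $\F$ (which has some sign $a=\pm1$ and rank $b$) to the integrally supported simples already forces $\F$ to act bijectively on the set of all simples: once we know $\{\F^n P\}$ generates every $X\ang m$ and $Y\ang m$, we get for free — via the rank/sign data — that $\{\F^n P\}$ also generates every $M_\lambda$, because $\F^n M_{\lambda_0}\cong M_{a^n\lambda_0+(\text{integer})}$ runs through a full coset of $k\smallsetminus\zed$ modulo $\zed$ as $n$ varies, but more to the point each individual $M_\lambda$ is the image under a suitable power of $\F$ of some module built from $P$... actually the cleanest route is: fix $\lambda\notin\zed$; since $\F^{-n}M_\lambda\cong M_{\mu}$ for appropriate $\mu$ and $M_\mu$ (being simple) is a quotient of $P\oplus(\text{shifts})$ only if it is generated — so instead one observes that the subcategory generated by $\{\F^n P\}$ is stable under $\F$ and $\F^{-1}$, hence if it contains all integrally supported simples it contains all their $\F$-translates, which by Theorem~\ref{thm-RST} is all simples. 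Thus (a) follows, and then all finite-length modules are generated by dévissage.

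For (b), let $Q$ be a rank 1 graded projective. Choose a maximal embedding $Q\hookrightarrow A$; by Lemma~\ref{lem-maxemb} the cokernel is semisimple and integrally supported, hence of finite length. So it suffices to generate $A$ itself together with finite-length modules; but $A\cong \F^0 P\oplus(\cdots)$? — no, $A$ need not be a summand of $\bigoplus\F^nP$ directly. Instead, run the argument in the other direction: the subcategory $\mathcal{C}$ generated by $\{\F^nP\}$ contains all finite-length modules by (a). Now take any rank 1 graded projective $Q$ and embed $P\hookrightarrow Q$ (possible after shifting, since all rank 1 projectives are commensurable inside the graded quotient division ring and $\hom_A(P,Q)\neq0$ for suitable representatives); the cokernel is finite-length, hence in $\mathcal C$, and $P\in\mathcal C$, so $Q\in\mathcal C$ because $\mathcal C$ is closed under extensions. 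Since $\mathcal C$ contains all rank 1 projectives and all finite-length modules, and every finitely generated graded module is a sum of these, $\mathcal C=\rgr A$, i.e. $\F$ is $P$-generative.

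**Main obstacle.** The delicate point is ensuring that $\{\F^nP\}$ reaches \emph{every} rank 1 graded projective, not just the shifts $A\ang n$: two rank 1 projectives can be $j$-opposite for infinitely many... no, only finitely many $j$ by Lemma~\ref{lem-asymptotic}, so any two are commensurable and differ by a finite-length quotient — that is exactly what makes the extension-closure argument in (b) go through. The real care needed is the bookkeeping that the class $\mathcal C$ of objects generated by a set is closed under extensions (standard, via the snake lemma / the definition of "generates" as "admits an epimorphism from a direct sum of copies") and that $\hom_A(P,Q)\neq 0$ for appropriately chosen representatives of the two commensurability classes; both are routine given the structure theory of Section~\ref{sec-category}, so I expect no serious difficulty, only the need to assemble Lemmas~\ref{lem-simple}, \ref{lem-maxemb}, \ref{lem-asymptotic}, \ref{lem-split} and Theorem~\ref{thm-RST} carefully.
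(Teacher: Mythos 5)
Your step (b) is, in substance, the paper's entire proof: take a \emph{maximal} embedding $\psi\colon P\hookrightarrow Q$, note by Lemma~\ref{lem-maxemb} that $Q/\psi(P)$ is semisimple and integrally supported --- hence, by hypothesis, a quotient of some $\bigoplus_i \F^{n_i}P$ --- and lift that surjection along $Q\to Q/\psi(P)$ using projectivity of the $\F^{n_i}P$, so that $\im\psi$ together with the image of the lift is all of $Q$. Once every rank~1 graded projective is generated, every object of $\rgr A$ is (each graded module is a quotient of a direct sum of shifts $A\ang{n}$), so your part (a) is not needed.

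That matters, because part (a) contains a genuine error. You assert that the generated class $\mathcal C$ contains each $M_\lambda$, $\lambda\notin\zed$, because $\mathcal C$ is stable under $\F^{\pm1}$ and the $\F$-translates of the integrally supported simples are ``all simples by Theorem~\ref{thm-RST}.'' Theorem~\ref{thm-RST} says the opposite: $\F$ carries each pair $\{X\ang{n},Y\ang{n}\}$ to another such pair and carries $M_\lambda$ to $M_{a\lambda+b}$ with $b\in\zed$, so the $\F$-orbit of the integrally supported simples consists only of integrally supported simples and never reaches any $M_\lambda$. (Your first attempted argument for the $M_\lambda$ is likewise inconclusive, as you yourself note.) Consequently (a) does not put general finite-length modules in $\mathcal C$, and your version of (b) --- an arbitrary finite-colength embedding whose cokernel is handled by (a) --- inherits the gap. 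The repair is to reorder: use a \emph{maximal} embedding so that Lemma~\ref{lem-maxemb} forces the cokernel into exactly the integrally supported semisimple case covered by the hypothesis, conclude for all rank~1 projectives first, and only then deduce the $M_\lambda$ and all other modules as quotients of projectives. One smaller caution: the hedge ``possible after shifting'' is dangerous, since $\mathcal C$ is not closed under shifting; you must embed $P$ into $Q$ itself, which is always possible (clear denominators inside the graded quotient ring, as the paper does implicitly throughout Section~\ref{sec-category}).
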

\begin{proof}
One direction is clear.  For the other, suppose that $\{ \F^n P \}$ generates all integrally supported graded simples.  It suffices to prove that $\{ \F^n P \}$ generates all rank 1 graded projectives. Let $Q$ be an arbitrary rank 1 graded projective, and let $\psi: P \hookrightarrow Q$ be a maximal embedding.  By Lemma~\ref{lem-maxemb}, $Q/\psi(P)$ is semisimple and integrally supported, and thus generated by $\{ \F^n P \}$.  Thus there is a surjection $\phi: \bigoplus_{i=1}^j \F^{n_i} P \twoheadrightarrow Q/\psi(P)$ that by projectivity of the $\F^{n_i}P$ lifts to a map $\bbar{\phi}:  \bigoplus_{i=1}^j \F^{n_i} P \to Q$.  Since $\im \psi + \im \bbar{\phi} = Q$, therefore $\{\F^n P\}$ generates $Q$.
\end{proof}

\begin{lemma}\label{p-positive}
If $\F$ is a generative autoequivalence of $\rgr A$, then   $\F$ is even and has  nonzero rank.
\end{lemma}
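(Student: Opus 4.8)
The plan is to assume $\F$ is $P$-generative for some finitely generated graded projective $P$ and then rule out the two ways it could fail to be even of nonzero rank --- being odd, or being even of rank $0$ --- by observing that in both of those cases $\F$ has \emph{finite order} in $\Pic(\rgr A)$, whereas a finite-order autoequivalence cannot be generative. First I would use Lemma~\ref{lem-split} to write $P \cong \bigoplus_{i=1}^m P_i$ with each $P_i$ a rank $1$ graded projective. Since $\shift_A$, $\omega$ and every involution $\inv_J$ carry rank $1$ graded projectives to rank $1$ graded projectives (for $\inv_J$, because $\inv_J Q$ is a finite-colength submodule of $Q$), Corollary~\ref{cor-eRST} shows that every autoequivalence of $\rgr A$ does; hence each $\F^n P_i$ is again a rank $1$ graded projective. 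By Lemma~\ref{lem-generative}, $P$-generativity is equivalent to every integrally supported simple being a quotient of some $\F^n P_i$ (a simple is a quotient of a direct sum iff it is a quotient of a summand). Writing, for a rank $1$ graded projective $Q$,
\[ S(Q) = \{\, k \in \zed : \hom_A(Q, X\ang k) \neq 0 \,\} = \{\, k \in \zed : F_k(Q) \cong X\ang k \,\} \]
(the two descriptions agree by Proposition~\ref{prop-duality}, which also shows $Y\ang k$ is a quotient of $Q$ precisely when $k \notin S(Q)$), generativity of $\F$ forces $\bigcup_{n \in \zed}\bigcup_{i=1}^m S(\F^n P_i) = \zed$.

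Next I would record two facts. First, each $S(Q)$ is bounded below: by Lemma~\ref{lem-asymptotic}, $\hom_A(Q, Y\ang{-n}) \neq 0$ for $n \gg 0$, so by Proposition~\ref{prop-duality} $-n \notin S(Q)$ for $n \gg 0$; thus $S(Q) \subseteq [-c(Q), \infty)$ for some $c(Q) \geq 0$. Second, $\F$ has finite order in the two bad cases. If $\F$ is even of rank $0$ it is numerically trivial, so $\F \cong \inv_J$ for some $J \in \zedfin$ by Corollary~\ref{cor-Picz}, and then $\F^2 \cong \inv_J^2 \cong \id_{\rgr A}$. If $\F$ is odd, then by Theorem~\ref{thm-RST} it acts on the parameter of each integrally supported simple by $\lambda \mapsto -\lambda + b$, where $b$ is its rank; squaring fixes every such parameter, so $\F^2$ has sign $+1$ and rank $0$, hence is numerically trivial, hence $\F^2 \cong \inv_K$ for some $K \in \zedfin$ by Corollary~\ref{cor-Picz}, and so $\F^4 \cong \id_{\rgr A}$. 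In either case $\{\F^n : n \in \zed\}$ is a finite set of autoequivalences, so $\{\F^n P_i : n \in \zed,\ 1 \le i \le m\}$ is a finite set of rank $1$ graded projectives.

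Finally, setting $C = \max c(\F^n P_i)$ over this finite set, the first fact gives $\bigcup_{n,i} S(\F^n P_i) \subseteq [-C, \infty) \subsetneq \zed$, so $X\ang{-C-1}$ is a quotient of no $\F^n P_i$, contradicting the displayed consequence of generativity. Hence $\F$ is neither odd nor of rank $0$, i.e.\ it is even of nonzero rank. I expect the only genuinely non-routine points to be the two structural inputs used in the middle paragraph --- the reflection action of an odd autoequivalence (Theorem~\ref{thm-RST}) and the identification of numerically trivial autoequivalences with the involutions $\inv_J$ (Corollary~\ref{cor-Picz}) --- since these are exactly what pin $\langle \F\rangle$ down as finite in the bad cases; the remaining content, that the ``$X$-support'' $S(Q)$ of a rank $1$ graded projective is bounded below so that finitely many such sets cannot exhaust $\zed$, is bookkeeping with Lemmas~\ref{lem-asymptotic} and Proposition~\ref{prop-duality}.
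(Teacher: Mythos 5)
Your proof is correct and follows essentially the same route as the paper: in both bad cases you show that $\F$ has finite order in $\Pic(\rgr A)$ (via Theorem~\ref{thm-RST} and Corollary~\ref{cor-Picz} rather than the paper's explicit conjugation computation from Corollary~\ref{cor-eRST}, but the content is the same, landing on $\F^4 \cong \id_{\rgr A}$ in the odd case and $\F^2 \cong \id_{\rgr A}$ in the even rank-zero case) and then conclude that a finite-order autoequivalence cannot be generative. The only real difference is that you spell out, using Lemma~\ref{lem-asymptotic} and Proposition~\ref{prop-duality}, why finite order is incompatible with generativity, a step the paper treats as immediate.
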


\begin{proof}
  We note that $\omega \shift_A \cong \shift_A^{-1} \omega$.
Let $\F \in \Pic({\rgr R})$ be generative.  
Suppose that $\F$ is odd, so by Corollary~\ref{cor-eRST}, $\F \cong \shift_A^n \inv_J \omega$ for some $n \in \zed$ and $ J \in \zedfin$. 
Then for $K = (J+n) \oplus (-J - 1)$, we have, 
\[
\F^2 \cong \shift_A^n \inv_J \omega \shift_A^n \inv_J \omega \cong \shift_A^n \inv_J \shift_A^{-n} \invbrak{-J-1} \cong \inv_K.\]
Thus, $\F^4 \cong \id_{\rgr A}$, contradicting the generativeness of $\F$.  

Thus $\F$ is even; since the involutions $\inv_J$ are not generative, the rank of $\F$ must be nonzero.
\end{proof}

\section{Classifying rings graded equivalent to the Weyl algebra}\label{sec-main}

We are now ready to classify rings graded equivalent to $A$ and prove Theorem~\ref{ithm1}.     Theorem~\ref{thm-grMor-pullback} and the analysis of $\Pic(\rgr A)$ completed in the previous section are our basic tools; using them, we are able to compute twisted endomorphism rings extremely explicitly, and we obtain ``canonical'' representatives for each graded Morita equivalence class.    

As a first illustration of our approach, note that for any $n \geq 1$, $\shift_A^n$ is $A$-generative.   This can be seen either from Lemma~\ref{lem-generative}, or directly:  since $x^{n-1}A + yA = A$, therefore $A\ang{n-1}$ and $A \ang{-1}$ generate $A$ and (by induction) all $A\ang{-1}, \cdots, A\ang{n-1}$.  It is straightforward to see that if $\F =\shift_A^n$, then  $\End^{\F}_A(A) \cong A\ver{n} = \bigoplus_{i \in \zed} A_{ni}$, the $n$'th Veronese of $A$.   Thus by Proposition~\ref{prop-pullback}, $A$ is graded equivalent to $A\ver{n}$, which is of course the $\zed/n\zed$-invariant ring of $A$.   
This is in marked contrast to most commutative graded rings, where if $n \geq 2$, we expect that a ring and its $n$'th Veronese will have the same $\Proj$, but not the same graded module category.  Note that   if  we grade $R = k [x,y]$ by $\deg x = 1$ and $\deg y = -1$, then $\shift^n_R$ is generative exactly when $n = \pm 1$; this tells us that no Veronese of $R$ is graded equivalent to $R$.

Our goal is to  compute (up to graded Morita equivalence) all rings that are graded equivalent to $\rgr A$.  In general we will see a combination of idealizers, as in the ring $B$ from the Introduction, and Veronese rings, as discussed above.  More specifically, let $f(z)$ be a monic polynomial in $z$, and let $n$ be a positive integer.  We define the  {\em generalized Weyl algebra defined by $f$ and $n$}, denoted
$\GWA(f,n)$, to be the $k$-algebra generated by $X$, $Y$, and $z$ subject to the relations:
\begin{align*}
Xz - zX & = n X	& Yz - zY & = -nY \\
XY & = f		& YX &  = f(z-n).
\end{align*}
These rings were defined by Bavula \cite{Bav}, and have been extensively studied by him.   In particular, Bavula proves the following:
\begin{theorem}\label{thm-Bav}
{\em (\cite[Proposition~1.3, Corollary~3.2, Theorem~5]{Bav})}
If $f$ does not have multiple roots or two distinct roots differing by a multiple of $n$, then $\GWA(f,n)$ is a simple hereditary Noetherian domain. \qed
\end{theorem}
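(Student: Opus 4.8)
The plan is to realize $\GWA(f,n)$ as a generalized Weyl algebra in Bavula's sense and to extract the four assertions from the structure theory of such rings. First I would observe that $\GWA(f,n)\cong k[z](\tau,a)$, the generalized Weyl algebra with base ring $D=k[z]$, automorphism $\tau$ given by $\tau(z)=z+n$, and defining element $a=f(z-n)$: indeed the relations $Xz-zX=nX$, $Yz-zY=-nY$, $XY=f$, $YX=f(z-n)$ say exactly that $Xp=\tau(p)X$, $Yp=\tau^{-1}(p)Y$, $YX=a$, $XY=\tau(a)$ for $p\in D$. Pushing every $z$ to the left gives the PBW normal form: $\GWA(f,n)$ is a free left (and right) $k[z]$-module on $\{1,X,X^2,\dots\}\cup\{Y,Y^2,\dots\}$, carrying the $\zed$-grading $\deg z=0$, $\deg X=1$, $\deg Y=-1$ with $\GWA(f,n)_i=k[z]X^i$ for $i>0$, $k[z]Y^{-i}$ for $i<0$, and $k[z]$ for $i=0$; freeness I would check via the faithful action on weight modules or by an associated-graded argument. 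The domain property then follows from a leading-term computation on homogeneous components, using that $k[z]$ is a domain, $\tau$ is an automorphism, and $X^mY^m=\prod_{j=1}^m\tau^j(a)\neq0$ (here $f\neq0$, as $f$ is monic). Noetherianity I would invoke from the general fact that a generalized Weyl algebra over a two-sided Noetherian ring is Noetherian --- concretely, $\GWA(f,n)$ is assembled from the Noetherian skew polynomial rings $k[z][X;\tau]$ and $k[z][Y;\tau^{-1}]$ --- and this step uses nothing about the roots of $f$.

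For simplicity, let $I\neq0$ be a two-sided ideal. Since $\GWA(f,n)$ is a domain, the powers of $X$ form an Ore set and the localization $L=\GWA(f,n)[X^{-1}]$ is the skew-Laurent ring $k[z][X^{\pm1};\tau]$, which is simple because $\tau$ has infinite order and $k[z]$ has no nonzero proper $\tau$-invariant ideal. Hence the two-sided ideal of $L$ generated by $I$ is all of $L$; clearing denominators yields $X^k\in I$ for some $k\geq0$, so $Y^kX^k=\prod_{i=1}^k f(z-in)\in I\cap k[z]$ is a nonzero polynomial, and $J:=I\cap k[z]=(d)$ for some $d\neq0$. For every $m\geq1$ the elements $X^m d Y^m=\tau^m(d)\,f(z)f(z+n)\cdots f(z+(m-1)n)$ and $Y^m d X^m=\tau^{-m}(d)\,f(z-n)f(z-2n)\cdots f(z-mn)$ lie in $J=(d)$. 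If $d$ had a root $\lambda$, then choosing $m$ large enough that $\lambda+mn$ is not a root of $d$ forces $(z-\lambda)\mid f(z)f(z+n)\cdots f(z+(m-1)n)$, so $f(\lambda+k_1n)=0$ for some $k_1\geq0$; symmetrically, taking $m$ with $\lambda-mn$ not a root of $d$ gives $f(\lambda-k_2n)=0$ for some $k_2\geq1$. These are two distinct roots of $f$ differing by the nonzero multiple $(k_1+k_2)n$ of $n$, contradicting the hypothesis. Hence $d\in k^{\times}$, so $I\cap k[z]=k[z]$ and $I=\GWA(f,n)$.

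Finally, heredity. Because $k[z]$ is regular of dimension $1$, $\GWA(f,n)$ has global dimension at most $2$, so it suffices to show every simple module has projective dimension at most $1$. The plan is to use Bavula's classification of the simple $\GWA(f,n)$-modules organized by the $\tau$-orbits on the maximal ideals of $k[z]$: an orbit disjoint from the zero set of $a$ contributes a semisimple block; an orbit meeting the zero set of $a$ in a single point at which $a$ vanishes simply contributes a hereditary block, the defining relations furnishing a length-one projective resolution of each simple there; and only an orbit meeting the zero set of $a$ in two or more points, or at a zero of higher multiplicity, produces a simple of projective dimension $2$. Since $a=f(z-n)$ is squarefree and no two of its zeros are $\tau$-conjugate exactly when $f$ has no multiple root and no two distinct roots differing by a multiple of $n$, the bad orbits are excluded and $\GWA(f,n)$ is hereditary. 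I expect this last step to be the main obstacle: in contrast to the rather mechanical domain, Noetherian, and simplicity arguments, it requires the full description of the simple modules and a careful orbit-by-orbit bound on projective dimensions, and it is the only place where the precise hypothesis on the roots of $f$ (both conditions) is genuinely used.
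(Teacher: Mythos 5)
The paper offers no proof of this statement: it is quoted from Bavula with a \qed, so your attempt has to be judged on its own merits rather than against an argument in the text. Most of it holds up. The identification $\GWA(f,n)\cong k[z](\tau,a)$ with $\tau(z)=z+n$, $a=f(z-n)$ is correct, and the PBW basis, domain, and Noetherian assertions are standard for generalized Weyl algebras over a Noetherian base. The simplicity argument is complete and is the right one: the only gap in justification is that "domain" alone does not make $\{X^k\}$ an Ore set --- you need the observation that $X^mY^j=\tau^{m-j}\bigl(\prod_{i=1}^{j}\tau^i(a)\bigr)X^{m-j}$ for $m>j$, after which Ore, the identification $R[X^{-1}]\cong k[z][X^{\pm1};\tau]$, its simplicity, the clearing of denominators to get $X^k\in I$, and the two-sided squeeze on $I\cap k[z]=(d)$ all go through exactly as you wrote them. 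It is also a good sign that this part uses only the ``no two distinct roots congruent mod $n\zed$'' hypothesis, since (as the paper's Section 6 shows via $U_{-1}$) a multiple root of $f$ does not destroy simplicity.

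The genuine gap is in the heredity step, and it is not only the one you flagged. Your opening claim --- that $\GWA(f,n)$ has global dimension at most $2$ ``because $k[z]$ is regular of dimension $1$'' --- is false as a general principle for generalized Weyl algebras: when $f$ has a multiple root the global dimension is \emph{infinite}, as the paper itself records for $U_{-1}$ in Section~\ref{sec-main}, and $U_{-1}$ is a GWA over a polynomial ring in one variable. So the squarefree hypothesis must already be invoked to get finite global dimension; it cannot be deferred entirely to the orbit analysis. Second, even granting finite global dimension, the reduction of $\operatorname{gl.dim}$ to $\sup\{\pd S : S \text{ simple}\}$ is not a formal fact for non-Artinian Noetherian rings (it is for Artinian or semiperfect rings, and for commutative Noetherian rings via localization); for these algebras it requires a separate argument. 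Third, the orbit-by-orbit description of the simple modules and the projective-dimension bound on each block is asserted rather than proved, and that assertion \emph{is} Bavula's theorem --- as written, this step restates the conclusion rather than deriving it. You correctly identified heredity as the main obstacle, but the sketch given would not close without substantial additional work (either Bavula's analysis of weight modules, or some replacement such as showing directly that every maximal right ideal is projective).
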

We caution the reader that Bavula uses slightly different notation; in particular, for Bavula the parameter $n$ is always equal to 1.   Since $W(f,n) \cong W(g, 1)$ for an appropriate choice of $g$, this is purely notational.
  
For any $J \in \zedfin$, define $f_J =\prod_{j \in J} (z + j)$.    Now let $n$ be a positive integer, and let $J \subseteq \{ 0, \ldots, n-1\}$.  Define $\bbar{J} = \{0, \ldots, n-1\} \smallsetminus J$, and let $W = W(f_{\bbar{J}},n)$.  We define a ring $S(J, n)$ by 
\[ S(J, n) = k[z] + f_J W \subseteq W.\]
Note that $S(J,n)  = \I_W(f_J W)$, the idealizer of $f_J W$ in $W$.   As a subring of $W$, $S(J, n)$ is a domain; by  \cite[Theorem~4.3]{Rob}, it is hereditary and Noetherian.

\begin{example}\label{ex-rings}
(1) Let $n =1$.  There are two subsets of $\{0\}$.  If $J = \{0\}$, then $f_{\bbar{J}} = 1$ and so $W = W(1,1) \cong A[y^{-1}]$. Let $T= A[y^{-1}]$.  Then   $S(\{0\}, 1) \cong \I_{T}(zT) = \I_T(xT)$.   On the other hand, if $J =\emptyset$, then  $S(\emptyset,1) \cong \GWA(z, 1) \cong A$.

(2) Now let $n > 1$ and let $J = \emptyset$.  
Then $f_{{J}} = 1$, and  $S(\emptyset, n) = W(\prod_{i=0}^{n-1} (z+i),n)$,
 which is isomorphic to the $n$'th Veronese  $A\ver{n}$ via 
\begin{align*}
X & \mapsto x^n \\
Y & \mapsto y^n \\
z & \mapsto z.
\end{align*}
\end{example}

We will see that the rings $S(J,n)$ give all the graded  Morita equivalence classes of rings graded equivalent to $A$.  However, they are not quite unique representatives for these equivalence classes.  We call a pair $(J, n) \in \zedfin \times \zed$ an {\em admissible pair} if $n \geq 1$ and $J \subseteq \{ 0, \ldots, n-1\}$.  We will 
say that two admissible pairs $(J, n)$ and $(J', n')$ have the same {\em necklace type} if $n = n'$ and there is some integer $j$ such that $J + j \equiv J' \pmod n$.  The idea is that $J$ and $J'$ both encode a string of $n$ beads, some black and some white, and that two pairs have the same necklace type if the strings make identical necklaces once we join the ends; we are allowed to rotate necklaces but not to turn them over.  These necklaces are well-known combinatorial objects.  The number  of necklaces of $n$ black and white beads is 
\[ \frac{1}{n} \sum_{d | n} \phi(d) 2^{n/d},\]
where $\phi$ is the Euler totient function \cite[Exercise~7.112]{EC}.

We will prove:

\begin{theorem}\label{thm-class}
If $S$ is a $\zed$-graded ring, then  $\rgr A \simeq \rgr S$ if and only if there is an admissible pair  $(J,n)$ such that $S$ is graded Morita equivalent to $S(J, n)$.    Furthermore, $S(J,n)$ and $S(L, r)$ are graded Morita equivalent if and only if $(J,n)$ and $(L,r)$ have the same necklace type.
\end{theorem}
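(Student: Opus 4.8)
The plan is to use Theorem~\ref{thm-grMor-pullback} to reduce the classification to a classification of generative autoequivalences of $\rgr A$ up to conjugacy in $\Pic(\rgr A)$, exploiting Proposition~\ref{prop-Morita-conjugate}. By Corollary~\ref{cor-eRST} and Lemma~\ref{p-positive}, every generative autoequivalence $\F$ is even of nonzero rank, so $\F \cong \shift_A^n \circ \inv_J$ for a unique $n \neq 0$ and $J \in \zedfin$; since $\F$ and $\F^{-1}$ give graded equivalent rings (via quasi-inverse functors) and only one sign of $n$ can be $P$-generative — I would check that $\shift_A^{-1}$-type functors fail to generate the ``$X$-side'' simples, so in fact $n \geq 1$ — we may take $n \geq 1$. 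Then I would compute $\End^{\F}_R(P)$ explicitly: for a suitable choice of rank $1$ projective $P$ (take $P = A$, or more generally $P = \inv_L A$ for $L \subseteq \{0,\dots,n-1\}$, which suffices by Proposition~\ref{prop-Morita-conjugate} after translating $J$), the twisted endomorphism ring $\bigoplus_{m} \hom_A(\F^m P, P)$ can be identified inside the graded quotient ring of $A$. Using Lemma~\ref{lem-form} to describe $\inv_J A$, $\inv_j y^i A$, etc., and the multiplication rule \eqref{mult1}, one identifies the generators $X$ (from $\hom$ in degree $+n$, essentially $x^n$), $Y$ (degree $-n$, essentially $y^n$), and $z$ (degree $0$), verifies the generalized Weyl relations with parameter $f = f_{\bar J} = \prod_{i \notin J}(z+i)$, and sees that the idealizer structure $k[z] + f_J W$ emerges from the fact that $\hom_A(\F^m A, A)$ for small $m$ is a proper subspace of $A_{mn}$ precisely at the degrees recorded by $J$.

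Next I would prove the ``only if'' direction: given $\rgr S \simeq \rgr A$, Theorem~\ref{thm-grMor-pullback} produces a $P$-generative $\F$ with $S \cong \End^{\F}_A(P)$; write $\F \cong \shift_A^n \inv_J$ with $n \geq 1$, and by conjugating $\F$ (equivalently, replacing $P$) we may arrange $J \subseteq \{0,\dots,n-1\}$ — this uses \eqref{eq-obvious}, which lets us translate the index set of an involution by conjugating with $\shift_A$, reducing $J$ modulo $n$ at the cost of altering $P$, which by Proposition~\ref{prop-Morita-conjugate} does not change the graded Morita class. So $S$ is graded Morita equivalent to $\End^{\shift_A^n \inv_J}_A(A) = S(J,n)$, up to identifying the computed ring with the displayed presentation. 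For the ``if'' direction, $S(J,n) \cong \End^{\shift_A^n \inv_J}_A(A)$ is graded equivalent to $A$ by Proposition~\ref{prop-pullback} once we check $\shift_A^n \inv_J$ is $A$-generative, which follows from Lemma~\ref{lem-generative}: the modules $\{(\shift_A^n \inv_J)^m A\}$ visibly surject onto every integrally supported simple (each $X\ang{k}$ or $Y\ang{k}$ appears as a factor of some $\inv_J A\ang{mn}$).

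Finally, for the necklace statement: by Proposition~\ref{prop-Morita-conjugate}, $S(J,n)$ and $S(L,r)$ are graded Morita equivalent iff $\shift_A^n \inv_J$ and $\shift_A^r \inv_L$ are conjugate in $\Pic(\rgr A) \cong \zedfin \rtimes D_\infty$. Conjugacy forces equal rank, so $n = r$; then conjugation by elements of $\Picz \cong \zedfin$ acts trivially on the $\shift_A^n$ part and modifies $\inv_J$ only by the ``coboundary'' $J \oplus (J+n)$ (a routine computation with $\inv_K \shift_A^n \inv_J \inv_K^{-1}$), while conjugation by $\shift_A^j$ sends $J \mapsto J+j$ and conjugation by $\omega$ sends $J \mapsto -J$, so modulo $n$ we get exactly rotation of the length-$n$ binary string (and the reflection from $\omega$, which one checks corresponds to reversing and shifting the necklace — I would verify this is consistent with the claimed invariant, since turning a necklace over is reflection; the statement as given allows only rotations, so I expect the reflection piece to be absorbed because $\omega$ conjugation combined with a coboundary lands back in a rotate of $J$, or the intended equivalence relation implicitly includes it — this subtlety is the one I would check most carefully). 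The combinatorial count $\frac1n\sum_{d\mid n}\phi(d)2^{n/d}$ then follows from Burnside. The main obstacle is the explicit identification of the twisted endomorphism ring with the presentation of $S(J,n)$: matching degrees, tracking the twist in \eqref{mult1}, and confirming the idealizer description all require care, though each step is a direct computation in the graded quotient ring of $A$ using Lemma~\ref{lem-form}.
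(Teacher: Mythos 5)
Your overall strategy coincides with the paper's: reduce via Theorem~\ref{thm-grMor-pullback} and Proposition~\ref{prop-Morita-conjugate} to conjugacy classes of generative autoequivalences in $\Pic(\rgr A)$, normalize to admissible pairs, and compute the twisted endomorphism ring explicitly inside the graded quotient ring. But two of your steps are wrong as stated, and both are repaired by the same observation. First, your proposed reduction to $n\geq 1$ --- ``only one sign of $n$ can be $P$-generative; I would check that $\shift_A^{-1}$-type functors fail to generate the $X$-side simples'' --- is false: $\shift_A^{-1}$ is $A$-generative, since $\{\shift_A^{-m}A\}_{m\in\zed}=\{A\ang{m}\}_{m\in\zed}$ generates $\rgr A$, and more generally $\shift_A^{-n}\inv_J$ is generative whenever $\shift_A^{n}\inv_{J'}$ is, by symmetry. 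The correct reduction is to conjugate $\F$ by the odd autoequivalence $\omega$, which negates the rank (because $\omega\shift_A\cong\shift_A^{-1}\omega$) and, by Proposition~\ref{prop-Morita-conjugate}, leaves the graded Morita class of the resulting ring unchanged.

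Second, the reflection subtlety you flag at the end is resolved by neither of your two guesses. The point is again parity: conjugation by an odd element of $\Pic(\rgr A)$ sends a rank-$n$ autoequivalence to one of rank $-n$, while conjugation by an even element preserves rank. Since $\shift_A^n\inv_J$ and $\shift_A^r\inv_L$ both have positive rank for admissible pairs, any conjugating element must be even, hence of the form $\shift_A^j\inv_I$; such elements act on the index set only by translation by $j$ and by the coboundary $I\oplus(I-n)$, so necklaces are rotated but never turned over. You then still need the combinatorial fact (Lemma~\ref{lem-comb}) that the image of $I\mapsto I\oplus(I-n)$ is exactly the set of $K\in\zedfin$ with every $\#(K^n_i)$ even; this is what makes ``same necklace type'' both necessary and sufficient, and it is also what justifies normalizing an arbitrary $J\in\zedfin$ into $\{0,\dots,n-1\}$ in the first place. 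One smaller caveat: the generativity of $\shift_A^n\inv_J$ is not ``visible'' --- after the reduction of Lemma~\ref{lem-generative} to integrally supported simples it still requires the bookkeeping of Lemma~\ref{l-gen} to see that every integer is covered by some $\F^jA$. With these corrections your outline matches the paper's proof.
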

This proves Theorem~\ref{ithm1} from the Introduction.  We also obtain Theorem~\ref{ithm-neck}:   

\begin{corollary}\label{cor-necklacedata}
The graded Morita equivalence classes of rings graded equivalent to $A$ are in 1-1 correspondence with pairs $(\mathbb{J}, n)$ where $n$ is a positive integer and $\mathbb{J}$ is a necklace of $n$ black and white beads. \qed
\end{corollary}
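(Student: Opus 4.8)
The plan is to combine the general machinery of Sections~\ref{sec-general}--\ref{sec-genPic} with the explicit description of $\Pic(\rgr A)$ from Section~\ref{sec-Pic}, reducing the whole statement to a computation of conjugacy classes in $\Pic(\rgr A)$ together with one explicit ring identification. By Theorem~\ref{thm-grMor-pullback}, a $\zed$-graded ring $S$ satisfies $\rgr A \simeq \rgr S$ if and only if $S \cong \End^{\F}_A(P)$ for some finitely generated graded projective $P$ and some $P$-generative $\F \in \Pic(\rgr A)$; and by Proposition~\ref{prop-Morita-conjugate}, $\End^{\F}_A(P)$ and $\End^{\F'}_A(P')$ are graded Morita equivalent exactly when $\F$ and $\F'$ are conjugate in $\Pic(\rgr A)$. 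Thus the graded Morita equivalence classes of rings graded equivalent to $A$ are in bijection with the conjugacy classes of generative autoequivalences of $\rgr A$, and the theorem follows once I (a) enumerate those conjugacy classes and (b) identify, for a suitably chosen module, the corresponding twisted endomorphism ring.

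For the enumeration: by Lemma~\ref{p-positive} a generative $\F$ is even of nonzero rank, so after conjugating by $\omega$ I may assume its rank $n$ is positive, and then Corollary~\ref{cor-eRST} gives $\F \cong \shift_A^n \inv_J$ for a unique $n \geq 1$ and $J \in \zedfin$. Using the relations of $\Pic(\rgr A) \cong \zedfin \rtimes D_\infty$ from Corollary~\ref{cor-Picr} --- in particular \eqref{eq-obvious} and $\inv_j^2 \cong \id_{\rgr A}$ --- a direct computation shows that conjugating $\shift_A^n\inv_J$ by $\shift_A^m$ replaces $J$ by $J+m$, while conjugating by $\inv_K$ replaces $J$ by $J \oplus K \oplus (K+n)$. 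The subgroup $\{\,K \oplus (K+n) : K \in \zedfin\,\}$ of $\zedfin$ is generated by the pairs $\{a, a+n\}$; quotienting by it records $J$ only through the residues in $\zed/n\zed$ that $J$ meets an odd number of times, with translations acting as rotations. Hence every generative $\F$ is conjugate to some $\shift_A^n\inv_J$ with $(J,n)$ admissible, and $\shift_A^n\inv_J$ is conjugate to $\shift_A^r\inv_L$ if and only if $n = r$ and the two residue subsets agree up to rotation, i.e. $(J,n)$ and $(L,r)$ have the same necklace type. (No identifications are lost by restricting to positive rank, since a flip-type element of $\Pic(\rgr A)$ negates the rank, so two positive-rank elements can only be conjugated by translation-type elements.)

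It remains, for each admissible pair $(J,n)$, to exhibit a finitely generated graded projective $P$ for which $\shift_A^n\inv_J$ is $P$-generative and $\End^{\shift_A^n\inv_J}_A(P) \cong S(J,n)$. Generativeness is checked via Lemma~\ref{lem-generative}: the iterates $(\shift_A^n\inv_J)^k A$ are, up to isomorphism, the modules $A\ang{kn}$ twisted by involutions $\inv_{L_k}$ with $L_k$ a \emph{disjoint} union of translates $J - in$ (disjointness uses $J \subseteq \{0,\dots,n-1\}$), and a bookkeeping argument with the invariants $F_m$ of Definition~\ref{def-Fj} and the formulas of Lemma~\ref{lem-form} shows every integrally supported simple occurs as a factor of some iterate. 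The ring isomorphism is then obtained by realizing $P$ as a submodule of the graded quotient ring of $A$ --- for $J = \emptyset$ one may take $P = A$ and recover the Veronese $A\ver{n} \cong S(\emptyset,n)$ as in Example~\ref{ex-rings}; in general $P = \inv_J A$ --- identifying $\bigoplus_{k\in\zed} \hom_A((\shift_A^n\inv_J)^k P, P)$ with an explicit $k[z]$-submodule, unwinding the twisted multiplication \eqref{mult1}--\eqref{alphan}, and matching the degree-$n$ and degree-$(-n)$ generators with $X$ and $Y$ so that the relations $Xz=(z+n)X$, $XY = f_{\bbar{J}}$, $YX = f_{\bbar{J}}(z-n)$ hold and the image is exactly $k[z] + f_J\,\GWA(f_{\bbar{J}},n) = S(J,n)$; the stated ring-theoretic properties of $S(J,n)$ then come from Theorem~\ref{thm-Bav} and \cite{Rob}. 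Combining the three steps yields the first assertion, and the first two yield the second; Corollary~\ref{cor-necklacedata} is immediate, since necklace types correspond to pairs $(\mathbb{J},n)$.

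I expect the main obstacle to be the explicit computation in the last step: choosing the right module $P$, computing all the twisted Hom-spaces inside the graded quotient ring of $A$, tracking the twisted product through \eqref{alphan}, and recognizing the resulting ring as the idealizer $k[z] + f_J\,\GWA(f_{\bbar{J}},n)$. By contrast, the conjugacy calculation is elementary once the semidirect-product structure of $\Pic(\rgr A)$ is in hand, and the generativeness verification, though fiddly, is routine.
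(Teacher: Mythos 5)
Your proposal is correct and follows essentially the same route as the paper: the corollary is immediate from Theorem~\ref{thm-class}, whose proof likewise reduces via Theorem~\ref{thm-grMor-pullback} and Proposition~\ref{prop-Morita-conjugate} to conjugacy classes of generative autoequivalences, enumerates those by the combinatorics of $\partial_n$ (your ``pairs $\{a,a+n\}$'' computation is Lemma~\ref{lem-comb}/Lemma~\ref{lem-comb2}), checks generativeness as in Lemma~\ref{l-gen}, and identifies the twisted endomorphism ring inside the graded quotient ring as in Proposition~\ref{prop-SJ}. The only deviation is your choice $P=\inv_J A$ in the final computation where the paper simply takes $P=A$; this is immaterial for the corollary, which needs only the count of conjugacy classes, not the explicit representatives $S(J,n)$.
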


Our program for proving Theorem~\ref{thm-class} has two parts.  First we will  understand conjugacy classes in $\Pic(\rgr A)$ and thus graded Morita equivalence classes of rings graded equivalent to $A$.  Then we will  compute representative rings for each equivalence class.  

 For the first part, we will need several easy combinatorial lemmas.  We begin by establishing notation.  For any positive integer $n$, define an operator $\partial_n$ on $\zedfin$ by setting  
 \[ \partial_n J = J \oplus (J-n).\]
   Note that $\partial_n(I\oplus J) = \partial_n I \oplus \partial_n J$.  
   
      Given $i, n \in \zed$ with $0 \leq i \leq n-1$ and $J \in \zedfin$, define 
 \[ J^n_i = \{ j \in \zed \st nj+i \in J\}.\]

\begin{lemma}\label{lem-comb}
For all $n$, the  operator $\partial_n$ is a one-to-one map from $\zedfin$ onto 
\[\{J \in \zedfin \st \#(J^n_i) \mbox{ is even for } 0 \leq i \leq n-1  \}.\]
  \end{lemma}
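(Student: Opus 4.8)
The plan is to reduce the statement to the case $n = 1$, where it is a discrete form of integration. The key observation is that $\partial_n$ never mixes residue classes modulo $n$: for $J \in \zedfin$ and $0 \le i \le n-1$ one has $nj + i \in J - n$ exactly when $n(j+1) + i \in J$, so $(J-n)^n_i = J^n_i - 1$, and since taking the $i$-th residue component is a homomorphism for $\oplus$,
\[ (\partial_n J)^n_i \;=\; J^n_i \oplus (J^n_i - 1) \;=\; \partial_1\bigl(J^n_i\bigr). \]
A finite subset $J \subseteq \zed$ is recovered from, and can be assembled freely out of, the $n$-tuple $(J^n_0, \dots, J^n_{n-1})$ of arbitrary finite subsets of $\zed$; hence the general case will follow once we know that $\partial_1$ maps $\zedfin$ bijectively onto $\{K \in \zedfin \st \#K \text{ even}\}$. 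Indeed $\partial_n$ is then injective because it is injective on each component, and $K$ lies in the image of $\partial_n$ iff each $K^n_i = \partial_1(J^n_i)$ lies in the image of $\partial_1$, i.e.\ has even cardinality.

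For $n = 1$ we have $J^1_0 = J$, so it remains to analyze $\partial_1$ itself. First I would encode each finite set $J$ by its (finitely supported) indicator $\chi_J \colon \zed \to \zed/2\zed$ and note that, since $\partial_1 J = J \oplus (J-1)$, one has $\chi_{\partial_1 J}(k) = \chi_J(k) + \chi_J(k+1)$. Summing over $k$ gives $\#(\partial_1 J) \equiv 2\sum_k \chi_J(k) \equiv 0 \pmod 2$, so $\partial_1$ does land in the claimed target. For injectivity: if $\partial_1 J = \partial_1 J'$ then $\partial_1(J \oplus J') = \emptyset$, so $\chi_{J \oplus J'}$ is constant, and being finitely supported it must be $0$, whence $J = J'$. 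For surjectivity, given $K$ with $\#K$ even I would integrate: set $\chi_J(k) := \sum_{\ell \ge k} \chi_K(\ell)$ in $\zed/2\zed$, a finite sum; it vanishes for $k > \max K$ and equals $\#K \equiv 0$ for $k \le \min K$, so $J \in \zedfin$, and a telescoping computation gives $\chi_J(k) + \chi_J(k+1) = \chi_K(k)$, i.e.\ $\partial_1 J = K$.

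I do not anticipate a genuine obstacle. The one place that wants a little care is the finiteness of the set $J$ produced in the surjectivity step, which is precisely where the hypothesis that $\#K$ is even enters (otherwise $\chi_J$ would be eventually $1$ as $k \to -\infty$); and the transition from the $n=1$ statement back to the description of the image of $\partial_n$ is routine bookkeeping via the residue components. Combining these gives the asserted bijection.
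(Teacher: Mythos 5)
Your proof is correct and follows essentially the same route as the paper: reduce to $n=1$ by observing that $\partial_n$ acts componentwise on the residue classes $J^n_i$, then handle $\partial_1$ directly. Your "discrete integration" $\chi_J(k)=\sum_{\ell\ge k}\chi_K(\ell)$ produces exactly the same preimage set as the paper's explicit union of intervals, just phrased via indicator functions.
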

\begin{proof}
We first prove the lemma for $n=1$.  
Let $J \in \zedfin$ with $\#J = 2m$ and write $J = \{a_1 < b_1 < \cdots < a_m < b_m\}$.  Let 
\[K = \bigsqcup_{i=1}^m \{a_i + 1, a_i +2, \ldots, b_i - 1, b_i\}.\]
Then $K \cup (K-1) = \bigsqcup_{i=1}^m \{a_i, a_i + 1, \ldots b_i\}$, and $K \cap (K-1) = \bigsqcup_{i=1}^m \{a_i + 1, \ldots b_i -1\}$.  We see that $\partial_1 K =  K \oplus (K-1) = \bigsqcup_{i=1}^m \{a_i,b_i\} = J$.  (Thus $ \partial_1 K$ is in some sense the ``boundary'' of $K$.)

Now suppose that $K \neq \emptyset$.  Let $r$ be the maximal element of $K$.  Then $r \not\in K - 1$, so $r \in (K \cup (K-1)) \smallsetminus (K \cap (K-1)) = \partial_1 K$, and $\partial_1 K \neq \emptyset$.

Finally,  given $I \in \zedfin$, put $I' = I \cap (I-1)$.  Then we have 
\[\partial_1 I = \bigl(I \smallsetminus I' \bigr) \cup \bigl((I-1) \smallsetminus  I')\bigr).\]
  This is a disjoint union, and as $I$ and $I-1$ have the same number of elements, we have 
  $\#\partial_1 I = 2 \cdot \#(I \smallsetminus I')$.
   
   Now let $n>1$, and let $J \in \zedfin$.  Then 
   \[ \partial_n J = \bigsqcup_{i=0}^{n-1} n (\partial_1 J^n_i) + i.\]
   Thus the result for general $n$ follows from the result for $n=1$.
\end{proof}

\begin{lemma}\label{lem-comb2} 
$(1)$  Let $\F$ be a generative autoequivalence of $\rgr A$.  Then $\F$ is conjugate in $\Pic(\rgr A)$ to  some $\shift^n_A \inv_J$, where $(J,n)$ is an admissible pair.  

$(2)$ If $(J, n)$ and $(L, m)$ are admissible pairs,  then $\shift^{m}_A \inv_{L}$ and $\shift^n_A \inv_J$ are conjugate in $\Pic(\rgr A)$ if and only if $(J,n)$ and $(L,m)$ have the same necklace type.
\end{lemma}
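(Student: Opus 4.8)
The plan is to reduce both parts to the conjugation action of $\Pic(\rgr A)$ on the even, positive-rank autoequivalences $\shift_A^n \inv_J$, and then to convert this into combinatorics of $\zedfin$ settled by Lemma~\ref{lem-comb}. If $\F$ is generative then by Lemma~\ref{p-positive} it is even of rank $b \neq 0$, so by Corollary~\ref{cor-eRST} we have $\F \cong \shift_A^b \inv_J$ for a unique $J \in \zedfin$; conjugating by $\omega$, which sends $\shift_A^b \inv_J$ to $\shift_A^{-b} \inv_{-1-J}$ (using $\omega \shift_A \omega^{-1} \cong \shift_A^{-1}$ and $\omega \inv_j \omega^{-1} \cong \inv_{-1-j}$), we may assume $b = n \geq 1$. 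Conjugating an even, rank-$n$ autoequivalence by a power of $\shift_A$ or by an involution again gives an even, rank-$n$ autoequivalence, since the image in the quotient $D_\infty$ (the translation by $n$) is unchanged; on the other hand a conjugator of the form $\shift_A^i \inv_K \omega$ sends that image to the translation by $-n$, so for $n \geq 1$ it cannot carry one positive-rank element to another. This last remark is what gives the $n=m$ reduction in part (2).

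The two conjugation formulas I will use are, from \eqref{eq-obvious},
\[ \shift_A^i \bigl( \shift_A^n \inv_J \bigr) \shift_A^{-i} \cong \shift_A^n \inv_{J+i}, \]
and, from \eqref{eq-obvious} together with $\inv_K \inv_J \cong \inv_{K \oplus J}$ and $\inv_j^2 \cong \id_{\rgr A}$,
\[ \inv_K \bigl( \shift_A^n \inv_J \bigr) \inv_K \cong \shift_A^n \inv_{J \,\oplus\, \partial_n K}, \qquad \partial_n K = K \oplus (K-n). \]
Thus the $J'$ for which $\shift_A^n \inv_{J'}$ is conjugate to $\shift_A^n \inv_J$ form the orbit of $J$ in $\zedfin$ under the subgroup generated by $J \mapsto J+1$ and $J \mapsto J \oplus \partial_n K$ ($K \in \zedfin$). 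The parity map $\zedfin \to (\zed/2\zed)^n$, $J \mapsto (\# J^n_i \bmod 2)_{0 \leq i \leq n-1}$, is a surjective group homomorphism whose kernel, by Lemma~\ref{lem-comb}, is exactly $\{ \partial_n K \mid K \in \zedfin \}$, and a direct computation — the substantive point being $(J+1)^n_0 = J^n_{n-1} + 1$ — shows that $J \mapsto J+1$ induces on $(\zed/2\zed)^n$ the cyclic rotation $(c_0, \dots, c_{n-1}) \mapsto (c_{n-1}, c_0, \dots, c_{n-2})$. Hence $\shift_A^n \inv_J$ and $\shift_A^n \inv_{J'}$ are conjugate in $\Pic(\rgr A)$ if and only if the parity vectors of $J$ and $J'$ lie in the same orbit under cyclic rotation, i.e. determine the same necklace of $n$ black and white beads; and for $J \subseteq \{0,\dots,n-1\}$ the parity vector is just the indicator vector of $J$.

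It remains to assemble these. For (1): the parity vector of $J$ is the indicator vector of a unique $J_0 \subseteq \{0,\dots,n-1\}$, so $J \oplus J_0$ lies in the kernel of the parity map, hence equals $\partial_n K$ for some $K$; conjugating $\shift_A^n \inv_J$ by $\inv_K$ then yields $\shift_A^n \inv_{J_0}$ with $(J_0,n)$ admissible. For (2): if $(J,n)$ and $(L,m)$ are admissible and $\shift_A^n \inv_J$, $\shift_A^m \inv_L$ are conjugate, then comparing images in $D_\infty$ forces $n = m$, a conjugator of $\omega$-type is excluded by the remark in the first paragraph, and the two displayed formulas together with the injectivity of $\Lambda$ (Corollary~\ref{cor-Picz}) show that the parity vector of $L$ is a cyclic rotation of that of $J$ — i.e. $(J,n)$ and $(L,n)$ have the same necklace type; conversely, the same computation run backwards produces explicit $i$ and $K$ realizing the conjugation once the necklace types agree. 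The step I expect to be the main obstacle is the combinatorial core of the second paragraph: matching $\{\partial_n K\}$ with the kernel of the parity map via Lemma~\ref{lem-comb} and verifying that integer translation descends to the claimed cyclic rotation, all while carefully tracking which conjugations preserve ``even of rank $n$'' so that the first-paragraph normalization is legitimate.
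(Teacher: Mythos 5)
Your proof is correct and follows essentially the same route as the paper: normalize via Lemma~\ref{p-positive}, Corollary~\ref{cor-eRST} and conjugation by $\omega$ to reach $\shift_A^n\inv_J$ with $n\geq 1$, compute conjugation by $\shift_A^i$ and by $\inv_K$ to get the orbit $\{(J+i)\oplus\partial_n K\}$, and invoke Lemma~\ref{lem-comb} to identify $\{\partial_n K\}$ with the sets of all-even parities. Your packaging of that last step as a parity homomorphism onto $(\zed/2\zed)^n$ with translation acting by cyclic rotation is just a cleaner restatement of the paper's parity comparison, not a different argument.
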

\begin{proof}
(1)   By  Lemma~\ref{p-positive} $\F$ is even and has nonzero rank; thus by Corollary~\ref{cor-eRST}  $\F \cong \shift_A^n \inv_K$ for some $n \neq 0$ and some $K \in \zedfin$.  Since $\omega \F \omega$ has rank $-n$, without loss of generality we may assume that $n > 0$.  

Define the set $J \subseteq \{0, \ldots, n-1\}$ to be:
\[J = \{  i  \st \# K_i^n \mbox{ is odd }\}.\]
   Then for all $0 \leq i \leq n-1$ we have that $\#(J\oplus K)^n_i$ is even, so by Lemma~\ref{lem-comb} there is some $I \in \zedfin$ such that $\partial_n I = J \oplus K$.   That is, 
\[ \inv_I \shift^n_A \inv_K \inv_I \cong \shift^n_A \invbrak{(I-n) \oplus K \oplus I} = \shift^n_A \invbrak{\partial_n I \oplus K} = \shift^n_A \inv_J.\]

(2) Let $(J, n)$ and $(L, m)$ be admissible pairs.  Note that conjugating by $\omega$ sends a rank $n$ autoequivalence to a rank $-n$ autoequivalence, while conjugating by an even autoequivalence preserves rank.  Since $m, n \geq 1$, if $\shift^{m}_A \inv_{L}$ and $\shift^n_A \inv_J$  are conjugate in $\Pic(\rgr A)$ then  they are conjugate in the subgroup of even autoequivalences, and we have $m=n$.

Suppose that $m=n$.  Then by the above and by Corollary~\ref{cor-eRST}, $\shift^n_A \inv_L$ and $\shift^n   \inv_J$ are conjugate if and only if there are some $j \in \zed$ and $I \in \zedfin$ such that 
$\shift^j_A \inv_I \shift^n \inv_J \inv_I \shift^{-j}_A \cong \shift^n \inv_L$; this is equivalent to the existence of $I$ and $j$ such that 
\[ J = \partial_n I \oplus (L-j).\]
By Lemma~\ref{lem-comb},  such an $I$ exists if and only if $(L-j)^n_i$ and $J^n_i$ have the same parity for all $0 \leq i \leq n-1$.  But since $J$ and $L$ are both contained in $\{0, \ldots, n-1\}$ this is true if and only if $(L-j) \equiv J \pmod n$.  This is equivalent to  $(L,n)$ and $(J,n)$ having the same necklace type.
\end{proof}

\begin{lemma}\label{l-gen}
Let $\F = \shift_A^n \circ \inv_I$, where $n > 0$ and $I \subseteq \{0, \ldots, n-1\}$.  Then $\F$ is $A$-generative.
\end{lemma}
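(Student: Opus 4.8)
The route is through Lemma~\ref{lem-generative}: it suffices to show that the family $\{\F^m A\}_{m\in\zed}$ generates every integrally supported graded simple $A$-module. By Lemma~\ref{lem-simple} these simples are exactly the shifts $X\ang{j}$ and $Y\ang{j}$, $j\in\zed$. Since a simple module is a quotient of a direct sum of members of a generating family iff it is already a quotient of a single member, I must show that for each $j$ both $X\ang{j}$ and $Y\ang{j}$ arise as quotients of some $\F^m A$; by Proposition~\ref{prop-duality} this is the same as saying that for each $j$ the set $\{F_j(\F^m A):m\in\zed\}$ contains both $X\ang{j}$ and $Y\ang{j}$.

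The next step is to compute $\F^m A$. Writing $\inv_I$ as the composite $\prod_{i\in I}\inv_i$ and pushing all shifts to the left by repeated use of \eqref{eq-obvious} (which gives $\shift_A^i\inv_J\cong\inv_{J+i}\shift_A^i$), a straightforward induction yields, for $m\ge1$,
\[\F^m A\;\cong\;\shift_A^{mn}\inv_{J_m}A,\qquad J_m=\bigsqcup_{t=0}^{m-1}(I-tn),\]
and for $m\le-1$,
\[\F^m A\;\cong\;\shift_A^{mn}\inv_{K_{-m}}A,\qquad K_\ell=\bigsqcup_{t=1}^{\ell}(I+tn),\]
the unions being disjoint because $I\subseteq\{0,\dots,n-1\}$. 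Combined with the facts that $F_j(A)\cong X\ang{j}$ for $j\ge0$ and $F_j(A)\cong Y\ang{j}$ for $j<0$ (immediate from the description of $X=A/xA$ and Proposition~\ref{prop-duality}), that $\inv_L$ interchanges $F_j$ with its opposite exactly for $j\in L$, and that $\shift_A^k$ translates indices by $k$, this gives a completely explicit formula for each $F_j(\F^m A)$.

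Finally I would read off the required simples from that formula. The module $A=\F^0A$ alone supplies $X\ang{j}$ for every $j\ge0$ and $Y\ang{j}$ for every $j<0$. For $X\ang{j}$ with $j<0$ one checks that the negative power $m=\lfloor j/n\rfloor$ does the job: there $F_j(\F^m A)$ corresponds to the index $j\bmod n\in\{0,\dots,n-1\}$ inside $\inv_{K_{-m}}A$, an index on which $K_{-m}$ does not act, so the factor is the untwisted $X\ang{j}$. Dually, for $Y\ang{j}$ with $j\ge0$ one uses $m=\lfloor j/n\rfloor$ when $j\ge n$ and $j\bmod n\in I$ (now the involution does act at the relevant index, flipping $X$ to $Y$), and a sufficiently large positive power otherwise. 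This exhausts all cases, so $\{\F^m A\}$ generates all integrally supported simples and $\F$ is $A$-generative.

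The argument involves no real obstacle beyond bookkeeping with shifts and involutions; the one point that needs attention is that one must use these specific ``transitional'' exponents $m=\lfloor j/n\rfloor$ rather than arguing by letting $m\to\pm\infty$, since for indices $j$ with $j\bmod n\in I$ the asymptotic value of $F_j(\F^m A)$ is the wrong one of the two simples and a particular finite power is genuinely needed.
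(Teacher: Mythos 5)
Your argument is correct and is essentially the paper's proof: both reduce via Lemma~\ref{lem-generative} to generating the integrally supported simples, compute $\F^m A$ as a shift of $A$ twisted by an explicit involution, and then check each residue class modulo $n$, with the same key observation that for residues in $I$ the asymptotic value of $F_j(\F^m A)$ is the wrong simple and only the single ``transitional'' power supplies the right one. The only cosmetic difference is that the paper absorbs the shift into a further involution ($A\ang{nj}\cong\invbrak{\Delta_j}A$) so that the conclusion becomes the condition $\bigcup_{j\neq 0}(\Gamma_j\oplus\Delta_j)=\zed$, whereas you keep the shift explicit and verify each simple factor directly.
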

\begin{proof}
We introduce notation:  for $j \neq 0$ define sets $\Gamma_{j}$ and  $\Delta_j$ by
\[
\Gamma_{j} = \brackarr{ 
	(I+n) \oplus (I+2n) \oplus \cdots \oplus (I+jn)	& \mbox{ if $j>0$}\\
	I \oplus (I-n) \oplus \cdots \oplus (I+(j+1)n)	& \mbox{ if $j<0$}
	} \]
\[
\Delta_{j} = \brackarr{ 
	\{0, \ldots, nj-1\} 	& \mbox{ if $j>0$}\\
	\{-1, \ldots, nj\}		& \mbox{ if $j<0$.}
	} \]
(Thus $\invbrak{\Delta_j} A \cong A \ang{nj}$.)  Then we see that if $j \neq 0$,
\[
 \F^j A  \cong \invbrak{\Gamma_j} \shift_A^{nj} A \cong \invbrak{\Gamma_j} \invbrak{\Delta_j} A = \invbrak{\Gamma_j \oplus \Delta_j} A. \]

Let $J_j = \Gamma_j \oplus \Delta_j$ for $j \neq 0$.    By Lemma~\ref{lem-generative}, $\{ \F^j A \}$ generates $\rgr A$ if and only if $\{ \F^j A \}$ generates all integrally supported simple modules.  Since $A$ generates $\X[l]_{l \geq 0}$ and $\Y[l]_{l \leq -1}$ we see that $\F$ is $A$-generative if and only if for all $l \geq 0$, some $\F^j A$ generates $\Y[l]$, and for all $l \leq -1$, some $\F^j A$ generates $\X[l]$.   That is, $\F$ is $A$-generative if and only if 
\begin{equation}\label{cond-I}\begin{split}
\bigcup_{j\neq 0} J_j = \zed.
\end{split}\end{equation}

Now, if $i \not\in I$, then $\Gamma_j \cap (n\zed + i) = \emptyset$, and so if $j>0$, then 
\[J_j \cap (n\zed+i) = \Delta_j \cap (n \zed +i) = \{ i, n+i, \ldots, n(j-1)+i\}.\]
If $j<0$, then $J_j \cap (n\zed+i) = \{ i-n, \ldots, i+nj \}$.  	Thus $(n\zed +i) \subseteq \bigcup_{j \neq 0} J_j$.   

Now suppose that $i \in I$.  Then   if $j >0$, then $ \Gamma_j \cap (n \zed +i) = 	\{ n+i, \ldots, nj+i\}$ and if $j< 0$, then  $ \Gamma_j \cap (n \zed +i)  = 		\{ i, i -n, \ldots, i+n(j+1) \}$.  
We see that $J_j \cap (n \zed + i) = (\Gamma_j \oplus \Delta_j) \cap (n \zed +i ) = \{i, i+nj\}$. Again we have that  $(n\zed +i) \subseteq \bigcup_{j \neq 0} J_j$.   
Thus \eqref{cond-I} is satisfied, and $\F$ is $A$-generative.
\end{proof}

We have gathered the ingredients for one-half of the proof of Theorem~\ref{thm-class}.  The other necessary piece is:
\begin{proposition}\label{prop-SJ}
Let $\F \cong \shift_A^n \inv_J$, with $(J, n)$ an admissible pair.  Then $\End^{\F}_A(A)$ is isomorphic to $S(J,n)$.  
\end{proposition}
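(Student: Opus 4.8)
The plan is to compute the twisted endomorphism ring $\End^{\F}_A(A) = \bigoplus_{m \in \zed} \hom_A(\F_m A, A)$ directly, using the explicit description of the $\zed$-sequence $\F_\bullet = \{ \shift_A^{nm} \inv_J^{(m)} \}$ (where, as in the proof of Lemma~\ref{l-gen}, $\F^m A \cong \invbrak{\Gamma_m \oplus \Delta_m} A =: \invbrak{J_m} A$ for $m \neq 0$ and $\F^0 A = A$). Regarding all these rank $1$ graded projectives as submodules of the graded quotient ring $Q$ of $A$ via the realizations in Lemma~\ref{lem-form}, a homomorphism $\F_m A \to A$ is given by left multiplication by an element of $Q$, so $\hom_A(\F_m A, A) \cong \{ q \in Q : q\, \invbrak{J_m} A \subseteq A \}$, which is a concrete $k[z]$-submodule of $A_{nm}$ (up to the shift bookkeeping). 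The first step is therefore to identify, for each $m$, this space of ``numerators'' explicitly: for $m > 0$ one gets something like $f_J \cdot (\text{a product of linear factors}) \cdot x^{nm}$-type elements, and for $m < 0$ a space containing $y^{-nm}$ with extra factors of $f_{\bar J}$ coming from the relations $XY = f_{\bar J}$, $YX = f_{\bar J}(z-n)$.

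The second and main step is to pin down the ring structure. By \eqref{mult1} (equivalently by Definition~\ref{defn-twist} and the formula \eqref{alphan} for $\beta^j$), multiplication in $\End^\F_A(A)$ is $f \star g = f \circ \beta^{j}(g)$, which in the quotient-ring picture becomes a \emph{twisted} multiplication: an element of degree $nm$ represented by $q \in Q$ multiplied by an element of degree $n\ell$ represented by $q' \in Q$ gives $q \cdot \gamma^{m}(q')$ for the appropriate automorphism $\gamma$ of $Q$ coming from the principal automorphism $\beta$ — but because we have realized everything inside the \emph{ungraded} ring $A[y^{-1}]$ (or a localization thereof) and $\F$ is, up to the involution twist, just $\shift_A^n$, this twist should collapse so that $\star$ is simply the multiplication inherited from $A$ (or from a localization of $A$), exactly the phenomenon already exploited for the Veronese case $J = \emptyset$ in Example~\ref{ex-rings}(2) and in the discussion opening Section~\ref{sec-main}. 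So the plan is: realize $\bigoplus_m \hom_A(\F_m A, A)$ as an honest subalgebra of $A[y^{-1}]$, write down generators $X, Y, z$ for it (with $X$ of degree $n$ coming from the $m=1$ piece, $Y$ of degree $-n$ from $m = -1$, and $z = xy$ from $m = 0$), and check directly that $X,Y,z$ satisfy the defining relations of $W(f_{\bar J}, n)$ and that the subalgebra they generate, together with $k[z]$, is exactly $k[z] + f_J W(f_{\bar J}, n) = S(J,n)$.

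In carrying this out I would first do the unmixed Veronese case $J = \emptyset$ to fix conventions: there $\F = \shift_A^n$, $\F_m A = A\ang{nm}$, $\hom_A(A\ang{nm}, A) = A_{nm}$, and the identification with $A^{(n)} = W(\prod_{i=0}^{n-1}(z+i), n)$ via $X \mapsto x^n$, $Y \mapsto y^n$, $z \mapsto z$ is Example~\ref{ex-rings}(2); this confirms the twist collapses. Then for general admissible $(J,n)$ I would use $\inv_J A = \bigcap_{i \in J}\inv_i A \supseteq f_J A$ (Lemma~\ref{lem-form}(1) and Lemma~\ref{lem-comp}, since $\inv_i A$ contains $(z+i)A$) to see $\F^m A \subseteq \shift_A^{nm}A$, hence $\hom_A(\F_m A, A) \supseteq \hom_A(A\ang{nm},A) = A_{nm}$, and more precisely that $\hom_A(\F_m A, A)$ is the set of $q$ with $q \cdot \invbrak{J_m}A \subseteq A$; an explicit computation of $J_m \cap (n\zed + i)$ exactly as in Lemma~\ref{l-gen} identifies which powers of which linear factors $(z+i)$ must divide $q$, and these bookkeeping computations show $\hom_A(\F_m A, A)$ corresponds precisely to the degree-$nm$ component of $k[z] + f_J W(f_{\bar J},n)$.

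The main obstacle I anticipate is \emph{not} the algebraic identities but getting the degree/shift bookkeeping and the direction of the twist exactly right — keeping straight the identification of $\hom_A(\F_m A, A)$ with a subspace of $Q$, the precise formula for the induced principal automorphism $\beta$ (and hence $\beta^j$) from the proof of Proposition~\ref{prop-shift}, and checking that under the realization inside $A[y^{-1}]$ the product $f \star g$ really is ordinary multiplication and not a nontrivially twisted one. Once that is nailed down, verifying $Xz - zX = nX$, $Yz - zY = -nY$, $XY = f_{\bar J}$, $YX = f_{\bar J}(z-n)$ and the idealizer description $S(J,n) = k[z] + f_J W$ is a direct computation in the localization of $A$, using $x^n z = (z+n)x^n$ and $x^n y^n = \prod_{i=0}^{n-1}(z+i)$ type identities together with the choice of which $(z+i)$-factors survive in $J$ versus $\bar J$.
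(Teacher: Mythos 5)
Your plan is correct and follows essentially the same route as the paper: the paper realizes $\End^\F_A(A)$ inside the Veronese $D\ver{n}$ of the graded quotient ring and computes the graded pieces $\hom_A(\F^m A, A)$ explicitly via Lemma~\ref{lem-form}, exactly as you propose. The one step you flag as the main obstacle but do not actually carry out --- that the twisted multiplication collapses to the ordinary multiplication of the localization --- is precisely the content of the paper's Lemma~\ref{lem-calcring}, proved there by lifting morphisms to the graded injective hull $D$ (on which $\inv_J$ acts as the identity, and for which the compatibility isomorphisms are trivial since $\F$ is an automorphism) and checking that the resulting $\zed$-algebra map respects the principal automorphisms.
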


Assuming this proposition for the moment, we may now prove Theorem~\ref{thm-class}.

\begin{proof}[Proof of Theorem~\ref{thm-class}]
Proposition~\ref{prop-SJ} tells us that if $(J, n)$ is an admissible pair, then $S(J,n)$ is isomorphic to $\End_A^{\F}(A)$ for $\F = \shift_A^n \inv_J$.  By Lemma~\ref{l-gen}, such an $\F$ is $A$-generative; thus by Theorem~\ref{thm-grMor-pullback}, $S(J, n)$ is graded equivalent to $A$.

Now let $S$ be a graded ring and let  $\Phi: \rgr A \to \rgr S$ be a category equivalence, with quasi-inverse $\Psi$.  Let $\F = \Phi^* \shift_S$, let $m$ be the rank of $\F$, and let $P = \Psi S$.  By Theorem~\ref{thm-grMor-pullback}, $\F$ is $P$-generative.    
By Lemma~\ref{lem-comb2}(1), there is some $\F'$ in the conjugacy class of $\F$ of the form $\F' \cong \shift^n_A \inv_J$ with $(J, n)$ an admissible pair.   By Lemma~\ref{l-gen}, $\F'$ is $A$-generative; thus by Proposition~\ref{prop-Morita-conjugate}, $S = \End^{\F}_R(P)$ is graded Morita equivalent to $\End_A^{\F'}(A)$.  By Proposition~\ref{prop-SJ}, $\End_A^{\F'}(A)$ is isomorphic to $S(J, n)$.  

Now let $(L, r)$ be another admissible pair, and let $\sh{G} = \shift_A^r \inv_L$.   By Proposition~\ref{prop-SJ} and Proposition~\ref{prop-Morita-conjugate}, $S(L, r) \cong \End^{\sh{G}}_A(A)$ and $S(J,n)$ are graded Morita equivalent if and only if $\sh{G}$ and $\F'$ are conjugate in $\Pic(\rgr A)$; but by Lemma~\ref{lem-comb2}(2) this is true if and only if $(L, r)$ and $(J, n)$ have the same necklace type.
\end{proof}

To complete the proof of Theorem~\ref{thm-class}, all that remains is to prove Proposition~\ref{prop-SJ}.  Before doing this,  we give a general lemma allowing us to calculate twisted endomorphism rings explicitly.  
To carry out our computations, we will work in two localizations of $A$.  Let $D$ be the graded quotient ring of $A$ and let $T = A[y^{-1}]$.  If $\sigma$ is the automorphism of $k[z]$ or $k(z)$ that sends $z \mapsto z+1$, we have that $D \cong k(z)[x, x^{-1}; \sigma] = k(z)[y^{-1}, y; \sigma]$ and that $T  \cong k[z][y^{-1}, y; \sigma]$, which we write in this way to emphasize the grading.

\begin{lemma}\label{lem-calcring}
Let $\F$ be an $A$-generative (and therefore even) rank $n$ autoequivalence of $\rgr A$, and let $C$ be the $\F$-twisted endomorphism  ring $\End^{\F}_A(A)$.  Write $\F = \shift^n_A \circ \inv_J$, and define $A$-submodules $M(j) $ of $D$ by:
\[ M(j) = \brackarr{ 
	\Bigl( \prod_{i=1}^{j} \invbrak{J + ni}^{-1} \Bigr) A 	& \mbox{if $j \geq 1$} \\
	A		& \mbox{ if $j=0$} \\
	\Bigl( \prod_{i = 0}^{-j-1} \invbrak{J-ni} \Bigr) A	& \mbox{if $j \leq -1$}	} \]
and a graded vector subspace $C' = \bigoplus_{j \in \zed} C'_j$ of the Veronese ring $D\ver{n}$ by 
\[  C'_j = M(j)_{nj}.\]
Then $C'$ is a subring of $D\ver{n}$ and $C' \cong C$.  
\end{lemma}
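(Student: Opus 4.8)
The strategy is to carry out the construction of $\End^{\F}_A(A)$ entirely inside the graded quotient ring $D$. Recall that $A$ is a domain, $D$ is a graded division ring, and every rank $1$ graded projective $A$-module is, up to a scalar in $D_0 = k(z)$, a fractional right ideal of $A$ in $D$. Two elementary facts will be used constantly: $\shift_A$ acts on submodules of $D$ by left multiplication by $x$; and a degree-preserving homomorphism $P \to Q$ between rank $1$ graded projectives $P, Q \subseteq D$ extends uniquely to a right $D$-module endomorphism of $D$, hence is left multiplication by a unique element of $D_0$, so that $\hom_A(P,Q) \cong \{c \in D_0 : cP \subseteq Q\}$.

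First I would compute the modules $\F^j A$ concretely. Using $\eqref{eq-obvious}$ in the form $\inv_L(x^m P) = x^m \inv_{L-m}P$, the identity $\inv_L^{-1} = \bigl(\prod_{l \in L}(z+l)^{-1}\bigr)\inv_L$, and Lemma~\ref{lem-form}, one iterates $\F = \shift^n_A \circ \inv_J$ and its quasi-inverse $\F^{-1} = \inv_J^{-1}\circ \shift^{-n}_A$ to obtain, for $j \ge 1$,
\[ \F^{j}A = x^{nj}\,\inv_{K_j}A \quad \bigl(K_j = {\textstyle\bigsqcup_{l=0}^{j-1}}(J-nl),\ \text{a disjoint union since } J \subseteq \{0,\dots,n-1\}\bigr), \qquad \F^{-j}A = x^{-nj}\,M(j); \]
in other words $\F^{-j}A = x^{-nj}M(j)$ for every $j \in \zed$. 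Moreover, in this realization $\F^{-1}\F$ and $\F\F^{-1}$ are the identity functor on the nose on the modules that occur — the scalar $\prod_l(z+l)^{-1}$ produced by $\inv_J^{-1}$ cancels the scalar $\prod_l(z+l)$ coming from $\inv_J^2$ (which is multiplication by $\prod_{l\in J}(z+l)$, by Lemma~\ref{lem-comp}) — so the whole $\zed$-sequence $\{\F^j\}$ together with all its compatibility isomorphisms $\eta^{ij}$ may be taken to be literal identities; and $\F^m$ sends the morphism ``left multiplication by $c$'' to ``left multiplication by $\sigma^{nm}(c)$'' for every $m \in \zed$ (each factor $\shift^{\pm n}_A$ contributing a $\sigma^{\pm n}$, each $\inv_J^{\pm 1}$ contributing nothing). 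Here the hypothesis that $\F$ has rank exactly $n$ is what makes the exponent come out as $nm$.

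Next I would assemble the ring and check that $\Theta\colon C \to C'$ is an isomorphism. By Definition~\ref{defn-twist}, $C_j := \hom_A(\F^j A, A)$ is, via the above, a $k$-subspace of $D_0$; write $b \in C_j$ as left multiplication by $c_b \in D_0$. Since $\F^{-j}$ is an equivalence with $\F^{-j}\F^j A \cong A$, the realization gives $C_j = \hom_A(\F^j A, A) \cong \hom_A(A, \F^{-j}A) = (\F^{-j}A)_0$ via $c_b \mapsto \sigma^{-nj}(c_b)$, whence $M(j)_{nj} = x^{nj}(\F^{-j}A)_0 = x^{nj}\sigma^{-nj}(C_j) = C_j\,x^{nj}$. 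Therefore $\Theta_j\colon C_j \to D_{nj}$, $b \mapsto c_b\,x^{nj}$, is a $k$-linear bijection onto $M(j)_{nj}$, and $\Theta := \bigoplus_j \Theta_j$ is a graded $k$-linear bijection $C = \End^{\F}_A(A) \to C' = \bigoplus_j M(j)_{nj}$. For multiplicativity: by $\eqref{mult1}$ and $\eqref{alphan}$, for $a \in C_i$ and $b \in C_j$ the product in $\End^{\F}_A(A)$ is $a \star b = a \circ \beta^i(b) = a \circ \F^i(b)$ (the $\eta^{ij}$ being identities), and by the two facts of the previous paragraph this is left multiplication by $c_a\,\sigma^{ni}(c_b)$; hence
\[ \Theta_{i+j}(a\star b) \;=\; c_a\,\sigma^{ni}(c_b)\,x^{n(i+j)} \;=\; \bigl(c_a\,x^{ni}\bigr)\bigl(c_b\,x^{nj}\bigr) \;=\; \Theta_i(a)\,\Theta_j(b), \]
the middle equality being ordinary multiplication in $D$, i.e.\ in $D\ver{n}$. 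Thus $\Theta$ is an isomorphism of $\zed$-graded rings; in particular $C' = \Theta(C)$ is closed under the multiplication of $D\ver{n}$, so $C'$ is a subring of $D\ver{n}$ and $C' \cong \End^{\F}_A(A)$, as claimed.

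The step I expect to be the main obstacle is the bookkeeping in the second paragraph: checking that the quasi-inverse $\F^{-1}$, the powers $\F^j$, and the compatibility data $\eta^{ij}$ really do realize inside $D$ as the clean formulas claimed, and correctly carrying the $\sigma$-twists produced by non-commutativity whenever one moves $x^{\pm nj}$ past elements of $D_0$. Once that is in place, the remainder is a direct, if mildly tedious, computation.
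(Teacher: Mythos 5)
Your proof is correct and takes essentially the same route as the paper's: both realize everything inside the graded quotient ring $D$, use that $\F$ acts on $D$ as $\shift_A^n$ so that the compatibility isomorphisms may be taken to be trivial, and identify $\hom_A(\F^j A, A)$ with $M(j)_{nj}$ inside $D\ver{n}$ via the observation that graded maps between fractional right ideals are left multiplications by elements of $D_0$. The only organizational difference is that the paper gets multiplicativity for free from uniqueness of lifts to the graded injective hulls $D\ang{nj}$ at the level of the $\zed$-algebra $H$, whereas you verify the product formula $c_a\,\sigma^{ni}(c_b)$ directly in the compressed ring; both are sound.
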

\begin{proof}
Since $\inv_J(D) \subseteq D$ is the graded injective hull of $\inv_J A$, then $\inv_J(D) = D$ and so for any $j \in \zed$, we have $\F^jD = \shift^{nj}_A D = D\ang{nj}$; further, if $g: D \to D$ is a graded $A$-module map, then  $\inv_J(g) = g$ and so $\F^j (g) = \shift^{nj}_A(g)$ as maps from $D\ang{nj} \to D\ang{nj}$.

Now, for each $j \in \zed$ there is a natural map $\psi_j: \F^j A \to D\ang{nj}$ given by applying $\F^j$ to the inclusion $\psi_0: A \hookrightarrow D$.  For $j \geq 1$, $\psi_j$ is the composition
\[  \F^j A \to \shift^{nj}_A \invbrak{J - n(j-1)} \circ \cdots \circ  \inv_{J} A \hookrightarrow D\ang{nj},\]
and for $j \leq -1$, $\psi_j$ is the composition
\[  \F^j A\to \shift^{nj}_A \invbrak{J + n(-j)}^{-1} \circ  \cdots \circ \invbrak{J + n}^{-1} A \hookrightarrow D\ang{nj}.\]
Notice that the image of $\psi_j$ is precisely $\shift^{nj}_A M(-j)$.  That is, for any $j \in \zed$ the module $M(j)$ may be identified with $\shift^{nj}_A \F^{-j} A$.  

Let $H$ be the $\zed$-algebra $H = \Hfunct{A}{\bigoplus_{j \in \zed} \F^j A}{\bigoplus_{j \in \zed} \F^j A}$; that is, 
\[H_{ij} = \hom_A(\F^j A, \F^i A).  \]
Suppose that $f: \F^j A \to \F^i A$ is an element of $H_{ij}$.  Then by graded injectivity of $D \ang{ni}$, there is a unique map $\widetilde{f}: D\ang{nj} \to D \ang{ni}$ such that the diagram
\beq\label{starstar}\begin{split} \xymatrix{
\F^j A \ar[r]^{f} \ar[d]_{\psi_j}	& \F^i A \ar[d]^{\psi_i}	\\
D \ang{nj} \ar[r]_{\widetilde{f}}	& D\ang{ni}	}
\end{split} \eeq
commutes.  Let $E = \bbar{D \ver{n}}$ and define a map 
\[\phi: H \to E = \Hfunct{A}{\bigoplus_{j \in \zed} D\ang{nj}}{\bigoplus_{i \in \zed} D\ang{ni}}\]
 by defining $\phi(f) = \widetilde{f}$.   

We claim that $\phi$ is a homomorphism of principal $\zed$-algebras; that is, $\phi$ is a (graded) map of $\zed$-algebras that commutes with the principal automorphisms of $H$ and $E$.  Let $\beta$ be the principal automorphism of $H$ induced from $\F$ and let  $\alpha$ be the canonical principal automorphism of $E$, induced from $\shift^n_A$.   Since $\F$ and $\shift_A$ are automorphisms of $\rgr A$,   in this case the compatibility isomorphisms are trivial, and we have $\beta(f)  = \F(f)$ and $\alpha(\phi(f)) = \shift^n_A(\phi(f))$.

Given  $f \in H_{ij}$  and $g \in H_{jk}$, there is a commutative diagram
\[ \xymatrix{
	\F^k A \ar[r]^{g} \ar[d]_{\psi_k}	& \F^j A \ar[r]^{f} \ar[d]^{\psi_j}	& \F^iA \ar[d]^{\psi_i} \\
	 D\ang{nk} \ar[r]^{\phi(g)} \ar@/_1pc/[rr]_{\phi(f\circ g)}				& D \ang{nj} \ar[r]^{\phi(f)}			& D  \ang{ni}.	} \]
Uniqueness of the lifting $\phi(f \circ g)$ gives us that $\phi(f \circ g) = \phi(f) \circ \phi(g)$, and so $\phi$ is a homomorphism of $\zed$-algebras.

We verify that $\phi$ is a morphism of principal $\zed$-algebras.  Fix $f \in H_{ij}$.  Then $\phi(f)$ is given by the diagram \eqref{starstar}, where $\phi(f) = \tilde{f}$.  
Applying $\F$ to \eqref{starstar}, we obtain:
\beq\label{triplestar} \begin{split}
 \xymatrix{
\F^{j+1} A \ar[rrrrr]^{\F(f) = \beta(f)} \ar[d]_{\F(\psi_j)}	&&&&& \F^{i+1} A \ar[d]^{\F(\psi_i)} \\
D \ang{n(j+1)} \ar[rrrrr]^{\F(\phi(f)) = \shift^{n}_A(\phi(f)) = \alpha(\phi(f))} &&&&& D \ang{n(i+1)}.	}
\end{split}\eeq
But by construction, $\F(\psi_j) = \psi_{j+1}$ and $\F(\psi_i) = \psi_{i+1}$, so \eqref{triplestar} is precisely the diagram giving $\phi(\beta(f))$.  That is, $\phi(\beta(f)) = \alpha(\phi(f))$, and  $\phi$ is a map of principal $\zed$-algebras, as claimed.

In particular, the canonical principal map $\alpha$ on $E$ restricts to a principal map on the image of $\phi$.  We saw that $\alpha$ induces a natural ring structure on $\bigoplus_{j \in \zed} E_{0j}$ that makes it isomorphic to $D\ver{n}$.  By Remark~\ref{rmk-opp}, it likewise  induces a ring structure isomorphic to $D\ver{n}$ on $\bigoplus_{j \in \zed} E_{-j,0}$.  Thus we may restrict the identifications $E_{0,j} = D_{nj} = E_{-j,0}$ to the image of $\phi$ to obtain a subring of $D\ver{n}$.  That is, the natural map 
\[  C_j \cong  \hom_A(A, \F^{-j} A)  \to \hom_A(A, \shift_A^{-nj}(M(j))) = M(j)_{nj} \stackrel{\phi}{\to} E_{-j,0} = D_{nj} \]
gives a ring monomorphism
\[ \phi: C \hookrightarrow D\ver{n}.\]
Therefore, $C' = \im \phi = \bigoplus_{j \in \zed} M(j)_{nj}$  is a subring of $D\ver{n}$ and $C \cong C'$, as claimed.  
\end{proof}

\begin{proof}[Proof of Proposition~\ref{prop-SJ}]
The proof is by direct computation.  
Put $S = \End^{\F}_A(A)$.  By Lemma~\ref{lem-calcring},  we may compute $S$ via:
\[S \cong \bigoplus_{j\in \zed} M(j)_{nj} \subseteq D\ver{n}\]
 where  
 \[
 M(j) = \shift_A^{nj} \F^{-j} A = \brackarr{ 
	\invbrak{(J+n) \oplus \cdots \oplus (J + jn)}^{-1}A	& \mbox{for $j \geq 1$} \\
	A	& \mbox{for $j=0$} \\ 
	 \invbrak{J \oplus (J-n) \oplus \cdots \oplus (J + (j+1)n)}A & \mbox{for $j\leq -1$.}
	} \]

Therefore, if $j>0$ we have
\begin{multline*}
S_j = M(j)_{nj} = (\invbrak{(J+n) \oplus \cdots \oplus (J+jn)}^{-1} A)_{nj} \\
= \Bigl( \prod_{l=1}^j f_{J+nl} \Bigr)^{-1} 
			\cdot (\invbrak{(J+n) \oplus \cdots \oplus (J+jn)} A)_{nj}, 
\end{multline*}
and if $j<0$ we have 
\[S_j =  (\invbrak{J \oplus (J-n) \oplus \cdots \oplus (J +(j+1)n)} A)_{nj}.\]
  If $j =0$ then $S_0 = A_0 = k[z]$.

We compute the terms $S_j$, using Lemma~\ref{lem-form}.  
We first let $j <0$.  
We want the $y^{-nj}$ term of 
\[ \invbrak{J \oplus (J-n) \oplus \cdots 
\oplus (J +(j+1)n)}A = \invbrak{J \cup (J-n) \cup \cdots \cup (J +(j+1)n)} A.\]
Let $L =  (J - n) \cup \cdots \cup (J + (j+1)n)$.  If $i\in L$, then since $0 > i >nj$,  by Lemma~\ref{lem-form}(3) we have $(\inv_i A)_{nj} = y^{-nj} k[z]$.  If $i \in J$, then $(\inv_i A)_{nj} = (z+i) y^{-nj} k[z]$.  That is, if $j<0$ we have by Lemma~\ref{lem-form}(1) that 
\beq\label{j-neg}
S_j  = \bigcap_{i \in J \cup L} \inv_i A 
 = \Bigl( \prod_{i \in J}(z+i) \Bigr) y^{-nj} k[z] \cap y^{-nj} k[z] = f_J y^{-nj} k[z].
\eeq

Now consider the terms for $j>0$.  Let 
\[K = (J+n) \oplus \cdots \oplus (J+nj) = (J+n) \cup \cdots \cup (J+nj).\]
  Put $K' = K \cap \zed_{\leq nj-1}$ and $K'' = K \cap \zed_{\geq nj} = J + nj$.  A similar computation shows that  $(\inv_{K'}A)_{nj} = x^{nj} k[z]$, and $(\inv_{K''}A)_{nj} = f_{K''} x^{nj} k[z] = f_{J+nj} x^{nj} k[z]$.
Now, $K = K' \cup K''$ and we see that 
\[ (\inv_K A)_{nj} = (\inv_{K'} A)_{nj} \cap (\inv_{K''}A)_{nj} = f_{J+nj} x^{nj} k[z].\]
Thus, 
\begin{multline*}	
S_j  = \Bigl( \prod_{l=1}^j f^{-1}_{J+nl}\Bigr) \cdot (\inv_K A)_{nj} 
	= \Bigl( \prod_{l = 1}^j f^{-1}_{J+nl} \Bigr) \cdot f_{J+nj} x^{nj} k[z] \\
	= f_J \cdot \Bigl( \prod_{l=0}^{j-1} f^{-1}_{J+nl}\Bigr) x^{nj} k[z].	\end{multline*}
It is straightforward to verify that this is equal to 
\[ f_J \cdot (f_J^{-1} x^n)^j k[z].\]

Recall the notation that  $\bbar{J} = \{ 0 \ldots n-1\} \smallsetminus J$.  
 Since $x^n = \prod_{i=0}^{n-1} (z+i) y^{-n} = f_J \cdot f_{\bbar{J}} y^{-n}$, we see that for $j > 0$, we have
\beq\label{j-pos}
S_j = f_J \cdot (f_{\bbar{J}} y^{-n})^j k[z].
\eeq
Since $S_0 = k[z]$, combining \eqref{j-neg} and \eqref{j-pos} we have that  $ S \cong \I_W(f_J W)$, 
where 
$ W = k \ang{f_{\bbar{J}} y^{-n}, y^n, z}$.
But it is easy to see that 
$ W \cong \GWA(f_{\bbar{J}}, n)$
--- that is, $S$ is precisely $S(J, n)$.
\end{proof}

\begin{example}\label{eg-ring0}
Let $\F = \shift_A \circ \inv_0$.  By Theorem~\ref{thm-class}, $A$ and $B = \End^{\F}_A(A)$ are graded equivalent; by Proposition~\ref{prop-SJ}, $B$ is isomorphic to 
\begin{equation*}\begin{split}
S(\{0\},1) = \I_T(zT) &= {\brackarr{ 
	 z y^{-n} k[z]		& \mbox{ if $n \geq 1$} \\
	 k[z]							&\mbox{ if $n=0$}\\
	zy^{-n}k[z]		& \mbox{ if $n \leq -1$.}
	} }
\end{split}\end{equation*} 

It is certainly surprising that these two rings are graded equivalent, since their behavior is so different.  It is well-known that $A$ is simple, and is therefore a {\em maximal order}:  there is no ring $S$ with $A \subsetneq S \subseteq Q(A)$ such that $a S b \subseteq A$ for some nonzero $a, b \in A$.  On the other hand, $B$ is neither simple or a maximal order.  Further, $B$ fails the {\em second layer condition} governing relationships among prime ideals and enabling localization (see \cite[Chapter~11]{GW}), while $A$ (trivially) satisfies the second layer condition.    Also, while $A$ has no finite-dimensional modules, $B$ has a 1-dimensional graded representation.

We explore the equivalence 
 $\Upsilon = \Htwist{\F}{A}{A}{\blank}$ between $\rgr A$ and $\rgr B$.    
 We first describe how idealizing affects graded module categories.  Let $R$ be a graded ring with a graded right ideal $I$ that is maximal as a right ideal of $R$; let $S = \I_R(I)$ be the idealizer of $I$ in $R$.  By \cite[Theorem~1.3]{Rob}, the ungraded category $\rmod S$ has one more simple than the category $\rmod R$:  the simple $R$-module $R/I$ becomes a length 2 module over $S$.  Thus in the graded category, idealizing corresponds to replacing the simples $(R/I)\ang{n}$ with pairs of simple modules.  

Now consider the category $\rgr B$.  Since $T$ is strongly graded, by \cite[Theorem~I.3.4]{NV}, the categories $\rgr T$ and $\rmod k[z]$ are equivalent, and the simples in $\rgr T$ are naturally parameterized by the affine line.  The discussion above shows that $\rgr B$ may be represented by an affine line  with double points at every integer:
\begin{equation*}%\label{picture}
\xymatrix@R=0pt@C=1.5cm{
& :  \ar[l] \ar@{-}[r] & : \ar@{-}[r]& : \ar@{-}[r] & : \ar[r] & . \\
& -1		& 0			&1			& 2  }
\end{equation*}
 We saw in Lemma~\ref{lem-simple} that this is also a representation of the simple objects in $\rgr A$.

  A quick computation shows that  $\F^nA \cong \inv_{0} \inv_n A$, and since
$\Upsilon (\F^nA) \cong B\ang{n}$, we see that 
$B$ is the unique shift of $B$ that generates $\Upsilon(X)$. Thus $\Upsilon(X)$ is the 1-dimensional module $k$, and  $\Upsilon$ maps the exact sequence
\[ \xymatrix{ 0 \ar[r] & xA \ar[r] & A \ar[r] & X \ar[r] & 0} \]
to 
\[ \xymatrix{ 0 \ar[r] & zT \ar[r] & B \ar[r] & k \ar[r] & 0.} \]
Thus the right ideal $xA$ maps to a two-sided ideal in $B$.

  Since the existence of $\Upsilon$ is rather counterintuitive, it is not surprising that $\Upsilon$ is quite different from the standard examples of  graded Morita equivalences and Zhang twists.   This can also be seen  in the Picard group of $\rgr A$:  the autoequivalence $\shift_A \inv_0 = \Upsilon^* \shift_B$ is not induced from  an automorphism of $\bbar{A}$, by Corollary~\ref{cor-new}.   On the other hand, by Corollary~\ref{cor-automs}, any autoequivalence of $\rgr A$ that corresponds to a Zhang twist or a graded Morita equivalence would be in the image of  \eqref{Aut-Pic}.
\end{example}

The discussion in Example~\ref{eg-ring0} proves Corollary~\ref{icor2}.

\begin{example}\label{eg-idealizerA}
For another example, we note that $A$ is graded equivalent to an idealizer in $A$:  put $n=2$ and $J= \{0\}$.  Then by Proposition~\ref{prop-SJ} the corresponding ring is $C = S(\{0\}, 2) = \I_{W}(zW)$, where $W = W(z+1, 2)$ is isomorphic to $A$ via
\begin{align*}
X & \mapsto x \\
Y & \mapsto 2y \\
z & \mapsto 2z-1
\end{align*}
Thus $C$ is isomorphic to $\I_A((z-1/2)A)$.   

This example is slightly less counterintuitive if we note that passing from $\rgr A$ to $\rgr C$ corresponds to adding a simple module for each half-integer point; thus pictorially the equivalence between $\rgr A$ and $\rgr C$ corresponds to scaling by a factor of 1/2.  It is still surprising that this can be made to work functorially.
\end{example}

\begin{remark}\label{rmk-stack}
Paul Smith has recently shown \cite{Smith-stack} that the category $\rgr A$ is also equivalent to a graded module category over a commutative ring $R$.  The ring $R$ is graded by $\zedfin$, and $\rgr R$ naturally corresponds to the affine line with a stacky $\zed/2\zed$-point at every integer.  (See \cite{Smith-stack} for precise definitions.)  This is plausible from the pictorial representation of $\rgr A$, but it is certainly quite counterintuitive that any module category associated to the Weyl algebra is commutative in any sense!
\end{remark}

We remark that  a similar classification to Theorem~\ref{thm-class} can be carried out for other generalized Weyl algebras, in particular for primitive factors of $U  = U(sl_2(\mathbb{C}))$.  In terms of generators and relations, $U$ is generated over $\mathbb{C}$   by $E$, $F$, and $H$, subject to the relations
\begin{align*}
[H, E] & = 2E 	& [H, F] & = -2F 		& [E,F] & = H.
\end{align*}
Let $\Omega \in U$ be the Casimir element $4EF + H^2 - 2H$.  Then the primitive factors of $U$ are given by factoring out $\Omega + \mu$ for some $\mu \in \mathbb{C}$; we write them as  $U_\lambda = U/(\Omega-\lambda^2 - 2\lambda)$ for $\lambda \in \mathbb{C}$.  If we let $e$ and $f$ be the images of $E$ and $F$, respectively, in $U_\lambda$, and let $h$ be $1/2$ times the image of $H$, then $U_\lambda$ is given in terms of generators and relations by 
\begin{align*}
[h, e] & = e	& fe & = -(h+\frac{\lambda}{2} + 1) (h - \frac{\lambda}{2})	\\
 [h, f] & = -f		& ef & = -(h+ \frac{\lambda}{2})(h - \frac{\lambda}{2} - 1).
\end{align*}
The $U_\lambda$ are clearly (isomorphic to) generalized Weyl algebras.   

Stafford \cite{St} has shown that in this parameterization, $U_\lambda$ is hereditary  when $ \lambda \not \in \zed$; that is, when the roots of $ef$ do not differ by an integer.   If $\lambda \in \zed$ but $\lambda \neq -1$, then $U_\lambda$ has global dimension 2.  If $\lambda = -1$, so the roots of $ef$ coincide, then $U_\lambda$ has infinite global dimension.  (See  \cite{Bav} for a generalization to arbitrary generalized Weyl algebras.)

For all values of $\lambda$, there are involutions of the category $\rgr U_\lambda$ similar to the involutions $\inv_j$ on $\rgr A$.  We will not give the classification of rings graded equivalent to $U_\lambda$, but we will note that if we let $S = U_\lambda[f^{-1}] \cong \mathbb{C}[h][f^{-1}, f; \tau]$, where $\tau(h) = h-1$, then similar calculations to Proposition~\ref{prop-SJ} produce the following result, which we give without proof:
\begin{proposition}\label{prop-U}
Let $\lambda \in \mathbb{C}$, and let $U_\lambda$ and $S$ be the rings defined above.  Then:

$(1)$ If $\lambda \not\in \zed$, then $U_\lambda$ is graded equivalent to $\I_S((h+\lambda/2)(h - \lambda/2 - 1) S)$.  Further, $U_\lambda$ is graded equivalent to $\I_A((z+\lambda + 1)A)$ and to $\I_A((z - \lambda -1)A)$.

$(2)$ If $\lambda=-1$, then $U_\lambda$ is graded equivalent to $\I_S((h-1/2)^2 S)$. 

$(3)$ If $- 1 \neq \lambda \in  \zed$, then $U_\lambda$ is graded equivalent to $\I_S((h+\lambda/2)(h- \lambda/2 -1)^2 S)$.    \qed
\end{proposition}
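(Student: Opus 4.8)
\medskip
\noindent\textbf{Proof proposal.} The plan is to re-run the development of Sections~\ref{sec-category}--\ref{sec-main} with $U_\lambda$ in place of $A$. Each $U_\lambda$ is a generalized Weyl algebra in the variable $h$: up to normalization and the harmless shift built into the presentation of $\GWA(\,\cdot\,,1)$, its defining polynomial is the Casimir-type quadratic $g = ef = -(h+\tfrac{\lambda}{2})(h-\tfrac{\lambda}{2}-1)$, whose two roots $-\tfrac{\lambda}{2}$ and $\tfrac{\lambda}{2}+1$ differ by $\lambda+1$; likewise $S = U_\lambda[f^{-1}] \cong \mathbb{C}[h][f^{-1},f;\tau]$ with $\tau(h) = h-1$ is strongly $\zed$-graded, so $\rgr S \simeq \rmod \mathbb{C}[h]$. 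Since the general machinery of Sections~\ref{sec-general}--\ref{sec-genPic}---Theorem~\ref{thm-grMor-pullback}, Proposition~\ref{prop-pullback} and Proposition~\ref{prop-Morita-conjugate} in particular---applies verbatim, what must be redone is the module-theoretic input of Sections~\ref{sec-category}--\ref{sec-Pic}. First I would classify the graded simple $U_\lambda$-modules and compute the relevant groups $\Ext^1_{U_\lambda}(-,-)$ (and, in the non-hereditary cases, track $\Ext^2$), following Lemmas~\ref{lem-simple}--\ref{lem-Ext}. The ``exceptional'' simples---those in the role of the pairs $X\ang{n}, Y\ang{n}$---are supported over the $\zed$-coset(s) of the roots of $g$: two disjoint cosets, each carrying a pair of simples, when $\lambda \notin \zed$; a single coset carrying a length-three chain of simples when $-1 \neq \lambda \in \zed$ (so $U_\lambda$ has global dimension $2$); and a single coset carrying a non-split self-extension of a length-two module when $\lambda = -1$ (infinite global dimension). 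From this I would construct, as in Proposition~\ref{prop-invsexist}, the involutions $\inv_j$ of $\rgr U_\lambda$ attached to the exceptional points, establish the rigidity statement corresponding to Corollary~\ref{cor-intsimple}, and thereby describe enough of $\Pic(\rgr U_\lambda)$ to proceed.

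Second, for each of the three cases I would choose an explicit finite set $J$ of exceptional points, set $\F \cong \shift_{U_\lambda}^{m} \circ \inv_J$ on $\rgr U_\lambda$ for the appropriate $m \in \{1,2\}$, verify that $\F$ is $U_\lambda$-generative using Lemma~\ref{lem-generative}, and compute $\End^{\F}_{U_\lambda}(U_\lambda)$ by the localization method of Lemma~\ref{lem-calcring}: pass to the graded quotient ring $D$ of $U_\lambda$, realize each $\F^{j} U_\lambda$ as an explicit fractional right ideal $M(j) \subseteq D$ assembled from $g$ and its $\tau$-translates, and read off $\bigoplus_{j} M(j)_{mj}$ as a subring of $D$. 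As in Proposition~\ref{prop-SJ} the result is an idealizer. With $m=1$ and $J$ a single exceptional point one gets $\mathbb{C}[h] + gS = \I_S(gS)$, which is the idealizer $\I_S\big((h+\tfrac{\lambda}{2})(h-\tfrac{\lambda}{2}-1)S\big)$ of the statement (equal also to $\I_S(eS)$, since $f$ is a unit in $S$); this is case~(1). In cases~(2) and~(3) one gets the squared-factor idealizers $\I_S\big((h-\tfrac12)^2 S\big)$ and $\I_S\big((h+\tfrac{\lambda}{2})(h-\tfrac{\lambda}{2}-1)^2 S\big)$, the extra square reflecting the coincidence (resp.\ collision) of the two root-chains on a single coset. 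Applying Theorem~\ref{thm-grMor-pullback} then yields the asserted graded equivalences. For the two further identifications in case~(1) I would run the same computation with $m=2$ and a suitable $J$, chosen so that the ambient generalized Weyl algebra produced is $A$ itself (exactly as in Example~\ref{eg-idealizerA}), obtaining $\I_A((z+\lambda+1)A)$; the companion identification with $\I_A((z-\lambda-1)A)$ then follows at once from the equality $U_\lambda = U_{-\lambda-2}$ of graded rings, on replacing $\lambda$ by $-\lambda-2$.

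I expect the main obstacle to be the non-hereditary range $\lambda \in \zed$. When $\lambda \neq -1$, global dimension $2$ makes Lemma~\ref{lem-split} fail, so finitely generated graded projectives need not split into rank-one pieces, and the clean ``reject of $X\oplus Y$'' description of the $\inv_j$ (Proposition~\ref{prop-invsexist}), together with the maximal-embedding lemmas (Lemmas~\ref{lem-maxemb}--\ref{lem-simplefactor}), must be replaced by arguments that correctly accommodate the length-three exceptional configuration and still deliver honest autoequivalences squaring to the identity. The case $\lambda = -1$ is harder still---infinite global dimension and a genuinely non-semisimple local structure---and there one must in addition check that the twisted-endomorphism computation really produces the non-reduced ideal $(h-\tfrac12)^2 S$ rather than its radical. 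Away from these points every step is a routine transcription of the Weyl-algebra case, which is why the proposition can reasonably be recorded without a full proof.
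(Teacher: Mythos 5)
The paper records this proposition \emph{without proof} (it is explicitly stated as following from ``similar calculations to Proposition~\ref{prop-SJ}''), so there is no proof in the paper to compare against line by line; your overall strategy --- redo the module-theoretic analysis of Sections~\ref{sec-category}--\ref{sec-Pic} for $U_\lambda$, build involutions at the exceptional points, and compute twisted endomorphism rings via Lemma~\ref{lem-calcring} --- is exactly the intended one. However, your dictionary between the choices of $(J,m)$ and the resulting idealizers is wrong in case (1), and it matters. In the analogue of $S(J,n)$, putting a root into $J$ \emph{removes} the corresponding linear factor from the defining polynomial of the ambient generalized Weyl algebra and places it into the idealizer ideal. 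For $\lambda\notin\zed$ the polynomial $g$ has two roots $r_1=-\lambda/2$ and $r_2=\lambda/2+1$ lying in \emph{distinct} $\zed$-cosets, so to obtain $\I_S(gS)$ you must take $m=1$ and $J=\{r_1,r_2\}$, one exceptional point from each coset; then $f_{\bbar{J}}=1$ and the ambient ring is $W(1,1)=S$. A single exceptional point does not give this ring. Conversely, the identifications with $\I_A((z+\lambda+1)A)$ and $\I_A((z-\lambda-1)A)$ come from $m=1$ and $J$ equal to a \emph{single} root: then $f_{\bbar{J}}$ is linear, $W(f_{\bbar{J}},1)\cong A$, and the two choices of root give the two signs (equivalently, exchange them via $U_\lambda=U_{-\lambda-2}$, as you correctly note). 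Your proposed $m=2$ cannot produce an idealizer in $A$ at a linear polynomial: the rank-$2$ Veronese of $U_\lambda$ is a generalized Weyl algebra at a degree-$4$ polynomial, so whenever $f_{\bbar{J}}$ is linear the idealizer ideal $f_J$ has degree $3$, and vice versa.

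The second, larger gap is that the genuinely new content of the proposition --- the exponents in cases (2) and (3), in particular the cubic $(h+\lambda/2)(h-\lambda/2-1)^2$ in case (3), which the paper itself flags as a curiosity --- is asserted rather than derived. When $\lambda\in\zed$ the two root-chains interleave on a single coset, $U_\lambda$ is no longer hereditary, and Lemmas~\ref{lem-maxemb}--\ref{lem-split} and Proposition~\ref{prop-invsexist} do not transcribe; you acknowledge this honestly, but the outline supplies no mechanism that would actually produce the squared factor, and ``the extra square reflecting the collision of the two root-chains'' is a heuristic, not an argument. To make this part of the proposal substantive you would need to carry out the analogue of the computation of $M(j)_{nj}$ from the proof of Proposition~\ref{prop-SJ} explicitly inside the localization $S$ and exhibit the non-reduced ideal directly.
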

It is a curiosity that we obtain idealizers in $S$ at a quadratic polynomial in the first two cases and at a cubic polynomial in the third.

\section{The graded $K$-theory of the Weyl algebra}\label{sec-Ktheory}
In this section we complete the analysis of the category $\rgr A$ by computing $K_0(\rgr A)$.  This is not needed to prove Theorem~\ref{ithm1}, but it does give further insight into differences between $\rgr A$ and $\rmod A$.  We also  show relationships between $K$-theory and the decomposition of $\Pic (\rgr A)$ given in  Section~\ref{sec-Pic}.

The ungraded $K$-theory of $A$ is trivial.  Not only is any projective module stably free, but by a result of Webber \cite{W}, if $P$ is any rank 1 projective then $P \oplus A \cong A^2$.  Thus one expects that the graded $K$-theory of $A$ will also be trivial; that is, that $K_0(\rgr A)$ is isomorphic to a countable direct sum $\zed^{(\zed)}$, where the generators $e_j$ correspond to the cyclic projectives $A \ang{j}$.  This is in fact true, but because the isomorphisms $P \oplus A \cong A^2$ are not graded, it does not  follow directly from the ungraded case.  In fact, Webber's result  is false for $\rgr A$:  if $P = x^4 A + (z+1)(z+3) A$, then there are no integers $l, m, n$ such that $P \oplus A \ang{l} \cong A\ang{m} \oplus A \ang{n}$.  However, it is true that all graded projectives are stably free; in this particular example, we have
\beq\label{foo} P \oplus A \ang{3} \oplus A \ang{1} \cong A \ang{4} \oplus A \ang{2} \oplus A.\eeq
Furthermore, we will see that in $\rgr A$, if $Q \oplus Q' \cong Q \oplus Q''$, then $Q' \cong Q''$.

We begin with a preliminary lemma:

\begin{lemma}\label{lem-sumisom}
Let  $J, K, J', K' \in \zedfin$.  Then $\inv_J A \oplus \inv_K A \cong \inv_{J'} A \oplus \inv_{K'} A$ if and only if $J \cap K = J' \cap K'$ and $J \cup K = J'\cup K'$.   In particular, 
\beq\label{K-identity}
 \inv_J A \oplus \inv_K A \cong \invbrak{J \cup K}A \oplus \invbrak{J \cap K} A.
\eeq
\end{lemma}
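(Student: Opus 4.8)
The plan is to reduce everything to bookkeeping with the functions $F_j$ from Definition~\ref{def-Fj} and the module-recognition results of Section~\ref{sec-category}. First I would record the basic structural fact: by Lemma~\ref{lem-split} any finitely generated graded projective $A$-module splits as a direct sum of rank~$1$ graded projectives, and by Lemma~\ref{lem-simplefactor} a rank~$1$ graded projective $P$ is determined up to isomorphism by the family $\{F_j(P)\}_{j\in\zed}$, equivalently by the set $D(P)=\{j\st F_j(P)\cong Y\ang{j}\}$ (with $F_j(P)\cong X\ang{j}$ for $j\notin D(P)$). For $P=\inv_J A$, Proposition~\ref{prop-invsexist} gives $D(\inv_J A)=(J+0)\oplus\{j\st j<0\}$ once we note $D(A)=\zed_{<0}$ and each $\inv_j$ toggles membership of $j$; more precisely $F_j(\inv_J A)$ differs from $F_j(A)$ exactly when $j\in J$. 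So $\inv_J A\cong\inv_{J'}A$ iff $J=J'$, and I can freely trade the module $\inv_J A$ for the set $J$.

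The $(\Rightarrow)$ direction: suppose $\inv_J A\oplus\inv_K A\cong\inv_{J'}A\oplus\inv_{K'}A$. Both sides are rank~$2$ graded projectives; by the splitting lemma an isomorphism must match up the rank~$1$ summands, so as unordered pairs $\{\inv_J A,\inv_K A\}\cong\{\inv_{J'}A,\inv_{K'}A\}$, hence as unordered pairs $\{J,K\}=\{J',K'\}$. For unordered pairs of sets, $\{J,K\}=\{J',K'\}$ is equivalent to the conjunction $J\cap K=J'\cap K'$ and $J\cup K=J'\cup K'$ (the intersection and union determine the pair, since for each index its pattern of membership in the two sets — ``in both'', ``in neither'', or ``in exactly one'' — is recovered, and symmetric difference does not need to be tracked because the pair is unordered). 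This gives the stated condition, and in particular taking $(J',K')=(J\cup K,\,J\cap K)$ yields \eqref{K-identity}.

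The $(\Leftarrow)$ direction is the same computation run backwards: if $J\cap K=J'\cap K'$ and $J\cup K=J'\cup K'$ then $\{J,K\}=\{J',K'\}$ as unordered pairs, so the direct sums agree up to reordering the summands. The only mild subtlety — and the step I would flag as the place to be careful rather than the place that is genuinely hard — is justifying that an abstract graded isomorphism of rank~$2$ projectives respects the decomposition into rank~$1$ pieces; this is where one invokes Lemma~\ref{lem-split} together with the fact (from Lemma~\ref{lem-simplefactor} and Proposition~\ref{prop-duality}) that a rank~$1$ graded projective is a submodule of $D$ pinned down by its simple factors, so the Krull--Schmidt-type matching of summands is legitimate here even though $A$ is not Artinian. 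Everything else is elementary set theory, and I would leave the routine verification of \eqref{K-identity} to the reader once the pair-matching principle is in place.
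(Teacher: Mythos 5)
Your argument has a fatal gap in both directions, and the two errors are at bottom the same: you assume that the decomposition of a rank $2$ graded projective into rank $1$ summands is unique up to reordering. Lemma~\ref{lem-split} gives only the \emph{existence} of such a decomposition, not uniqueness, and uniqueness genuinely fails here --- the identity \eqref{K-identity} is itself the counterexample. Take $J=\{0\}$, $K=\{1\}$: then \eqref{K-identity} asserts $\inv_0 A\oplus\inv_1 A\cong\invbrak{\{0,1\}}A\oplus A$, yet $\{\inv_0 A,\inv_1 A\}$ and $\{\invbrak{\{0,1\}}A,\,A\}$ are distinct unordered pairs of isomorphism classes (compare the factors $F_1$ of the summands). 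So the claim in your $(\Rightarrow)$ that an isomorphism ``must match up the rank $1$ summands'' is false, and the claim in your $(\Leftarrow)$ that $J\cap K=J'\cap K'$ and $J\cup K=J'\cup K'$ force $\{J,K\}=\{J',K'\}$ is false even as pure set theory: knowing which elements lie in exactly one of the two sets does not tell you \emph{which} one, as the pairs $\{\{0\},\{1\}\}$ and $\{\emptyset,\{0,1\}\}$ show. The content of the lemma is precisely that the invariant of $\inv_J A\oplus\inv_K A$ is the pair $(J\cap K,\,J\cup K)$ --- equivalently the multiset $J\uplus K$ --- and \emph{not} the unordered pair $\{J,K\}$; this failure of Krull--Schmidt is the graded analogue of $I\oplus I'\cong R\oplus II'$ over a Dedekind domain.

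What $(\Leftarrow)$ actually requires is a nontrivial isomorphism in the non-nested case, and the paper produces it with Schanuel's lemma: let $M$ be the unique semisimple factor of $A$ supported on $J\smallsetminus K$ (it exists and is unique by Proposition~\ref{prop-duality}); then there are exact sequences
\[ 0\to\invbrak{J\cup K}A\to\inv_K A\to M\to 0\qquad\text{and}\qquad 0\to\inv_J A\to\invbrak{J\cap K}A\to M\to 0, \]
and Schanuel's lemma yields \eqref{K-identity}, from which the general $(\Leftarrow)$ follows. For $(\Rightarrow)$ the paper first invokes \eqref{K-identity} to normalize to $J\subseteq K$ and $J'\subseteq K'$, and only then compares the simply supported length-$2$ factors of the two sides (which record, at each integer, whether it lies in both, neither, or exactly one of the two sets) to conclude $J=J'$ and $K=K'$. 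Your $F_j$ bookkeeping is sound for rank $1$ modules, but it cannot be applied summand-by-summand to a direct sum without first fixing a canonical (here: nested) decomposition.
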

\begin{proof}
$(\Leftarrow)$.  It is enough to establish \eqref{K-identity}. 
By Proposition~\ref{prop-duality}, $A$ has a unique semisimple factor that is supported on $J \smallsetminus K$.  Call this module $M$; then $M$ is also a factor of $\invbrak{J \cap K}A$ and  $\inv_K A$.  It is easy to see that  there is  a diagram with  exact rows:
\[ \xymatrix{
0 \ar[r] & \invbrak{J \cup K} A \ar[r] & \inv_K A \ar[r] 	& M \ar@{=}[d] \ar[r]	& 0 \\
0 \ar[r] & 	\inv_J A \ar[r]	& \invbrak{J \cap K} A \ar[r]	& M \ar[r] 	& 0. } \]
By Schanuel's lemma we see that we have
$\inv_J A \oplus \inv_K A \cong \invbrak{J \cap K} A \oplus \invbrak{J \cup K} A$, as claimed.

$(\Rightarrow)$.  By \eqref{K-identity} we may without loss of generality assume that $J \subseteq K$ and $J' \subseteq K'$.  Now comparing simply supported factors of length 2, which must be isomorphic, we see immediately that $J = J'$ and $K = K'$.
\end{proof}

\begin{proposition}\label{prop-K0}
Let $K_0= K_0(\rgr A)$.   Then $K_0 \cong \zed^{(\zed)}$ is  a direct sum of countably many copies of $\zed$, and is generated by the equivalence classes of the shifts $\{ A\ang{n}\}_{n \in \zed}$.  Further, if $P$ and $P'$ are finitely generated graded projective modules, then $[P] = [P']$ in $K_0$ if and only if $P \cong P'$.
\end{proposition}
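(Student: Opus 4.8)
The plan is to compute $K_0(\rgr A)$ directly from the structure theory of graded projectives developed in Sections~\ref{sec-category} and \ref{sec-Pic}, rather than via resolutions of arbitrary modules. First I would fix the free generators: since $A$ is hereditary (Lemma~\ref{lem-split}), every finitely generated graded projective splits as a direct sum of rank $1$ graded projectives, so $K_0$ is generated by the classes $[P]$ with $P$ rank $1$. By Corollary~\ref{cor-Picz} and Corollary~\ref{cor-new}, every rank $1$ graded projective is of the form $\inv_J(A\ang{n}) \cong \shift_A^n \inv_{J-n} A$ for a unique $n \in \zed$ and $J \in \zedfin$ (one reads off $n$ as the ``eventual'' behaviour of the simple factors $F_j$, and then $J$ from the finitely many places where $P$ differs from $A\ang{n}$, as in the proof of Corollary~\ref{cor-Picz}). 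I would then show that in $K_0$ the relation $[\inv_J A] + [\inv_K A] = [\invbrak{J\cup K} A] + [\invbrak{J\cap K}A]$ from Lemma~\ref{lem-sumisom}, equation~\eqref{K-identity}, lets one ``straighten'' any such class: iterating \eqref{K-identity} with singletons shows $[\inv_J A] = \sum_{j\in J}[\inv_j A] - (\#J - 1)[A]$ whenever $J \neq \emptyset$, and then $[\inv_j A]$ is itself reducible because $\inv_j A$ sits in a rank-1 filtration with rank-1 cyclic subquotients: from Lemma~\ref{lem-form}(3) together with the short exact sequences \eqref{YX}, \eqref{XY} one gets $[\inv_j A] = [A\ang{j}] + [A\ang{j+1}] - [A\ang{j-1}]$ (or the analogous identity read off from the explicit generators), which is manifestly in the span of the $[A\ang{n}]$. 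Hence the classes $\{[A\ang{n}]\}_{n\in\zed}$ generate $K_0$; equation~\eqref{foo} is then just the instance of this bookkeeping for $P = x^4 A + (z+1)(z+3)A = \invbrak{\{2,4\}}A\ang{2}$ — I would include it as the worked example the introduction to Section~\ref{sec-Ktheory} promises.

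Next I would prove these generators are free, i.e. there is no relation $\sum_n a_n [A\ang{n}] = 0$ with $a_n \in \zed$ not all zero, and simultaneously the cancellation statement. The cleanest route is to construct a homomorphism $K_0(\rgr A) \to \zed^{(\zed)}$ splitting the obvious surjection $\zed^{(\zed)} \to K_0$. Here I would use an additive invariant of finite-length graded modules: for each $n\in\zed$ define $\chi_n(M) = [\text{multiplicity of } X\ang{n} \text{ in } M] - [\text{multiplicity of } Y\ang{n}\text{ in }M]$ on objects of finite length, which is well-defined and additive on short exact sequences by Lemma~\ref{lem-simple} and the Jordan–Hölder theorem, and is $0$ on the $M_\lambda$. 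For a rank $1$ graded projective $P$, applying Proposition~\ref{prop-duality} (which partitions the integrally supported simples according to which of $X\ang{n}$, $Y\ang{n}$ occurs as a factor) gives a well-defined ``defect vector'' $\delta(P) \in \{0,1\}^{(\zed)} \subseteq \zed^{(\zed)}$ recording at which $n$ the module $P$ is $n$-opposite to $A$; by Lemma~\ref{lem-asymptotic}/Lemma~\ref{lem-maxemb} this is a finite set. Combined with the rank $b(P)$, the pair $(b(P), \delta(P))$ is a complete invariant of $P$ by Lemma~\ref{lem-simplefactor}. I would check $(b,\delta)$ is additive on direct sums of rank $1$ projectives (additivity of $b$ is clear; for $\delta$, \eqref{K-identity} shows $\delta(\inv_J A) + \delta(\inv_K A) = \delta(\invbrak{J\cup K}A) + \delta(\invbrak{J\cap K}A)$ — and since $\inv_J$ toggles exactly the coordinates in $J$, this is just $\mathbf{1}_J + \mathbf{1}_K = \mathbf{1}_{J\cup K} + \mathbf{1}_{J\cap K}$ in $\zed^{(\zed)}$, which holds), so $(b,\delta)$ descends to a group homomorphism $K_0(\rgr A) \to \zed \oplus \zed^{(\zed)}$. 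Since $\delta(A\ang{n})$ and $b(A\ang{n})$ together separate the generators $[A\ang n]$, this homomorphism is injective on their span, forcing freeness; and for two finitely generated graded projectives $P \cong \bigoplus P_i$, $P'\cong\bigoplus P'_j$, equality $[P]=[P']$ forces equal invariant, hence — after splitting off equal rank-$1$ summands one at a time using Lemma~\ref{lem-simplefactor} and the cancellation $\inv_J A \oplus \inv_K A \cong \inv_{J'} A \oplus \inv_{K'} A \Rightarrow \{J\cap K, J\cup K\} = \{J'\cap K', J'\cup K'\}$ of Lemma~\ref{lem-sumisom} — forces $P \cong P'$.

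The one genuine gap in the above is that $K_0(\rgr A)$ is a priori generated by classes of \emph{all} finitely generated graded modules, not just the projective ones, with relations coming from \emph{all} short exact sequences; I have so far only analysed the projective ones. Since $A$ is hereditary and every graded module $M$ of finite presentation has a length-$\le 1$ graded projective resolution $0 \to P_1 \to P_0 \to M \to 0$ with $P_0, P_1$ finitely generated graded projective (Lemma~\ref{lem-split} again), one has $[M] = [P_0] - [P_1]$, so the projective classes do generate; and the Euler-characteristic map $[M]\mapsto [P_0]-[P_1]$ is the standard way to see the invariant $(b,\delta)$ (and $\chi_n$ on the torsion part) extends to all of $K_0$. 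This is the step I expect to be the main obstacle — not because it is deep, but because one must be slightly careful that ``finitely generated graded projective'' resolutions exist for every object of $\rgr A$ that can appear (including infinitely generated ones, which contribute no compact generators and can be ignored, or handled by noting $K_0$ is generated by finitely generated objects). Once that is in place, the two displayed conclusions — $K_0 \cong \zed^{(\zed)}$ on the generators $[A\ang n]$, and $[P]=[P'] \Leftrightarrow P\cong P'$ for finitely generated graded projectives — both drop out of the invariant $(b,\delta)$ and Lemmas~\ref{lem-simplefactor} and \ref{lem-sumisom}.
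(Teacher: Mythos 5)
Your proposal is correct in substance and reaches the result by a route that overlaps with, but is not identical to, the paper's. Both arguments rest on the same two pillars: Lemma~\ref{lem-sumisom}, and the fact (from Lemma~\ref{lem-asymptotic}, Proposition~\ref{prop-duality} and Lemma~\ref{lem-simplefactor}) that a rank 1 graded projective is determined by its integrally supported simple factors, so that every rank 1 projective is $\inv_J A$ for a unique $J \in \zedfin$. (There is no separate shift parameter to normalize away: every rank 1 projective has the same asymptotic factors as $A$, and $A\ang{n}$ is itself $\invbrak{\{0,\ldots,n-1\}}A$ for $n \geq 1$, resp.\ $\invbrak{\{-1,\ldots,n\}}A$ for $n \leq -1$.) For generation, the paper stays entirely inside split exact sequences of projectives, inducting on $\#J$ via the isomorphism $\inv_J A \oplus A\ang{m} \cong \invbrak{J \cap \{0,\ldots,m-1\}}A \oplus A\ang{m+1}$ with $m = \max J$; you instead straighten $[\inv_J A]$ to $\sum_{j \in J}[\inv_j A] - (\#J-1)[A]$ and then resolve each $[\inv_j A]$ by a short exact sequence, which requires identifying the split Grothendieck group of projectives with the full $K_0$ (the resolution-theorem step you rightly flag, and which hereditariness supplies; the paper silently works with the projective version). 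Your displayed formula $[\inv_j A] = [A\ang{j}] + [A\ang{j+1}] - [A\ang{j-1}]$ is wrong as written (already for $j=0$, where $\inv_0 A = xA \cong A\ang{1}$); the correct identity, from $0 \to \inv_j A \to A \to F_j(A) \to 0$ together with $[X\ang{j}] = [A\ang{j}] - [A\ang{j+1}]$ and $[Y\ang{j}] = [A\ang{j+1}] - [A\ang{j}]$, is $[\inv_j A] = [A] - [A\ang{j}] + [A\ang{j+1}]$ for $j \geq 0$ and $[A] + [A\ang{j}] - [A\ang{j+1}]$ for $j \leq -1$ --- but since only membership in the span of the shifts is needed and you hedged the formula, this is a slip rather than a gap. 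For freeness and cancellation, the paper decomposes $P \cong \inv_{J_1}A \oplus \cdots \oplus \inv_{J_m}A$ with $J_1 \subseteq \cdots \subseteq J_m$ and reads the $J_i$ off from the dimensions $\dim_k \hom_A(P,S)$; your homomorphism $(b,\delta)$ to $\zed \oplus \zed^{(\zed)}$ packages exactly the same information as an explicit splitting, and has the advantage of making the linear independence of the generators $[A\ang{n}]$ (which the paper leaves largely implicit, noting only that the classes are distinct) completely transparent. Both treatments are sound.
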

\begin{proof}
We will prove the second statement first.  Suppose that $P \oplus Q \cong P' \oplus Q$, where $P$, $P'$, and $Q$ are finitely generated graded projective modules.   This implies that $P$ and $P'$ have the same rank, say $m$.  Further, if $S$ is an integrally supported simple module, then 
\begin{multline*}
\dim_k \hom_A(P, S) = \dim_k \hom_A(P \oplus Q, S) - \dim_k \hom_A(Q, S) \\
= \dim_k \hom_A(P', S). 
\end{multline*}
Using Lemma~\ref{lem-sumisom}, we may write 
\[P \cong \inv_{J_1} A \oplus \cdots \oplus \inv_{J_m} A,\]
where $J_1 \subseteq \cdots \subseteq J_m$.  Likewise we have
\[P' \cong \inv_{K_1} A \oplus \cdots \oplus \inv_{K_m} A,\]
where $K_1 \subseteq \cdots \subseteq K_m$.
 The sets $J_i$ may be read off from the dimensions of the vector spaces $\hom_A(P, S)$ as $S$ varies over all integrally supported simples.  Thus $K_i = J_i$ for all $i$, and $P \cong P'$.

In particular, the classes $[A\ang{n}]$ in $K_0$ are all distinct.  Let $G$ be the subgroup of $K_0$ that they generate; we will show that $G = K_0$.  It suffices to show that $G$ contains all  rank 1 graded projective modules; let $P$ be such a module.  By Lemma~\ref{lem-asymptotic}, $F_j(P) = Y\ang{j}$ for all $j \ll 0$, and $F_j(P) = X\ang{j}$ for all $j \gg 0$.  Let 
\[n = \min \{ j \in \zed \st  F_j(P) = X\ang{j} \}.\]
  It clearly suffices to prove that $[P \ang{-n}]$ is in $G$; that is, without loss of generality we may assume that $P \cong \inv_J A$, with $J \subseteq\zed_{\geq 1}$.

We induct on $\#J$.  If $J = \emptyset$, then $[P] = [A]$ is in $G$.  So assume that $\#J \geq 1$, and that for all $I$ with $\#I < \#J$ and $I \subseteq \zed_{\geq 1}$, we have $[\inv_I A] \in G$.  Let $m = \max J$.  Then  by Lemma~\ref{lem-sumisom}, we have that
\begin{multline*} P \oplus A\ang{m} \cong \invbrak{J \cap \{ 0, \ldots, m-1\}}A \oplus \inv_{\{0, \ldots, m\}} A \\
	\cong \invbrak{J \cap \{0, \ldots, m-1\}}A \oplus A \ang{m+1}. \end{multline*}
By induction, $[\invbrak{J \cap \{ 0, \cdots, m-1\}}A] \in G$, and so $[P] \in G$.
\end{proof}

We define the  {\em graded reduced Grothendieck group} $\widetilde{K_0} = \widetilde{K_0} (\rgr A)$  to be $K_0/([A])$.  Then we have:

\begin{corollary}\label{cor-K_0-red}
There is a unique group homomorphism $\theta: \widetilde{K_0} \to \Picz(\rgr A)$ such that $\theta([\inv_JA]) = \inv_J$ for all $J$.  Further, $\theta$ 
induces an isomorphism
\[ \theta \otimes 1: \widetilde{K_0} \otimes \zed/2\zed \to \Picz(\rgr A).\]
\end{corollary}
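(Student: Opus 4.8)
The plan is to build $\theta$ directly from the description of $\Picz(\rgr A)$ in Corollary~\ref{cor-Picz} and the description of $K_0$ in Proposition~\ref{prop-K0}, then check it kills $[A]$, and finally compute the kernel of the induced map to identify it with multiplication by $2$.

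First I would define $\theta$ on $K_0$ rather than on $\widetilde{K_0}$. Proposition~\ref{prop-K0} gives that $K_0$ is free abelian on the classes $\{[A\ang{n}]\}_{n\in\zed}$, so to define a homomorphism $K_0\to\Picz(\rgr A)$ it suffices to prescribe the image of each generator; I would send $[A\ang{n}]\mapsto\id_{\rgr A}$ for every $n$. This visibly kills $[A]=[A\ang 0]$ and hence factors through $\widetilde{K_0}$; call the resulting map $\theta$. The content is then to verify the asserted formula $\theta([\inv_J A])=\inv_J$ for all $J\in\zedfin$. For this I would induct on $\#J$ using the identity~\eqref{K-identity} from Lemma~\ref{lem-sumisom}: writing $m=\max J$, that lemma gives $\inv_J A\oplus A\ang m\cong \invbrak{J\cap\{0,\dots,m-1\}}A\oplus A\ang{m+1}$ (shifting the earlier computation in the proof of Proposition~\ref{prop-K0}), so in $\widetilde{K_0}$ one gets $[\inv_J A]=[\invbrak{J'}A]$ where $J'=J\smallsetminus\{m\}$ has $\#J'=\#J-1$; meanwhile in $\Picz(\rgr A)\cong(\zed/2\zed)^{(\zed)}$ we have $\inv_J=\inv_{J'}\inv_m$, and I must reconcile the ``$\inv_m$'' discrepancy. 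The point is that $\widetilde{K_0}$ does \emph{not} see $\inv_m$ on the nose --- we are only claiming $\theta([\inv_J A])=\inv_J$, and $\theta$ has a large kernel --- so really the clean statement to prove is that $\theta$ is \emph{well-defined by} the rule $[\inv_J A]\mapsto\inv_J$, i.e.\ that whenever $\sum n_i[\inv_{J_i}A]=\sum m_i[\inv_{K_i}A]$ in $\widetilde{K_0}$ we get $\prod\inv_{J_i}^{n_i}=\prod\inv_{K_i}^{m_i}$ in $\Picz$. I would instead establish the formula $\theta([\inv_J A]) = \inv_J$ by first noting that, by Proposition~\ref{prop-K0} applied to the rank~$1$ case and Lemma~\ref{lem-sumisom}, the classes $[\inv_J A]-[A]$ for $J\in\zedfin$ span $\widetilde{K_0}$, and that under $\theta$ (defined via the generators $[A\ang n]$) one computes $\theta([\inv_J A])$ by expressing $[\inv_J A]$ in terms of the $[A\ang n]$ and applying the prescribed rule --- and \eqref{K-identity} is exactly the relation that makes $\theta([\inv_J A])$ come out to $\inv_J$ after one checks the base cases $\theta([\inv_j A])=\inv_j$ directly (using Lemma~\ref{lem-form}(3): $\inv_j A$ is $A\ang 1$ up to the involution data, or more precisely $\inv_0 A = xA\cong A\ang 1$ as an ungraded module but carries the nontrivial $F_0$-value, while in $K_0$, $[\inv_0 A]=[A\ang 1]$).

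For the final isomorphism statement, I would argue as follows. Since $\inv_J^2\cong\id_{\rgr A}$, the map $\theta$ is killed by $2\widetilde{K_0}$, so it induces $\theta\otimes 1:\widetilde{K_0}\otimes\zed/2\zed\to\Picz(\rgr A)$. Surjectivity is immediate from Corollary~\ref{cor-Picz}: every element of $\Picz(\rgr A)$ is $\inv_J$ for some $J$, and $\inv_J=\theta([\inv_J A])=(\theta\otimes 1)([\inv_J A]\otimes 1)$. For injectivity I would use the explicit basis. By Lemma~\ref{lem-sumisom}, $\widetilde{K_0}$ has a $\zed$-basis consisting of the classes $\{[\inv_{\{j\}}A]-[A]\}_{j\in\zed}$ together with, say, $[A\ang 1]-[A]$ --- more carefully, $\widetilde{K_0}\cong\zed^{(\zed)}$ with one coordinate recording ``total shift'' and the others recording ``which involutions''; concretely, reading off from $\hom_A(P,S)$ as in the proof of Proposition~\ref{prop-K0}, any rank~$1$ projective $P$ has a well-defined shift $n(P)\in\zed$ and a well-defined $J(P)\in\zedfin$ with $P\cong\inv_{J(P)}A\ang{n(P)}$, and $[P]-[A]$ is determined by $(n(P),J(P))$ additively-ish via \eqref{K-identity}. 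After tensoring with $\zed/2\zed$ the ``shift'' coordinate and the ``involution'' coordinates both survive, but $\theta\otimes 1$ sends the shift coordinate to $0$ (since $\theta([A\ang n])=\id$) and the $j$-th involution coordinate isomorphically onto the $j$-th $\zed/2\zed$ factor of $\Picz(\rgr A)\cong(\zed/2\zed)^{(\zed)}$. Thus $\theta\otimes 1$ is a surjection between two copies of $(\zed/2\zed)^{(\zed)}$ whose kernel is exactly the ``shift'' line, which is zero after one checks that $[A\ang 1]-[A]$ is already divisible by $2$ modulo the span of the involution classes --- indeed $[A\ang 1]=[\inv_0 A]$ so $[A\ang 1]-[A]=[\inv_0 A]-[A]$ lies in the span of the involution generators, meaning there is no separate ``shift'' coordinate at all and $\theta\otimes 1$ is bijection.

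The main obstacle I anticipate is the bookkeeping in the previous paragraph: pinning down an explicit $\zed$-basis of $\widetilde{K_0}$ adapted to the involution decomposition, and verifying that the relation $[A\ang 1]=[\inv_0 A]$ (equivalently $xA\cong A\ang 1$ in $K_0$, which follows since $xA\cong A\ang 1$ as graded modules) collapses what naively looks like an extra $\zed$ summand, so that $\widetilde{K_0}\otimes\zed/2\zed$ and $\Picz(\rgr A)$ have ``the same size'' and the surjection $\theta\otimes 1$ is forced to be an isomorphism. Everything else --- well-definedness of $\theta$, vanishing on $[A]$, the formula $\theta([\inv_J A])=\inv_J$, surjectivity --- is a direct consequence of Lemma~\ref{lem-sumisom}, Proposition~\ref{prop-K0}, and Corollary~\ref{cor-Picz}, assembled by induction on $\#J$.
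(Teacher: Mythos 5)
Your construction of $\theta$ is broken at the outset. You define $\theta$ on the free basis $\{[A\ang n]\}$ of $K_0$ by $[A\ang n]\mapsto\id_{\rgr A}$; since these classes freely generate $K_0$, this makes $\theta$ the \emph{trivial} homomorphism, so $\theta([P])=\id_{\rgr A}$ for every projective $P$. But $\inv_0A=xA\cong A\ang 1$ as graded modules (left multiplication by $x$ is a degree-preserving isomorphism $A\ang 1\to xA$; equivalently both modules have $F_0=Y$ and $F_j=F_j(A)$ for $j\neq 0$, so Lemma~\ref{lem-simplefactor} applies). Hence $[\inv_0A]=[A\ang1]$ and your $\theta$ sends it to $\id_{\rgr A}$, not to $\inv_0$: the base case of your induction already fails. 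The same confusion propagates into the inductive step: the identity you extract from \eqref{K-identity}, namely $[\inv_JA]=[\invbrak{J\smallsetminus\{m\}}A]$ in $\widetilde{K_0}$, is false. The correct consequence of $\inv_JA\oplus A\ang m\cong\invbrak{J\smallsetminus\{m\}}A\oplus A\ang{m+1}$ is $[\inv_JA]=[\invbrak{J\smallsetminus\{m\}}A]+[A\ang{m+1}]-[A\ang m]$, and $[A\ang{m+1}]-[A\ang m]$ is a \emph{nonzero} element of $\widetilde{K_0}$ (it equals $[\invbrak{\{m\}}A]$, the difference of two distinct free generators). There is no ``$\inv_m$ discrepancy'' to be explained away by the size of $\ker\theta$; the discrepancy is precisely the term your formula drops.

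You do, midway, state the correct thing to prove --- that the rule $[\inv_JA]\mapsto\inv_J$ respects all relations among the classes $[\inv_JA]$ --- but you never carry it out, reverting instead to the broken definition ``via the generators $[A\ang n]$.'' That verification is the entire content of the corollary, and it is what the paper does: every rank~one graded projective is isomorphic to $\inv_JA$ for a unique $J$ (in particular $A\ang n\cong\invbrak{\{0,\dots,n-1\}}A$ for $n\geq1$), so the classes $[\inv_JA]$ generate $\widetilde{K_0}$, which gives uniqueness; and by Proposition~\ref{prop-K0} (cancellation) together with Lemma~\ref{lem-sumisom}, the relations are generated by
$[\inv_JA]+[\inv_KA]=[\invbrak{J\cap K}A]+[\invbrak{J\cup K}A]=2[\invbrak{J\cap K}A]+[\invbrak{J\oplus K}A]$.
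The rule $[\inv_JA]\mapsto\inv_J$ visibly respects these, because $\Picz(\rgr A)$ is $2$-torsion and $\inv_J\inv_K\cong\invbrak{J\oplus K}$; reducing the presentation modulo $2$ yields exactly the presentation of $\zedfin\cong\Picz(\rgr A)$, which is the asserted isomorphism. A repaired version of your ``free basis'' idea is available --- one can check that $\{[\invbrak{\{j\}}A]\}_{j\in\zed}$ is a $\zed$-basis of $\widetilde{K_0}$ (a unitriangular change of basis from $\{[A\ang n]\}_{n\neq0}$) and set $\theta([\invbrak{\{j\}}A])=\inv_j$ --- but verifying $\theta([\inv_JA])=\inv_J$ for general $J$ still requires the relations of Lemma~\ref{lem-sumisom}, so nothing is gained over the paper's argument.
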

\begin{proof}
It is enough to prove that $\theta \otimes 1$ is well-defined and is an isomorphism.  
By Proposition~\ref{prop-K0} and Lemma~\ref{lem-sumisom}, the relations on $\widetilde{K_0}$ are generated by relations of the form
\[
[\inv_J A] + [\inv_K A] = [\invbrak{J \cap K}A]+ [\invbrak{J \cup K}A] = 2[\invbrak{J\cap K} A] + [\invbrak{J \oplus K} A] 
\]
and if $J \cap K = \emptyset$, then 
\[ [\inv_J A] + [\inv_K A] = [\invbrak{J \cup K} A].\]
Thus the relations on $\widetilde{K_0} \otimes \zed/2\zed$ are of the form:
\[ 
[\inv_J A] \otimes 1 + [\inv_K A] \otimes 1 =  [\invbrak{J \oplus K} A] \otimes 1
\]
and clearly we have an isomorphism to $\Picz(\rgr A)$ as claimed.
\end{proof}

The relations given above show that  $\widetilde{K_0}$ is isomorphic to the group generated by the semigroup of finite {\em multisets} of integers under the operation of union; for example, \eqref{foo} may be written
\[ \inv_{\{1,3\}}A \oplus \inv_{\{0,1,2\}} A \oplus \inv_{0} A \cong \inv_{\{0,1,2,3\}}A \oplus \inv_{\{0,1\}}A \oplus \inv_\emptyset A.\]
Note  that the multisets $\{ 1, 3, 0, 1, 2, 0 \}$ and $\{ 0, 1, 2, 3, 0, 1\}$ are equal.

\end{document}